\let\orgdescriptionlabel\descriptionlabel
\renewcommand*{\descriptionlabel}[1]{
	\let\orglabel\label
	\let\label\@gobble
	\phantomsection
	\edef\@currentlabel{#1} 	
	\let\label\orglabel
	\orgdescriptionlabel{#1}
}
\def\th@plain{
	\thm@notefont{}
	\itshape
}
\def\th@definition{
	\thm@notefont{}
	\normalfont
}
\g@addto@macro\th@remark{\thm@headpunct{}}
\g@addto@macro\th@definition{\thm@headpunct{}}
\g@addto@macro\th@plain{\thm@headpunct{}}
\definecolor{myblue}{rgb}{.8, .8, 1}
  \newcommand*\mybluebox[1]{
    \colorbox{myblue}{\hspace{1em}#1\hspace{1em}}}
\crefname{equation}{}{}
\crefname{chapter}{Chapter}{Chapters}
\crefname{item}{item}{items}
\crefname{figure}{Figure}{Figures}
\crefname{theorem}{Theorem}{Theorems}
\crefname{lemma}{Lemma}{Lemmas}
\crefname{proposition}{Proposition}{Propositions}
\crefname{corollary}{Corollary}{Corollarys}
\crefname{definition}{Definition}{Definitions}
\crefname{fact}{Fact}{Facts}
\crefname{example}{Example}{Examples}
\crefname{algorithm}{Algorithm}{Algorithms}
\crefname{remark}{Remark}{Remarks}
\crefname{note}{Note}{Notes}
\crefname{notation}{Notation}{Notations}
\crefname{case}{Case}{Cases}
\crefname{exercise}{Exercise}{Exercises}
\crefname{question}{Question}{Questions}
\crefname{claim}{Claim}{Claims}
\crefname{enumi}{}{}
\numberwithin{equation}{section}
\theoremstyle{plain}
\newtheorem{theorem}{Theorem}[section]
\newtheorem{corollary}[theorem]{Corollary}
\newtheorem{fact}[theorem]{Fact}
\newtheorem{lemma}[theorem]{Lemma}
\newtheorem{proposition}[theorem]{Proposition}
\theoremstyle{definition}
\newtheorem{definition}[theorem]{Definition}
\newtheorem{example}[theorem]{Example}
\newtheorem{remark}[theorem]{Remark}
\newcommand{\aff}{\ensuremath{\operatorname{aff} \,}}
\newcommand{\Fix}{\ensuremath{\operatorname{Fix}}}
\newcommand{\Id}{\ensuremath{\operatorname{Id}}}
\newcommand{\dist}{\ensuremath{\operatorname{d}}}
\newcommand{\Pro}{\ensuremath{\operatorname{P}}}
\newcommand{\R}{\ensuremath{\operatorname{R}}}
\newcommand{\I}{\ensuremath{\operatorname{I}}}
\newcommand{\pa}{\ensuremath{\operatorname{par}}}
\newcommand{\CCO}[1]{CC{#1}}
\newcommand{\CC}[1]{CC_{#1}}
\providecommand{\norm}[1]{\lVert#1\rVert}
\providecommand{\Norm}[1]{{\Big\lVert}#1{\Big\rVert}}
\providecommand{\innp}[1]{\langle#1\rangle}
\providecommand{\Innp}[1]{\Big\langle#1\Big\rangle}
\begin{document}

\title{ \sffamily  Best approximation mappings  in Hilbert spaces}

\author{
         Heinz H.\ Bauschke\thanks{
                 Mathematics, University of British Columbia, Kelowna, B.C.\ V1V~1V7, Canada.
                 E-mail: \href{mailto:heinz.bauschke@ubc.ca}{\texttt{heinz.bauschke@ubc.ca}}.},~
         Hui\ Ouyang\thanks{
                 Mathematics, University of British Columbia, Kelowna, B.C.\ V1V~1V7, Canada.
                 E-mail: \href{mailto:hui.ouyang@alumni.ubc.ca}{\texttt{hui.ouyang@alumni.ubc.ca}}.},~
         and Xianfu\ Wang\thanks{
                 Mathematics, University of British Columbia, Kelowna, B.C.\ V1V~1V7, Canada.
                 E-mail: \href{mailto:shawn.wang@ubc.ca}{\texttt{shawn.wang@ubc.ca}}.}
                 }

\date{June 2, 2020}

\maketitle

\begin{abstract}
\noindent
The notion of  best approximation mapping (BAM) with respect to  a closed affine subspace in finite-dimensional space was introduced by Behling, Bello Cruz and Santos to show the linear convergence of the block-wise circumcentered-reflection method. The best approximation mapping possesses  two critical properties of the circumcenter mapping for linear convergence.

Because the iteration sequence of BAM linearly converges, the BAM is interesting in its own right. In this paper, we  naturally extend the definition of BAM from closed affine subspace to nonempty closed convex set and from $\mathbb{R}^{n}$ to general Hilbert space.  We discover that the convex set associated with the BAM must be the fixed point set of the BAM. Hence, the iteration sequence generated by a BAM  linearly converges to the nearest fixed point of the BAM. Connections between BAMs and other mappings generating convergent iteration sequences are considered.
Behling et al.\ proved that the finite composition of BAMs associated with closed affine subspaces is  still a BAM in $\mathbb{R}^{n}$. We generalize their result from $\mathbb{R}^{n}$ to   general Hilbert space and also construct a new constant associated with the composition of BAMs. This provides a new proof of  the linear convergence of the method of alternating projections.
Moreover, compositions of BAMs associated with general convex sets are investigated.
In addition, we  show that convex combinations of BAMs associated with affine subspaces are BAMs.
Last but not least, we connect BAM with circumcenter mapping in Hilbert spaces.
\end{abstract}

{\small
\noindent
{\bfseries 2020 Mathematics Subject Classification:}
{Primary 90C25, 41A50,  65B99;
Secondary 46B04, 41A65.}

\noindent{\bfseries Keywords:}
Best approximation mapping,  linear convergence, fixed point set, best approximation problem,  projector, circumcentered isometry method, circumcentered reflection method,  method of alternating projections.
}

\section{Introduction} \label{sec:Introduction}

Throughout this paper, we shall assume that
\begin{empheq}[box = \mybluebox]{equation*}
\text{$\mathcal{H}$ is a real Hilbert space},
\end{empheq}
with inner product $\innp{\cdot,\cdot}$ and induced norm $\|\cdot\|$, 
$\mathbb{N}=\{0,1,2,\ldots\}$ and $m \in \mathbb{N} \smallsetminus \{0\}$.

In \cite{BCS2017}, Behling, Bello Cruz and Santos introduced the circumcentered Douglas-Rachford method, which is a special instance of the circumcentered-reflection method (C-RM) and the first circumcentered isometry method in the literature. Then the same authors contributed \cite{BCS2018}, \cite{BCS2019} and
\cite{BCS2020ConvexFeasibility} on C-RMs.
In \cite{BCS2019},
in order to prove the linear convergence of the block-wise C-RM that is the sequence of iterations of finite composition of circumcentered-reflection operators, they introduced the best approximation mapping (BAM) and proved that the finite composition of BAMs is still a BAM.   Our paper is inspired by \cite{BCS2019}, and we provide the following main results:

\begin{enumerate}
	\item[\textbf{R1:}]  \cref{prop:BAM:Properties} states that the sequence of iterations of BAM solves the best approximation problem associated with the fixed point set of the BAM.
	
	\item[\textbf{R2:}] \cref{thm:BAM:COMPO} generalizes  \cite[Theorem~1]{BCS2019} and  shows that  the finite composition of BAMs associated with closed affine subspaces in Hilbert space is  a BAM. It also provides a new constant associated with the composition of BAMs. In fact, we provide examples showing that our new constant is independent with the one constructed in \cite[Lemma~1]{BCS2019}.
	In particular, as a corollary of the \cref{thm:BAM:COMPO},   in \cref{cor:BAM:COMPO:Pro}\cref{cor:BAM:COMPO:Pro:BAM}  we show the linear convergence of the method of alternating projections (MAP).
	
	\item[\textbf{R3:}] \Cref{theorem:convex:comb:BAM:2,theor:averacomb:BAM} use two different methods to show that the convex combination of finitely many BAMs associated with  affine subspaces is a BAM.
		
	\item[\textbf{R4:}] \Cref{theom:CCS1:compose:comb,theom:CCS2:compose:comb,theorem:Hilbert:BAM} show linear convergence of the iteration sequences generated from composition and convex combination of   circumcenter mappings in Hilbert spaces.
\end{enumerate}

The paper is organized as follows.  In \cref{sec:Preliminaries}, we present some auxiliary results to be used in the sequel.
\cref{sec:BAM} includes definition and properties of the BAM  in Hilbert spaces.  In particular, the comparisons: BAM vs convergent mapping,  BAM vs Banach contraction, and BAM vs linear regular operator are provided.
In \cref{sec:Composition:BAM}, we generalize results shown in \cite[Section~2]{BCS2019} from $\mathbb{R}^{n}$ to the general Hilbert space and show that the finite composition of BAMs with closed and affine fixed point sets in Hilbert space  is still a BAM.  In addition, compositions of BAMs associated with general convex sets are considered in \cref{sec:Composition:BAM} as well.
In \cref{sec:combinationBAM}, we use two methods to show that the convex combination of finitely many BAMs with closed and affine fixed point sets  is a BAM.
In \cref{sec:BAMandCIM}, we review definitions and facts on circumcenter mapping and circumcentered isometry methods. We also  provide sufficient conditions for the circumcenter mapping to be a BAM in Hilbert spaces. Moreover, we show linear convergence of sequences generated from composition and convex combination of   circumcenter mappings as BAMs in Hilbert spaces.

We now turn to the notation used in this paper. Let $C$ be a nonempty subset
of $\mathcal{H}$.
The \emph{orthogonal complement} of $C$ is the set $ C^{\perp} :=\{x \in \mathcal{H}~|~ \innp{x,y}=0
~\text{for all}~y \in C\}. $
$C$ is an \emph{affine subspace} of
$\mathcal{H}$ if $C \neq \varnothing$ and $(\forall \rho\in\mathbb{R})$ $\rho
C + (1-\rho)C = C$. The smallest affine subspace of $\mathcal{H}$ containing $C$ is
denoted by $\aff C$ and called the \emph{affine hull} of $C$.
An affine subspace $C$ is said to be \emph{parallel} to an affine subspace $
M $ if $C = M +a $ for some $ a \in \mathcal{H}$.
Suppose that  $C$ is a nonempty closed convex subset of $\mathcal{H}$. The \emph{projector} (or \emph{projection operator}) onto $C$ is the operator, denoted by
$\Pro_{C}$,  that maps every point in $\mathcal{H}$ to its unique projection onto $C$. $\R_{C} :=2 \Pro_{C} -\Id$ is the \emph{reflector} associated with $C$.
Moreover, $(\forall x \in \mathcal{H})$ $\dist_{C} (x) :=\min_{c \in C} \norm{x-c} =\norm{x -\Pro_{C}x}$. Let $x \in \mathcal{H}$ and $\rho \in \mathbb{R}_{++}$. Denote the ball centered at $x$ with radius $\rho$ as $\mathbf{B}[x;\rho ]$.

Let $T: \mathcal{H} \rightarrow \mathcal{H}$ be an operator.
The \emph{fixed
point set} of the operator $T$ is denoted by $\Fix T$, i.e., $\Fix T := \{x \in
\mathcal{H} ~|~ Tx=x\}$.
Denote by $\mathcal{B} (\mathcal{H}) := \{ T: \mathcal{H} \rightarrow \mathcal{H} ~:~ T ~\text{is bounded and linear} \}$. For every $T \in \mathcal{B} (\mathcal{H})$, the operator norm $\norm{T}$ of $T$ is defined by $\norm{T} := \sup_{\norm{x} \leq  1} \norm{Tx}$.

For other notation not explicitly defined here, we refer the reader to \cite{BC2017}.

\section{Preliminaries} \label{sec:Preliminaries}

\subsection*{Projections and Friedrichs angle}
\begin{fact} {\rm \cite[Proposition~3.19]{BC2017}} \label{fac:SetChangeProje}
	Let $C$ be a nonempty closed  convex subset of the Hilbert space $\mathcal{H}$ and let $x \in \mathcal{H}$. Set $D:=z+C$, where $z \in \mathcal{H}$. Then $\Pro_{D}x=z+\Pro_{C}(x-z)$.
\end{fact}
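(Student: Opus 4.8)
The plan is to exploit the defining variational characterization of the projector together with the bijection between $D$ and $C$ induced by translation. Recall that for a nonempty closed convex set $K$ and a point $y$, the projection $\Pro_{K} y$ is the unique element $p \in K$ satisfying $\norm{y-p} = \min_{k \in K} \norm{y-k}$, equivalently (by \cite{BC2017}) the unique $p \in K$ with $\scal{y-p}{k-p} \leq 0$ for all $k \in K$. I would verify directly that the candidate point $p := z + \Pro_{C}(x-z)$ meets this characterization for $K = D$ and $y = x$, and then invoke uniqueness to conclude $\Pro_{D} x = p$.

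First I would note that $p \in D$, since $\Pro_{C}(x-z) \in C$ and $D = z + C$. The main computation is a change of variables: every $d \in D$ can be written uniquely as $d = z + c$ with $c \in C$, and then
\begin{equation*}
\norm{x - d}^{2} = \norm{(x-z) - c}^{2}.
\end{equation*}
Hence minimizing $\norm{x-d}$ over $d \in D$ is equivalent to minimizing $\norm{(x-z)-c}$ over $c \in C$, whose unique minimizer is by definition $c = \Pro_{C}(x-z)$. This immediately identifies the minimizing $d$ as $z + \Pro_{C}(x-z) = p$, and uniqueness of the projector gives $\Pro_{D} x = p$.

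Alternatively, and perhaps more in the spirit of a self-contained argument, I would check the inequality characterization: for arbitrary $d = z + c \in D$,
\begin{equation*}
\scal{x - p}{d - p} = \scal{(x-z) - \Pro_{C}(x-z)}{c - \Pro_{C}(x-z)} \leq 0,
\end{equation*}
where the last inequality is precisely the characterization of $\Pro_{C}(x-z)$ as the projection of $x-z$ onto $C$. Since $p \in D$ and this holds for all $d \in D$, uniqueness yields $\Pro_{D} x = z + \Pro_{C}(x-z)$.

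I do not anticipate any genuine obstacle here; the statement is a routine translation-equivariance property of projectors onto closed convex sets. The only point requiring a little care is that the translation $c \mapsto z + c$ is a bijection between $C$ and $D$ preserving the relevant distances, so that the minimization problems over $C$ and over $D$ are in exact correspondence and the uniqueness of the projector transfers cleanly. Everything else is an isometry-of-the-norm computation.
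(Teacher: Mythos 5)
The paper states this result as a \emph{Fact}, citing \cite[Proposition~3.19]{BC2017} without reproducing a proof, so there is no internal argument to compare against. Your proposal is correct and is the standard proof: both routes you sketch (the isometric change of variables $d = z + c$ reducing the minimization over $D$ to the one over $C$, and the direct verification of the variational inequality $\scal{x-p}{d-p} \leq 0$ for $p := z + \Pro_{C}(x-z)$) are valid and complete, and either one together with uniqueness of the projector settles the claim.
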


\begin{fact} {\rm \cite[Theorems~5.8]{D2012}} \label{MetrProSubs8}
	Let $M$ be a closed linear subspace of $\mathcal{H}$. Then $\Id =\Pro_{M}+\Pro_{M^{\perp}}$.
\end{fact}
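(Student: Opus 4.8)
The plan is to prove the pointwise identity $x = \Pro_{M}x + \Pro_{M^{\perp}}x$ for every $x \in \mathcal{H}$, and the engine driving everything is the variational characterization of the projection onto a closed subspace. First I would recall (or establish) that for a closed linear subspace $N$ and a point $y \in \mathcal{H}$, the point $q := \Pro_{N}y$ is the unique element of $N$ satisfying $y - q \in N^{\perp}$; equivalently, $q \in N$ together with $\innp{y-q, n}=0$ for all $n \in N$ characterizes $q = \Pro_{N}y$. For a subspace this sharpening of the obtuse-angle inequality into an orthogonality relation follows because $m - p$ ranges over all of $N$ as $m$ does, and $N$ is closed under negation. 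This in turn rests on existence and uniqueness of nearest points in a nonempty closed convex set, which uses completeness of $\mathcal{H}$ and the parallelogram law; I would take these as the standing prerequisites from best-approximation theory.

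Next, fix $x \in \mathcal{H}$ and set $p := \Pro_{M}x$. By the characterization applied to $N = M$, we have $p \in M$ and $x - p \in M^{\perp}$. The key step is to show that $x - p$ is in fact $\Pro_{M^{\perp}}x$. To verify this I apply the same characterization with $N = M^{\perp}$: I must check that the candidate $q := x - p$ lies in $M^{\perp}$ and that $x - q$ lies in $(M^{\perp})^{\perp}$. The first holds because $x - p \in M^{\perp}$ by construction; the second holds because $x - q = p \in M$, and every element of $M$ is orthogonal to every element of $M^{\perp}$ directly from the definition of $M^{\perp}$. Thus $x - p$ satisfies the defining conditions of $\Pro_{M^{\perp}}x$, and by uniqueness $\Pro_{M^{\perp}}x = x - p = x - \Pro_{M}x$. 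Rearranging gives $\Pro_{M}x + \Pro_{M^{\perp}}x = x$, and since $x$ was arbitrary, $\Id = \Pro_{M}+\Pro_{M^{\perp}}$.

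I expect the only real subtlety to be keeping the argument free of circular reasoning: a tempting shortcut invokes $(M^{\perp})^{\perp}=M$, but that identity is itself typically derived from the very orthogonal decomposition one is trying to prove. The route above sidesteps this by using only the trivial inclusion $M \subseteq (M^{\perp})^{\perp}$, i.e.\ the bare definition of the orthogonal complement, so no appeal to the reverse inclusion is needed. Beyond that, the proof is routine; the substantive content has been front-loaded into the existence and variational characterization of projections onto closed convex subsets, which is exactly what the cited source supplies.
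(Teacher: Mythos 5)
Your proof is correct, and since the paper states this result only as a quoted Fact (citing Deutsch) without proof, there is nothing to diverge from: your argument is precisely the standard one the cited source uses, namely the variational characterization that $q=\Pro_{N}y$ iff $q\in N$ and $y-q\in N^{\perp}$, applied first with $N=M$ and then with $N=M^{\perp}$, using only the trivial inclusion $M\subseteq (M^{\perp})^{\perp}$ to avoid circularity. The only step you leave implicit is that $M^{\perp}$ is itself a closed linear subspace (being an intersection of kernels of the continuous functionals $\innp{\cdot\,,y}$, $y\in M$), which is needed so that $\Pro_{M^{\perp}}$ is well defined and the characterization applies to it; this is trivial but worth one line.
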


Note that the case in which $M$ and $N$ are linear subspaces in the following result has already been shown in \cite[Lemma~9.2]{D2012}.
\begin{lemma} \label{lem:ExchangeProj}
	Let $M$ and $N$ be closed affine subspaces of $\mathcal{H}$ with $M \cap N \neq \varnothing$. Assume  $M \subseteq N$ or $N \subseteq M$. Then $\Pro_{M}\Pro_{N}=\Pro_{N}\Pro_{M} =\Pro_{M \cap N} $.
\end{lemma}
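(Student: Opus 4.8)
The plan is to reduce the affine case to the linear case recorded in \cite[Lemma~9.2]{D2012} by translating both subspaces so that they pass through a common point. Since $M \cap N \neq \varnothing$, I would fix a point $z \in M \cap N$ and set $M_{0} := M - z$ and $N_{0} := N - z$. Because $z$ lies in both $M$ and $N$ and each is a closed affine subspace, both $M_{0}$ and $N_{0}$ are closed linear subspaces. The inclusion hypothesis transfers verbatim: $M \subseteq N$ if and only if $M_{0} \subseteq N_{0}$, and likewise for the reverse inclusion, so at least one of $M_{0} \subseteq N_{0}$ or $N_{0} \subseteq M_{0}$ holds. Moreover $(M \cap N) - z = M_{0} \cap N_{0}$.

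Next I would invoke \cref{fac:SetChangeProje} with translation vector $z$ to express each affine projector through its linear counterpart: for every $x \in \mathcal{H}$,
\[
\Pro_{M}x = z + \Pro_{M_{0}}(x-z), \qquad \Pro_{N}x = z + \Pro_{N_{0}}(x-z), \qquad \Pro_{M \cap N}x = z + \Pro_{M_{0} \cap N_{0}}(x-z).
\]
Composing and using $\Pro_{N}x - z = \Pro_{N_{0}}(x-z)$, I obtain
\[
\Pro_{M}\Pro_{N}x = z + \Pro_{M_{0}}\big(\Pro_{N}x - z\big) = z + \Pro_{M_{0}}\Pro_{N_{0}}(x-z),
\]
and symmetrically $\Pro_{N}\Pro_{M}x = z + \Pro_{N_{0}}\Pro_{M_{0}}(x-z)$.

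Finally, since $M_{0} \subseteq N_{0}$ or $N_{0} \subseteq M_{0}$, the linear subspace result \cite[Lemma~9.2]{D2012} yields $\Pro_{M_{0}}\Pro_{N_{0}} = \Pro_{N_{0}}\Pro_{M_{0}} = \Pro_{M_{0} \cap N_{0}}$. Substituting this into the two displayed identities and comparing with the expression for $\Pro_{M \cap N}x$ gives $\Pro_{M}\Pro_{N}x = \Pro_{N}\Pro_{M}x = \Pro_{M \cap N}x$ for every $x \in \mathcal{H}$, which is the claim. I expect no genuine obstacle here beyond careful bookkeeping: the only points requiring attention are verifying that the common point $z$ makes all three translated sets linear subspaces simultaneously and that translation converts the affine inclusion into the corresponding linear inclusion; once these are secured, the known linear case does all the work.
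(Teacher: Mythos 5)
Your proposal is correct and follows essentially the same route as the paper: fix $z \in M \cap N$, translate to the parallel linear subspaces $M-z$ and $N-z$, apply \cite[Lemma~9.2]{D2012} to the linear case, and translate back via \cref{fac:SetChangeProje}. The computation you display is exactly the one in the paper's proof.
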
	
\begin{proof}
	Let $z \in M \cap N$. By \cite[Theorem~1.2]{R1970}, the parallel linear subspaces of $M$ and $N$ are $\pa M =M -z$ and $\pa N = N-z $ respectively. By assumption, $M \subseteq N$ or $N \subseteq M$, we know, $\pa M \subseteq \pa N$ or $\pa N \subseteq \pa M$.
	
	Then by \cref{fac:SetChangeProje} and \cite[Lemma~9.2]{D2012}, for every $x \in \mathcal{H}$, $\Pro_{M}\Pro_{N}x =z+\Pro_{\pa M}( \Pro_{N}(x)-z) = z+\Pro_{\pa M}( z + \Pro_{\pa N}(x-z)-z) = z+\Pro_{\pa M} \Pro_{\pa N}(x-z) = z+\Pro_{\pa M \cap \pa N}(x-z) =\Pro_{M \cap N} x$,
	which implies that $\Pro_{M}\Pro_{N} = \Pro_{M \cap N}$.  The proof of $\Pro_{N}\Pro_{M} = \Pro_{M \cap N}$ is similar.
\end{proof}

\begin{definition} {\rm \cite[Definition~9.4]{D2012}} \label{defn:FredrichAngleClassical}
	The  \emph{Friedrichs angle} between two linear subspaces $U$ and $V$ is the angle $\alpha(U,V)$ between $0$ and $\frac{\pi}{2}$ whose cosine, $c(U,V) :=\cos \alpha(U,V)$, is defined by the expression
	\begin{align*}
	c(U,V)
	:=  \sup \{ |\innp{u,v}| ~:~ u \in U \cap (U \cap V)^{\perp}, v \in V \cap (U \cap V)^{\perp}, \norm{u} \leq 1, \norm{v} \leq 1 \}.
	\end{align*}
\end{definition}

\begin{fact} {\rm \cite[Theorem~9.35]{D2012}} \label{fac:cFLess1}
	Let $U$ and $V$ be closed linear subspaces of $\mathcal{H}$. Then the following statements are equivalent.
	\begin{enumerate}
		\item $c(U,V) <1$.
		\item $U +V$ is closed.
		\item $U^{\perp} +V^{\perp}$ is closed.
	\end{enumerate}
\end{fact}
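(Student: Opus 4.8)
The plan is to establish all three equivalences by first normalizing the geometry so that $U \cap V = \{0\}$, then proving (i) $\Leftrightarrow$ (ii) through a single coercivity estimate, and finally deriving (ii) $\Leftrightarrow$ (iii) from a symmetry property of the Friedrichs cosine under orthogonal complementation. For the reduction, I would set $L := U \cap V$ and split $U = L \oplus U_{0}$ and $V = L \oplus V_{0}$ with $U_{0} := U \cap L^{\perp}$ and $V_{0} := V \cap L^{\perp}$. Then $U_{0} \cap V_{0} = \{0\}$, both $U_{0},V_{0} \subseteq L^{\perp}$, and $U + V = L \oplus (U_{0}+V_{0})$ is an orthogonal sum; since $L$ is closed, $U+V$ is closed iff $U_{0}+V_{0}$ is closed. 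By \cref{defn:FredrichAngleClassical} the cosine only ever sees $U_{0}$ and $V_{0}$, so $c(U,V) = c(U_{0},V_{0})$. Hence I may assume $U \cap V = \{0\}$, in which case $c := c(U,V) = \sup\{\abs{\innp{u,v}} : u\in U,\, v\in V,\, \norm{u}\le 1,\, \norm{v}\le 1\}$.

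For (i) $\Rightarrow$ (ii), the key inequality is that $\abs{\innp{u,v}} \le c\,\norm{u}\,\norm{v}$ together with $2\norm{u}\,\norm{v}\le \norm{u}^{2}+\norm{v}^{2}$ gives, for all $u\in U$ and $v\in V$,
\[
\norm{u+v}^{2} \ge \norm{u}^{2}+\norm{v}^{2}-2c\,\norm{u}\,\norm{v} \ge (1-c)\bigl(\norm{u}^{2}+\norm{v}^{2}\bigr).
\]
When $c<1$ this makes the summation coercive: if $u_{n}+v_{n}\to w$, then $(u_{n}+v_{n})$ is Cauchy, and applying the estimate to $(u_{n}-u_{m})+(v_{n}-v_{m})$ forces $(u_{n})$ and $(v_{n})$ to be Cauchy, hence convergent in the closed subspaces $U$ and $V$, so $w\in U+V$. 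For the converse (ii) $\Rightarrow$ (i) I would argue contrapositively: if $c=1$, pick unit vectors $u_{n}\in U$, $v_{n}\in V$ with $\innp{u_{n},v_{n}}\to 1$ (adjusting signs using the absolute value), so that $\norm{u_{n}-v_{n}}^{2}=2-2\innp{u_{n},v_{n}}\to 0$. The bounded injective map $T\colon U\times V\to\mathcal{H}$, $T(u,v)=u+v$, then satisfies $T(u_{n},-v_{n})=u_{n}-v_{n}\to 0$ while $\norm{(u_{n},-v_{n})}=\sqrt{2}$, so $T$ is not bounded below; were $U+V=\operatorname{ran}T$ closed, the bounded inverse theorem would force $T$ bounded below, a contradiction.

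It remains to bring in (iii), and for this I would pass to the operator form of the cosine, $c(U,V)=\norm{\Pro_{U}\Pro_{V}-\Pro_{U\cap V}}$, which in the reduced case becomes $c(U,V)=\norm{\Pro_{U}\Pro_{V}}$. This follows from expanding $\norm{\Pro_{U}\Pro_{V}} = \sup_{\norm{x},\norm{y}\le 1}\abs{\scal{\Pro_{V}x}{\Pro_{U}y}}$ and noting that $\Pro_{V}x$ and $\Pro_{U}y$ range over the unit balls of $V$ and $U$. Granting the symmetry $c(U,V)=c(U^{\perp},V^{\perp})$, the already-proved equivalence (i)$\Leftrightarrow$(ii) applied to the pair $(U^{\perp},V^{\perp})$ yields that $U^{\perp}+V^{\perp}$ is closed $\iff c(U^{\perp},V^{\perp})<1 \iff c(U,V)<1 \iff U+V$ is closed, which closes the cycle.

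The main obstacle is precisely this complementation symmetry. I would prove $c(U,V)=c(U^{\perp},V^{\perp})$ by rewriting $\Pro_{U^{\perp}}\Pro_{V^{\perp}}-\Pro_{U^{\perp}\cap V^{\perp}}$ using $\Pro_{U^{\perp}}=\Id-\Pro_{U}$, $\Pro_{V^{\perp}}=\Id-\Pro_{V}$, and $U^{\perp}\cap V^{\perp}=(U+V)^{\perp}$, thereby reducing the claimed norm identity to an algebraic relation among $\Pro_{U}$, $\Pro_{V}$, and $\Pro_{\overline{U+V}}$; alternatively, one can sidestep the angle and obtain (ii)$\Leftrightarrow$(iii) directly from the closed range theorem applied to $T$ and its adjoint $x\mapsto(\Pro_{U}x,\Pro_{V}x)$. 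The delicate point throughout is the bookkeeping with the closure $\overline{U+V}$ when the sum is \emph{not} assumed closed, and it is in controlling this step that the real work lies.
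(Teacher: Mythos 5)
The paper imports this equivalence as a Fact from Deutsch's book and supplies no proof of its own, so your argument has to stand alone. The parts you actually carry out are fine: the orthogonal splitting $U=L\oplus U_{0}$, $V=L\oplus V_{0}$ with $L=U\cap V$ correctly reduces everything to the case $U\cap V=\{0\}$ (both the closedness of $U+V$ and the value of $c(U,V)$ survive the reduction), the coercivity estimate $\norm{u+v}^{2}\geq (1-c)\big(\norm{u}^{2}+\norm{v}^{2}\big)$ gives (i)$\Rightarrow$(ii) cleanly, and the bounded-inverse argument with $T\colon U\times V\to\mathcal{H}$, $T(u,v)=u+v$, gives the contrapositive of (ii)$\Rightarrow$(i).

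The genuine gap is the passage to (iii), and you flag it yourself (``it is in controlling this step that the real work lies'') without closing it. The duality $c(U,V)=c(U^{\perp},V^{\perp})$ is not an algebraic consequence of $\Pro_{U^{\perp}}=\Id-\Pro_{U}$ and $U^{\perp}\cap V^{\perp}=(U+V)^{\perp}$; it is itself a nontrivial theorem (Deutsch proves it separately, and note that your normalization $U\cap V=\{0\}$ does \emph{not} give $U^{\perp}\cap V^{\perp}=\{0\}$, so the two pairs cannot be reduced simultaneously). Your fallback also misfires as stated: the adjoint of $T$ is $T^{*}x=(\Pro_{U}x,\Pro_{V}x)$, whose range $\{(\Pro_{U}x,\Pro_{V}x):x\in\mathcal{H}\}$ has no evident relation to $U^{\perp}+V^{\perp}$, so the closed range theorem applied to $T$ proves nothing about (iii). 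The repair is to apply the closed range theorem to $A:=\Pro_{U^{\perp}}|_{V}\colon V\to U^{\perp}$, whose adjoint is $A^{*}=\Pro_{V}|_{U^{\perp}}$. Since
\begin{align*}
U+V=U\oplus \Pro_{U^{\perp}}(V)=U\oplus\Range A
\qquad\text{and}\qquad
U^{\perp}+V^{\perp}=V^{\perp}\oplus \Pro_{V}(U^{\perp})=V^{\perp}\oplus\Range A^{*}
\end{align*}
are orthogonal sums whose first summands are closed, $U+V$ is closed iff $\Range A$ is closed iff $\Range A^{*}$ is closed iff $U^{\perp}+V^{\perp}$ is closed; this yields (ii)$\Leftrightarrow$(iii) directly and lets you bypass the angle duality altogether.
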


\subsection*{Nonexpansive operators}

\begin{definition} \label{defn:Nonexpansive} {\rm \cite[Definition~4.1]{BC2017}}
	Let $D$ be a nonempty subset of $\mathcal{H}$ and let $T:D \rightarrow \mathcal{H}$. Then $T$ is	\begin{enumerate}
		\item \label{Nonex} \emph{nonexpansive} if it is Lipschitz continuous with constant 1, i.e., $(\forall x  \in D)$ 	$(\forall y \in D)$ $\norm{Tx -Ty} \leq \norm{x-y}$;
		\item \label{QuasiNonex} \emph{quasinonexpansive} if $(\forall x \in D)$ $(\forall y \in \Fix T)$ $\norm{Tx-y} \leq \norm{x-y}$;
		\item \label{StrickQuasiNonex} and \emph{strictly quasinonexpansive} if $(\forall x \in D \smallsetminus \Fix T)$ $(\forall y \in \Fix T)$ $\norm{Tx-y} < \norm{x-y}$.
	\end{enumerate}
\end{definition}

\begin{definition} {\rm \cite[Definition~4.33]{BC2017}} \label{defn:AlphaAverage}
	Let $D$ be a nonempty subset of $\mathcal{H}$, let $T: D \rightarrow \mathcal{H}$ be nonexpansive, and let $\alpha \in \, \left]0,1\right[\,$. Then $T$ is \emph{averaged with constant $\alpha$}, or \emph{$\alpha$-averaged} for short, if there exists a nonexpansive operator $R: D \rightarrow \mathcal{H}$ such that $T=(1- \alpha) \Id +\alpha R$.
\end{definition}

\begin{lemma} \label{lemma:quasi:to:nonexpansive}
	Let $T: \mathcal{H} \to \mathcal{H}$ be affine operator with $\Fix T \neq \varnothing$. Then $T$ is quasinonexpansive if and only if $T$ is nonexpansive.
\end{lemma}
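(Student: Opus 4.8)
The plan is to treat the two implications separately. The forward implication, that nonexpansiveness implies quasinonexpansiveness, holds for any operator with a nonempty fixed point set and uses no affineness: if $y \in \Fix T$ and $x \in \mathcal{H}$, then $\norm{Tx - y} = \norm{Tx - Ty} \le \norm{x - y}$, which is exactly the defining inequality for quasinonexpansiveness in \cref{defn:Nonexpansive}. So the real content lies entirely in the reverse implication.

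For the reverse implication, I would first exploit the affine structure to reduce the problem to the linear part of $T$. Fix a point $p \in \Fix T$ (available since $\Fix T \neq \varnothing$) and write $T = L + b$, where $L := T - T0$ and $b := T0$; affineness of $T$ guarantees that $L$ is linear. The fixed-point relation $Tp = p$ then forces $b = p - Lp$, and a direct computation gives, for all $x, y \in \mathcal{H}$,
\begin{equation*}
Tx - Ty = L(x - y) \qquad \text{and} \qquad Tx - p = L(x - p).
\end{equation*}
The first identity shows that $T$ is nonexpansive if and only if $\norm{Lz} \le \norm{z}$ for every $z \in \mathcal{H}$, so it suffices to establish this bound on the linear part $L$.

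The key step is that quasinonexpansiveness, applied at the single fixed point $p$, already delivers this bound on all of $\mathcal{H}$. Indeed, for an arbitrary $x \in \mathcal{H}$ the hypothesis gives $\norm{Tx - p} \le \norm{x - p}$, and by the second identity above this reads $\norm{L(x - p)} \le \norm{x - p}$. Since $x - p$ ranges over all of $\mathcal{H}$ as $x$ does, we obtain $\norm{Lz} \le \norm{z}$ for every $z \in \mathcal{H}$; combining this with the first identity yields $\norm{Tx - Ty} = \norm{L(x - y)} \le \norm{x - y}$, so $T$ is nonexpansive.

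The point to get right — rather than a genuine obstacle — is the observation that affineness collapses the difference $Tx - Ty$ to the linear part $L$ applied to $x - y$, so that controlling $T$ against a \emph{single} fixed point is enough to control it against every pair of points. Without affineness this passage fails, which is precisely why the equivalence is special to affine operators.
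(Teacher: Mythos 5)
Your proof is correct and follows essentially the same route as the paper: both arguments extract the linear part of the affine operator (the paper via $F(x):=T(x+z)-z$ for a fixed point $z$, you via $L:=T-T0$ together with the relation $b=p-Lp$), observe that quasinonexpansiveness at a single fixed point gives $\norm{Lw}\le\norm{w}$ for all $w$, and then use $Tx-Ty=L(x-y)$ to conclude nonexpansiveness. The only cosmetic difference is where the translation is performed; the paper cites a lemma for the linearity of the translated map, which you assert directly as the standard decomposition of an affine operator.
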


\begin{proof}
By \cref{defn:Nonexpansive}, $T$ is nonexpansive implies that $T$ is quasinonexpansive. Suppose that $T$ is quasinonexpansive.   Because $\Fix T \neq \varnothing$, take $z \in \Fix T$.	Define
\begin{align} \label{eq:lemma:quasi:to:nonexpansive:F}
(\forall x \in \mathcal{H}) \quad F(x) := T(x+z)-z.
\end{align}
Then by \cite[Lemma~3.8]{BOyW2019LinearConvergence}, $F$ is linear.  Because $T$ is quasinonexpansive,
\begin{align*}
(\forall x \in \mathcal{H}) \quad \norm{Fx} =\norm{T(x+z)-z } \leq \norm{(x+z)-z}=\norm{x},
\end{align*}
which, by the linearity of $F$, implies that
\begin{align} \label{eq:lemma:quasi:to:nonexpansive:ineq}
(\forall x \in \mathcal{H}) (\forall y \in \mathcal{H}) \quad \norm{Fx-Fy}   \leq \norm{x-y}.
\end{align}
Now, for every $x \in \mathcal{H}$ and for every $y \in \mathcal{H}$,
\begin{align*}
\norm{Tx -Ty} \stackrel{\cref{eq:lemma:quasi:to:nonexpansive:F}}{=} \norm{ z+ F(x-z) - ( z+ F(y-z))} =\norm{F(x-z) -F(y-z) } \stackrel{\cref{eq:lemma:quasi:to:nonexpansive:ineq}}{\leq} \norm{x-y},
\end{align*}
which means that $T$ is nonexpansive.
\end{proof}

\section{Best approximation mapping} \label{sec:BAM}
 The best approximation mapping with respect to a closed affine subspaces in $\mathbb{R}^{n}$ was introduced by Behling, Bello-Cruz and Santos in \cite{BCS2019}. In this section, we  extend the definition of BAM from closed affine subspace to nonempty closed convex set, and from $\mathbb{R}^{n}$ to general Hilbert space.
 Moreover, we provide some examples and properties of the generalized version of BAM.

\subsection*{Definition of BAM}

\begin{definition} \label{def:BAM}
	Let $G: \mathcal{H} \to \mathcal{H}$, and let $\gamma \in \left[0,1\right[\,$.
	Then $G$ is a \emph{best approximation mapping with constant $\gamma$} (for short $\gamma$-BAM), if
	\begin{enumerate}
		\item \label{def:BAM:Fix}  $\Fix G$ is a nonempty closed  convex subset of $\mathcal{H}$,
		\item  \label{def:BAM:eq} $\Pro_{\Fix G}G=\Pro_{\Fix G}$, and
		\item  \label{def:BAM:Ineq}
		$(\forall x \in \mathcal{H})$ $\norm{Gx -\Pro_{\Fix G}x} \leq \gamma \norm{x - \Pro_{\Fix G}x}$.
	\end{enumerate}	
In particular, if $\gamma$ is unknown or not necessary to point out, we just say that $G$ is a BAM.
\end{definition}

The following \cref{lemma:CFixG}\cref{lemma:CFixG:Fix} illustrates that in \cite[Definition~2]{BCS2019}, the set $C$ is uniquely determined by the operator $G$, and that, moreover, $C=\Fix G$. Hence,   our \cref{def:BAM}  is indeed a natural generalization of \cite[Definition~2]{BCS2019}.
\begin{lemma} \label{lemma:CFixG}
	Let $G: \mathcal{H} \to \mathcal{H}$, let  $C$ be a nonempty closed  convex subset of $\mathcal{H}$, and let $\gamma \in \left[0,1\right[\,$.  Suppose that $\Pro_{C}G=\Pro_{C}$ and that
	\begin{align} \label{eq:lemma:CFixG}
	(\forall x \in \mathcal{H}) \quad \norm{Gx -\Pro_{C}x} \leq \gamma \norm{x - \Pro_{C}x}.
	\end{align}
	Then the following hold:
	\begin{enumerate}
		\item \label{lemma:CFixG:GC} $G\Pro_{C}=\Pro_{C}$.
		\item \label{lemma:CFixG:Fix} $\Fix G =C$.
		\item \label{lemma:CFixG:BAM} $G$ is a $\gamma$-BAM.
	\end{enumerate}
\end{lemma}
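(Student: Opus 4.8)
The plan is to prove the three statements in sequence, exploiting the hypotheses $\Pro_{C}G=\Pro_{C}$ and \cref{eq:lemma:CFixG}, and using the basic fact that $\Pro_{C}$ fixes exactly the points of $C$ (i.e.\ $\Pro_{C}c=c\iff c\in C$).

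First I would prove \cref{lemma:CFixG:GC}. Fix $c\in C$, so that $\Pro_{C}c=c$. Applying \cref{eq:lemma:CFixG} with $x=c$ gives $\norm{Gc-\Pro_{C}c}\leq \gamma\norm{c-\Pro_{C}c}=\gamma\cdot 0=0$, whence $Gc=\Pro_{C}c=c$. Since this holds for every $c\in C$, and every point of the form $\Pro_{C}x$ lies in $C$, we conclude $G\Pro_{C}x=\Pro_{C}x$ for all $x$, i.e.\ $G\Pro_{C}=\Pro_{C}$.

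Next I would prove \cref{lemma:CFixG:Fix} by establishing both inclusions. The inclusion $C\subseteq\Fix G$ is immediate from \cref{lemma:CFixG:GC}: for $c\in C$ we have $\Pro_{C}c=c$, and \cref{lemma:CFixG:GC} (or directly the computation above) gives $Gc=c$, so $c\in\Fix G$. For the reverse inclusion $\Fix G\subseteq C$, take $x\in\Fix G$, so $Gx=x$. The hypothesis $\Pro_{C}G=\Pro_{C}$ evaluated at $x$ gives $\Pro_{C}x=\Pro_{C}Gx=\Pro_{C}x$, which is unhelpful on its own; instead I would feed $x\in\Fix G$ into \cref{eq:lemma:CFixG}, obtaining $\norm{Gx-\Pro_{C}x}\leq\gamma\norm{x-\Pro_{C}x}$. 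Since $Gx=x$, the left side equals $\norm{x-\Pro_{C}x}$, so $\norm{x-\Pro_{C}x}\leq\gamma\norm{x-\Pro_{C}x}$ with $\gamma\in[0,1[\,$, forcing $\norm{x-\Pro_{C}x}=0$, i.e.\ $x=\Pro_{C}x\in C$. Thus $\Fix G=C$.

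Finally, \cref{lemma:CFixG:BAM} follows by checking the three defining conditions of \cref{def:BAM} against $\Fix G$. By \cref{lemma:CFixG:Fix} we have $\Fix G=C$, which is a nonempty closed convex subset of $\mathcal{H}$ by assumption, giving \cref{def:BAM:Fix}. Substituting $\Fix G=C$ into the hypothesis $\Pro_{C}G=\Pro_{C}$ yields $\Pro_{\Fix G}G=\Pro_{\Fix G}$, which is \cref{def:BAM:eq}. Similarly, rewriting \cref{eq:lemma:CFixG} with $C$ replaced by $\Fix G$ gives \cref{def:BAM:Ineq}. Hence $G$ is a $\gamma$-BAM. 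I do not anticipate a genuine obstacle here; the only subtle point is the use of $\gamma<1$ in the reverse inclusion of \cref{lemma:CFixG:Fix}, which is exactly where the strictness of the BAM constant is essential and where the identification $\Fix G=C$ would fail if $\gamma$ were allowed to equal $1$.
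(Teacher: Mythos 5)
Your proposal is correct and follows essentially the same route as the paper: item (i) via the inequality \cref{eq:lemma:CFixG} evaluated at points of $C$ (equivalently at $\Pro_C y$, using idempotence of $\Pro_C$), item (ii) by the two inclusions with $\gamma<1$ forcing $x=\Pro_C x$ for fixed points, and item (iii) by checking \cref{def:BAM} directly. No gaps.
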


\begin{proof}
	\cref{lemma:CFixG:GC}: For every $y\in \mathcal{H}$, use the idempotent property of $\Pro_{C}$ and apply \cref{eq:lemma:CFixG}  with $x=\Pro_{C} y$ to obtain that
	\begin{align*}
	\norm{G\Pro_{C} y -\Pro_{C}y}=	\norm{G\Pro_{C} y -\Pro_{C}\Pro_{C} y} \leq \gamma \norm{\Pro_{C} y-\Pro_{C} \Pro_{C} y} =0,
	\end{align*}
	which implies that $(\forall y \in \mathcal{H})$ $G\Pro_{C} y =\Pro_{C}y$, that is, $G\Pro_{C}=\Pro_{C}$.

	\cref{lemma:CFixG:Fix}:	Let $x \in \mathcal{H}$.   On the one hand, by \cref{lemma:CFixG:GC}, $x \in C \Rightarrow x=\Pro_{C}x =G\Pro_{C}x =Gx \Rightarrow x \in \Fix G$.  On the other hand,   $x \in \Fix G \Rightarrow x=Gx \Rightarrow \norm{x -\Pro_{C}x}=\norm{Gx - \Pro_{C}x} \leq \gamma \norm{x -\Pro_{C}x} \Rightarrow x-\Pro_{C}x=0 \Rightarrow x \in C$,
	where the second and third implications are from \cref{eq:lemma:CFixG}, and $\gamma <1$ respectively. Altogether, \cref{lemma:CFixG:Fix} is true.
	
	\cref{lemma:CFixG:BAM}: This is directly from \cref{def:BAM}.
\end{proof}

\begin{proposition}  \label{prop:BAM:Fix}
	Let $\gamma \in \left[0,1\right[\,$. Suppose that $G$ is a $\gamma$-BAM.  Then $\dist_{\Fix G} \circ G \leq \gamma \dist_{\Fix G} $.
\end{proposition}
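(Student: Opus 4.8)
The plan is to unravel the definition of the distance function and then chain together the three defining conditions of a $\gamma$-BAM directly; no auxiliary construction is needed. Fix $x \in \mathcal{H}$. Since $\Fix G$ is a nonempty closed convex set by \cref{def:BAM}\cref{def:BAM:Fix}, the projector $\Pro_{\Fix G}$ is well defined and, following the notation fixed in the Introduction, $\dist_{\Fix G}(y) = \norm{y - \Pro_{\Fix G} y}$ for every $y \in \mathcal{H}$.

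The first step is to evaluate this identity at $y = Gx$, which gives $\dist_{\Fix G}(Gx) = \norm{Gx - \Pro_{\Fix G}(Gx)}$, and then to invoke the projection identity $\Pro_{\Fix G} G = \Pro_{\Fix G}$ from \cref{def:BAM}\cref{def:BAM:eq} to replace $\Pro_{\Fix G}(Gx)$ by $\Pro_{\Fix G} x$. This turns the expression into $\dist_{\Fix G}(Gx) = \norm{Gx - \Pro_{\Fix G} x}$.

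The second step is simply to bound the right-hand side using the best-approximation inequality \cref{def:BAM}\cref{def:BAM:Ineq}, which reads $\norm{Gx - \Pro_{\Fix G} x} \leq \gamma \norm{x - \Pro_{\Fix G} x}$, and to recognize the final quantity as $\gamma\, \dist_{\Fix G}(x)$. Concatenating the two steps yields $\dist_{\Fix G}(Gx) \leq \gamma\, \dist_{\Fix G}(x)$ for arbitrary $x$, which is the claimed inequality $\dist_{\Fix G} \circ G \leq \gamma\, \dist_{\Fix G}$.

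There is essentially no obstacle here: the proposition is an immediate consequence of the definition. The only point deserving attention is that condition \cref{def:BAM:Ineq} by itself measures the deviation of $Gx$ from the projection of $x$ onto $\Fix G$, rather than from the projection of $Gx$; it is precisely condition \cref{def:BAM:eq}, asserting $\Pro_{\Fix G}(Gx) = \Pro_{\Fix G} x$, that lets one identify $\norm{Gx - \Pro_{\Fix G} x}$ with the genuine distance $\dist_{\Fix G}(Gx)$. Thus the interplay of conditions \cref{def:BAM:eq} and \cref{def:BAM:Ineq} is the whole content of the argument.
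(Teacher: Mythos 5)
Your proof is correct and follows exactly the same route as the paper: rewrite $\dist_{\Fix G}(Gx)$ as $\norm{Gx - \Pro_{\Fix G}(Gx)}$, use $\Pro_{\Fix G}G = \Pro_{\Fix G}$ to replace the projection of $Gx$ by that of $x$, and then apply the defining inequality of a $\gamma$-BAM. Nothing is missing.
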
	

\begin{proof}
	Let $x \in \mathcal{H}$.
	By \cref{def:BAM}\cref{def:BAM:Fix}, $\Fix G $ is a nonempty closed convex set, so $\dist_{\Fix G}$ is well defined. Moreover, by \cref{def:BAM}\cref{def:BAM:eq}$\&$\cref{def:BAM:Ineq},
	\begin{align*}
	\dist_{\Fix G} (Gx)
	= \norm{Gx - \Pro_{\Fix G}Gx}
	= \norm{Gx - \Pro_{\Fix G}x}
	\leq \gamma \norm{x - \Pro_{\Fix G}x}
	=  \gamma \dist_{\Fix G} x.
	\end{align*}
\end{proof}

\begin{example} \label{examp:BAM:Pro}
	Let $C$ be a nonempty closed  convex subset  of $\mathcal{H}$. Then for every $\gamma \in \left[0,1\right[\,$, $  (1-\gamma) \Pro_{C}+\gamma  \Id$ is a $\gamma$-BAM with $\Fix G=C$. Moreover, $\Id$ is a $0$-BAM with $\Fix \Id =\mathcal{H}$.
\end{example}

\begin{proof}
	Let $\gamma \in \left[0,1\right[\,$. Then  by \cite[Proposition~3.21]{BC2017}, $\Pro_{C} \left(   (1-\gamma) \Pro_{C}+\gamma  \Id \right) = \Pro_{C}$.
	In addition, $\norm{ (1-\gamma) \Pro_{C}x +\gamma  x - \Pro_{C}x } =\gamma \norm{x - \Pro_{C}x }$. The last assertion is clear from definitions.
\end{proof}	

\begin{remark} \label{remark:chara:Id:P}
		Let $C$ be a nonempty closed  convex subset  of $\mathcal{H}$ and let  $\gamma \in \mathbb{R}$.
	\begin{enumerate}
		\item \label{remark:chara:Id:P:i} Because $(\forall x \in \mathcal{H})$ $\norm{   (1-\gamma) \Pro_{C}x +\gamma  x- \Pro_{C}x } =|\gamma| \norm{x - \Pro_{C}x }$, and $|\gamma|< 1 \Leftrightarrow \gamma \in \left]-1, 1\right[\,$, by \cref{def:BAM}\cref{def:BAM:Ineq},  we know that $(1-\gamma) \Pro_{C}+\gamma  \Id$ is a BAM implies that $\gamma \in \left]-1, 1\right[\,$.
		\item \label{remark:chara:Id:P:ii} Let $\epsilon \in \mathbb{R}_{++}$. Suppose that $\mathcal{H}=\mathbb{R}^{2}$,   $C:=\mathbf{B}[0;1]$ and $\gamma :=-\epsilon$. Let $x:=(1+\epsilon, 0)$. Then
		\begin{align*}
		\Pro_{C}( (1-\gamma) \Pro_{C}+\gamma  \Id )x =\begin{cases}
		(1-\epsilon^{2},0) \quad & \text{if } \epsilon \leq \sqrt{2},\\
		(-1,0) \quad & \text{if } \epsilon >\sqrt{2},
		\end{cases}
		\end{align*}
	which implies that  $\Pro_{C}( (1-\gamma) \Pro_{C}+\gamma  \Id )x \neq (1,0) =\Pro_{C} x$, which yields that $(1-\gamma) \Pro_{C}+\gamma  \Id$ is not a BAM.
	\end{enumerate}
Hence, using the two items above, we conclude that if $(1-\gamma) \Pro_{C}+\gamma  \Id$ is a BAM, then $ \gamma  \in \left]-1, 1\right[\,$ and that generally if $\gamma \in \left]-1, 0\right]$, then $(1-\gamma) \Pro_{C}+\gamma  \Id$ is not a BAM. Therefore, the assumption in \cref{examp:BAM:Pro} is tight.
\end{remark}

\begin{example}\label{exam:LinearConver:BAM}
	Suppose that $\mathcal{H} = \mathbb{R}^{n}$.	Let $T: \mathcal{H} \to \mathcal{H}$ be $\alpha$-averaged with $\alpha \in \,\left]0,1\right[$ and let $T$ be  linear.  Then $\norm{T\Pro_{(\Fix T)^{\perp}}} \in \left[0,1\right[\,$ and $T$ is a $\norm{T\Pro_{(\Fix T)^{\perp}}}$-BAM.
\end{example}	

\begin{proof}
	The  items \cref{def:BAM:Fix}, \cref{def:BAM:eq} and \cref{def:BAM:Ineq} in \cref{def:BAM} follow from \cite[Lemmas~3.12 and 3.14]{BDHP2003}  and  \cite[Proposition~2.22]{BOyW2019LinearConvergence} respectively.
\end{proof}	

It is easy to see that $-\Id$  is linear and nonexpansive but   not a BAM. Hence, the condition \enquote{$T$ is $\alpha$-averaged} in \cref{exam:LinearConver:BAM} can not be replaced by \enquote{$T$ is nonexpansive}.

\begin{proposition} \label{prop:contraction:BAM}
	Let $T : \mathcal{H} \to \mathcal{H}$ be a Banach contraction on $\mathcal{H}$, say, there exists $\gamma \in \left[0,1\right[$ such that
	\begin{align} \label{eq:exam:contraction:BAM}
	(\forall x \in \mathcal{H}) (\forall y \in \mathcal{H}) \quad \norm{Tx-Ty} \leq \gamma \norm{x -y}.
	\end{align}
	 Then $T$ is a $\gamma$-BAM.
\end{proposition}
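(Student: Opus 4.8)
The plan is to verify the three conditions of \cref{def:BAM} directly, and the cleanest route is to reduce everything to \cref{lemma:CFixG} by producing the candidate set $C$. Since $T$ is a Banach contraction with constant $\gamma < 1$, the Banach fixed point theorem gives a unique fixed point; call it $p$. So $\Fix T = \{p\}$ is a nonempty closed convex (indeed singleton) set, which settles \cref{def:BAM:Fix}. I would then set $C := \{p\} = \Fix T$ and check the hypotheses of \cref{lemma:CFixG}, namely $\Pro_{C}T = \Pro_{C}$ and the contraction-type bound \cref{eq:lemma:CFixG}, and invoke \cref{lemma:CFixG}\cref{lemma:CFixG:BAM} to conclude that $T$ is a $\gamma$-BAM.

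The verification of the two hypotheses is essentially immediate. Because $C = \{p\}$ is a singleton, the projector is constant: $(\forall x \in \mathcal{H})$ $\Pro_{C}x = p$. Hence $\Pro_{C}Tx = p = \Pro_{C}x$ for every $x$, giving $\Pro_{C}T = \Pro_{C}$. For the inequality, I would apply \cref{eq:exam:contraction:BAM} with $y = p$, using $Tp = p$, to get
\begin{align*}
(\forall x \in \mathcal{H}) \quad \norm{Tx - \Pro_{C}x} = \norm{Tx - p} = \norm{Tx - Tp} \leq \gamma \norm{x - p} = \gamma \norm{x - \Pro_{C}x},
\end{align*}
which is exactly \cref{eq:lemma:CFixG}. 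With both hypotheses in hand, \cref{lemma:CFixG} yields $\Fix T = C = \{p\}$ and that $T$ is a $\gamma$-BAM.

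I do not anticipate a genuine obstacle here, since the statement is really a corollary of \cref{lemma:CFixG} once the fixed point set is identified; the only external input is the existence and uniqueness of the fixed point, which is standard (Banach fixed point theorem). The one point requiring a little care is the explicit observation that a singleton's projector is the constant map onto that point, so that $\Pro_{C}$ and the defining inequality collapse to the contraction estimate evaluated at $p$. Everything else is bookkeeping, and the whole argument avoids re-deriving the BAM axioms from scratch by routing through the already-established \cref{lemma:CFixG}.
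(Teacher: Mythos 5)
Your proposal is correct and follows essentially the same route as the paper: both identify $\Fix T$ as the singleton fixed point guaranteed by the Banach fixed point theorem, note that the projector onto a singleton is constant so $\Pro_{\Fix T}T=\Pro_{\Fix T}$, and obtain the defining inequality by applying the contraction estimate with $y$ equal to the fixed point. The only cosmetic difference is that you conclude via \cref{lemma:CFixG} whereas the paper verifies \cref{def:BAM} directly; since you take $C=\Fix T$ from the outset, the two are interchangeable.
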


\begin{proof}
	By \cite[Theorem~1.50(i)]{BC2017}, $\Fix T$ is a singleton, say  $\Fix T =\{z\}$ for some $z \in \mathcal{H}$.
Let $x \in \mathcal{H}$. Then $\Pro_{\Fix T} Tx=z=\Pro_{\Fix T} x$, which implies that $\Pro_{\Fix T} T=\Pro_{\Fix T} $.
	 Moreover, $\norm{Tx - \Pro_{\Fix T}x}=\norm{Tx - z} =\norm{Tx - Tz} \stackrel{\cref{eq:exam:contraction:BAM}}{\leq}  \gamma \norm{x -z}= \gamma \norm{x -\Pro_{\Fix T}x}$.
	Altogether,  $T$ is a $\gamma$-BAM.
\end{proof}

\begin{remark}
	\begin{enumerate}
		\item \cref{prop:contraction:BAM} illustrates that every Banach contraction is a BAM.
		\item Note that a  contraction must be continuous. By \cref{exam:counterexam} below, a BAM (even with fixed point set being singleton) is generally not continuous. Hence, we know that a BAM is generally not a  contraction and that the converse of \cref{prop:contraction:BAM} fails.
	\end{enumerate}
\end{remark}

\begin{proposition} \label{prop:ABAM}
	Let $A \in \mathbb{R}^{n\times n}$ be a normal matrix. Denote by $\rho (A)$ the spectral radius of $A$, i.e., 
$$\rho (A):=\max \{ |\lambda| ~:~ \lambda \text{ is an eigenvalue of } A  \}.$$
	\begin{enumerate}
		\item \label{prop:ABAM:rho} Suppose one of the following holds:
		\begin{enumerate}
			\item \label{prop:ABAM:rho:<} $\rho (A) <1$.
			\item \label{prop:ABAM:rho:=} $\rho (A) =1$, where $\lambda=1$ is the only eigenvalue on the unit circle
and semisimple.
		\end{enumerate}
		Then $A$ is a BAM.
		\item \label{prop:ABAM:equa} The following are equivalent:
		\begin{enumerate}
		\item \label{prop:ABAM:equa:a} $\lim_{k \to \infty} A^{k}$ exists.
		\item \label{prop:ABAM:equa:b} $\lim_{k \to \infty} A^{k} =\Pro_{\Fix A}$.
		\item \label{prop:ABAM:equa:c} $A$ is a BAM.
		\end{enumerate}
\end{enumerate}
\end{proposition}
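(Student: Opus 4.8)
The plan is to reduce everything to the spectral structure of the normal matrix $A$ relative to the orthogonal decomposition induced by its fixed subspace. First I would set $M := \Fix A = \ker(A-\Id)$, which is a linear subspace and hence automatically a nonempty (it contains $0$) closed convex set, so \cref{def:BAM}\cref{def:BAM:Fix} holds for free. Since $A$ is normal, $A-\Id$ is normal as well, whence $\ker(A-\Id)=\ker(A^{*}-\Id)$; consequently $M$ is a \emph{reducing} subspace, i.e.\ both $M$ and $M^{\perp}$ are invariant under $A$. Writing $A_{\perp}:=A|_{M^{\perp}}$, this gives the block-diagonal form $A=\Id_{M}\oplus A_{\perp}$ with respect to $\mathbb{R}^{n}=M\oplus M^{\perp}$, where $A_{\perp}$ is again normal and its eigenvalues are precisely the eigenvalues of $A$ other than $1$ (because, $A$ being normal, the eigenspace $\ker(A-\Id)$ already exhausts the $1$-part). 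I would also recall the standard fact that for a normal matrix $B$ one has $\norm{B}=\rho(B)$.

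For \cref{prop:ABAM:rho}, the key observation is that under either \cref{prop:ABAM:rho:<} or \cref{prop:ABAM:rho:=} every eigenvalue $\lambda\neq 1$ of $A$ satisfies $|\lambda|<1$ (in case \cref{prop:ABAM:rho:<} there is no eigenvalue equal to $1$, so $M=\{0\}$ and $A_{\perp}=A$; in case \cref{prop:ABAM:rho:=} the semisimplicity is automatic for normal matrices). Hence $\rho(A_{\perp})<1$, and setting $\gamma:=\norm{A_{\perp}}=\rho(A_{\perp})\in[0,1[$ I would verify the two remaining BAM axioms by splitting $x=\Pro_{M}x+\Pro_{M^{\perp}}x$: since $A\Pro_{M}x=\Pro_{M}x\in M$ and $A\Pro_{M^{\perp}}x\in M^{\perp}$, projecting onto $M$ gives $\Pro_{M}Ax=\Pro_{M}x$, which is \cref{def:BAM}\cref{def:BAM:eq}; and $\norm{Ax-\Pro_{M}x}=\norm{A_{\perp}\Pro_{M^{\perp}}x}\leq\gamma\norm{\Pro_{M^{\perp}}x}=\gamma\norm{x-\Pro_{M}x}$, which is \cref{def:BAM}\cref{def:BAM:Ineq}. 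Thus $A$ is a $\gamma$-BAM. (Alternatively one checks that $A$ is $\alpha$-averaged for $\alpha$ close to $1$ and invokes \cref{exam:LinearConver:BAM}; I prefer the direct verification.)

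For the equivalences in \cref{prop:ABAM:equa} I would run the cycle \cref{prop:ABAM:equa:a}$\Rightarrow$\cref{prop:ABAM:equa:c}$\Rightarrow$\cref{prop:ABAM:equa:b}$\Rightarrow$\cref{prop:ABAM:equa:a}. For the first, writing $A=\sum_{i}\lambda_{i}P_{i}$ with $P_{i}$ the orthogonal eigenprojections, $A^{k}=\sum_{i}\lambda_{i}^{k}P_{i}$ can converge only if each $\lambda_{i}^{k}$ does, which forces every eigenvalue into $\{|\lambda|<1\}\cup\{1\}$; this is exactly the hypothesis of \cref{prop:ABAM:rho}, so $A$ is a BAM. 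For \cref{prop:ABAM:equa:c}$\Rightarrow$\cref{prop:ABAM:equa:b} I would read the structure straight off the BAM axioms: \cref{def:BAM}\cref{def:BAM:eq} applied to $x\in M^{\perp}$ gives $\Pro_{M}Ax=\Pro_{M}x=0$, so $M^{\perp}$ is $A$-invariant, and \cref{def:BAM}\cref{def:BAM:Ineq} restricted to $M^{\perp}$ gives $\norm{A|_{M^{\perp}}}\leq\gamma<1$; hence $A=\Id_{M}\oplus A_{\perp}$ with $A_{\perp}^{k}\to 0$, so $A^{k}\to\Id_{M}\oplus 0=\Pro_{M}=\Pro_{\Fix A}$, which is \cref{prop:ABAM:equa:b} (notably this implication uses only the BAM definition, not normality). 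Finally \cref{prop:ABAM:equa:b}$\Rightarrow$\cref{prop:ABAM:equa:a} is immediate, since \cref{prop:ABAM:equa:b} already asserts existence of the limit.

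The main obstacle I anticipate is purely the linear-algebra bookkeeping for normal matrices: justifying cleanly that $M=\Fix A$ is a reducing subspace (so that $A$ is genuinely block diagonal rather than merely block triangular) and that $\norm{A_{\perp}}=\rho(A_{\perp})$ equals the largest modulus among the eigenvalues of $A$ different from $1$. Once these standard facts are in place, all three BAM axioms in \cref{prop:ABAM:rho} and all three implications of the cycle in \cref{prop:ABAM:equa} reduce to short computations with the orthogonal splitting $x=\Pro_{M}x+\Pro_{M^{\perp}}x$.
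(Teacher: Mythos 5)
Your proof is correct, and it ultimately rests on the same fact as the paper's argument (the spectral theorem for normal matrices), but it is organized differently at three points. For part (i) the paper splits into cases: when $\rho(A)<1$ it observes that $A$ is a Banach contraction and invokes \cref{prop:contraction:BAM}, and only when $\rho(A)=1$ does it write the spectral decomposition $A=\Pro_{U_{1}}+\sum_{i\geq 2}\lambda_{i}\Pro_{U_{i}}$ and read off the BAM axioms; you treat both cases uniformly via the block form $A=\Id_{M}\oplus A_{\perp}$ with $M:=\Fix A$ (so $M=\{0\}$ in the first case), which is the same decomposition with the tail $\sum_{i\geq 2}\lambda_{i}\Pro_{U_{i}}$ packaged as $A_{\perp}$, and it buys a single verification with the explicit constant $\gamma=\rho(A_{\perp})$. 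For part (ii) the paper cites Meyer's Theorem of Limits of Powers to identify when $\lim_{k}A^{k}$ exists and what the limit is, and uses the BAM definition (in effect \cref{prop:BAM:Properties}) for the implication from ``$A$ is a BAM'' to ``$A^{k}\to\Pro_{\Fix A}$''; you re-derive both by hand --- convergence of $A^{k}=\sum_{i}\lambda_{i}^{k}P_{i}$ forces every eigenvalue into $\{1\}\cup\{\,\lambda:|\lambda|<1\,\}$ because each $\lambda_{i}^{k}$ must converge, and the BAM axioms alone give the invariance of $M^{\perp}$ and $\lVert A|_{M^{\perp}}\rVert\leq\gamma<1$, hence $A^{k}\to\Pro_{M}$. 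Your remark that this last implication uses only the BAM axioms and not normality is a genuine, if small, sharpening that is invisible in the paper's citation-based proof. The only bookkeeping worth making explicit is that for a real normal matrix the eigenprojections $P_{i}$ and the identity $\lVert A_{\perp}\rVert=\rho(A_{\perp})$ live over the complexification, but this affects nothing.
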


\begin{proof}
	\cref{prop:ABAM:rho}:
	If $\rho (A) <1$, then by \cite[Example~2.19]{BC2017}, $A$ is a Banach contraction. Hence, by \cref{prop:contraction:BAM}, $A$ is a BAM.
	
	Suppose that $\rho (A) =1$ and $\lambda=1$ is the only eigenvalue of $A$ on the unit circle and semisimple. 
Then by the Spectral Theorem for Diagonalizable Matrices \cite[page~517]{Meyer2000} and Properties of Normal Matrices  \cite[page~548]{Meyer2000},
	\begin{align*}
	A=\Pro_{U_{1}} +\lambda_{2}\Pro_{U_{2}}  +\cdots +\lambda_{k}\Pro_{U_{k}},
	\end{align*}
	where $\sigma(A) =\{\lambda_{1}, \lambda_{2}, \ldots, \lambda_{k}\}$ with $\lambda_{1}=1$ is the spectrum of $A$ and $(\forall i \in \{1, \ldots, k\})$ $U_{i} :=\ker (A-\lambda_{i}\Id)$.
	Then clearly $\Fix A=\ker(A-\Id) =U_{1}$. Moreover, by the Spectral Theorem for Diagonalizable Matrices \cite[page~517]{Meyer2000} again, it is easy to see that
	\begin{align*}
	&\Pro_{\Fix A}A=\Pro_{\Fix A},\\
	(\forall x \in \mathbb{R}^{n}) \quad &\norm{ Ax- \Pro_{\Fix A}x} \leq |\lambda_{2}| \norm{x -\Pro_{\Fix A}x},
	\end{align*}
	where $|\lambda_{2}| <1$. Therefore, $A$ is a BAM.
	
	\cref{prop:ABAM:equa}: By the Theorem of Limits of Powers \cite[Page~630]{Meyer2000}, 
$\lim_{k \to \infty} A^{k}$ exists if and only if $\rho (A) <1$ or $\rho (A) =1$ with $\lambda=1$ being the only eigenvalue of $A$ on the unit circle and semisimple, which implies that $\lim_{k \to \infty} A^{k}=\Pro_{\Fix A}$. Moreover, by
\cref{def:BAM}, $A$ being a BAM implies that  $\lim_{k \to \infty} A^{k} =\Pro_{\Fix A}$. Combine these results with \cref{prop:ABAM:rho} to obtain \cref{prop:ABAM:equa}.
\end{proof}

\subsection*{Properties of BAM}

The following \cref{prop:BAM:Properties}\cref{prop:BAM:Properties:compo:Ineq} states that any sequence of iterates of a BAM must linearly converge to the best approximation onto the fixed point set of the BAM. Therefore, we see the importance of the study of BAMs.
The following \cref{prop:BAM:Properties} reduces to \cite[Proposition~1]{BCS2019} when $\mathcal{H} =\mathbb{R}^{n}$ and $\Fix G$ is an affine subspace of $\mathbb{R}^{n}$. In fact, there is little difficulty  to extend the space from $\mathbb{R}^{n}$ to $\mathcal{H}$ and the related set from closed affine subspace to nonempty closed convex set.

\begin{proposition}  \label{prop:BAM:Properties}
Let $\gamma \in \left[0,1\right[\,$ and let $G: \mathcal{H} \to \mathcal{H}$. Suppose that $G$ is a $\gamma$-BAM. Then for every $k \in \mathbb{N}$,
	\begin{enumerate}
		\item \label{prop:BAM:Properties:compo:ProG} $\Pro_{\Fix G} G^{k}=\Pro_{\Fix G}$, and
		\item \label{prop:BAM:Properties:compo:Ineq} $(\forall x \in \mathcal{H})$  $\norm{G^{k}x -\Pro_{\Fix G}x} \leq \gamma^{k} \norm{x- \Pro_{\Fix G}x}$.
	\end{enumerate}	
	Consequently, for every $x \in \mathcal{H}$, $(G^{k}x)_{k\in \mathbb{N}}$ converges to $\Pro_{\Fix G}x$ with a linear rate $\gamma$.
\end{proposition}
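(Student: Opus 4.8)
The plan is to establish items \cref{prop:BAM:Properties:compo:ProG} and \cref{prop:BAM:Properties:compo:Ineq} by induction on $k$, proving \cref{prop:BAM:Properties:compo:ProG} completely first, and then to read off linear convergence directly from \cref{prop:BAM:Properties:compo:Ineq}. The order matters: the inequality in \cref{prop:BAM:Properties:compo:Ineq} will only iterate correctly once we know that the projection $\Pro_{\Fix G}$ is insensitive to applying powers of $G$, which is precisely the content of \cref{prop:BAM:Properties:compo:ProG}.

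First I would treat \cref{prop:BAM:Properties:compo:ProG}. The base case $k=0$ is immediate since $G^{0}=\Id$, while $k=1$ is exactly \cref{def:BAM}\cref{def:BAM:eq}. For the inductive step, assuming $\Pro_{\Fix G}G^{k}=\Pro_{\Fix G}$, I would write $G^{k+1}=G^{k}\circ G$, compose with $\Pro_{\Fix G}$ on the left, substitute the inductive hypothesis to collapse $\Pro_{\Fix G}G^{k}$ into $\Pro_{\Fix G}$, and then apply \cref{def:BAM}\cref{def:BAM:eq} once more to finish. This step is routine; the only thing to watch is the grouping in the composition, but since both orderings collapse via \cref{def:BAM}\cref{def:BAM:eq} it is harmless.

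Next I would prove \cref{prop:BAM:Properties:compo:Ineq}, again by induction on $k$, with base case $k=0$ trivial ($\gamma^{0}=1$) and $k=1$ being \cref{def:BAM}\cref{def:BAM:Ineq}. For the inductive step, fix $x\in\mathcal{H}$ and apply the defining inequality \cref{def:BAM}\cref{def:BAM:Ineq} not to $x$ but to the point $G^{k}x$, obtaining
\[
\norm{G(G^{k}x)-\Pro_{\Fix G}(G^{k}x)}\leq \gamma\,\norm{G^{k}x-\Pro_{\Fix G}(G^{k}x)}.
\]
The heart of the argument is the substitution $\Pro_{\Fix G}(G^{k}x)=\Pro_{\Fix G}x$ supplied by \cref{prop:BAM:Properties:compo:ProG}, which rewrites both projections with the fixed anchor $\Pro_{\Fix G}x$ and yields $\norm{G^{k+1}x-\Pro_{\Fix G}x}\leq\gamma\,\norm{G^{k}x-\Pro_{\Fix G}x}$. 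Feeding in the inductive hypothesis then produces the factor $\gamma^{k+1}$.

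Finally, the concluding statement follows at once: since $\gamma\in\left[0,1\right[\,$, item \cref{prop:BAM:Properties:compo:Ineq} bounds the error $\norm{G^{k}x-\Pro_{\Fix G}x}$ by $\gamma^{k}\norm{x-\Pro_{\Fix G}x}\to 0$, which is by definition convergence of $(G^{k}x)_{k\in\mathbb{N}}$ to $\Pro_{\Fix G}x$ with linear rate $\gamma$. I expect no serious obstacle here; the one point requiring care is simply to prove \cref{prop:BAM:Properties:compo:ProG} \emph{before} \cref{prop:BAM:Properties:compo:Ineq}, since it is exactly the identity $\Pro_{\Fix G}(G^{k}x)=\Pro_{\Fix G}x$ that turns the single-step estimate of \cref{def:BAM}\cref{def:BAM:Ineq} into genuine geometric decay.
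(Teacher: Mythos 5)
Your proposal is correct and follows essentially the same route as the paper: induction on $k$, using the defining identity $\Pro_{\Fix G}G=\Pro_{\Fix G}$ to replace $\Pro_{\Fix G}(G^{k}x)$ by $\Pro_{\Fix G}x$ so that the single-step inequality iterates. The only cosmetic difference is that you run two separate inductions while the paper proves both items in one simultaneous induction; the substance is identical.
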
	
\begin{proof}
	Because  $G$ is a $\gamma$-BAM, by \cref{def:BAM}, we have that $\Fix G$ is a nonempty closed and convex subset of $\mathcal{H}$, and that
	\begin{subequations}
		\begin{align}
		&\Pro_{\Fix G} G=\Pro_{\Fix G}, \label{prop:BAM:Properties:PCG}\\
			(\forall y \in \mathcal{H}) \quad & \norm{Gy -\Pro_{\Fix G}y} \leq \gamma \norm{y- \Pro_{\Fix G}y}. \label{prop:BAM:Properties:GPCineq}
		\end{align}
	\end{subequations}
	We argue by induction on $k$. It is trivial that \cref{prop:BAM:Properties:compo:ProG} and \cref{prop:BAM:Properties:compo:Ineq} hold for $k=0$. Assume \cref{prop:BAM:Properties:compo:ProG} and \cref{prop:BAM:Properties:compo:Ineq}  are true for some $k \in \mathbb{N}$, that is,
	\begin{subequations}
			\begin{align}
		&\Pro_{\Fix G} G^{k}=\Pro_{\Fix G}, \label{prop:BAM:Properties:PCGm}\\
		(\forall y \in \mathcal{H}) \quad &\norm{G^{k}y -\Pro_{\Fix G}y} \leq \gamma^{k} \norm{y- \Pro_{\Fix G}y}.\label{prop:BAM:Properties:GPCineq:k}
		\end{align}
	\end{subequations}
	Let $x \in \mathcal{H}$. Now
	\begin{align*}
	\Pro_{\Fix G} G^{k+1}x=\Pro_{\Fix G} G (G^{k} x) \stackrel{\cref{prop:BAM:Properties:PCG}}{=}\Pro_{\Fix G}(G^{k} x) \stackrel{\cref{prop:BAM:Properties:PCGm}}{=}\Pro_{\Fix G}.
	\end{align*}
	Moreover,
$
	\norm{G^{k+1}x -\Pro_{\Fix G}x} \stackrel{\cref{prop:BAM:Properties:PCGm}}{=} \norm{G(G^{k}x) -\Pro_{\Fix G}(G^{k}x)} \stackrel{\cref{prop:BAM:Properties:GPCineq}}{\leq} \gamma \norm{G^{k}x- \Pro_{\Fix G}(G^{k}x)}\stackrel{\cref{prop:BAM:Properties:PCGm}}{=}\gamma \norm{G^{k}x- \Pro_{\Fix G}x} \stackrel{\cref{prop:BAM:Properties:GPCineq:k}}{\leq} \gamma^{k+1} \norm{x- \Pro_{\Fix G}x}.
	$
	
	Hence, the proof is complete by the principle of mathematical induction.
\end{proof}

\begin{proposition} \label{prop:T:QNEP}
	Let $T : \mathcal{H} \to \mathcal{H}$ be quasinonexpansive with $\Fix T  $ being a closed affine subspace of $\mathcal{H}$. Let $\gamma \in \left[0, 1\right[\,$. Suppose that $(\forall x \in \mathcal{H}) $ $\norm{Tx -\Pro_{\Fix T}x} \leq \gamma \norm{x - \Pro_{\Fix T}x} $.
	Then $T$ is a $\gamma$-BAM.
\end{proposition}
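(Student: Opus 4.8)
The plan is to verify directly the three defining conditions of a $\gamma$-BAM in \cref{def:BAM}, applied to $G = T$. Write $S := \Fix T$. Two of the three conditions come essentially for free. Since $S$ is by hypothesis a closed affine subspace, it is in particular a nonempty closed convex set (affine subspaces in this paper are nonempty, and they are convex and closed here by assumption), which settles \cref{def:BAM}\cref{def:BAM:Fix}; and \cref{def:BAM}\cref{def:BAM:Ineq} is literally the inequality assumed on $T$. Thus the entire content of the proposition is the identity $\Pro_{S}T = \Pro_{S}$, that is, condition \cref{def:BAM}\cref{def:BAM:eq}, and this is where the work lies.

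To prove $\Pro_{S}T = \Pro_{S}$ I would fix $x \in \mathcal{H}$, set $p := \Pro_{S}x$, and let $\pa S$ be the (closed linear) subspace parallel to $S$. Picking any $a \in S$ and applying \cref{fac:SetChangeProje} with the decomposition $S = a + \pa S$ together with \cref{MetrProSubs8} gives the two facts I will use repeatedly: first, $x - p \in (\pa S)^{\perp}$; and second, the sufficiency criterion that in order to conclude $\Pro_{S}(Tx) = p$ it is enough to verify $p \in S$ (already known) together with $Tx - p \in (\pa S)^{\perp}$. So the goal reduces to producing this single orthogonality relation.

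The heart of the argument is to extract that orthogonality from quasinonexpansiveness. For an arbitrary $v \in \pa S$ and an arbitrary $t \in \mathbb{R}$, the point $y := p + tv$ lies in $S = p + \pa S$ and hence is a fixed point of $T$, so quasinonexpansiveness yields $\norm{Tx - (p+tv)}^{2} \leq \norm{x - (p+tv)}^{2}$. I would expand both squared norms and use $\innp{x - p, v} = 0$ (valid because $x - p \in (\pa S)^{\perp}$ and $v \in \pa S$) to kill the cross term on the right; the common quadratic term $t^{2}\norm{v}^{2}$ cancels as well, leaving $\norm{Tx - p}^{2} - 2t\innp{Tx - p, v} \leq \norm{x - p}^{2}$ for every $t \in \mathbb{R}$. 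The left side is affine in $t$ with slope $-2\innp{Tx - p, v}$, so it is bounded above only if that slope vanishes, forcing $\innp{Tx - p, v} = 0$. Since $v \in \pa S$ was arbitrary, $Tx - p \in (\pa S)^{\perp}$, and the sufficiency criterion gives $\Pro_{S}(Tx) = p = \Pro_{S}x$; as $x$ was arbitrary, $\Pro_{S}T = \Pro_{S}$.

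I expect the only genuinely delicate point to be this last inference: one must let $t$ range over all of $\mathbb{R}$, both signs, so that an expression affine in $t$ and bounded above is forced to have zero slope — restricting to $t \geq 0$ would not suffice. Everything else is routine bookkeeping with the inner product and the affine structure of $S$. It is worth remarking that condition \cref{def:BAM}\cref{def:BAM:Ineq} plays no role in establishing \cref{def:BAM}\cref{def:BAM:eq}: quasinonexpansiveness together with the affine structure of $\Fix T$ already forces $\Pro_{\Fix T}T = \Pro_{\Fix T}$.
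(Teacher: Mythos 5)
Your proof is correct, but it takes a different route from the paper's. The paper disposes of the key identity $\Pro_{\Fix T}T=\Pro_{\Fix T}$ in two lines by citing the Fej\'er-monotonicity machinery of \cite{BC2017}: quasinonexpansiveness makes $(T^{k}x)_{k\in\mathbb{N}}$ Fej\'er monotone with respect to $\Fix T$, and a cited result (Proposition~5.9(i) there) says that the projection onto a closed \emph{affine} subspace is constant along any sequence that is Fej\'er monotone with respect to it. You instead prove the identity from first principles: you reduce $\Pro_{\Fix T}(Tx)=\Pro_{\Fix T}x$ to the orthogonality $Tx-\Pro_{\Fix T}x\in(\pa \Fix T)^{\perp}$ via \cref{fac:SetChangeProje} and \cref{MetrProSubs8}, and you extract that orthogonality by testing quasinonexpansiveness against $y=\Pro_{\Fix T}x+tv$ for all $t\in\mathbb{R}$ and $v\in\pa\Fix T$, observing that an expression affine in $t$ and bounded above must have zero slope. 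This is in essence the same argument that underlies the cited fact, so the mathematics is equivalent; what your version buys is self-containedness (no appeal to Fej\'er monotonicity, and it makes transparent exactly where the affine structure of $\Fix T$ and both signs of $t$ are used), at the cost of a longer write-up. Your closing remark that \cref{def:BAM}\cref{def:BAM:Ineq} plays no role in establishing \cref{def:BAM}\cref{def:BAM:eq} is accurate and consistent with the paper's proof as well.
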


\begin{proof}
	By   assumptions and \cref{def:BAM}, it remains to prove $\Pro_{\Fix T}T=\Pro_{\Fix T} $.

	Let $x \in \mathcal{H}$. By \cite[Example~5.3]{BC2017}, $T$ is quasinonexpansive and $\Fix T \neq \varnothing$ imply that $(T^{k}x)_{k \in \mathbb{N}}$ is Fej\'er monotone with respect to $\Fix T$. This, the assumption  that $\Fix T$ is  a closed affine subspace, and  \cite[Proposition~5.9(i)]{BC2017} imply that
	\begin{align*}
	(\forall k \in \mathbb{N})  \quad  \Pro_{\Fix T}T^{k}x= \Pro_{\Fix T}x,
	\end{align*}
	which yields $\Pro_{\Fix T}T=\Pro_{\Fix T} $ when $k=1$.
\end{proof}

The following result shows further connection between BAMs and linear convergent mappings.
\begin{corollary}
	Let $T : \mathcal{H} \to \mathcal{H}$ be quasinonexpansive with $\Fix T  $ being a closed affine subspace of $\mathcal{H}$. Let $\gamma \in \left[0, 1\right[\,$. Then $T$ is a $\gamma$-BAM if and only if $ (\forall k \in \mathbb{N})$ $ (\forall x \in \mathcal{H})$ $ \norm{T^{k}x -\Pro_{\Fix T}x} \leq \gamma^{k} \norm{x - \Pro_{\Fix T}x}$.
\end{corollary}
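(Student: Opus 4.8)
The plan is to prove the two implications separately, each by quoting one of the two results immediately preceding the corollary, so that essentially no new analytic work is required.

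For the forward implication, I would assume that $T$ is a $\gamma$-BAM and simply invoke \cref{prop:BAM:Properties}\cref{prop:BAM:Properties:compo:Ineq}. With $G$ replaced by $T$, that proposition asserts precisely that for every $k \in \mathbb{N}$ and every $x \in \mathcal{H}$ one has $\norm{T^{k}x - \Pro_{\Fix T}x} \leq \gamma^{k}\norm{x - \Pro_{\Fix T}x}$, which is the claimed family of inequalities verbatim. It is worth remarking that this direction uses only the BAM hypothesis; the standing assumptions that $T$ be quasinonexpansive and that $\Fix T$ be a closed affine subspace are not needed here.

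For the backward implication, I would assume the family of inequalities holds for all $k \in \mathbb{N}$ and all $x \in \mathcal{H}$, and then specialize to $k = 1$. This yields $(\forall x \in \mathcal{H})$ $\norm{Tx - \Pro_{\Fix T}x} \leq \gamma\norm{x - \Pro_{\Fix T}x}$. Combined with the standing hypotheses that $T$ is quasinonexpansive and that $\Fix T$ is a closed affine subspace of $\mathcal{H}$, this is exactly the hypothesis of \cref{prop:T:QNEP}. Applying that proposition gives that $T$ is a $\gamma$-BAM, which closes the equivalence.

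All of the genuinely nontrivial content is already housed inside \cref{prop:T:QNEP}, where the projection identity $\Pro_{\Fix T}T = \Pro_{\Fix T}$ (condition \cref{def:BAM:eq} of \cref{def:BAM}) is extracted from the Fej\'er monotonicity of the orbit $(T^{k}x)_{k\in\mathbb{N}}$ with respect to the affine subspace $\Fix T$. Since that step has already been carried out, I anticipate no real obstacle in the present argument: the corollary is a clean repackaging of \cref{prop:BAM:Properties}\cref{prop:BAM:Properties:compo:Ineq} (for $\Rightarrow$) and \cref{prop:T:QNEP} (for $\Leftarrow$), the only mildly delicate point being to notice that the single inequality at $k=1$ already suffices to trigger \cref{prop:T:QNEP}.
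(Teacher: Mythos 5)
Your proposal is correct and follows exactly the paper's own argument: the forward direction is an immediate application of \cref{prop:BAM:Properties}\cref{prop:BAM:Properties:compo:Ineq}, and the backward direction specializes the inequality to $k=1$ and invokes \cref{prop:T:QNEP}. No further comment is needed.
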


\begin{proof}
	\enquote{$\Rightarrow$}: This is  clearly from \cref{prop:BAM:Properties}.
	
	\enquote{$\Leftarrow$}: This comes from the assumptions and \cref{prop:T:QNEP}.
\end{proof}

The following result states that BAM with closed affine fixed point set is strictly quasinonexpansive. In particular, the inequality shown in \cref{prop:BAM:StrictlyQuasinonexpansive}\cref{prop:BAM:StrictlyQuasinonexpansive:INEQ} is interesting on its own.
\begin{proposition} \label{prop:BAM:StrictlyQuasinonexpansive}
	Let $G: \mathcal{H} \to \mathcal{H}$ with $\Fix G$ being a  closed affine subspace of $\mathcal{H}$. Let $\gamma \in \left[0,1\right[\,$.  Suppose that $G$ is a $\gamma $-BAM.  The the following hold:
\begin{enumerate}
	\item \label{prop:BAM:StrictlyQuasinonexpansive:INEQ}	$(\forall x \in \mathcal{H}) $ $(\forall y \in \Fix G)$ $\norm{Gx -y}^{2} + ( 1-\gamma^{2}) \norm{x - \Pro_{\Fix G}(x) }^{2}   \leq \norm{x -y}^{2}$.
	\item \label{prop:BAM:StrictlyQuasinonexpansive:STRICQUA} $G$ is strictly quasinonexpansive.
\end{enumerate}
\end{proposition}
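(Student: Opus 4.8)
The plan is to reduce both parts to a single Pythagorean identity for projections onto a closed affine subspace, after which everything becomes substitution. Write $M := \Fix G$, fix some $z \in M$, and let $L := M - z$ be the associated parallel closed linear subspace, so that $M = z + L$ and, by \cref{fac:SetChangeProje}, $\Pro_{M} w = z + \Pro_{L}(w - z)$ for every $w \in \mathcal{H}$. The first step is to establish that, for every $w \in \mathcal{H}$ and every $y \in M$,
\begin{align*}
\norm{w - y}^{2} = \norm{w - \Pro_{M} w}^{2} + \norm{\Pro_{M} w - y}^{2}.
\end{align*}
Indeed, $w - \Pro_{M} w = (w-z) - \Pro_{L}(w-z) = \Pro_{L^{\perp}}(w-z) \in L^{\perp}$ by \cref{MetrProSubs8}, while $\Pro_{M} w - y \in L$ since both $\Pro_{M} w$ and $y$ lie in $M = z + L$; hence the two vectors are orthogonal and the identity follows by expanding the squared norm.

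Next I would prove \cref{prop:BAM:StrictlyQuasinonexpansive:INEQ}. Apply the identity with $w = Gx$: since \cref{def:BAM}\cref{def:BAM:eq} gives $\Pro_{M}(Gx) = \Pro_{M} x$, this yields $\norm{Gx - y}^{2} = \norm{Gx - \Pro_{M} x}^{2} + \norm{\Pro_{M} x - y}^{2}$. The BAM inequality \cref{def:BAM}\cref{def:BAM:Ineq} bounds the first summand by $\gamma^{2} \norm{x - \Pro_{M} x}^{2}$. Applying the identity once more with $w = x$ gives $\norm{\Pro_{M} x - y}^{2} = \norm{x - y}^{2} - \norm{x - \Pro_{M} x}^{2}$, and substituting produces
\begin{align*}
\norm{Gx - y}^{2} \leq \gamma^{2} \norm{x - \Pro_{M} x}^{2} + \norm{x - y}^{2} - \norm{x - \Pro_{M} x}^{2},
\end{align*}
which rearranges exactly to the claimed inequality.

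Finally, \cref{prop:BAM:StrictlyQuasinonexpansive:STRICQUA} follows immediately from \cref{prop:BAM:StrictlyQuasinonexpansive:INEQ}: because $\gamma \in \left[0,1\right[\,$ we have $1 - \gamma^{2} > 0$, and for $x \in \mathcal{H} \smallsetminus \Fix G$ we have $x \neq \Pro_{M} x$, so $\norm{x - \Pro_{M} x}^{2} > 0$; hence $\norm{Gx - y}^{2} < \norm{x - y}^{2}$ for every $y \in \Fix G$, which is precisely strict quasinonexpansiveness in the sense of \cref{defn:Nonexpansive}\cref{StrickQuasiNonex}. The only step requiring genuine thought is the affine Pythagorean identity; once it is in hand, both parts are pure substitution, so I anticipate no real obstacle beyond carefully tracking which point $\Pro_{M}$ is being applied to and exploiting that the fixed point set is affine (the identity is exactly what fails for a merely convex fixed point set).
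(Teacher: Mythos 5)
Your proof is correct and follows essentially the same route as the paper: decompose $\norm{Gx-y}^{2}$ and $\norm{x-y}^{2}$ via the Pythagorean identity for the affine subspace $\Fix G$, use $\Pro_{\Fix G}G=\Pro_{\Fix G}$ and the BAM inequality on the orthogonal component, and deduce strict quasinonexpansiveness from $1-\gamma^{2}>0$. The only difference is that the paper cites an external result for the affine Pythagorean identity, whereas you derive it directly from \cref{fac:SetChangeProje} and \cref{MetrProSubs8}; your derivation is correct and makes the argument self-contained.
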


\begin{proof}
	\cref{prop:BAM:StrictlyQuasinonexpansive:INEQ}: Because $G$ is a $\gamma $-BAM, by \cref{def:BAM},
	\begin{subequations}
			\begin{align}
		&\Pro_{\Fix G}G =\Pro_{\Fix G},\label{prop:BAM:StrictlyQuasinonexpansive:eq}\\
		(\forall x \in \mathcal{H}) \quad & \norm{Gx -\Pro_{\Fix G}x} \leq \gamma \norm{x - \Pro_{\Fix G}x}.\label{prop:BAM:StrictlyQuasinonexpansive:ineq}
		\end{align}
	\end{subequations}
Because $\Fix G$ is a closed affine subspace of $\mathcal{H}$,	by \cite[Proposition~2.10]{BOyW2018Proper}, for every $x \in \mathcal{H}$ and $y \in \Fix G$,
\begin{subequations} \label{prop:BAM:StrictlyQuasinonexpansive:Gx-y}
	\begin{align}
	\norm{Gx -y}^{2} &~~=~ \norm{Gx - \Pro_{\Fix G}(Gx) }^{2} +\norm{\Pro_{\Fix G}(Gx) -y}^{2}\\
	& \stackrel{\cref{prop:BAM:StrictlyQuasinonexpansive:eq}}{=} \norm{Gx - \Pro_{\Fix G}(x) }^{2} +\norm{\Pro_{\Fix G}(x) -y}^{2}\\
	& \stackrel{\cref{prop:BAM:StrictlyQuasinonexpansive:ineq}}{\leq } \gamma^{2} \norm{x - \Pro_{\Fix G}(x) }^{2} +\norm{\Pro_{\Fix G}(x) -y}^{2}
	\end{align}
\end{subequations}
	and, by \cite[Proposition~2.10]{BOyW2018Proper} again,
	\begin{align} \label{prop:BAM:StrictlyQuasinonexpansive:x-y}
	\norm{x -y}^{2} = \norm{x - \Pro_{\Fix G}(x) }^{2} +\norm{\Pro_{\Fix G}(x) -y}^{2}.
	\end{align}
	Combine \cref{prop:BAM:StrictlyQuasinonexpansive:Gx-y} with \cref{prop:BAM:StrictlyQuasinonexpansive:x-y} to see that
	\begin{align} \label{eq:prop:BAM:StrictlyQuasinonexpansive:Gxy}
	\norm{Gx -y}^{2} - \norm{x -y}^{2}  \leq (\gamma^{2} -1) \norm{x - \Pro_{\Fix G}(x) }^{2},
	\end{align}
	which yields \cref{prop:BAM:StrictlyQuasinonexpansive:INEQ}.
	
	\cref{prop:BAM:StrictlyQuasinonexpansive:STRICQUA}:
	Because   $(\forall x \in \mathcal{H} \smallsetminus  \Fix G )$, $\norm{x - \Pro_{\Fix G}x } >0$ and $\gamma \in \left[0,1\right[\,$, by \cref{eq:prop:BAM:StrictlyQuasinonexpansive:Gxy},
	\begin{align*}
	( \forall x \in \mathcal{H} \smallsetminus  \Fix G ) ( \forall y \in \Fix G) \quad \norm{Gx -y}< \norm{x -y}.
	\end{align*}
	Hence, by \cref{defn:Nonexpansive}\cref{StrickQuasiNonex}, we obtain that $G$ is strictly quasinonexpansive.
\end{proof}

\begin{corollary} \label{cor:affine:BAM}
	Let $G: \mathcal{H} \to \mathcal{H}$ be an affine  BAM.  Then $G$ is  nonexpansive.
\end{corollary}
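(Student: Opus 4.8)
The plan is to obtain the result by assembling two structural facts already proved for affine operators: \cref{prop:BAM:StrictlyQuasinonexpansive}, which shows that a $\gamma$-BAM with \emph{closed affine} fixed point set is strictly quasinonexpansive, and \cref{lemma:quasi:to:nonexpansive}, which shows that an affine operator with nonempty fixed point set is nonexpansive as soon as it is quasinonexpansive. Once the hypotheses of these two results are checked, the corollary falls out immediately; the entire argument therefore reduces to verifying applicability.

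First I would verify the hypothesis needed for \cref{prop:BAM:StrictlyQuasinonexpansive}, namely that $\Fix G$ is a closed affine subspace of $\mathcal{H}$. Since $G$ is a BAM, \cref{def:BAM}\cref{def:BAM:Fix} already supplies that $\Fix G$ is a nonempty closed convex set, so the only missing ingredient is affineness. This follows purely from the affineness of $G$: for any $x, y \in \Fix G$ and any $\rho \in \mathbb{R}$, affineness gives $G(\rho x + (1-\rho)y) = \rho Gx + (1-\rho)Gy = \rho x + (1-\rho)y$, so $\rho x + (1-\rho)y \in \Fix G$. Hence $\rho \Fix G + (1-\rho)\Fix G = \Fix G$ for every $\rho \in \mathbb{R}$, which is exactly the defining condition of an affine subspace; combined with closedness, $\Fix G$ is a closed affine subspace.

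With this established, \cref{prop:BAM:StrictlyQuasinonexpansive}\cref{prop:BAM:StrictlyQuasinonexpansive:STRICQUA} yields that $G$ is strictly quasinonexpansive, and in particular quasinonexpansive (indeed, for $x \in \Fix G$ the defining inequality holds with equality since $Gx = x$, and for $x \notin \Fix G$ it holds strictly). Since $G$ is affine with $\Fix G \neq \varnothing$, \cref{lemma:quasi:to:nonexpansive} then converts quasinonexpansiveness into nonexpansiveness, completing the proof. It is worth noting that no continuity of $G$ need be assumed a priori: the BAM inequality in \cref{def:BAM}\cref{def:BAM:Ineq} is precisely what forces the associated linear part to be bounded inside \cref{lemma:quasi:to:nonexpansive}.

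I do not anticipate a genuine obstacle, as the corollary is essentially a composition of \cref{prop:BAM:StrictlyQuasinonexpansive} and \cref{lemma:quasi:to:nonexpansive}. The one point deserving a moment's care is the affineness of $\Fix G$, which hinges on $G$ being affine rather than merely on $G$ being a BAM; the passage from strict quasinonexpansiveness to plain quasinonexpansiveness is then a harmless observation.
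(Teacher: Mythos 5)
Your proposal is correct and follows essentially the same route as the paper: deduce that $\Fix G$ is a closed affine subspace from the affineness of $G$ together with \cref{def:BAM}\cref{def:BAM:Fix}, invoke \cref{prop:BAM:StrictlyQuasinonexpansive}\cref{prop:BAM:StrictlyQuasinonexpansive:STRICQUA} to get strict quasinonexpansiveness, and then apply \cref{lemma:quasi:to:nonexpansive}. Your explicit remark that strict quasinonexpansiveness yields quasinonexpansiveness (because the inequality holds with equality on $\Fix G$) is a small point the paper leaves implicit, but otherwise the arguments coincide.
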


\begin{proof}
By \cref{def:BAM}\cref{def:BAM:Fix},  $G$ is a BAM yields that $\Fix G$ is a nonempty closed and convex subset of $\mathcal{H}$. Moreover, because $G$ is affine,
\begin{align*}
(\forall x \in \Fix G ) (\forall y \in \Fix G ) (\forall \alpha \in \mathbb{R}) \quad G ( \alpha x + (1-\alpha) y) =\alpha G(x) + (1-\alpha) G( y)= \alpha x + (1-\alpha) y,
\end{align*}
which implies that  $\Fix G$ is an affine subspace.
Hence, by	\cref{prop:BAM:StrictlyQuasinonexpansive}\cref{prop:BAM:StrictlyQuasinonexpansive:STRICQUA}, $G$ is strictly quasinonexpansive.  Therefore, by \cref{lemma:quasi:to:nonexpansive}, $G$ is  nonexpansive.
\end{proof}

Let $T :\mathcal{H} \to \mathcal{H}$ with $\Fix T \neq \varnothing$ and let $\kappa \in \mathbb{R}_{+}$. We say $T$ is \emph{linear regular} with constant $\kappa$ if
\begin{align*}
(\forall x \in \mathcal{H}) \quad \dist_{\Fix T} (x) \leq \kappa \norm{x -Tx}.
\end{align*}
By the following two results, we know that every BAM is linearly regular, but generally linearly regular operator is not a BAM.
\begin{proposition} \label{prop:BAM:linearRegular}
	Let $G: \mathcal{H} \to \mathcal{H}$ and let $\gamma \in \left[0,1\right[\,$. Suppose that $G$ is a $\gamma$-BAM. Then $G$ is linearly regular with constant $\frac{1}{1 -\gamma}$.
\end{proposition}

\begin{proof}
	Because $G$ is a $\gamma$-BAM, by \cref{def:BAM}, $\Fix G$ is a  nonempty closed convex subset of $\mathcal{H}$ and
	\begin{align} \label{eq:prop:BAM:linearRegular}
	(\forall x \in \mathcal{H}) \quad \norm{Gx -\Pro_{\Fix G}x} \leq \gamma \norm{x - \Pro_{\Fix G}x}.
	\end{align}
	Let $x \in \mathcal{H}$.  By the triangle inequality and \cref{eq:prop:BAM:linearRegular},
	\begin{align*}
	&\norm{x - \Pro_{\Fix G}x }  \leq \norm{x - Gx} +\norm{Gx- \Pro_{\Fix G}x }
	\leq \norm{x - Gx} +\gamma \norm{x- \Pro_{\Fix G}x },\\
	\Rightarrow~	&	(1-\gamma ) \norm{x - \Pro_{\Fix G}x }  \leq \norm{x - Gx} \\
	\Leftrightarrow~ &  \norm{x - \Pro_{\Fix G}x }  \leq  \frac{1}{1 -\gamma} \norm{x - Gx}.
	\end{align*}
	Hence, $(\forall x \in \mathcal{H})$ $\dist_{\Fix T} (x)   \leq \frac{1}{1 -\gamma} \norm{x -Gx}$, that is, $G$ is linearly regular with constant $\frac{1}{1 -\gamma}$.
\end{proof}

\begin{example} \label{exam:notlinearregular}
	Suppose that $\mathcal{H} =\mathbb{R}^{2}$. Let $C=\mathbf{B}[0;1]$ and $G=\R_{C}$.  Let $x=(2,0)$. $\Pro_{C}\R_{C}x=(0,0) \neq (1,0) =\Pro_{C}x$, which, by \cref{def:BAM}, yields that $\R_{C}$ is not a BAM. On the other hand, apply \cite[Example~2.2]{BNP2015} with $\lambda =2$ to obtain that $\R_{C}= (1-2) \Id +2 \Pro_{C}$ is linearly regular with constant $\frac{1}{2}$.
\end{example}

\begin{proposition} \label{prop:CompositionBAM}
	Let $\I:=\{1, \ldots, m\}$. Let $(\forall i \in \I)$ $G_{i}: \mathcal{H} \to \mathcal{H}$ be operators with $\Fix G_{i} $ being a  closed affine subspace of $\mathcal{H}$  and  $(\forall i \in \I)$ $\gamma_{i} \in \left[0,1\right[\,$. Suppose that $(\forall i \in \I)$ $G_{i}$ is a $\gamma_{i}$-BAM and that $\cap_{j \in \I} \Fix G_{j} \neq \varnothing$.
	The  following hold:
	\begin{enumerate}
		\item \label{prop:CompositionBAM:SQN} $G_{m} \cdots G_{1} $ is strictly quasinonexpansive.
		\item \label{prop:CompositionBAM:comp}  $\Fix G_{m} \cdots G_{1} = \Fix \cap_{i \in \I} \Fix G_{i}$.
		\item \label{prop:CompositionBAM:comb} Let $(\omega_{i})_{i \in \I}$ be real numbers in $\left]0,1\right] $ such that $\sum_{i \in \I} \omega_{i}=1$. Then $\Fix \sum_{i \in \I} \omega_{i} G_{i} = \Fix \cap_{i \in \I} \Fix G_{i}$.
	\end{enumerate}
\end{proposition}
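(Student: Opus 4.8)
The plan is to reduce all three claims to the strict quasinonexpansiveness of the individual $G_{i}$. Since each $\Fix G_{i}$ is a closed affine subspace and each $G_{i}$ is a $\gamma_{i}$-BAM, \cref{prop:BAM:StrictlyQuasinonexpansive}\cref{prop:BAM:StrictlyQuasinonexpansive:STRICQUA} gives that every $G_{i}$ is strictly quasinonexpansive. I write $F := \cap_{i \in \I} \Fix G_{i}$, which is nonempty by hypothesis; the inclusions $F \subseteq \Fix(G_{m} \cdots G_{1})$ and $F \subseteq \Fix(\sum_{i \in \I} \omega_{i} G_{i})$ are immediate (each $G_{i}$ fixes every point of $F$), so the content of \cref{prop:CompositionBAM:comp} and \cref{prop:CompositionBAM:comb} lies in the reverse inclusions.

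For \cref{prop:CompositionBAM:SQN} and \cref{prop:CompositionBAM:comp}, I would fix $y \in F$ and $x \in \mathcal{H}$ and track the orbit $x_{0} := x$, $x_{i} := G_{i} x_{i-1}$, so that $G_{m} \cdots G_{1} x = x_{m}$. Quasinonexpansiveness of each $G_{i}$ at the common fixed point $y$ produces the telescoping chain $\norm{x_{m} - y} \leq \norm{x_{m-1} - y} \leq \cdots \leq \norm{x_{0} - y}$, which already shows $G_{m} \cdots G_{1}$ is quasinonexpansive. The crucial step is the equality case: if $\norm{x_{m} - y} = \norm{x_{0} - y}$, then every inequality in the chain is forced to be an equality, and strictness of each $G_{i}$ forces $x_{i-1} \in \Fix G_{i}$, hence $x_{i} = x_{i-1}$, for all $i$; thus $x = x_{0} = \cdots = x_{m}$ and $x \in F$. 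The contrapositive is precisely strict quasinonexpansiveness, proving \cref{prop:CompositionBAM:SQN}, and \cref{prop:CompositionBAM:comp} follows at once, since an $x \in \Fix(G_{m} \cdots G_{1}) \smallsetminus F$ would contradict the strict inequality $\norm{G_{m} \cdots G_{1} x - y} < \norm{x - y}$ applied with any $y \in F$.

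For \cref{prop:CompositionBAM:comb}, with $S := \sum_{i \in \I} \omega_{i} G_{i}$ I would assume $Sx = x$ and, fixing $y \in F$, combine the triangle inequality for the convex combination with the quasinonexpansiveness of the $G_{i}$ to sandwich $\norm{x - y} = \norm{Sx - y} \leq \sum_{i \in \I} \omega_{i} \norm{G_{i} x - y} \leq \sum_{i \in \I} \omega_{i} \norm{x - y} = \norm{x - y}$. Equality throughout, together with $\omega_{i} > 0$ and $\norm{G_{i} x - y} \leq \norm{x - y}$, forces $\norm{G_{i} x - y} = \norm{x - y}$ for each $i$, whence strictness again yields $x \in \Fix G_{i}$ for all $i$, i.e., $x \in F$.

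I expect the only genuinely delicate point to be the collapse-of-the-chain argument underlying \cref{prop:CompositionBAM:SQN} and \cref{prop:CompositionBAM:comp}: strict quasinonexpansiveness of the composition is \emph{not} obtained by composing strict inequalities (a given $x$ may be fixed by some $G_{i}$ but not all), but rather from analyzing when the whole chain of nonstrict inequalities degenerates to equality. Everything else reduces to routine manipulation of the defining inequalities, and the affineness of the fixed point sets enters only through \cref{prop:BAM:StrictlyQuasinonexpansive} and plays no direct role afterward. (I also read the right-hand sides of \cref{prop:CompositionBAM:comp} and \cref{prop:CompositionBAM:comb} as $\cap_{i \in \I} \Fix G_{i}$, the spurious leading $\Fix$ being a typo.)
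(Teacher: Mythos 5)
Your proposal is correct and takes essentially the same route as the paper: both arguments begin by invoking \cref{prop:BAM:StrictlyQuasinonexpansive} to conclude that each $G_{i}$ is strictly quasinonexpansive, after which the paper simply cites \cite[Corollary~4.50]{BC2017} for \cref{prop:CompositionBAM:SQN}--\cref{prop:CompositionBAM:comp} and \cite[Proposition~4.47]{BC2017} for \cref{prop:CompositionBAM:comb}, whereas you reprove those two standard facts directly via the (correct) equality-case analysis of the telescoping chain and of the convex combination. Your reading of the right-hand sides as $\cap_{i \in \I} \Fix G_{i}$ (the leading $\Fix$ being a typo) matches the intended statement.
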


\begin{proof}
 Because $(\forall i \in \I)$ $G_{i}$ is a $\gamma_{i}$-BAM  with $\Fix G_{i} $ being a  closed affine subspace of $\mathcal{H}$,  by \cref{prop:BAM:StrictlyQuasinonexpansive},  $(\forall i \in \I)$  $G_{i}$ is strictly quasinonexpansive. Moreover, by assumption, $\cap_{i \in \I} \Fix G_{i} \neq \varnothing$.

\cref{prop:CompositionBAM:SQN}$\&$\cref{prop:CompositionBAM:comp}:  These are from  \cite[Corollary~4.50]{BC2017}.

 \cref{prop:CompositionBAM:comb}: This comes from \cite[Proposition~4.47]{BC2017}.
\end{proof}		

\begin{proposition} \label{prop:BAMG:0}
	Let $G: \mathcal{H} \to \mathcal{H}$  with $\Fix G$ being a  nonempty closed convex subset of $\mathcal{H}$. Then $G$ is a  $0$-BAM if and only if $G=\Pro_{\Fix G}$.
\end{proposition}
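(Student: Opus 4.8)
The plan is to prove both implications directly from \cref{def:BAM} with $\gamma=0$, since the constant $\gamma=0$ is exactly what collapses the defining inequality into the identity $G=\Pro_{\Fix G}$. Throughout, the only nontrivial facts I would invoke are the idempotence of the projector, $\Pro_{\Fix G}\Pro_{\Fix G}=\Pro_{\Fix G}$, and the elementary observation that a norm bounded above by $0$ must vanish.

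For the implication \enquote{$\Rightarrow$}, I would assume that $G$ is a $0$-BAM and read off condition \cref{def:BAM:Ineq} of \cref{def:BAM} with $\gamma=0$. This gives
\begin{align*}
(\forall x \in \mathcal{H}) \quad \norm{Gx -\Pro_{\Fix G}x} \leq 0 \cdot \norm{x - \Pro_{\Fix G}x} = 0,
\end{align*}
whence $Gx=\Pro_{\Fix G}x$ for every $x\in\mathcal{H}$, i.e.\ $G=\Pro_{\Fix G}$. This direction uses only item \cref{def:BAM:Ineq} of the definition.

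For the implication \enquote{$\Leftarrow$}, I would assume $G=\Pro_{\Fix G}$ and verify the three conditions of \cref{def:BAM}. Condition \cref{def:BAM:Fix} holds by the standing hypothesis that $\Fix G$ is a nonempty closed convex subset of $\mathcal{H}$. Condition \cref{def:BAM:eq} follows from idempotence of the projector, $\Pro_{\Fix G}G=\Pro_{\Fix G}\Pro_{\Fix G}=\Pro_{\Fix G}$. Condition \cref{def:BAM:Ineq} with $\gamma=0$ holds because for every $x\in\mathcal{H}$ we have $\norm{Gx-\Pro_{\Fix G}x}=\norm{\Pro_{\Fix G}x-\Pro_{\Fix G}x}=0=0\cdot\norm{x-\Pro_{\Fix G}x}$.

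I do not expect any genuine obstacle here: the statement is essentially a reformulation of \cref{def:BAM} in the degenerate case $\gamma=0$, and the entire content is the remark that the constant $0$ forces the approximation inequality to be an equality. The only point meriting a word of care is that in the \enquote{$\Leftarrow$} direction one must confirm condition \cref{def:BAM:Fix} is already granted by the hypothesis, so that no extra argument about $\Fix G$ being closed and convex is needed.
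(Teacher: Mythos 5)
Your proposal is correct and follows essentially the same route as the paper: the \enquote{$\Rightarrow$} direction is the identical one-line argument from \cref{def:BAM}\cref{def:BAM:Ineq} with $\gamma=0$, and your direct verification of the three conditions in the \enquote{$\Leftarrow$} direction is exactly the computation the paper delegates to \cref{examp:BAM:Pro} (with $\gamma=0$). No gaps.
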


\begin{proof}
	\enquote{$\Rightarrow$}: Assume that $G$ is a $0$-BAM.
	By
	\cref{def:BAM},
	$(\forall x \in \mathcal{H})$ $\norm{Gx -\Pro_{\Fix G}x} \leq 0 \norm{x - \Pro_{\Fix G}x}=0$. Hence, $G=\Pro_{\Fix G}$.
	
	\enquote{$\Leftarrow$}: Assume that  $G=\Pro_{\Fix G}$. Then by \cref{examp:BAM:Pro}, $G$ is a BAM with constant $0$.
\end{proof}

\begin{corollary} \label{cor:G2G1:Constant0}
	Let $(\forall i \in \{1,2\})$ $G_{i}: \mathcal{H} \to \mathcal{H}$ be such that $\Fix G_{i} $ is a  closed affine subspace of $\mathcal{H}$. Suppose that  $(\forall i \in \{1,2\})$ $G_{i}$ is a BAM and  that  $\Fix G_{1} \cap \Fix G_{2} \neq \varnothing $. Then $G_{2}G_{1}$ is a $0$-BAM if and only if $G_{2}G_{1}=\Pro_{\Fix G_{1} \cap \Fix G_{2}}$.
\end{corollary}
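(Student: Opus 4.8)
The plan is to recognize that this statement is a direct consequence of two results already established, namely the fixed-point-set identity for compositions in \cref{prop:CompositionBAM}\cref{prop:CompositionBAM:comp} and the characterization of $0$-BAMs in \cref{prop:BAMG:0}. The whole argument is a matter of matching hypotheses and substituting. So I would first check that the assumptions here are exactly those of \cref{prop:CompositionBAM} specialized to $m=2$ with $\I=\{1,2\}$: each $G_{i}$ is a BAM, each $\Fix G_{i}$ is a closed affine subspace, and $\Fix G_{1}\cap\Fix G_{2}\neq\varnothing$. These all hold by hypothesis, so \cref{prop:CompositionBAM}\cref{prop:CompositionBAM:comp} applies and yields $\Fix(G_{2}G_{1})=\Fix G_{1}\cap\Fix G_{2}$.

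Next I would verify that $G:=G_{2}G_{1}$ satisfies the standing requirement of \cref{prop:BAMG:0}, i.e.\ that $\Fix G$ is a nonempty closed convex subset of $\mathcal{H}$. This is immediate from the identity just obtained: $\Fix G_{1}\cap\Fix G_{2}$ is the intersection of two closed affine subspaces, hence itself a closed affine subspace, and it is nonempty by assumption; in particular it is nonempty closed convex. Therefore $\Fix(G_{2}G_{1})$ is a nonempty closed convex set, as required.

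Finally I would invoke \cref{prop:BAMG:0} with this operator $G=G_{2}G_{1}$. That proposition states that $G$ is a $0$-BAM if and only if $G=\Pro_{\Fix G}$. Substituting $\Fix(G_{2}G_{1})=\Fix G_{1}\cap\Fix G_{2}$ into the right-hand side converts the conclusion $G_{2}G_{1}=\Pro_{\Fix(G_{2}G_{1})}$ into $G_{2}G_{1}=\Pro_{\Fix G_{1}\cap\Fix G_{2}}$, which is precisely the asserted equivalence.

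I do not expect any genuine obstacle here; the result is a corollary in the strict sense, and the only points demanding attention are the bookkeeping ones: confirming that the composition hypotheses of \cref{prop:CompositionBAM} are met (affineness of each $\Fix G_{i}$ and nonemptiness of the intersection) and that the intersection inherits the nonempty closed convex structure needed to legitimately apply \cref{prop:BAMG:0}. Once the fixed-point-set identity is in hand, the equivalence follows by rewriting the target set, with no further computation.
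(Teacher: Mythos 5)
Your proposal is correct and follows essentially the same route as the paper: identify $\Fix(G_{2}G_{1})=\Fix G_{1}\cap\Fix G_{2}$ via \cref{prop:CompositionBAM}\cref{prop:CompositionBAM:comp}, note this set is a nonempty closed affine (hence convex) subspace, and then read off the equivalence from \cref{prop:BAMG:0}. The only cosmetic difference is that the paper cites \cref{examp:BAM:Pro} directly for the reverse implication, whereas you reuse the \enquote{if and only if} of \cref{prop:BAMG:0}, which amounts to the same thing.
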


\begin{proof}
	Because $\Fix G_{1}$ and $\Fix G_{2}$ are  closed affine subspaces and $\Fix G_{1} \cap \Fix G_{2} \neq \varnothing $, $\Fix G_{1} \cap \Fix G_{2} $ is a closed affine subspace.
	
	\enquote{$\Rightarrow$}:  By \cref{prop:CompositionBAM}\cref{prop:CompositionBAM:comp}, $\Fix G_{2}G_{1} = \Fix G_{1} \cap \Fix G_{2}$ is a closed affine subspace. Hence, by \cref{prop:BAMG:0}, $G_{2}G_{1}=\Pro_{\Fix G_{2}G_{1} }=\Pro_{\Fix G_{1} \cap \Fix G_{2}}$.
	
	\enquote{$\Leftarrow$}:  By \cref{examp:BAM:Pro}, $G_{2}G_{1}=\Pro_{\Fix G_{1} \cap \Fix G_{2}}$ is a $0$-BAM.
\end{proof}
According to the following \cref{exam:T2T1} and \cref{examp:CCS1CCS2} below, we know that   the composition of BAMs is a projector is not sufficient to deduce that  the individual BAMs are projectors. Hence, the condition  \enquote{$G_{i}$ is a BAM} in the \cref{cor:G2G1:Constant0} above is more general than \enquote{$G_{i}$ is a projector}.
\begin{example} \label{exam:T2T1}
	Let $U_{1}:=\mathbb{R} (1,0)$ and $U_{2}:=\mathbb{R} (0,1) $. Set $T_{1}:=\frac{1}{2} \Pro_{U_{1}}$ and  $T_{2}:=\frac{1}{2} \Pro_{U_{2}}$. Then neither $T_{1}$ nor $T_{2}$ is a projection. Moreover, $T_{2}T_{1} =\Pro_{\{(0,0) \}}$.
\end{example}

\begin{corollary} \label{cor:BAMProPro}
	Let $C_{1}$ and $C_{2}$ be  closed   convex subsets of $\mathcal{H}$ with $C_{1} \cap C_{2} \neq \varnothing$. Then  $\Pro_{C_{2}}\Pro_{C_{1}}$ is a $0$-BAM   if and only if $\Pro_{C_{2}}\Pro_{C_{1}} = \Pro_{C_{1} \cap C_{2}} $.
\end{corollary}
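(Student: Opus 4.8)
The plan is to reduce the statement to \cref{prop:BAMG:0}. The crucial preliminary step is to identify the fixed point set of the composition, namely to establish
\begin{align*}
\Fix (\Pro_{C_2}\Pro_{C_1}) = C_1 \cap C_2.
\end{align*}
Once this is known, $\Fix (\Pro_{C_2}\Pro_{C_1}) = C_1 \cap C_2$ is nonempty by hypothesis, and closed and convex as an intersection of closed convex sets; hence \cref{prop:BAMG:0}, applied to $G := \Pro_{C_2}\Pro_{C_1}$, yields that $\Pro_{C_2}\Pro_{C_1}$ is a $0$-BAM if and only if $\Pro_{C_2}\Pro_{C_1} = \Pro_{\Fix (\Pro_{C_2}\Pro_{C_1})} = \Pro_{C_1 \cap C_2}$, which is precisely the asserted equivalence.

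To prove the key identity, the inclusion $C_1 \cap C_2 \subseteq \Fix (\Pro_{C_2}\Pro_{C_1})$ is immediate, since $x \in C_1 \cap C_2$ gives $\Pro_{C_1}x = x$ and then $\Pro_{C_2}\Pro_{C_1}x = \Pro_{C_2}x = x$. For the reverse inclusion I would use that each projector $\Pro_{C_i}$ is firmly nonexpansive (\cite[Proposition~4.16]{BC2017}) with $\Fix \Pro_{C_i} = C_i$; firm nonexpansiveness gives, for $x \notin C_i$ and $y \in C_i$, the inequality $\norm{\Pro_{C_i}x - y}^2 + \norm{x - \Pro_{C_i}x}^2 \leq \norm{x-y}^2$ with $\norm{x - \Pro_{C_i}x} > 0$, so each $\Pro_{C_i}$ is strictly quasinonexpansive. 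Since $\Fix \Pro_{C_1} \cap \Fix \Pro_{C_2} = C_1 \cap C_2 \neq \varnothing$, the same composition argument used in \cref{prop:CompositionBAM}, via \cite[Corollary~4.50]{BC2017}, then gives $\Fix (\Pro_{C_2}\Pro_{C_1}) = \Fix \Pro_{C_1} \cap \Fix \Pro_{C_2} = C_1 \cap C_2$.

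The point requiring attention is that, in contrast to \cref{prop:CompositionBAM}, the sets $C_1$ and $C_2$ here are arbitrary closed convex sets rather than affine subspaces, so one must check that the fixed-point identity for the composition relies only on the strict quasinonexpansiveness of the projectors and not on any affine structure. This is indeed the case: the chain-of-inequalities argument establishing $\Fix (T_2 T_1) = \Fix T_1 \cap \Fix T_2$ for strictly quasinonexpansive $T_1, T_2$ sharing a common fixed point uses no affineness whatsoever. I expect this verification to be the only (minor) obstacle; the remainder is a direct appeal to \cref{prop:BAMG:0}.
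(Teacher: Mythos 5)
Your proposal is correct and follows essentially the same route as the paper: identify $\Fix(\Pro_{C_2}\Pro_{C_1}) = C_1 \cap C_2$ and then invoke \cref{prop:BAMG:0}. The only difference is bookkeeping — the paper cites \cite[Corollary~4.5.2]{Cegielski} directly for the fixed-point identity, whereas you rederive it from firm nonexpansiveness of projectors and \cite[Corollary~4.50]{BC2017}, which is a valid (and correctly justified) alternative since that result indeed requires no affine structure.
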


\begin{proof}
	Because $C_{1} \cap C_{2} \neq \varnothing$, by \cite[Corollary~4.5.2]{Cegielski}, $\Fix \Pro_{C_{2}}\Pro_{C_{1}} = C_{1} \cap C_{2} $ is nonempty, closed, and convex. Therefore,
the desired result follows  from  \cref{prop:BAMG:0}.
\end{proof}

\begin{proposition} \label{prop:BAMAffinLinea}
	Let $z \in \mathcal{H}$.	Let $(\forall i \in \{1,2\})$ $G_{i} :\mathcal{H} \rightarrow \mathcal{H}$  satisfy
	\begin{align} \label{eq:prop:BAMAffinLinea}
	(\forall x \in \mathcal{H})  \quad G_{1}x =z+ G_{2} (x -z).
	\end{align}
	Then the following assertions hold:
	\begin{enumerate}
		\item \label{prop:BAMAffinLinea:Fix} $\Fix G_{2} = \Fix G_{1} -z$.
		\item \label{prop:BAMAffinLinea:BAM}  Suppose that $\Fix G_{1}$ or $\Fix G_{2}$ is a  nonempty closed convex subset of $\mathcal{H}$. Let $\gamma \in \left[0,1\right[\,$.  Then  $G_{1}$ is a  $\gamma$-BAM  if and only if $G_{2}$ is a $\gamma$-BAM.
	\end{enumerate}
\end{proposition}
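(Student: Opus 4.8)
The plan is to exploit the fact that \cref{eq:prop:BAMAffinLinea} presents $G_1$ and $G_2$ as translation conjugates of one another. Writing the defining identity in the equivalent form $(\forall x \in \mathcal{H})$ $G_2 x = G_1(x+z) - z$, we see that replacing $z$ by $-z$ and interchanging the indices $1$ and $2$ leaves the hypothesis invariant. Consequently the entire statement is symmetric in the two indices, and for \cref{prop:BAMAffinLinea:BAM} it suffices to prove a single implication, say that $G_1$ being a $\gamma$-BAM forces $G_2$ to be one.

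For \cref{prop:BAMAffinLinea:Fix}, I would simply chase the fixed-point equation: $x \in \Fix G_2 \Leftrightarrow G_1(x+z) - z = x \Leftrightarrow G_1(x+z) = x+z \Leftrightarrow x+z \in \Fix G_1 \Leftrightarrow x \in \Fix G_1 - z$. This yields $\Fix G_2 = \Fix G_1 - z$ with no appeal to convexity. For \cref{prop:BAMAffinLinea:BAM}, I would first note that, by \cref{prop:BAMAffinLinea:Fix}, $\Fix G_2$ is a translate of $\Fix G_1$; since translation preserves nonemptiness, closedness, and convexity, the assumption that one of the two fixed-point sets is nonempty closed convex forces both to be, so \cref{def:BAM}\cref{def:BAM:Fix} holds for $G_1$ and $G_2$ simultaneously.

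The key technical device is then \cref{fac:SetChangeProje} applied to $\Fix G_1 = z + \Fix G_2$, which yields the projector identity $(\forall w \in \mathcal{H})$ $\Pro_{\Fix G_2} w = \Pro_{\Fix G_1}(w+z) - z$. Assuming $G_1$ is a $\gamma$-BAM, I would verify the two remaining BAM conditions for $G_2$. For \cref{def:BAM}\cref{def:BAM:eq}, combining the defining identity with this projector identity gives $\Pro_{\Fix G_2} G_2 x = \Pro_{\Fix G_1}(G_1(x+z)) - z = \Pro_{\Fix G_1}(x+z) - z = \Pro_{\Fix G_2} x$, where the middle equality is $\Pro_{\Fix G_1} G_1 = \Pro_{\Fix G_1}$. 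For \cref{def:BAM}\cref{def:BAM:Ineq}, the same substitutions collapse both sides onto their $G_1$-counterparts evaluated at $x+z$: one computes $G_2 x - \Pro_{\Fix G_2} x = G_1(x+z) - \Pro_{\Fix G_1}(x+z)$ and $x - \Pro_{\Fix G_2} x = (x+z) - \Pro_{\Fix G_1}(x+z)$, so the $\gamma$-BAM inequality for $G_1$ at the point $x+z$ delivers exactly $\norm{G_2 x - \Pro_{\Fix G_2} x} \leq \gamma \norm{x - \Pro_{\Fix G_2} x}$.

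I do not expect a genuine obstacle here; the whole argument is bookkeeping with the translation $x \mapsto x+z$. The only point demanding care is the consistent use of the projector identity from \cref{fac:SetChangeProje} in both of its forms, together with the discipline of evaluating the $G_1$-conditions at the shifted argument $x+z$ rather than at $x$. Once these substitutions are arranged correctly, conditions \cref{def:BAM:eq} and \cref{def:BAM:Ineq} transfer verbatim, and the reverse implication follows from the symmetry observed at the outset.
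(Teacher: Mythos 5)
Your proposal is correct and follows essentially the same route as the paper: part \cref{prop:BAMAffinLinea:Fix} is the identical fixed-point chase, and part \cref{prop:BAMAffinLinea:BAM} rests on the same translation bookkeeping via \cref{fac:SetChangeProje}, the paper merely writing both directions as chains of equivalences where you prove one implication and invoke the $z\mapsto -z$ symmetry. The symmetry shortcut is valid and purely stylistic.
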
	
\begin{proof}
	\cref{prop:BAMAffinLinea:Fix}: Let $x \in \mathcal{H}$. Then,
	\begin{align*}
	x \in \Fix G_{2} \Leftrightarrow x =G_{2}x =G_{1}(x+z) -z \Leftrightarrow  x +z =G_{1}(x+z)  \Leftrightarrow x+z \in \Fix G_{1} \Leftrightarrow x \in \Fix G_{1} -z.
	\end{align*}
	\cref{prop:BAMAffinLinea:BAM}:
	Clearly, by \cref{prop:BAMAffinLinea:Fix}, $\Fix G_{1} $ is a nonempty closed convex subset of $\mathcal{H}$  if and only if  $\Fix G_{2} $ is a nonempty closed  convex subset of $\mathcal{H}$.
	
Note that
	\begin{align*}
	\Pro_{\Fix G_{1}}G_{1}=\Pro_{\Fix G_{1}} & ~\Leftrightarrow~ (\forall x \in \mathcal{H})  ~\Pro_{\Fix G_{1}}G_{1}x= \Pro_{\Fix G_{1}}x\\
	& ~\Leftrightarrow~ (\forall x \in \mathcal{H})  ~\Pro_{z+\Fix G_{2}}G_{1}x= \Pro_{z+\Fix G_{2}}x \quad (\text{by \cref{prop:BAMAffinLinea:Fix}})\\
	&~\Leftrightarrow~ (\forall x \in \mathcal{H}) ~ z + \Pro_{\Fix G_{2}}( G_{1}x -z)= z + \Pro_{\Fix G_{2}} (x -z) \quad (\text{by \cref{fac:SetChangeProje}})\\
	&\stackrel{\cref{eq:prop:BAMAffinLinea} }{\Leftrightarrow} (\forall x \in \mathcal{H})   \Pro_{\Fix G_{2}}( G_{2} (x -z) )=  \Pro_{\Fix G_{2}} (x -z)\\
	&~\Leftrightarrow~ (\forall x \in \mathcal{H}) ~ \Pro_{\Fix G_{2}} (G_{2}x) = \Pro_{\Fix G_{2}}x\\
	&~\Leftrightarrow~ \Pro_{\Fix G_{2}} G_{2} = \Pro_{\Fix G_{2}},
	\end{align*}	
	and that
	\begin{align*}
	&	(\forall x \in \mathcal{H}) ~ \norm{G_{1}x -\Pro_{\Fix G_{1}}x} \leq \gamma \norm{x - \Pro_{\Fix G_{1}}x} \\
	 \Leftrightarrow~ &	(\forall x \in \mathcal{H}) ~ \norm{G_{1}x -\Pro_{z+\Fix G_{2}}x} \leq \gamma \norm{x - \Pro_{z+\Fix G_{2}}x} \quad (\text{by \cref{prop:BAMAffinLinea:Fix}})\\
	\Leftrightarrow~ &	(\forall x \in \mathcal{H}) ~ \norm{G_{1}x -\left(z+\Pro_{\Fix G_{2}}(x-z) \right)} \leq \gamma \norm{x - \left(z+\Pro_{\Fix G_{2}}(x-z) \right)} \quad (\text{by \cref{fac:SetChangeProje}})\\
	\stackrel{\cref{eq:prop:BAMAffinLinea} }{\Leftrightarrow}& (\forall x \in \mathcal{H}) ~
	\norm{G_{2}(x-z) -\Pro_{\Fix G_{2}}(x-z) } \leq \gamma \norm{(x-z) - \Pro_{\Fix G_{2}}(x-z) } \\
 \Leftrightarrow~ &	(\forall x \in \mathcal{H})  ~
	\norm{G_{2}x -\Pro_{\Fix G_{2}}x } \leq \gamma \norm{x - \Pro_{\Fix G_{2}}x }.
	\end{align*}
	Altogether, by \cref{def:BAM}, \cref{prop:BAMAffinLinea:BAM} above is true.
\end{proof}	

\begin{lemma} \label{lemma:shift}
Set $\I:=\{1, \ldots, m\}$.	Let $(\forall i \in \I)$ $F_{i}: \mathcal{H} \to \mathcal{H} $. Define
	\begin{align} \label{eq:lemma:shift}
	(\forall i \in \I)	(\forall x \in \mathcal{H})  \quad T_{i}x :=z+ F_{i} (x -z).
	\end{align}
	Let $ \gamma \in  \left[0,1\right[\,$. Then the following hold:
	\begin{enumerate}
		\item \label{lemma:shift:comp} Suppose that $\Fix F_{m}\cdots F_{1}$  or $\Fix T_{m}\cdots T_{1}$ is a  nonempty closed and convex subset of $\mathcal{H}$. Then $F_{m}\cdots F_{1}$ is a $\gamma$-BAM if and only if $T_{m}\cdots T_{1}$ is a $\gamma$-BAM.
		\item \label{lemma:shift:combi} Let $(\omega_{i})_{i \in \I}$ be in $\mathbb{R}$ such that $\sum_{i \in \I} \omega_{i} =1$. Suppose that $\Fix \sum_{i \in \I} \omega_{i} F_{i}$ or $\Fix \sum_{i \in \I} \omega_{i} T_{i}$ is a  nonempty closed and convex subset of $\mathcal{H}$.
		Then $\sum_{i \in \I} \omega_{i} F_{i}$ is a $\gamma$-BAM if and only if $\sum_{i \in \I} \omega_{i} T_{i}$ is a $\gamma$-BAM.
	\end{enumerate}
\end{lemma}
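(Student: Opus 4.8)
The plan is to reduce both statements to \cref{prop:BAMAffinLinea}\cref{prop:BAMAffinLinea:BAM} by observing that the composition $T_{m}\cdots T_{1}$ and the combination $\sum_{i\in\I}\omega_{i}T_{i}$ are related to their $F$-counterparts precisely through the affine shift \cref{eq:prop:BAMAffinLinea}. Concretely, once we know for the composition that $(T_{m}\cdots T_{1})x = z + (F_{m}\cdots F_{1})(x-z)$ for every $x \in \mathcal{H}$, we set $G_{1}:=T_{m}\cdots T_{1}$ and $G_{2}:=F_{m}\cdots F_{1}$. These satisfy \cref{eq:prop:BAMAffinLinea}; the hypothesis that $\Fix G_{1}$ or $\Fix G_{2}$ be a nonempty closed convex subset of $\mathcal{H}$ is exactly our standing assumption; and the conclusion that $G_{1}$ is a $\gamma$-BAM if and only if $G_{2}$ is a $\gamma$-BAM is precisely the assertion of \cref{lemma:shift:comp}.

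For \cref{lemma:shift:comp}, I would establish the identity $(T_{m}\cdots T_{1})x = z + (F_{m}\cdots F_{1})(x-z)$ by induction on the number of factors. The base case $m=1$ is the defining relation \cref{eq:lemma:shift}. For the inductive step, suppose $(T_{k}\cdots T_{1})x = z + w$ where $w := (F_{k}\cdots F_{1})(x-z)$. Applying \cref{eq:lemma:shift} to $T_{k+1}$ at the point $z+w$ gives $(T_{k+1}\cdots T_{1})x = z + F_{k+1}\big((z+w)-z\big) = z + F_{k+1}(w) = z + (F_{k+1}\cdots F_{1})(x-z)$, which closes the induction. With $G_{1}$ and $G_{2}$ as above, \cref{prop:BAMAffinLinea}\cref{prop:BAMAffinLinea:BAM} then yields the claim.

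For \cref{lemma:shift:combi} no induction is needed. Using $\sum_{i\in\I}\omega_{i}=1$ together with \cref{eq:lemma:shift}, for every $x \in \mathcal{H}$ we have $\sum_{i\in\I}\omega_{i}T_{i}x = \sum_{i\in\I}\omega_{i}\big(z+F_{i}(x-z)\big) = \big(\sum_{i\in\I}\omega_{i}\big)z + \sum_{i\in\I}\omega_{i}F_{i}(x-z) = z + \big(\sum_{i\in\I}\omega_{i}F_{i}\big)(x-z)$. Setting $G_{1}:=\sum_{i\in\I}\omega_{i}T_{i}$ and $G_{2}:=\sum_{i\in\I}\omega_{i}F_{i}$, this is again the shift relation \cref{eq:prop:BAMAffinLinea}, and a second application of \cref{prop:BAMAffinLinea}\cref{prop:BAMAffinLinea:BAM} completes the proof.

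There is no serious obstacle: the entire content lies in the two elementary identities, of which only the composition one requires an induction, since the substantive BAM equivalence has already been carried out in \cref{prop:BAMAffinLinea}. The only point demanding a little care is that the normalization $\sum_{i\in\I}\omega_{i}=1$ is exactly what makes the constant terms collapse to a single $z$ in the combination case; without it the shift relation would acquire a spurious factor $\big(\sum_{i\in\I}\omega_{i}\big)z$ and fail.
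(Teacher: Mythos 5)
Your proposal is correct and follows essentially the same route as the paper: establish the shift identities $(T_{m}\cdots T_{1})x = z + (F_{m}\cdots F_{1})(x-z)$ and $\sum_{i\in\I}\omega_{i}T_{i}x = z + \sum_{i\in\I}\omega_{i}F_{i}(x-z)$, then invoke \cref{prop:BAMAffinLinea}\cref{prop:BAMAffinLinea:BAM}. The only difference is that you spell out the composition identity by induction, where the paper simply writes the telescoping chain of equalities.
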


\begin{proof}
	Let $x \in \mathcal{H}$. By \cref{eq:lemma:shift}, it is easy to see that
	\begin{align*}
	&T_{m}\cdots T_{2}T_{1}x= 	T_{m}\cdots T_{2}(z+ F_{1} (x -z))=\cdots=z+F_{m}\cdots F_{2}F_{1} (x -z)\\
	&\sum_{i \in \I} \omega_{i} T_{i}x= \sum_{i \in \I} \omega_{i} \big(z+ F_{i} (x -z)\big) =z +\sum_{i \in \I} \omega_{i} F_{i}(x-z).
	\end{align*}
	Therefore, both \cref{lemma:shift:comp} and 	\cref{lemma:shift:combi} follow from \cref{prop:BAMAffinLinea}\ref{prop:BAMAffinLinea:BAM}.
\end{proof}


\section{Compositions of BAMs} \label{sec:Composition:BAM}
In this section, we  study compositions of BAMs and determine whether the composition of BAMs is still a BAM or not.

\subsection*{Compositions of BAMs with closed and affine fixed point sets}
In this subsection, we consider   compositions of BAMs  with with closed and affine fixed point sets.

The following result is essential to the proof of  \cref{thm:ComposiBAM:Linear} below.
\begin{lemma} \label{lem:ComposiBAM:Linear}
	Set $\I:=\{1,2\}$. Let $(\forall i \in \I)$ $G_{i} :\mathcal{H} \to \mathcal{H}$, and let $\gamma_{i} \in \left[0,1\right[\,$. Set $(\forall i \in \I)$  $U_{i}:=\Fix G_{i}$. Suppose that $(\forall i \in \I)$ $ G_{i}$ is a $\gamma_{i} $-BAM and that $U_{i}$ is a closed linear subspace of $\mathcal{H}$.
	Denote the cosine $c( U_{1}, U_{2})$ of the Friedrichs angle between $ U_{1}$ and $U_{2}$ by $c_{F}$.
	Let $x \in \mathcal{H}$, and let
$x - \Pro_{U_{1}\cap U_{2}}x \neq 0$ and $G_{1}x - \Pro_{U_{1}\cap U_{2}}x \neq 0$.     Set
\begin{align} \label{eq:beta1:beta2}
\beta_{1} :=\frac{\norm{ \Pro_{U_{2}}G_{1}x - \Pro_{U_{1}\cap U_{2}}x}}{\norm{G_{1}x-\Pro_{U_{1}\cap U_{2}}x}} \quad \text{and} \quad \beta_{2}:= \frac{\norm{\Pro_{U_{1}}x - \Pro_{U_{1}\cap U_{2}}x}}{\norm{x-\Pro_{U_{1}\cap U_{2}}x}}.
\end{align}
	Then the following statements hold:
	\begin{enumerate}
		\item \label{lem:ComposiBAM:Linear:ineq} $\norm{ G_{2}G_{1}x - \Pro_{U_{1}\cap U_{2}}x}^{2} \leq 	\left(\gamma^{2}_{2} +(1 - \gamma^{2}_{2}) \beta^{2}_{1}  \right)  \left(\gamma^{2}_{1} +(1 - \gamma^{2}_{1} ) \beta^{2}_{2}  \right) \norm{x -\Pro_{U_{1}\cap U_{2}}x}^{2}$.
\item \label{lem:ComposiBAM:Linear:beta1beta2} $\beta_{1} \in [0,1]$ and $\beta_{2} \in [0,1]$.

		\item \label{lem:ComposiBAM:Linear:constant} Suppose that   $\Pro_{U_{1}}x - \Pro_{U_{1}\cap U_{2}}x \neq 0$ and $\Pro_{ U_{2}}G_{1}x - \Pro_{U_{1}\cap U_{2}}x  \neq 0$.
		Set
		\begin{align*}
		u:= \frac{G_{1}x- \Pro_{U_{1}\cap U_{2}}x}{\norm{G_{1}x- \Pro_{U_{1}\cap U_{2}}x}},\quad  v:=\frac{\Pro_{U_{1}}x-\Pro_{U_{1}\cap U_{2}}x}{\norm{\Pro_{U_{1}}x -\Pro_{U_{1}\cap U_{2}}x}}, \quad \text{and} \quad  w:=\frac{\Pro_{U_{2}}G_{1}x - \Pro_{U_{1}\cap U_{2}}x }{\norm{\Pro_{U_{2}}G_{1}x - \Pro_{U_{1}\cap U_{2}}x }} .
		\end{align*}
		Then
		\begin{subequations}
			\begin{align}
			&\innp{v,w} \leq c_{F},\label{lem:ComposiBAM:Linear:constant:cF} \\
			& \beta_{1} =\innp{u,w} \quad \text{and} \quad \beta_{2} \leq \innp{u,v},\label{lem:ComposiBAM:Linear:constant:beta} \\
			&\beta_{1}\beta_{2} \leq \frac{1+c_{F}}{2},\label{lem:ComposiBAM:Linear:constant:betacF}\\
			& \min\{ \beta_{1} , \beta_{2}  \} \leq  \sqrt{\frac{1+c_{F}}{2}}. \label{lem:ComposiBAM:Linear:sumbeta}
			\end{align}
		\end{subequations}
	\end{enumerate}
\end{lemma}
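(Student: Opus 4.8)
The plan is to treat the four displayed inequalities in turn, reducing everything to the strict-quasinonexpansiveness estimate of \cref{prop:BAM:StrictlyQuasinonexpansive}\cref{prop:BAM:StrictlyQuasinonexpansive:INEQ} together with orthogonal (Pythagorean) decompositions. Write $P := \Pro_{U_1 \cap U_2}$. Since $U_1 \cap U_2$ is a closed linear subspace contained in each $U_i$, \cref{lem:ExchangeProj} gives $P\Pro_{U_i} = \Pro_{U_i}P = P$; combined with $\Pro_{U_1}G_1 = \Pro_{U_1}$ from \cref{def:BAM}\cref{def:BAM:eq}, applying $P$ on the left yields the identity $PG_1 = P$, i.e.\ $\Pro_{U_1\cap U_2}G_1 x = Px$, which I will use repeatedly. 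Also note $Px \in U_1 \cap U_2 \subseteq \Fix G_i$, so $Px$ is a common fixed point that I may feed into the BAM inequalities.

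For \cref{lem:ComposiBAM:Linear:ineq}, I first apply \cref{prop:BAM:StrictlyQuasinonexpansive}\cref{prop:BAM:StrictlyQuasinonexpansive:INEQ} to $G_1$ with $y = Px \in \Fix G_1$, and rewrite $\norm{x - Px}^2 = \norm{x - \Pro_{U_1}x}^2 + \norm{\Pro_{U_1}x - Px}^2$ by Pythagoras, since $x - \Pro_{U_1}x \in U_1^\perp$ and $\Pro_{U_1}x - Px \in U_1$. Solving for $\norm{x - \Pro_{U_1}x}^2 = (1 - \beta_2^2)\norm{x - Px}^2$ and substituting gives $\norm{G_1 x - Px}^2 \leq (\gamma_1^2 + (1-\gamma_1^2)\beta_2^2)\norm{x - Px}^2$. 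Repeating the identical argument for $G_2$ at the point $G_1 x$, again with $y = Px \in \Fix G_2$ and the orthogonal decomposition along $U_2$, produces $\norm{G_2 G_1 x - Px}^2 \leq (\gamma_2^2 + (1-\gamma_2^2)\beta_1^2)\norm{G_1 x - Px}^2$; multiplying the two estimates is exactly \cref{lem:ComposiBAM:Linear:ineq}. Part \cref{lem:ComposiBAM:Linear:beta1beta2} is immediate from these same Pythagorean identities, since each defines $\beta_i$ as the ratio of a leg to the hypotenuse.

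For \cref{lem:ComposiBAM:Linear:constant}, the key preliminary is to locate $v$ and $w$ in the subspaces appearing in the Friedrichs-angle definition. Using $P\Pro_{U_1} = P$ I write $\Pro_{U_1}x - Px = (\Id - P)\Pro_{U_1}x \in U_1 \cap (U_1 \cap U_2)^\perp$, and using $\Pro_{U_1\cap U_2}G_1 x = Px$ together with $P\Pro_{U_2} = P$ I write $\Pro_{U_2}G_1 x - Px = (\Id - P)\Pro_{U_2}G_1 x \in U_2 \cap (U_1 \cap U_2)^\perp$; \cref{defn:FredrichAngleClassical} then gives $\innp{v,w} \leq c_F$. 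The two statements in \cref{lem:ComposiBAM:Linear:constant:beta} follow from orthogonal splittings: writing $G_1 x - Px = (G_1 x - \Pro_{U_2}G_1 x) + (\Pro_{U_2}G_1 x - Px)$ with the first summand in $U_2^\perp$ kills the cross term and yields $\innp{u,w} = \beta_1$; writing $G_1 x - Px = (G_1 x - \Pro_{U_1}x) + (\Pro_{U_1}x - Px)$ (using $\Pro_{U_1}G_1 x = \Pro_{U_1}x$, so the first summand lies in $U_1^\perp$) yields $\innp{u,v} = \norm{\Pro_{U_1}x - Px}/\norm{G_1 x - Px}$, and since $\norm{G_1 x - Px} \leq \norm{x - Px}$ (by the first estimate above, as $\gamma_1^2 + (1-\gamma_1^2)\beta_2^2 \leq 1$) this is $\geq \beta_2$.

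Finally, \cref{lem:ComposiBAM:Linear:constant:betacF} is the crux. Having $\beta_1 = \innp{u,w} \geq 0$ and $0 \leq \beta_2 \leq \innp{u,v}$, I bound $\beta_1 \beta_2 \leq \innp{u,w}\innp{u,v}$ and then apply Cauchy--Schwarz to $\innp{u, v+w}$: since $\norm{u}=1$, I get $\innp{u,v} + \innp{u,w} \leq \norm{v+w} = \sqrt{2 + 2\innp{v,w}} \leq \sqrt{2 + 2c_F}$, and AM--GM on the two nonnegative numbers $\innp{u,v}, \innp{u,w}$ gives $\innp{u,w}\innp{u,v} \leq \tfrac14(2 + 2c_F) = \tfrac{1+c_F}{2}$. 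The last bound \cref{lem:ComposiBAM:Linear:sumbeta} then drops out, since $\min\{\beta_1,\beta_2\}^2 \leq \beta_1\beta_2 \leq \tfrac{1+c_F}{2}$. The main obstacle is organizational rather than deep: the delicate points are justifying the two membership claims $v \in U_1 \cap (U_1\cap U_2)^\perp$ and $w \in U_2 \cap (U_1 \cap U_2)^\perp$, which hinge on the BAM identity $\Pro_{U_1\cap U_2}G_1 = \Pro_{U_1 \cap U_2}$, and the combination of Cauchy--Schwarz with AM--GM that converts the angle bound $\innp{v,w} \leq c_F$ into the product bound on $\beta_1 \beta_2$.
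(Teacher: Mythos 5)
Your proof is correct and follows essentially the same route as the paper: the Pythagorean decompositions along $U_1$ and $U_2$, the identification of $v$ and $w$ as unit vectors in $U_1\cap(U_1\cap U_2)^\perp$ and $U_2\cap(U_1\cap U_2)^\perp$ via $\Pro_{U_1\cap U_2}G_1=\Pro_{U_1\cap U_2}$, and the Cauchy--Schwarz/AM--GM combination for \cref{lem:ComposiBAM:Linear:constant:betacF} are all exactly the paper's argument. The only cosmetic difference is that you obtain part \cref{lem:ComposiBAM:Linear:ineq} by invoking \cref{prop:BAM:StrictlyQuasinonexpansive}\cref{prop:BAM:StrictlyQuasinonexpansive:INEQ} twice (at $x$ for $G_1$ and at $G_1x$ for $G_2$, both with $y=\Pro_{U_1\cap U_2}x$) rather than writing out the chain of four Pythagorean identities directly, but that proposition is proved by the very same decompositions, so the underlying computation is identical.
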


\begin{proof}
	Because $G_{1}$ is a $\gamma_{1} $-BAM  and  $G_{2}$ is a $ \gamma_{2} $-BAM,  by  \cref{def:BAM}  and  \cref{lemma:CFixG}\cref{lemma:CFixG:GC},  we have that
	\begin{align} \label{eq:prop:ComposiBAM:ProG}
	\Pro_{U_{1} }G_{1} = \Pro_{U_{1}} = G_{1}\Pro_{U_{1}}   \quad \text{and} \quad \Pro_{U_{2}} G_{2} = \Pro_{U_{2}}=G_{2}\Pro_{U_{2}},
	\end{align}
	and that
	\begin{align} \label{eq:prop:ComposiBAM:Ineq}
	(\forall y \in \mathcal{H}) \quad \norm{G_{1}y -\Pro_{U_{1}}y} \leq  \gamma_{1}   \norm{y - \Pro_{U_{1}}y} \quad \text{and} \quad \norm{G_{2}y -\Pro_{U_{2}}y} \leq \gamma_{2} \norm{y - \Pro_{U_{2}}y}.
	\end{align}
Note that by \cref{eq:prop:ComposiBAM:ProG} and \cref{MetrProSubs8}, we have that
	\begin{subequations}
		\begin{align}
		&G_{2}G_{1}x - \Pro_{U_{2}}G_{1}x \stackrel{\cref{eq:prop:ComposiBAM:ProG}}{=}G_{2}G_{1}x - \Pro_{ U_{2}}G_{2}G_{1}x =(\Id - \Pro_{ U_{2}})G_{2}G_{1}x= \Pro_{ U^{\perp}_{2}}G_{2}G_{1}x \in U^{\perp}_{2}, \label{eq:GWGW:perp:GWGV}\\
		& G_{1}x -\Pro_{U_{2}}G_{1}x =(\Id - \Pro_{ U_{2}}) G_{1}x   =\Pro_{ U^{\perp}_{2}} G_{1}x  \in U^{\perp}_{2}, \label{eq:GWGW:perp:GVPW}\\
		&G_{1}x-\Pro_{U_{1}}x \stackrel{\cref{eq:prop:ComposiBAM:ProG}}{=} G_{1}x-\Pro_{U_{1}} G_{1}x =(\Id - \Pro_{ U_{1}}) G_{1}x =\Pro_{ U^{\perp}_{1}}G_{1}x  \in U^{\perp}_{1}, \label{eq:GWGW:perp:GV}\\
		& x-\Pro_{U_{1}}x=(\Id - \Pro_{ U_{1}}) x=\Pro_{ U^{\perp}_{1}}x \in U^{\perp}_{1}. \label{eq:GWGW:perp:x}
		\end{align}
	\end{subequations}
	Hence, by the Pythagorean theorem, we obtain
	\begin{align} \label{eq:thm:ComposiBAM:GWGV}
	\norm{ \underbrace{G_{2}G_{1}x - \Pro_{U_{2}}G_{1}x}_{\in U^{\perp}_{2}}}^{2} +  \norm{ \underbrace{ \Pro_{U_{2}}G_{1} x - \Pro_{U_{1}\cap U_{2}}x}_{\in U_{2}}}^{2}= \norm{ G_{2}G_{1}x - \Pro_{U_{1}\cap U_{2}}x}^{2},
	\end{align}
	\begin{align} \label{eq:thm:ComposiBAM:GV:PWGV}
	\norm{\underbrace{G_{1}x -\Pro_{U_{2}}G_{1}x}_{\in U^{\perp}_{2}}}^{2}+\norm{\underbrace{\Pro_{U_{2}}G_{1}x - \Pro_{U_{1}\cap U_{2}}x}_{\in U_{2}} }^{2} =\norm{ G_{1}x -\Pro_{U_{1}\cap U_{2}}x }^{2},
	\end{align}
	\begin{align} \label{eq:thm:ComposiBAM:GV:PV}
	\norm{\underbrace{ G_{1}x-\Pro_{U_{1}}x}_{\in U^{\perp}_{1}}}^{2} +\norm{\underbrace{\Pro_{U_{1}}x-\Pro_{U_{1}\cap U_{2}}x}_{\in U_{1}} }^{2} =\norm{ G_{1}x - \Pro_{U_{1}\cap U_{2}}x }^{2},
	\end{align}
	\begin{align} \label{eq:thm:ComposiBAM:xPV}
	\norm{\underbrace{x-\Pro_{U_{1}}x}_{\in U^{\perp}_{1}} }^{2} + \norm{\underbrace{\Pro_{U_{1}}x-\Pro_{U_{1}\cap U_{2}}x}_{\in U_{1}} }^{2} = \norm{x-\Pro_{U_{1}\cap U_{2}}x }^{2}.
	\end{align}
	\cref{lem:ComposiBAM:Linear:ineq}:
Note that
\begin{subequations} \label{eq:prop:ComposiBAM:subeq}
	\begin{align}
	&\norm{ G_{2}G_{1}x - \Pro_{U_{1}\cap U_{2}}x}^{2} \nonumber\\
	\stackrel{\cref{eq:thm:ComposiBAM:GWGV}}{=} &   \norm{ G_{2}G_{1}x - \Pro_{U_{2}}G_{1}x}^{2} +  \norm{ \Pro_{U_{2}}G_{1}x - \Pro_{U_{1}\cap U_{2}}x}^{2} \nonumber\\
	\stackrel{\cref{eq:prop:ComposiBAM:Ineq}}{\leq}  & \gamma^{2}_{2}  \norm{ G_{1}x - \Pro_{U_{2}}G_{1}x}^{2} +  \norm{ \Pro_{U_{2}}G_{1}x - \Pro_{U_{1}\cap U_{2}}x}^{2}
	\nonumber \\
	=\,& \gamma^{2}_{2}  \left( \norm{ G_{1}x - \Pro_{U_{2}}G_{1}x}^{2} +\norm{ \Pro_{U_{2}}G_{1}x - \Pro_{U_{1}\cap U_{2}}x}^{2} \right)+(1 -\gamma^{2}_{2} )  \norm{ \Pro_{U_{2}}G_{1}x - \Pro_{U_{1}\cap U_{2}}x}^{2}  \nonumber\\
	\stackrel{\cref{eq:thm:ComposiBAM:GV:PWGV}}{=} & \gamma^{2}_{2}  \norm{ G_{1}x -\Pro_{U_{1}\cap U_{2}}x}^{2}   +(1 - \gamma^{2}_{2} )  \norm{ \Pro_{U_{2}}G_{1}x - \Pro_{U_{1}\cap U_{2}}x}^{2}  \nonumber\\
	\stackrel{\cref{eq:beta1:beta2}}{=} & \gamma^{2}_{2}  \norm{ G_{1}x -\Pro_{U_{1}\cap U_{2}}x}^{2}   +(1 - \gamma^{2}_{2} ) \beta^{2}_{1} \norm{G_{1}x-\Pro_{U_{1}\cap U_{2}}x}^{2}
	\nonumber\\
	=\,& \left(\gamma^{2}_{2}  +(1 -\gamma^{2}_{2} ) \beta^{2}_{1}  \right)  \norm{ G_{1}x-\Pro_{U_{1}\cap U_{2}}x}^{2}
	\nonumber \\
	\stackrel{\cref{eq:thm:ComposiBAM:GV:PV}}{=} & \left(\gamma^{2}_{2}  +(1 - \gamma^{2}_{2} ) \beta^{2}_{1}  \right) \left( \norm{G_{1}x-\Pro_{U_{1}}x}^{2} +\norm{\Pro_{U_{1}}x-\Pro_{U_{1}\cap U_{2}}x}^{2}  \right)
	\nonumber \\
	\stackrel{\cref{eq:prop:ComposiBAM:Ineq}}{\leq}  &  \left(\gamma^{2}_{2} +(1 -\gamma^{2}_{2} ) \beta^{2}_{1}  \right) \left( \gamma^{2}_{1} \norm{x-\Pro_{U_{1}}x}^{2} +\norm{\Pro_{U_{1}}x-\Pro_{U_{1}\cap U_{2}}x}^{2}  \right)
	\nonumber\\
	=\,&  \left(\gamma^{2}_{2}  +(1 - \gamma^{2}_{2} ) \beta^{2}_{1}  \right) \Big( \gamma^{2}_{1}  \left( \norm{x-\Pro_{U_{1}}x}^{2} + \norm{\Pro_{U_{1}}x-\Pro_{U_{1}\cap U_{2}}x}^{2} \right) +(1 -\gamma^{2}_{1} ) \norm{\Pro_{U_{1}}x-\Pro_{U_{1}\cap U_{2}}x}^{2}  \Big)
	\nonumber\\
	\stackrel{\cref{eq:thm:ComposiBAM:xPV}}{=}   &  \left(\gamma^{2}_{2}  +(1 - \gamma^{2}_{2} ) \beta^{2}_{1}  \right) \Big( \gamma^{2}_{1} \norm{x-\Pro_{U_{1}\cap U_{2}}x}^{2}  +(1- \gamma^{2}_{1} ) \norm{\Pro_{U_{1}}x-\Pro_{U_{1}\cap U_{2}}x}^{2}  \Big)
	\nonumber\\
	\stackrel{\cref{eq:beta1:beta2}}{=}  &  \left(\gamma^{2}_{2}  +(1 -\gamma^{2}_{2} ) \beta^{2}_{1}  \right) \Big( \gamma^{2}_{1} \norm{x-\Pro_{U_{1}\cap U_{2}}x}^{2} +(1 -\gamma^{2}_{1} ) \beta^{2}_{2}\norm{x -\Pro_{U_{1}\cap U_{2}}x}^{2}  \Big)
	\nonumber\\
	= \,&   \left(\gamma^{2}_{2} +(1 - \gamma^{2}_{2} ) \beta^{2}_{1}  \right)  \left(\gamma^{2}_{1}  +(1 - \gamma^{2}_{1} ) \beta^{2}_{2}  \right) \norm{x -\Pro_{U_{1}\cap U_{2}}x}^{2}.  \nonumber
	\end{align}
\end{subequations}	

\cref{lem:ComposiBAM:Linear:beta1beta2}:
This comes from \cref{eq:beta1:beta2}, \cref{eq:thm:ComposiBAM:GV:PWGV} and \cref{eq:thm:ComposiBAM:xPV}.

	\cref{lem:ComposiBAM:Linear:constant}:
	By \cref{lem:ExchangeProj} and   \cref{MetrProSubs8},  we know that
	\begin{align*}
	\Pro_{U_{1}}x -\Pro_{U_{1}\cap U_{2}}x = \Pro_{U_{1}}x -\Pro_{U_{1}\cap U_{2}}\Pro_{U_{1}}x = (\Id - \Pro_{U_{1} \cap U_{2} })\Pro_{U_{1}}x =\Pro_{(U_{1}\cap U_{2})^{\perp}} \Pro_{U_{1}}(x)\in U_{1} \cap (U_{1} \cap U_{2})^{\perp}.
	\end{align*}
By \cref{lem:ExchangeProj}, \cref{eq:prop:ComposiBAM:ProG} and \cref{MetrProSubs8}, $\Pro_{U_{1}\cap U_{2}}x =\Pro_{U_{1}\cap U_{2}} \Pro_{U_{1}}x=\Pro_{U_{1}\cap U_{2}}\Pro_{U_{1}} G_{1}x =\Pro_{U_{1}\cap U_{2}}\Pro_{U_{2}} G_{1}x$, so by \cref{MetrProSubs8},
\begin{align*}
\Pro_{U_{2}} G_{1} x - \Pro_{U_{1}\cap U_{2}}x =  \Pro_{U_{2}}G_{1} x - \Pro_{U_{1}\cap U_{2}} \Pro_{U_{2}}G_{1} x
=\Pro_{(U_{1} \cap U_{2})^{\perp}} \Pro_{U_{2}}G_{1} x \in U_{2} \cap (U_{1} \cap U_{2})^{\perp}.
\end{align*}
Hence, using
	\cref{defn:FredrichAngleClassical}, we obtain that
	\begin{align*}
	\innp{v,w} =\Innp{ \frac{\Pro_{U_{1}}x -\Pro_{U_{1}\cap U_{2}}x}{\norm{\Pro_{U_{1}}x-\Pro_{U_{1}\cap U_{2}}x}} , \frac{ \Pro_{U_{2}}G_{1} x - \Pro_{U_{1}\cap U_{2}}x}{\norm{\Pro_{U_{2}}G_{1} x - \Pro_{U_{1}\cap U_{2}}x }} } \leq c_{F},
	\end{align*}
which yields \cref{lem:ComposiBAM:Linear:constant:cF}.

It is easy to see that
	$ \innp{ G_{1}x- \Pro_{U_{1}\cap U_{2}}x,\Pro_{U_{2}}G_{1}x- \Pro_{U_{1}\cap U_{2}}x  } = \innp{ G_{1}x- \Pro_{U_{2}}G_{1}x,  \Pro_{U_{2}}G_{1}x - \Pro_{U_{1}\cap U_{2}}x}+ \innp{\Pro_{U_{2}}G_{1}x - \Pro_{U_{1}\cap U_{2}}x,\Pro_{U_{2}}G_{1}x- \Pro_{U_{1}\cap U_{2}}x} \stackrel{\cref{eq:GWGW:perp:GVPW}}{=}  \norm{\Pro_{U_{2}}G_{1}x - \Pro_{U_{1}\cap U_{2}}x}^{2}$. Hence,
	\begin{align}  \label{lemma:eq:com:BAM:beta1}
	\beta_{1}=\frac{\norm{\Pro_{U_{2}}G_{1}x - \Pro_{U_{1}\cap U_{2}}x}}{\norm{G_{1}x-\Pro_{U_{1}\cap U_{2}}x}} =\Innp{ \frac{G_{1}x- \Pro_{U_{1}\cap U_{2}}x}{\norm{G_{1}x- \Pro_{U_{1}\cap U_{2}}x}},   \frac{\Pro_{U_{2}}G_{1}x-\Pro_{U_{1}\cap U_{2}}x }{\norm{\Pro_{U_{2}}G_{1}x - \Pro_{U_{1}\cap U_{2}}x }} }   =\innp{u,w}.
	\end{align}
Moreover, by \cref{eq:prop:ComposiBAM:Ineq},  $\norm{G_{1}x -\Pro_{U_{1}}x} \leq  \gamma_{1}   \norm{x -\Pro_{U_{1}}x}  \leq \norm{x -\Pro_{U_{1}}x} $, then using \cref{eq:thm:ComposiBAM:GV:PV} and \cref{eq:thm:ComposiBAM:xPV}, we know that  $\norm{G_{1}x-\Pro_{U_{1}\cap U_{2}}x} \leq \norm{x-\Pro_{U_{1}\cap U_{2}}x}$. Hence,
	\begin{align} \label{lemma:eq:com:BAM:beta2}
	\beta_{2}=  \frac{\norm{\Pro_{U_{1}}x - \Pro_{U_{1}\cap U_{2}}x}}{\norm{x-\Pro_{U_{1}\cap U_{2}}x}}
	\leq   \frac{\norm{\Pro_{U_{1}}x - \Pro_{U_{1}\cap U_{2}}x}}{\norm{G_{1}x-\Pro_{U_{1}\cap U_{2}}x}}
	=  \Innp{\frac{G_{1}x- \Pro_{U_{1}\cap U_{2}}x}{ \norm{G_{1}x-\Pro_{U_{1}\cap U_{2}}x}}, \frac{\Pro_{U_{1}}x -\Pro_{U_{1}\cap U_{2}}x}{\norm{\Pro_{U_{1}}x -\Pro_{U_{1}\cap U_{2}}x}} }=\innp{u,v},
	\end{align}
	where the second equality is from $\innp{G_{1}x- \Pro_{U_{1}\cap U_{2}}x, \Pro_{U_{1}}x -\Pro_{U_{1}\cap U_{2}}x}= \innp{G_{1}x- \Pro_{U_{1}}x, \Pro_{U_{1}}x -\Pro_{U_{1}\cap U_{2}}x} +\innp{\Pro_{U_{1}}x- \Pro_{U_{1}\cap U_{2}}x, \Pro_{U_{1}}x -\Pro_{U_{1}\cap U_{2}}x} \stackrel{\cref{eq:GWGW:perp:GV}}{=} 0+ \norm{\Pro_{U_{1}}x- \Pro_{U_{1}\cap U_{2}}x }^{2}$.
	
	Hence, \cref{lemma:eq:com:BAM:beta1} and \cref{lemma:eq:com:BAM:beta2} yield \cref{lem:ComposiBAM:Linear:constant:beta}.
	
	Note that by \cref{lem:ComposiBAM:Linear:constant:beta} and  $\norm{u}=\norm{v}=\norm{w}=1$,
	\begin{align*}
	\beta_{1} +\beta_{2} \leq \innp{u, v+w} \leq \norm{u}\norm{v+w} =\sqrt{\norm{v}^{2} +2\innp{v,w} +\norm{w}^{2} } = \sqrt{2(1+\innp{v,w}) } \stackrel{\cref{lem:ComposiBAM:Linear:constant:cF}}{\leq} \sqrt{2(1+c_{F}) },
	\end{align*}
so
	\begin{align*}
	\beta_{1}\beta_{2} \leq \frac{(\beta_{1} +\beta_{2})^{2}}{4} \leq \frac{1+c_{F} }{2},
	\end{align*}
	which shows \cref{lem:ComposiBAM:Linear:constant:betacF}.
	
	We now turn to \cref{lem:ComposiBAM:Linear:sumbeta}. Suppose to the contrary that $\min\{ \beta_{1} , \beta_{2}  \} >  \sqrt{\frac{1+c_{F}}{2}}$. Then $\beta_{1} \beta_{2} > \frac{1+c_{F} }{2}$, which contradicts \cref{lem:ComposiBAM:Linear:constant:betacF}.
	
Altogether, the proof is complete.
\end{proof}

In the following  result, we  extend \cite[Lemma~1]{BCS2019} from $\mathbb{R}^{n}$ to $\mathcal{H}$ and also provide a new constant associated with the composition of BAMs. Although the following proof is shorter than the proof of \cite[Lemma~1]{BCS2019}, the main idea of the following proof is from the proof of \cite[Lemma~1]{BCS2019}.

\begin{theorem} \label{thm:ComposiBAM:Linear}
	Set $\I:=\{1,2\}$. Let $(\forall i \in \I)$ $G_{i} :\mathcal{H} \to \mathcal{H}$, and let $\gamma_{i} \in \left[0,1\right[\,$. Set $(\forall i \in \I)$  $U_{i}:=\Fix G_{i}$. Suppose that $(\forall i \in \I)$ $ G_{i}$ is a $\gamma_{i} $-BAM and that $U_{i}$ is a closed linear subspace of $\mathcal{H}$ such that $U_{1}+U_{2}$ is closed.
	Denote the cosine $c( U_{1}, U_{2})$ of the Friedrichs angle between $ U_{1}$ and $U_{2}$ by $c_{F}$.
	Then the following  hold:
	\begin{enumerate}
		\item \label{thm:ComposiBAM:PVcapW} $\Fix (G_{2}\circ G_{1}) = \Fix G_{1} \cap \Fix G_{2}$ is a closed linear subspace of $\mathcal{H}$ and $\Pro_{U_{1}\cap U_{2}} G_{2}G_{1}=\Pro_{U_{1}\cap U_{2}} $.
		\item  \label{thm:ComposiBAM:Linear:eq0} Let $x \in \mathcal{H}$. If  $x - \Pro_{U_{1}\cap U_{2}}x = 0$ or $G_{1}x - \Pro_{U_{1}\cap U_{2}}x = 0$, then $\norm{ G_{2}G_{1}x - \Pro_{U_{1}\cap U_{2}}x}=0$.
		\item  \label{thm:ComposiBAM:rVW}  Set
		\begin{align} \label{eq:rVcapW}
		r:=\max \left\{ \sqrt{\gamma^{2}_{1}  + (1-\gamma^{2}_{1} )\frac{1+c_{F}}{2}}, \sqrt{\gamma^{2}_{2}  + (1-\gamma^{2}_{2} )\frac{1+c_{F}}{2}}  \right\}.
		\end{align}
	Then	$r \in \big[\max \{  \gamma_{1}   , \gamma_{2} \},1 \big[\,$. Moreover,
	\begin{align} \label{eq:thm:ComposiBAM:ineq}
(\forall x \in \mathcal{H})\quad 	\norm{ G_{2}G_{1}x - \Pro_{U_{1}\cap U_{2}}x} \leq r \norm{x - \Pro_{U_{1}\cap U_{2}} x}.
	\end{align}
		\item \label{thm:ComposiBAM:Linear:lVW} Set
		\begin{align} \label{eq:lVcapW}
		s:=\sqrt{\gamma^{2}_{1} +\gamma^{2}_{2}   -\gamma^{2}_{1}  \gamma^{2}_{2}  +(1-\gamma^{2}_{1} ) (1-\gamma^{2}_{2} ) \frac{(1+c_{F})^{2}}{4}}.
		\end{align}
		Then $s \in \big[\max \{ \gamma_{1}   , \gamma_{2}, \frac{1}{2} \}, 1 \big[\,$. Moreover,
		\begin{align} \label{eq:thm:ComposiBAM:gamma}
	(\forall x \in \mathcal{H})\quad  	\norm{ G_{2}G_{1}x - \Pro_{U_{1}\cap U_{2}}x} \leq s \norm{x - \Pro_{U_{1}\cap U_{2}}x}.
		\end{align}
		
		\item \label{thm:ComposiBAM:BAM} $G_{2}\circ G_{1}$ is a $\min \{r ,s \}$-BAM.
	\end{enumerate}
\end{theorem}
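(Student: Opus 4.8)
The plan is to establish the five items in order, using \cref{prop:CompositionBAM} for the fixed point structure and \cref{lem:ComposiBAM:Linear} for the quantitative estimates, and then to read off \cref{thm:ComposiBAM:BAM} directly from \cref{def:BAM}. Since $U_{1}$ and $U_{2}$ are closed linear subspaces, $0\in U_{1}\cap U_{2}$, so $U_{1}\cap U_{2}$ is a nonempty closed linear subspace; moreover, closedness of $U_{1}+U_{2}$ forces $c_{F}<1$ by \cref{fac:cFLess1}, hence $\frac{1+c_{F}}{2}\in\left[\frac{1}{2},1\right[$, a fact used throughout.

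For \cref{thm:ComposiBAM:PVcapW}, each $G_{i}$ is a $\gamma_{i}$-BAM with $\Fix G_{i}=U_{i}$ a closed affine subspace and $\bigcap_{i}U_{i}\neq\varnothing$, so \cref{prop:CompositionBAM}\cref{prop:CompositionBAM:comp} gives $\Fix(G_{2}G_{1})=U_{1}\cap U_{2}$. For the projection identity I would compute directly, combining the inclusions $U_{1}\cap U_{2}\subseteq U_{1},U_{2}$ with \cref{lem:ExchangeProj} (which yields $\Pro_{U_{1}\cap U_{2}}\Pro_{U_{i}}=\Pro_{U_{1}\cap U_{2}}$) and the BAM identity $\Pro_{U_{i}}G_{i}=\Pro_{U_{i}}$ from \cref{def:BAM}\cref{def:BAM:eq}: thus $\Pro_{U_{1}\cap U_{2}}G_{2}G_{1}=\Pro_{U_{1}\cap U_{2}}\Pro_{U_{2}}G_{2}G_{1}=\Pro_{U_{1}\cap U_{2}}\Pro_{U_{2}}G_{1}=\Pro_{U_{1}\cap U_{2}}G_{1}=\Pro_{U_{1}\cap U_{2}}\Pro_{U_{1}}G_{1}=\Pro_{U_{1}\cap U_{2}}\Pro_{U_{1}}=\Pro_{U_{1}\cap U_{2}}$. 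Item \cref{thm:ComposiBAM:Linear:eq0} is then immediate: if $x=\Pro_{U_{1}\cap U_{2}}x\in U_{1}\cap U_{2}=\Fix(G_{2}G_{1})$ then $G_{2}G_{1}x=x=\Pro_{U_{1}\cap U_{2}}x$; and if $G_{1}x=\Pro_{U_{1}\cap U_{2}}x\in U_{2}=\Fix G_{2}$ then $G_{2}G_{1}x=\Pro_{U_{1}\cap U_{2}}x$.

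The heart of the argument is \cref{thm:ComposiBAM:rVW} and \cref{thm:ComposiBAM:Linear:lVW}. The range statements are elementary once $c_{F}<1$ is in hand: for $r$, note $\gamma_{i}^{2}\le \gamma_{i}^{2}+(1-\gamma_{i}^{2})\frac{1+c_{F}}{2}<\gamma_{i}^{2}+(1-\gamma_{i}^{2})=1$; for $s$, rewriting $s^{2}=1-(1-\gamma_{1}^{2})(1-\gamma_{2}^{2})\big(1-\frac{(1+c_{F})^{2}}{4}\big)$ shows $s<1$ and $s^{2}\ge\gamma_{i}^{2}$, and, since $\frac{(1+c_{F})^{2}}{4}\ge\frac14$ gives $(1-\gamma_{1}^{2})(1-\gamma_{2}^{2})(1-\frac{(1+c_{F})^{2}}{4})\le\frac34$, also $s\ge\frac12$. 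For both inequalities I fix $x$ and split: if $x-\Pro_{U_{1}\cap U_{2}}x=0$ or $G_{1}x-\Pro_{U_{1}\cap U_{2}}x=0$, the left-hand side vanishes by \cref{thm:ComposiBAM:Linear:eq0}; otherwise \cref{lem:ComposiBAM:Linear}\cref{lem:ComposiBAM:Linear:ineq} reduces matters to bounding $\big(\gamma_{2}^{2}+(1-\gamma_{2}^{2})\beta_{1}^{2}\big)\big(\gamma_{1}^{2}+(1-\gamma_{1}^{2})\beta_{2}^{2}\big)$ by $r^{2}$, respectively $s^{2}$. Using $\beta_{1},\beta_{2}\in[0,1]$ from \cref{lem:ComposiBAM:Linear}\cref{lem:ComposiBAM:Linear:beta1beta2}, together with $\min\{\beta_{1},\beta_{2}\}\le\sqrt{\frac{1+c_{F}}{2}}$ and $\beta_{1}\beta_{2}\le\frac{1+c_{F}}{2}$ from \cref{lem:ComposiBAM:Linear}\cref{lem:ComposiBAM:Linear:constant}, the $r$-bound follows by bounding the factor containing the smaller of $\beta_{1},\beta_{2}$ via $\min\{\beta_{1},\beta_{2}\}^{2}\le\frac{1+c_{F}}{2}$ and the other factor via $\beta^{2}\le1$ by $1$, yielding one of the two expressions whose maximum is $r^{2}$; the $s$-bound follows by expanding the product and estimating each term via $\beta_{j}^{2}\le1$ and $\beta_{1}^{2}\beta_{2}^{2}\le\frac{(1+c_{F})^{2}}{4}$, whereupon the sum collapses exactly to $s^{2}$. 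One subtlety must be flagged: \cref{lem:ComposiBAM:Linear}\cref{lem:ComposiBAM:Linear:constant} carries two nonvanishing hypotheses, but whenever either fails the definitions in \cref{eq:beta1:beta2} force $\beta_{1}=0$ or $\beta_{2}=0$, so both bounds on $\min\{\beta_{1},\beta_{2}\}$ and on $\beta_{1}\beta_{2}$ hold unconditionally.

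Finally, \cref{thm:ComposiBAM:BAM} assembles the pieces against \cref{def:BAM}: \cref{thm:ComposiBAM:PVcapW} supplies \cref{def:BAM:Fix} and \cref{def:BAM:eq} with $\Fix(G_{2}G_{1})=U_{1}\cap U_{2}$, while \cref{thm:ComposiBAM:rVW} and \cref{thm:ComposiBAM:Linear:lVW} together supply \cref{def:BAM:Ineq} with constant $\min\{r,s\}\in\left[0,1\right[$, establishing that $G_{2}\circ G_{1}$ is a $\min\{r,s\}$-BAM. The main obstacle is purely the bookkeeping in the two product estimates—verifying that the $s$-bound collapses exactly to $s^{2}$ and that the degenerate cases of \cref{lem:ComposiBAM:Linear}\cref{lem:ComposiBAM:Linear:constant} are correctly absorbed—rather than any conceptual difficulty.
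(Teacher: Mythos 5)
Your proposal is correct and follows essentially the same route as the paper: \cref{prop:CompositionBAM}\cref{prop:CompositionBAM:comp} and \cref{lem:ExchangeProj} for item \cref{thm:ComposiBAM:PVcapW}, the degenerate-case reductions for \cref{thm:ComposiBAM:Linear:eq0}, and then \cref{lem:ComposiBAM:Linear} with $\min\{\beta_{1},\beta_{2}\}\leq\sqrt{(1+c_{F})/2}$ for the $r$-bound and $\beta_{1}\beta_{2}\leq(1+c_{F})/2$ with the expanded product for the $s$-bound. Your observation that the nonvanishing hypotheses of \cref{lem:ComposiBAM:Linear}\cref{lem:ComposiBAM:Linear:constant} can be absorbed because their failure forces $\beta_{1}=0$ or $\beta_{2}=0$ is exactly how the paper disposes of those cases (it bounds the product by $\gamma_{1}^{2}$ or $\gamma_{2}^{2}$ there, which is the same estimate).
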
	

\begin{proof}
Because $U_{1} + U_{2}$ is closed, by \cref{fac:cFLess1}, we know that
	\begin{align}   \label{eq:prop:ComposiBAM:cF}
	c_{F} :=c(U_{1},U_{2}) \in \left[0,1\right[\,.
	\end{align}
	Because $G_{1}$ is a $ \gamma_{1}   $-BAM    and  $G_{2}$ is a $\gamma_{2}  $-BAM,  by  \cref{def:BAM}  and  \cref{lemma:CFixG}\cref{lemma:CFixG:GC},  we have that
	\begin{align} \label{eq:prop:ComposiBAM:ProG:TH}
	\Pro_{U_{1}} G_{1}= \Pro_{U_{1}} = G_{1}\Pro_{U_{1}}   \quad \text{and} \quad \Pro_{U_{2}} G_{2} = \Pro_{U_{2}}=G_{2}\Pro_{U_{2}},
	\end{align}
	and that
	\begin{align} \label{eq:prop:ComposiBAM:Ineq:TH}
	(\forall x \in \mathcal{H}) \quad  \norm{G_{2}x -\Pro_{U_{2}}x} \leq \gamma_{2} \norm{x - \Pro_{U_{2}}x}.
	\end{align}
	
		\cref{thm:ComposiBAM:PVcapW}: By assumptions and by \cref{prop:CompositionBAM}\cref{prop:CompositionBAM:comp},  $\Fix (G_{2}\circ G_{1}) =\Fix G_{1} \cap \Fix G_{2}= U_{1} \cap U_{2} $ is a closed linear subspace of $\mathcal{H}$.
		 Because $U_{1} \cap U_{2} \subseteq U_{1}$ and $U_{1} \cap U_{2} \subseteq U_{2}$, by \cref{lem:ExchangeProj}, we know that
	\begin{align} \label{eq:prop:ComposiBAM:ProVW}
	\Pro_{U_{1}\cap U_{2}} \Pro_{U_{2}}=\Pro_{U_{1}\cap U_{2}}= \Pro_{U_{2}}\Pro_{U_{1}\cap U_{2}} \quad \text{and} \quad\Pro_{U_{1}\cap U_{2}}\Pro_{U_{1}}=\Pro_{U_{1}\cap U_{2}} = \Pro_{U_{1}} \Pro_{U_{1}\cap U_{2}}.
	\end{align}
	Moreover,
	\begin{align*}
	\Pro_{U_{1}\cap U_{2}} G_{2}G_{1} \stackrel{\cref{eq:prop:ComposiBAM:ProVW} }{=}  \Pro_{U_{1}\cap U_{2}}\Pro_{U_{2}}G_{2}G_{1} \stackrel{\cref{eq:prop:ComposiBAM:ProG:TH} }{=}  \Pro_{U_{1}\cap U_{2}}\Pro_{U_{2}}G_{1}  \stackrel{\cref{eq:prop:ComposiBAM:ProVW} }{=} \Pro_{U_{1}\cap U_{2}}\Pro_{U_{1}} G_{1} \stackrel{\cref{eq:prop:ComposiBAM:ProG:TH} }{=}   \Pro_{U_{1}\cap U_{2}}\Pro_{U_{1}} \stackrel{\cref{eq:prop:ComposiBAM:ProVW} }{=}  \Pro_{U_{1}\cap U_{2}}.
	\end{align*}
	
	\cref{thm:ComposiBAM:Linear:eq0}: If $x- \Pro_{U_{1}\cap U_{2}}x=0$, then by \cref{eq:prop:ComposiBAM:ProVW} and \cref{eq:prop:ComposiBAM:ProG:TH},  then $G_{2}G_{1}x=G_{2}G_{1}\Pro_{U_{1}\cap U_{2}}x=G_{2}G_{1}\Pro_{U_{1}}\Pro_{U_{1}\cap U_{2}}x=G_{2}\Pro_{U_{1}}\Pro_{U_{1}\cap U_{2}}x=G_{2}\Pro_{U_{2}}\Pro_{U_{1}\cap U_{2}}x=\Pro_{U_{2}}\Pro_{U_{1}\cap U_{2}}x=\Pro_{U_{1}\cap U_{2}}x$. Hence,
	$
	\norm{ G_{2}G_{1}x - \Pro_{U_{1}\cap U_{2}}x}= 0
	$.
	
	If $G_{1}x - \Pro_{U_{1}\cap U_{2}}x=0$, then by \cref{eq:prop:ComposiBAM:ProVW} and \cref{eq:prop:ComposiBAM:Ineq:TH},
	$
	\norm{ G_{2}G_{1}x -\Pro_{U_{1}\cap U_{2}}x} = \norm{G_{2} \Pro_{U_{1}\cap U_{2}}x- \Pro_{ U_{2}}\Pro_{U_{1}\cap U_{2}}x}
	\leq \gamma_{2}   \norm{ \Pro_{U_{1}\cap U_{2}}x - \Pro_{ U_{2}}\Pro_{U_{1}\cap U_{2}}x }
	=0
	$, that is, $\norm{ G_{2}G_{1}x - \Pro_{U_{1}\cap U_{2}}x} =0$.

	\cref{thm:ComposiBAM:rVW}:
	Because $\gamma_{1} \in \left[0,1\right[$ and $\gamma_{2} \in \left[0,1\right[$, by \cref{eq:prop:ComposiBAM:cF}, $r \in \big[\max \{  \gamma_{1}   , \gamma_{2}  \},1 \big[\, .$
	
We shall prove \cref{eq:thm:ComposiBAM:ineq} next.  Let $x \in \mathcal{H}$. By \cref{thm:ComposiBAM:Linear:eq0}, we are able to assume
	$x - \Pro_{U_{1}\cap U_{2}}x \neq 0$ and $G_{1}x - \Pro_{U_{1}\cap U_{2}}x \neq 0$.
	We define $\beta_{1}$ and $\beta_{2}$ as  in \cref{lem:ComposiBAM:Linear}.
	
	Note that if $\Pro_{U_{1}}x - \Pro_{U_{1}\cap U_{2}}x=0$, then $\beta_{2} =0$. Moreover,  by \cref{lem:ComposiBAM:Linear}\cref{lem:ComposiBAM:Linear:ineq}$\&$\cref{lem:ComposiBAM:Linear:beta1beta2},
	$
	\norm{ G_{2}G_{1}x - \Pro_{U_{1}\cap U_{2}}x}^{2}  \leq \left(\gamma^{2}_{2}  +(1 - \gamma^{2}_{2} ) \beta^{2}_{1}  \right)\gamma^{2}_{1}  \norm{x -\Pro_{U_{1}\cap U_{2}}x}^{2} \leq
	\gamma^{2}_{1}   \norm{x -\Pro_{U_{1}\cap U_{2}}x}^{2}.
	$
	If $\Pro_{U_{2}}G_{1}x - \Pro_{U_{1}\cap U_{2}}x =0$, then $\beta_{1} =0$ and, by \cref{lem:ComposiBAM:Linear}\cref{lem:ComposiBAM:Linear:ineq}$\&$\cref{lem:ComposiBAM:Linear:beta1beta2},
	$
	\norm{ G_{2}G_{1}x - \Pro_{U_{1}\cap U_{2}}x}^{2}  \leq  \gamma^{2}_{2}   \left(\gamma^{2}_{1}  +(1 - \gamma^{2}_{1} ) \beta^{2}_{2}  \right)   \norm{x -\Pro_{U_{1}\cap U_{2}}x}^{2} \leq
	\gamma^{2}_{2}   \norm{x -\Pro_{U_{1}\cap U_{2}}x}^{2}$. Because $\max \{  \gamma_{1}  , \gamma_{2}  \} \leq r$,  we know that in these two cases, \cref{eq:thm:ComposiBAM:ineq} is true. So in the rest of the proof, we assume that $\Pro_{U_{1}}x - \Pro_{U_{1}\cap U_{2}}x \neq 0$ and $\Pro_{U_{2}}G_{1}x - \Pro_{U_{1}\cap U_{2}}x  \neq 0$.

Using \cref{lem:ComposiBAM:Linear:sumbeta} in \cref{lem:ComposiBAM:Linear} \cref{lem:ComposiBAM:Linear:constant}, we obtain that
 	\begin{align}  \label{eq:thm:ComposiBAM:rVW:sumbeta}
 \beta_{1}  \leq  \sqrt{\frac{1+c_{F}}{2}} \quad \text{or} \quad \beta_{2}  \leq  \sqrt{\frac{1+c_{F}}{2}}.
 \end{align}

Therefore, combine \cref{lem:ComposiBAM:Linear}\cref{lem:ComposiBAM:Linear:ineq}$\&$\cref{lem:ComposiBAM:Linear:beta1beta2} with \cref{eq:rVcapW} and \cref{eq:thm:ComposiBAM:rVW:sumbeta} to obtain \cref{eq:thm:ComposiBAM:ineq}. 	Altogether, \cref{thm:ComposiBAM:rVW} holds.

	\cref{thm:ComposiBAM:Linear:lVW}: $	s=\sqrt{\gamma^{2}_{1}  +\gamma^{2}_{2}  - \gamma^{2}_{1}  \gamma^{2}_{2}   +(1-\gamma^{2}_{1} ) (1-\gamma^{2}_{2} ) \frac{(1+c_{F})^{2}}{4}}= \sqrt{\gamma^{2}_{1}  +(1-\gamma^{2}_{1} ) \big( \gamma^{2}_{2} +  (1-\gamma^{2}_{2} ) \frac{(1+c_{F})^{2}}{4}} \big)$ and $\gamma_{1} \in \left[0,1\right[$ and $\gamma_{2} \in \left[0,1\right[$ are symmetric in the expression of $s$. So, by \cref{eq:prop:ComposiBAM:cF}, $s \in \big[\max \{ \gamma_{1}  , \gamma_{2}  \}, 1 \big[\,$.   In addition, some elementary algebraic manipulations yield $\gamma^{2}_{1}  +\gamma^{2}_{2}  - \gamma^{2}_{1}  \gamma^{2}_{2}   +(1-\gamma^{2}_{1} ) (1-\gamma^{2}_{2} ) \frac{(1+c_{F})^{2}}{4} \geq \gamma^{2}_{1}  +\gamma^{2}_{2}  - \gamma^{2}_{1}  \gamma^{2}_{2}   +(1-\gamma^{2}_{1} ) (1-\gamma^{2}_{2} ) \frac{1}{4} \geq \frac{1}{4}$.
	 Hence,  $s \in \big[\max \{ \gamma_{1}  , \gamma_{2} , \frac{1}{2} \}, 1 \big[\,$.
	
	We prove \cref{eq:thm:ComposiBAM:gamma} next. Let $x \in \mathcal{H}$. Because $s\geq  \max \{ \gamma_{1}  , \gamma_{2}  \}$,
	similarly to the proof of \cref{thm:ComposiBAM:rVW}, to show \cref{eq:thm:ComposiBAM:gamma}, we are able to assume $x - \Pro_{U_{1}\cap U_{2}}x \neq 0$ and $G_{1}x - \Pro_{U_{1}\cap U_{2}}x \neq 0$, $\Pro_{U_{1}}x - \Pro_{U_{1}\cap U_{2}}x \neq 0$ and $\Pro_{U_{2}}G_{1}x - \Pro_{U_{1}\cap U_{2}}x  \neq 0$.
	Define   $\beta_{1}$ and $\beta_{2}$ as in \cref{lem:ComposiBAM:Linear}.
	
	Use \cref{lem:ComposiBAM:Linear}\cref{lem:ComposiBAM:Linear:beta1beta2} and   \cref{lem:ComposiBAM:Linear:constant:betacF}  in \cref{lem:ComposiBAM:Linear}\cref	{lem:ComposiBAM:Linear:constant} respectively in the following two inequalities to obtain that
	\begin{subequations}
		\begin{align*}
		\left(\gamma^{2}_{2}  +(1 - \gamma^{2}_{2} ) \beta^{2}_{1}  \right)  \left(\gamma^{2}_{1}  +(1 - \gamma^{2}_{1} ) \beta^{2}_{2}  \right) &=
		\gamma^{2}_{2} \gamma^{2}_{1} +\gamma^{2}_{2}  (1 - \gamma^{2}_{1} ) \beta^{2}_{2} +\gamma^{2}_{1} (1 - \gamma^{2}_{2} ) \beta^{2}_{1}
		+(1 - \gamma^{2}_{2} )  (1 - \gamma^{2}_{1} )\beta^{2}_{1} \beta^{2}_{2}  \\
		&\leq
		\gamma^{2}_{2} \gamma^{2}_{1} +\gamma^{2}_{2}  (1 -\gamma^{2}_{1} ) +\gamma^{2}_{1} (1 - \gamma^{2}_{2} )
		+(1 - \gamma^{2}_{2} ) (1 - \gamma^{2}_{1} ) \beta^{2}_{1}\beta^{2}_{2}  \\
		& \leq
		\gamma^{2}_{1} +\gamma^{2}_{2}   - \gamma^{2}_{1}  \gamma^{2}_{2}   +(1-\gamma^{2}_{1} ) (1-\gamma^{2}_{2} ) \frac{(1+c_{F})^{2}}{4}=s^{2}.
		\end{align*}
	\end{subequations}
This and  \cref{lem:ComposiBAM:Linear}\cref{lem:ComposiBAM:Linear:ineq}  yield that
	\begin{align*}
	\norm{ G_{2}G_{1}x - \Pro_{U_{1}\cap U_{2}}x}^{2} &\leq 	\left(\gamma^{2}_{2} +(1 - \gamma^{2}_{2} ) \beta^{2}_{1}  \right)  \left(\gamma^{2}_{1} +(1 - \gamma^{2}_{1} ) \beta^{2}_{2}  \right) \norm{x -\Pro_{U_{1}\cap U_{2}}x}^{2}\\
	&\leq s^{2} \norm{x -\Pro_{U_{1}\cap U_{2}}x}^{2}.
	\end{align*}
	Hence, 	\cref{thm:ComposiBAM:Linear:lVW} holds.
	
	\cref{thm:ComposiBAM:BAM}:
	Combine \cref{def:BAM} with  \cref{thm:ComposiBAM:PVcapW}, \cref{thm:ComposiBAM:rVW} and \cref{thm:ComposiBAM:Linear:lVW}	  to obtain 	that  $G_{2}\circ G_{1}$ is a $\min \{r, s\}$-BAM.
\end{proof}	

\begin{lemma} \label{lemma:perp:equiv}
	Set $\I:=\{1, \ldots, m\}$.	Let $U_{1}, \ldots, U_{m}$ be closed linear subspaces of $\mathcal{H}$. The following hold:
	\begin{enumerate}
		\item \label{lemma:perp:equiv:i} Let $ i \in \I \smallsetminus\{m\}$. Then
			\begin{align*}
	  U_{i+1} + \cap^{i}_{j=1}   U_{j} \text{ is closed }   \Leftrightarrow  U^{\perp}_{i+1} + (\cap^{i}_{j=1}   U_{j})^{\perp} \text{ is closed }
		\Leftrightarrow  U^{\perp}_{i+1} +\overline{\sum^{i}_{j=1}   U^{\perp} _{j}} \text{ is closed}.
		\end{align*}
		\item \label{lemma:perp:equiv:EQ} $ (\forall i \in \I \smallsetminus\{m\}) $ $U_{i+1} + \cap^{i}_{j=1}   U_{j} $ is closed if and only if $ (\forall i \in \I )$ $\sum^{i}_{j=1}   U^{\perp}_{j}$  is closed
	\end{enumerate}
\end{lemma}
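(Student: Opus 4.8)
The plan for \cref{lemma:perp:equiv:i} is to read both equivalences off from facts already available rather than argue from scratch. For the first equivalence I would apply \cref{fac:cFLess1} to the two closed linear subspaces $U := U_{i+1}$ and $V := \cap^{i}_{j=1} U_{j}$, the latter being a closed linear subspace as a finite intersection of closed linear subspaces. The equivalence of items (ii) and (iii) in \cref{fac:cFLess1} then says precisely that $U_{i+1} + \cap^{i}_{j=1} U_{j}$ is closed if and only if $U^{\perp}_{i+1} + (\cap^{i}_{j=1} U_{j})^{\perp}$ is closed. For the second equivalence I would invoke the standard duality identity $(\cap^{i}_{j=1} U_{j})^{\perp} = \overline{\sum^{i}_{j=1} U^{\perp}_{j}}$ (which follows by taking orthogonal complements twice and using that each $U_{j}$ is closed, so that $(\overline{\sum^{i}_{j=1} U^{\perp}_{j}})^{\perp} = \cap^{i}_{j=1} U_{j}$). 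Since the two summands $(\cap^{i}_{j=1} U_{j})^{\perp}$ and $\overline{\sum^{i}_{j=1} U^{\perp}_{j}}$ are literally equal, the sets $U^{\perp}_{i+1} + (\cap^{i}_{j=1} U_{j})^{\perp}$ and $U^{\perp}_{i+1} + \overline{\sum^{i}_{j=1} U^{\perp}_{j}}$ coincide, and the second equivalence is immediate.

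For \cref{lemma:perp:equiv:EQ} I would abbreviate $V_{j} := U^{\perp}_{j}$ and $S_{i} := \sum^{i}_{j=1} V_{j} = \sum^{i}_{j=1} U^{\perp}_{j}$, so that the right-hand condition reads ``$S_{i}$ is closed for all $i \in \I$''. Using \cref{lemma:perp:equiv:i}, I would first restate the left-hand condition equivalently as ``$V_{i+1} + \overline{S_{i}}$ is closed for all $i \in \I \smallsetminus \{m\}$''. The implication from the left-hand condition to the right-hand one is then an induction on $i$: the base case $S_{1} = U^{\perp}_{1}$ is automatic, since an orthogonal complement is always closed; for the inductive step, if $S_{i}$ is closed then $\overline{S_{i}} = S_{i}$, so the restated left-hand condition (valid for $i \in \I \smallsetminus \{m\}$) gives that $V_{i+1} + S_{i} = S_{i+1}$ is closed. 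This propagates closedness from $S_{1}$ up to $S_{m}$. For the converse, if every $S_{i}$ is closed then $\overline{S_{i}} = S_{i}$ and $V_{i+1} + \overline{S_{i}} = S_{i+1}$ is closed for each $i \in \I \smallsetminus \{m\}$, which is exactly the restated left-hand condition.

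Once \cref{lemma:perp:equiv:i} is in hand there is no serious obstacle; the rest is bookkeeping. The only points that need a little care are (a) justifying or citing the orthogonal-complement duality $(\cap_{j} U_{j})^{\perp} = \overline{\sum_{j} U^{\perp}_{j}}$, and (b) keeping the index ranges straight in the induction, noting that the hypothesis in \cref{lemma:perp:equiv:EQ} ranges over $i \in \{1,\dots,m-1\}$ while the conclusion ranges over $i \in \{1,\dots,m\}$ --- which is precisely what the induction, anchored at the free base case $S_{1}$, delivers.
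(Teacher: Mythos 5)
Your proposal is correct and follows essentially the same route as the paper: part (i) is obtained by applying \cref{fac:cFLess1} to $U_{i+1}$ and $\cap_{j=1}^{i}U_{j}$ together with the duality identity $(\cap_{j=1}^{i}U_{j})^{\perp}=\overline{\sum_{j=1}^{i}U_{j}^{\perp}}$ (which the paper simply cites from Deutsch, Theorem~4.6(5), while you rederive it by taking orthogonal complements twice), and part (ii) is the same induction anchored at the automatically closed $S_{1}=U_{1}^{\perp}$. No gaps.
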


\begin{proof}
\cref{lemma:perp:equiv:i}:	The two equivalences follow by  \cref{fac:cFLess1} and \cite[Theorem~4.6(5)]{D2012} respectively.

\cref{lemma:perp:equiv:EQ}: Note that by \cite[Theorem~4.5(1)]{D2012},
$ U_{1}^{\perp}$ is a closed linear subspace of $\mathcal{H}$, that is, $U_{1}^{\perp} =\overline{U_{1}^{\perp}}$. Then the asserted result follows from \cref{lemma:perp:equiv:i} by the principle of strong mathematical induction on $m$.
\end{proof}

\begin{theorem} \label{thm:BAM:COMPO}
	Set $\I:= \{1, \ldots, m\}$.	
	Let $(\forall i \in \I)$ $\gamma_{i} \in \left[0,1\right[\,$ and let
	$G_{i} : \mathcal{H} \to \mathcal{H}$ be a $\gamma_{i}$-BAM such that $ U_{i}:=\Fix G_{i}$ is a closed affine subspaces of $\mathcal{H}$ with $\cap_{i \in \I} U_{i} \neq \varnothing$. Assume that $(\forall i \in \I )$ $\sum^{i}_{j=1}   (\pa U_{j} )^{\perp} $ is closed. Then the following statements hold:
	\begin{enumerate}
		\item \label{thm:BAM:COMPO:Fix} $(\forall k \in \{1, \ldots, m\})$ $\Fix (G_{k} \circ \cdots \circ G_{1}) = \cap^{k}_{i=1} \Fix G_{i} $ is a closed affine subspaces of $\mathcal{H}$.
		\item  \label{thm:BAM:COMPO:BAM} $G_{m} \circ \cdots \circ G_{2} \circ G_{1}$ is a  BAM. \item \label{thm:BAM:COMPO:BAM:2} Suppose that $m=2$. Denote the cosine $c( \pa U_{1}, \pa U_{2})$ of the Friedrichs angle between $ \pa U_{1}$ and $\pa U_{2}$ by $c_{F}$.   Set
		\begin{align*}
	r:=\max_{i\in \I}  \sqrt{\gamma^{2}_{i}  + (1-\gamma^{2}_{i} )\frac{1+c_{F}}{2}},
		\quad \text{and} \quad s:=\sqrt{\gamma^{2}_{1} +\gamma^{2}_{2}   - \gamma^{2}_{1}  \gamma^{2}_{2}  +(1-\gamma^{2}_{1} ) (1-\gamma^{2}_{2} ) \frac{(1+c_{F})^{2}}{4}}.
		\end{align*}
		Then $\min \{ r , s \} \in \left[0,1\right[\,$ and $G_{2}\circ G_{1}$ is a  $\min \{ r , s \}$-BAM.
		\item \label{thm:BAM:COMPO:LineaConve}  There exists $\gamma \in \left[0,1\right[$ such that
		\begin{align*}
		(\forall x \in \mathcal{H}) \quad \norm{ (G_{m} \circ \cdots \circ G_{2} \circ G_{1})^{k}x - \Pro_{ \cap^{m}_{i=1} U_{i}} x} \leq \gamma^{k} \norm{x - \Pro_{ \cap^{m}_{i=1} U_{i}} x}.
		\end{align*}
	\end{enumerate}
\end{theorem}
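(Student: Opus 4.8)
The plan is to reduce the statement to the case of closed \emph{linear} fixed point sets by a single translation, and then to induct on $m$ with \cref{thm:ComposiBAM:Linear} serving as the two-operator engine. I would first dispose of \cref{thm:BAM:COMPO:Fix}. Each $G_i$ is a $\gamma_i$-BAM whose fixed point set $U_i$ is a closed affine subspace, so by \cref{prop:BAM:StrictlyQuasinonexpansive}\cref{prop:BAM:StrictlyQuasinonexpansive:STRICQUA} each $G_i$ is strictly quasinonexpansive; since $\cap_{i\in\I}U_i\neq\varnothing$, applying \cref{prop:CompositionBAM}\cref{prop:CompositionBAM:comp} to each partial family $\{G_1,\dots,G_k\}$ yields $\Fix(G_k\circ\cdots\circ G_1)=\cap_{i=1}^k U_i$, which is a closed affine subspace as a nonempty intersection of closed affine subspaces.

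For \cref{thm:BAM:COMPO:BAM}, I would fix $z\in\cap_{i\in\I}U_i$, put $V_i:=\pa U_i=U_i-z$ (a closed linear subspace), and define $F_i$ by $G_i x=z+F_i(x-z)$. By \cref{prop:BAMAffinLinea}\cref{prop:BAMAffinLinea:Fix}$\&$\cref{prop:BAMAffinLinea:BAM}, each $F_i$ is a $\gamma_i$-BAM with $\Fix F_i=V_i$, and by \cref{lemma:shift}\cref{lemma:shift:comp} it suffices to show that $F_m\circ\cdots\circ F_1$ is a BAM. This I would establish by induction on $m$: for $m=1$ there is nothing to prove, and assuming $H:=F_{m-1}\circ\cdots\circ F_1$ is a BAM with $\Fix H=\cap_{j=1}^{m-1}V_j=:W$ (a closed linear subspace, by \cref{thm:BAM:COMPO:Fix}), I would apply \cref{thm:ComposiBAM:Linear} to the pair $H$ and $F_m$, whose fixed point sets are the closed linear subspaces $W$ and $V_m$. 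Provided $W+V_m$ is closed, \cref{thm:ComposiBAM:Linear}\cref{thm:ComposiBAM:PVcapW}$\&$\cref{thm:ComposiBAM:BAM} then give that $F_m\circ H$ is a BAM with fixed point set $W\cap V_m=\cap_{j=1}^m V_j$, closing the induction.

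The one thing that makes the induction run is the closedness of $\cap_{j=1}^{i}V_j+V_{i+1}$ at every stage $i\in\{1,\dots,m-1\}$, and this is precisely where the standing hypothesis is used. Since $V_j^{\perp}=(\pa U_j)^{\perp}$, the assumption that $\sum_{j=1}^i(\pa U_j)^{\perp}$ is closed for every $i\in\I$ is, by \cref{lemma:perp:equiv}\cref{lemma:perp:equiv:EQ}, equivalent to the closedness of $V_{i+1}+\cap_{j=1}^i V_j$ for every $i\in\I\smallsetminus\{m\}$ --- exactly the input required by \cref{thm:ComposiBAM:Linear} at the $i$-th inductive step. I expect this matching of the closedness hypothesis to the two-operator theorem along the induction to be the main (if not deep) point to get right; everything else is transport of a known binary result through a telescoping composition.

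Finally, \cref{thm:BAM:COMPO:BAM:2} is read off directly from \cref{thm:ComposiBAM:Linear}\cref{thm:ComposiBAM:BAM} in the case $m=2$ after the translation, the constants $r,s$ being unchanged because $c_F=c(\pa U_1,\pa U_2)$ is translation invariant. And \cref{thm:BAM:COMPO:LineaConve} is then immediate: writing $G:=G_m\circ\cdots\circ G_1$, once $G$ is a $\gamma$-BAM for some $\gamma\in[0,1[\,$, \cref{prop:BAM:Properties}\cref{prop:BAM:Properties:compo:Ineq} yields $\norm{G^k x-\Pro_{\Fix G}x}\le\gamma^k\norm{x-\Pro_{\Fix G}x}$ for every $k\in\mathbb{N}$, while $\Fix G=\cap_{i=1}^m U_i$ by \cref{thm:BAM:COMPO:Fix}.
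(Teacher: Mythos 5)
Your proposal is correct and follows essentially the same route as the paper: translate by a point of $\cap_{i\in\I}U_i$ to reduce to closed linear fixed point sets via \cref{prop:BAMAffinLinea} and \cref{lemma:shift}\cref{lemma:shift:comp}, obtain \cref{thm:BAM:COMPO:Fix} from \cref{prop:CompositionBAM}\cref{prop:CompositionBAM:comp}, and then induct, feeding the two-operator result \cref{thm:ComposiBAM:Linear} with the closedness of $(\cap_{j=1}^{k}U_j)+U_{k+1}$ supplied by \cref{lemma:perp:equiv}\cref{lemma:perp:equiv:EQ}. You correctly identified the matching of the closedness hypothesis along the induction as the one point that needs care, and your handling of \cref{thm:BAM:COMPO:BAM:2} and \cref{thm:BAM:COMPO:LineaConve} coincides with the paper's.
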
	

\begin{proof}
	\cref{thm:BAM:COMPO:Fix}: This is from \cref{prop:CompositionBAM}\cref{prop:CompositionBAM:comp}.

	\cref{thm:BAM:COMPO:BAM}$\&$\cref{thm:BAM:COMPO:BAM:2}: Let $z \in \cap_{i \in \I} U_{i} $. Define $(\forall i \in \I)$ $F_{i} :\mathcal{H} \to \mathcal{H}$ by
	\begin{align} \label{eq:thm:BAM:COMPO:F}
	(\forall x \in \mathcal{H}) \quad 	F_{i} (x) := G_{i}(x +z) -z
	\end{align}
	By  the assumptions, \cref{eq:thm:BAM:COMPO:F} and \cref{prop:BAMAffinLinea}, $F_{i}$ is a $\gamma_{i} $-BAM with $\Fix F_{i} =\pa U_{i}$ being a closed linear subspace of $\mathcal{H}$.
	Hence,   by \cref{thm:BAM:COMPO:Fix}, \cref{eq:thm:BAM:COMPO:F} and \cref{lemma:shift}\cref{lemma:shift:comp},  we are able to assume that $U_{1}, \ldots, U_{m}$ are  closed linear subspaces of $\mathcal{H}$. Then  \cref{thm:BAM:COMPO:BAM:2} reduces to \cref{thm:ComposiBAM:Linear}\cref{thm:ComposiBAM:BAM}.
	
	We prove 	\cref{thm:BAM:COMPO:BAM} next.
	If $m=1$, then there is nothing to prove. Suppose  that $m \geq 2$. We prove it by induction on $k \in \{1, \ldots, m\}$. By assumption,  $G_{1}$ is a  BAM, so the base case is true.  Assume $G_{k} \circ \cdots   \circ G_{1}$ is a  BAM for some $k \in \{1,2,\ldots, m-1\}$.
	By the assumption, $(\forall i \in \I )$  $ \sum^{i}_{j=1}   U^{\perp}_{j} $ is closed,  and by \cref{thm:BAM:COMPO:Fix} and  \cref{lemma:perp:equiv}\cref{lemma:perp:equiv:EQ}, we know that
	$\Fix (G_{k} \circ \cdots   \circ G_{1})  + \Fix G_{k+1} = (\cap^{k}_{j=1}   U_{j})  +U_{k+1}$ is closed. Hence,
	apply \cref{thm:ComposiBAM:Linear}\cref{thm:ComposiBAM:BAM} with $G_{1}=G_{k} \circ \cdots   \circ G_{1}$ and $G_{2} = G_{k+1}$   to obtain that $G_{k+1} \circ G_{k} \circ \cdots   \circ G_{1}$ is a  BAM.
	Therefore,  \cref{thm:BAM:COMPO:BAM}  holds as well.

	\cref{thm:BAM:COMPO:LineaConve}: This comes from \cref{thm:BAM:COMPO:BAM} and \cref{prop:BAM:Properties}.
\end{proof}

The following \cref{rema:comp:BAM}\cref{rema:comp:BAM:PP} and \cref{remark:ComposiBAM:Linear}\cref{remark:ComposiBAM:Linear:constant}   exhibit a case  where the new constant $s$ associated with the composition of BAMs presented in \cref{thm:ComposiBAM:Linear}\cref{thm:ComposiBAM:BAM} is better than the constant $r $ from \cite{BCS2019}.
Moreover, \cref{rema:comp:BAM} illustrates that generally $\min\{r,s\}$ in \cref{thm:ComposiBAM:Linear} is not a sharp constant for the composition of BAMs.
\begin{remark} \label{rema:comp:BAM}
	Let $L_{1}$ and $L_{2}$ be closed linear subspaces of $\mathcal{H}$. Assume that $L_{1} + L_{2}$ is closed. Denote by $c_{F}:=c(L_{1}, L_{2})$ the Friedrichs angle between $L_{1}$ and $L_{2}$.  By \cite[Corollary~4.5.2]{Cegielski},   $\Fix \Pro_{L_{2}}\Pro_{L_{1}} =L_{1} \cap L_{2}$ is a closed linear subspace of $\mathcal{H}$. By \cref{examp:BAM:Pro}, both $ \Pro_{L_{1}}$ and  $ \Pro_{L_{2}}$ are  $0$-BAM.  Moreover,  the following hold:
	\begin{enumerate}
		\item \label{rema:comp:BAM:PP} Apply \cref{thm:ComposiBAM:Linear}\cref{thm:ComposiBAM:BAM} with $G_{1}=\Pro_{L_{1}}$, $G_{2}=\Pro_{L_{2}}$, $ \gamma_{1}  =0$, $ \gamma_{2} =0$ to obtain that $ \min \{ \sqrt{\frac{1+c_{F}}{2}}, \frac{1+c_{F}}{2}\}=\frac{1+c_{F}}{2}$ and $\Pro_{L_{2}}\Pro_{L_{1}}$ is a $\frac{1+c_{F}}{2}$-BAM.
		
		\item 	\label{rema:comp:BAM:ineq} By \cite[Lemma~9.5(7) and Theorem~9.8]{D2012},
		\begin{align*}
		(\forall x \in \mathcal{H}) \quad \norm{\Pro_{L_{2}} \Pro_{L_{1}}x - \Pro_{  L_{1} \cap L_{2} } x  }  \leq  c_{F}  \norm{x - \Pro_{  L_{1} \cap L_{2} }x},
		\end{align*}
	and $c_{F} $ is the smallest constant satisfying the inequality above.
		Hence, $\Pro_{L_{2}}\Pro_{L_{1}}$ is a BAM with sharp constant $c_{F} $.
		\end{enumerate}
	Recall that $c_{F} :=c(U_{1}, U_{2}) \in \left[0,1\right[\,$, so $c_{F} < \frac{1+c_{F}}{2}$.
	Hence, we know that
	generally the constant associated with the composition of  BAMs provided  by \cref{thm:ComposiBAM:Linear}\cref{thm:ComposiBAM:BAM}   is not sharp.
\end{remark}		
The following \cref{remark:ComposiBAM:Linear}\cref{rema:comp:BAM:l:r} presents examples showing that the constants $s  $ and $  r $  in \cref{thm:ComposiBAM:Linear} are  independent.
\begin{remark} \label{remark:ComposiBAM:Linear}
	Consider the constants $r,s$ in \cref{thm:ComposiBAM:Linear}\cref{thm:ComposiBAM:BAM} .
	\begin{enumerate}
		\item \label{remark:ComposiBAM:Linear:constant} Suppose that $\gamma_{1}=0$ or  $\gamma_{2}=0$, that is, $G_{1} =\Pro_{U_{1}}$ or $G_{2} =\Pro_{U_{2}}$. Without loss of generality, let $\gamma_{2}=0$. Then
		\begin{align*}
		r  =\sqrt{\gamma^{2}_{1}  + (1-\gamma^{2}_{1} )\frac{1+c_{F}}{2}}, \quad \text{and} \quad
		&s=\sqrt{\gamma^{2}_{1} +(1-\gamma^{2}_{1} ) \frac{(1+c_{F})^{2}}{4}}.
		\end{align*}
		Therefore,  $s  \leq  r $.
		\item \label{rema:comp:BAM:l:r} Suppose that  $\gamma:=\gamma_{1} =\gamma_{2} \in \left[0,1\right[\,$ and that $c_{F} =0$. Then
		\begin{align*}
		r =\sqrt{\gamma^{2}+ (1-\gamma^{2} )\frac{1}{2}} \quad \text{and} \quad
		s =\sqrt{2\gamma^{2}  - \gamma^{4}  +(1-\gamma^{2} )^{2} \frac{1}{4}}.
		\end{align*}
		Hence
		\begin{align*}
		s^{2} -r^{2}
		=\frac{  (1-\gamma^{2} )  }{4}  (  3\gamma^{2}  -1 ),
		\end{align*}
		which implies that
		\begin{align*}
		s  \geq  r  \Leftrightarrow  \gamma \in \left[\frac{\sqrt{3}}{3},1\right[\, \quad \text{and} \quad s  <  r  \Leftrightarrow \gamma \in \left[ 0, \frac{\sqrt{3}}{3} \right[\, .
		\end{align*}
	\end{enumerate}
\end{remark}

\subsection*{Compositions of BAMs with general convex fixed point sets}
In this subsection, we investigate compositions of BAMs with general closed and convex  fixed point sets.

By \cref{examp:BAM:Pro},   the projection onto a nonempty closed convex subset of $\mathcal{H}$ is the most common BAM.
The following results show that the order of the projections does matter to determine whether the composition of projections is a BAM or not.
The next result considers the composition of projections onto a cone and a ball.
\begin{proposition}  \label{prop:coneball}
	Let $K$ be a nonempty closed convex cone in $\mathcal{H}$, and let $\rho \in \mathbb{R}_{++}$. Denote by $B:=\mathbf{B}[0; \rho]$.
	\begin{enumerate}
		\item \label{item:prop:coneball:cone} $\Pro_{B}\Pro_{K}=\Pro_{K \cap B }$  is a $0$-BAM.
		\item \label{item:prop:coneball:ball} Suppose that $\mathcal{H} =\mathbb{R}^{2}$, $K=\mathbb{R}^{2}_{+}$ and $\rho=1$. Then $\Pro_{K}\Pro_{B}$ is not a BAM.
	\end{enumerate}
\end{proposition}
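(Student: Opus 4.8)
\textbf{Part \cref{item:prop:coneball:cone}.} The plan is to first establish the operator identity $\Pro_{B}\Pro_{K}=\Pro_{K\cap B}$; once this is in hand, the conclusion is immediate, for $0\in K\cap B$ (a nonempty closed convex cone contains the origin, and $0\in B$), so $K\cap B$ is nonempty, closed, and convex, and \cref{cor:BAMProPro} applied with $C_{1}:=K$ and $C_{2}:=B$ yields that $\Pro_{B}\Pro_{K}=\Pro_{K\cap B}$ is a $0$-BAM. To prove the identity, I would fix $x\in\mathcal{H}$, set $p:=\Pro_{K}x$ and $q:=\Pro_{B}p$, and verify that $q$ satisfies the variational characterization of $\Pro_{K\cap B}x$, namely $q\in K\cap B$ together with $\innp{x-q,c-q}\leq 0$ for every $c\in K\cap B$.

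The case $\norm{p}\leq\rho$ is routine: then $q=p\in K\cap B$ and the required inequality reduces to the defining inequality of $p=\Pro_{K}x$ tested against $c\in K\cap B\subseteq K$. The interesting case is $\norm{p}>\rho$, where $q=(\rho/\norm{p})\,p$; here $q\in K$ because $K$ is a cone and $\rho/\norm{p}\geq 0$, while $\norm{q}=\rho$ gives $q\in B$. The key algebraic input is the cone orthogonality $\innp{x-p,p}=0$, which I obtain by testing the inequality $\innp{x-p,k-p}\leq 0$ with $k=2p\in K$ and with $k=0\in K$. I then expand
\[
\innp{x-q,c-q}=\innp{x-p,c-q}+\Big(1-\tfrac{\rho}{\norm{p}}\Big)\innp{p,c-q},
\]
bound the first summand by writing $c-q=(c-p)+(1-\tfrac{\rho}{\norm{p}})p$ and using $\innp{x-p,c-p}\leq 0$ together with $\innp{x-p,p}=0$, and bound the second by noting $\innp{p,q}=\rho\norm{p}$, so that $c\in B$ gives $\innp{p,c}\leq\norm{p}\norm{c}\leq\rho\norm{p}=\innp{p,q}$, whence $\innp{p,c-q}\leq 0$ while the factor $1-\tfrac{\rho}{\norm{p}}>0$. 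This variational verification, and in particular the exploitation of the cone structure, is the main obstacle.

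\textbf{Part \cref{item:prop:coneball:ball}.} Here the plan is to exhibit a single point at which condition \cref{def:BAM:eq} of \cref{def:BAM} fails. First I identify the fixed point set: by the same Cegielski result invoked in the proof of \cref{cor:BAMProPro}, $\Fix(\Pro_{K}\Pro_{B})=B\cap K$, which in $\mathcal{H}=\mathbb{R}^{2}$ with $K=\mathbb{R}^{2}_{+}$ and $\rho=1$ is the quarter-disk $\{(a,b):a\geq 0,\ b\geq 0,\ a^{2}+b^{2}\leq 1\}$. Taking $x:=(1,-1)$, I compute $\Pro_{B}x=\tfrac{1}{\sqrt{2}}(1,-1)$, hence $\Pro_{K}\Pro_{B}x=(\tfrac{1}{\sqrt{2}},0)$, which already lies in $K\cap B$ and is therefore fixed by $\Pro_{K\cap B}$. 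On the other hand, the nearest point of the quarter-disk to $(1,-1)$ is $(1,0)$, so $\Pro_{K\cap B}x=(1,0)$. Since $\Pro_{K\cap B}(\Pro_{K}\Pro_{B}x)=(\tfrac{1}{\sqrt{2}},0)\neq(1,0)=\Pro_{K\cap B}x$, the identity $\Pro_{\Fix G}G=\Pro_{\Fix G}$ fails for $G=\Pro_{K}\Pro_{B}$, so $\Pro_{K}\Pro_{B}$ is not a BAM.

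The computations in Part \cref{item:prop:coneball:ball} are routine; the only point needing care is confirming that $\Pro_{K\cap B}(1,-1)$ is the axis point $(1,0)$ rather than a point on the circular arc, which I would settle by minimizing the squared distance $3-2\cos\theta+2\sin\theta$ over $\theta\in[0,\tfrac{\pi}{2}]$ (it is increasing there) and comparing against the two boundary segments. The genuine difficulty lies entirely in the variational-inequality argument of Part \cref{item:prop:coneball:cone}, through the orthogonality relation $\innp{x-p,p}=0$ and the cone-invariance of the radial projection $q$.
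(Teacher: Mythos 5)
Your proof is correct, and part \cref{item:prop:coneball:ball} is essentially identical to the paper's: the paper also takes $x=(1,-1)$, identifies $\Fix (\Pro_{K}\Pro_{B})=K\cap B$ via \cite[Corollary~4.5.2]{Cegielski}, and concludes from $\Pro_{K\cap B}\Pro_{K}\Pro_{B}(1,-1)=(\tfrac{1}{\sqrt{2}},0)\neq(1,0)=\Pro_{K\cap B}(1,-1)$ that condition \cref{def:BAM:eq} of \cref{def:BAM} fails (it cites \cite[Example~7.5]{BBW2018} for the computation you carry out by hand). The genuine difference is in part \cref{item:prop:coneball:cone}: the paper obtains the identity $\Pro_{B}\Pro_{K}=\Pro_{K\cap B}$ by citing \cite[Corollary~7.3]{BBW2018} and then applies \cref{cor:BAMProPro}, exactly as you do, whereas you prove the identity from scratch. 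Your variational argument is sound: the orthogonality $\innp{x-p,p}=0$ for $p=\Pro_{K}x$ (obtained by testing the projection inequality with $2p\in K$ and $0\in K$, the latter because a nonempty closed cone contains the origin), the cone-invariance of the radial point $q=(\rho/\norm{p})p$, and the Cauchy--Schwarz bound $\innp{p,c}\leq\rho\norm{p}=\innp{p,q}$ for $c\in B$ together give $\innp{x-q,c-q}\leq 0$ for all $c\in K\cap B$, which is the characterization of $\Pro_{K\cap B}x$. What your route buys is a self-contained, elementary proof that removes the dependence on the external reference; what the paper's route buys is brevity. Either is acceptable.
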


\begin{proof}
\cref{item:prop:coneball:cone}: By	\cite[Corollary~7.3]{BBW2018}, $\Pro_{B}\Pro_{K}=\Pro_{K \cap B }$, which, by \cref{cor:BAMProPro}, yields that $\Pro_{B}\Pro_{K}$  is a $0$-BAM.

\cref{item:prop:coneball:ball}: By \cite[Corollary~4.5.2]{Cegielski}, $\Fix \Pro_{K}\Pro_{B}= K \cap B $.
By	\cite[Example~7.5]{BBW2018}, we know that
\begin{align*}
\Pro_{K \cap B }\Pro_{K}\Pro_{B}(1,-1) =\left(\frac{1}{\sqrt{2}}, 0\right) \neq (1,0)=\Pro_{K \cap B}(1,-1),
\end{align*}
which implies that $\Pro_{K \cap B}\Pro_{K}\Pro_{B} \neq \Pro_{K \cap B }$. So, by \cref{def:BAM}, $\Pro_{K}\Pro_{B} $ is not a BAM.
\end{proof}

The following example considers projections onto an affine subspace and a cone.

\begin{example} \label{exam:LineCone}
	Suppose $\mathcal{H} =\mathbb{R}^{2}$. Let $U:= \{(x_{1},x_{2} ) \in \mathbb{R}^{2} ~:~ x_{2} =-x_{1} +1   \}$  and $K:=\mathbb{R}^{2}_{+}$. Then the  following hold (see also \cref{fig:examp:BAM:LineCone}):
	\begin{enumerate}
		\item \label{exam:LineCone:ConeLine} $\Pro_{U}\Pro_{K}$ is not a BAM.
		\item \label{exam:LineCone:LineCone} $\Pro_{K}\Pro_{U}$ is a $ \frac{\sqrt{2}}{2}  $-BAM.
	\end{enumerate}
\end{example}

\begin{proof}
	Define the lines $L_{1} :=\mathbb{R} \cdot (1,0)$, $L_{2} :=\mathbb{R} \cdot (0,1)$, $l_{1}:= \{(x_{1},x_{2} ) \in \mathbb{R}^{2} ~:~ x_{2} =x_{1} -1   \}$ and $l_{2}:= \{(x_{1},x_{2} ) \in \mathbb{R}^{2} ~:~ x_{2} =x_{1} +1  \}$.
	It is easy to see that for every $(x_{1},x_{2}) \in \mathbb{R}^{2}$,
	\begin{subequations} \label{eq:exam:LineCone}
		\begin{align}
	    \Pro_{U}(x_{1},x_{2}) &= \left(\frac{x_{1}-x_{2}+1}{2},\frac{-x_{1}+x_{2}+1}{2} \right), \label{eq:exam:LineCone:PU}\\
		\Pro_{K}(x_{1},x_{2})  &=\begin{cases}
		(x_{1},x_{2}), \quad & \text{if} ~x_{1}\geq 0 ~\text{and}~x_{2} \geq 0;\\
		(0,0), \quad & \text{if} ~x_{1}< 0 ~\text{and}~x_{2} < 0;\\
		(x_{1},0) =\Pro_{L_{1}}(x_{1},x_{2}), \quad & \text{if} ~x_{1}\geq 0 ~\text{and}~x_{2} < 0;\\
		(0,x_{2}) =\Pro_{L_{2}}(x_{1},x_{2}), \quad & \text{if} ~x_{1}< 0 ~\text{and}~x_{2} \geq 0.\\
		\end{cases} \label{eq:exam:LineCone:K}
		\end{align}
	\end{subequations}
	
	By \cite[Corollary~4.5.2]{Cegielski},
	\begin{align}  \label{exam:LineCone:Fix}
	\Fix \Pro_{U}\Pro_{K} =U \cap K=\Fix \Pro_{K}\Pro_{U}.
	\end{align}
	
	\cref{exam:LineCone:ConeLine}:  Let $(x_{1},x_{2}) \in \mathbb{R}^{2} \smallsetminus (K \cup \mathbb{R}^{2}_{--})$
	 such that $x_{1}-1 < x_{2} <x_{1} +1$, that is, $(x_{1},x_{2}) $ is above $l_{1}$ and below $l_{2}$ but neither in $K$ nor in the strictly negative orthant. Then by \cref{eq:exam:LineCone},
	\begin{align*}
	\Pro_{U \cap K}(x_{1},x_{2}) =\Pro_{U }(x_{1},x_{2})  \neq \Pro_{U \cap K}\Pro_{U}\Pro_{K}(x_{1},x_{2}),
	\end{align*}
	which, by \cref{def:BAM} and \cref{exam:LineCone:Fix}, implies  that $\Pro_{U}\Pro_{K}$ is not a BAM.	
	
	\cref{exam:LineCone:LineCone}: By \cref{defn:FredrichAngleClassical}, the cosine of Friedrichs angles between $\pa U$ and $L_{1}$ and between $\pa U$ and $L_{2}$ is
	\begin{align} \label{eq:exam:LineCone:cF}
	c(\pa U, L_{1})= \Innp{\frac{(1,-1)}{ \sqrt{2}} ,(1,0)} =\frac{\sqrt{2}}{2} = \Innp{\frac{(1,-1)}{ \sqrt{2}} ,(0,1)} = c(\pa U, L_{2})
	\end{align}
	Let $(x_{1},x_{2}) \in \mathbb{R}^{2}$. If $(x_{1},x_{2}) \in    \{ (y_{1},y_{2})  \in \mathbb{R}^{2} ~:~ y_{1}-1 \leq y_{2} \leq y_{1} +1 \} $, then $ \Pro_{U \cap K} (x_{1},x_{2}) =\Pro_{U} (x_{1},x_{2}) $, which yields that
	\begin{align*}
	\Pro_{K}  \Pro_{U} (x_{1},x_{2})  =\Pro_{U \cap K} (x_{1},x_{2}) \quad \text{and} \quad \Pro_{U \cap K} \Pro_{K}  \Pro_{U} (x_{1},x_{2})  =\Pro_{U \cap K} (x_{1},x_{2}).
	\end{align*}
	Assume that $x_{2} < x_{1} -1$. Then
	\begin{subequations}
		\begin{align} \label{eq:exam:LineCone:PPPPP}
		\Pro_{K}\Pro_{U}(x_{1},x_{2}) &=\Pro_{L_{1}}\Pro_{U}(x_{1},x_{2}), \\
		 \Pro_{U \cap K}\Pro_{K}\Pro_{U}(x_{1},x_{2}) &=(1,0) =\Pro_{U \cap K}(x_{1},x_{2})=\Pro_{U \cap L_{1}}(x_{1},x_{2}).
		\end{align}
	\end{subequations}
	
	Moreover, because $U$ and $L_{1}$ are closed affine subspaces with $ U \cap L_{1} \neq \varnothing$,
		\begin{align*}
	\norm{\Pro_{K}\Pro_{U}(x_{1},x_{2}) -  \Pro_{U \cap K}(x_{1},x_{2})}  &= \norm{\Pro_{L_{1}}\Pro_{U}(x_{1},x_{2}) -  \Pro_{U \cap L_{1}}(x_{1},x_{2})}  \quad (\text{by \cref{eq:exam:LineCone:PPPPP}})\\
	& \leq  \frac{\sqrt{2}}{2}  \norm{(x_{1},x_{2})-  \Pro_{U \cap L_{1}}(x_{1},x_{2})} \quad (\text{by \cref{rema:comp:BAM}\cref{rema:comp:BAM:ineq} and \cref{eq:exam:LineCone:cF}})\\
	&=  \frac{\sqrt{2}}{2}  \norm{(x_{1},x_{2})-  \Pro_{U \cap K}(x_{1},x_{2})}. \quad (\text{by \cref{eq:exam:LineCone:PPPPP}})
		\end{align*}

	Assume that $x_{2} > x_{1} +1$. Then similarly to the case that $x_{2} < x_{1} -1$, we also have  that $\norm{\Pro_{K}\Pro_{U}(x_{1},x_{2}) -  \Pro_{U \cap K}(x_{1},x_{2})}   \leq \frac{\sqrt{2}}{2}  \norm{(x_{1},x_{2})-  \Pro_{U \cap K}(x_{1},x_{2})}$.
	
	Altogether, for every $(x_{1},x_{2}) \in \mathbb{R}^{2}$, we have that
	\begin{subequations}
		\begin{align*}
		\Pro_{U \cap K}\Pro_{K}\Pro_{U}(x_{1},x_{2})  &= \Pro_{U \cap K}(x_{1},x_{2}),\\
		\norm{\Pro_{K}\Pro_{U}(x_{1},x_{2}) -  \Pro_{U \cap K}(x_{1},x_{2})}   &\leq \frac{\sqrt{2}}{2}  \norm{(x_{1},x_{2})-  \Pro_{U \cap K}(x_{1},x_{2})},
		\end{align*}
	\end{subequations}
	which combining with \cref{exam:LineCone:Fix} yield that $\Pro_{K}\Pro_{U}$ is a $ \frac{\sqrt{2}}{2}  $-BAM.
\end{proof}

\begin{figure}[H]
	\begin{center} \includegraphics[scale=3.0]{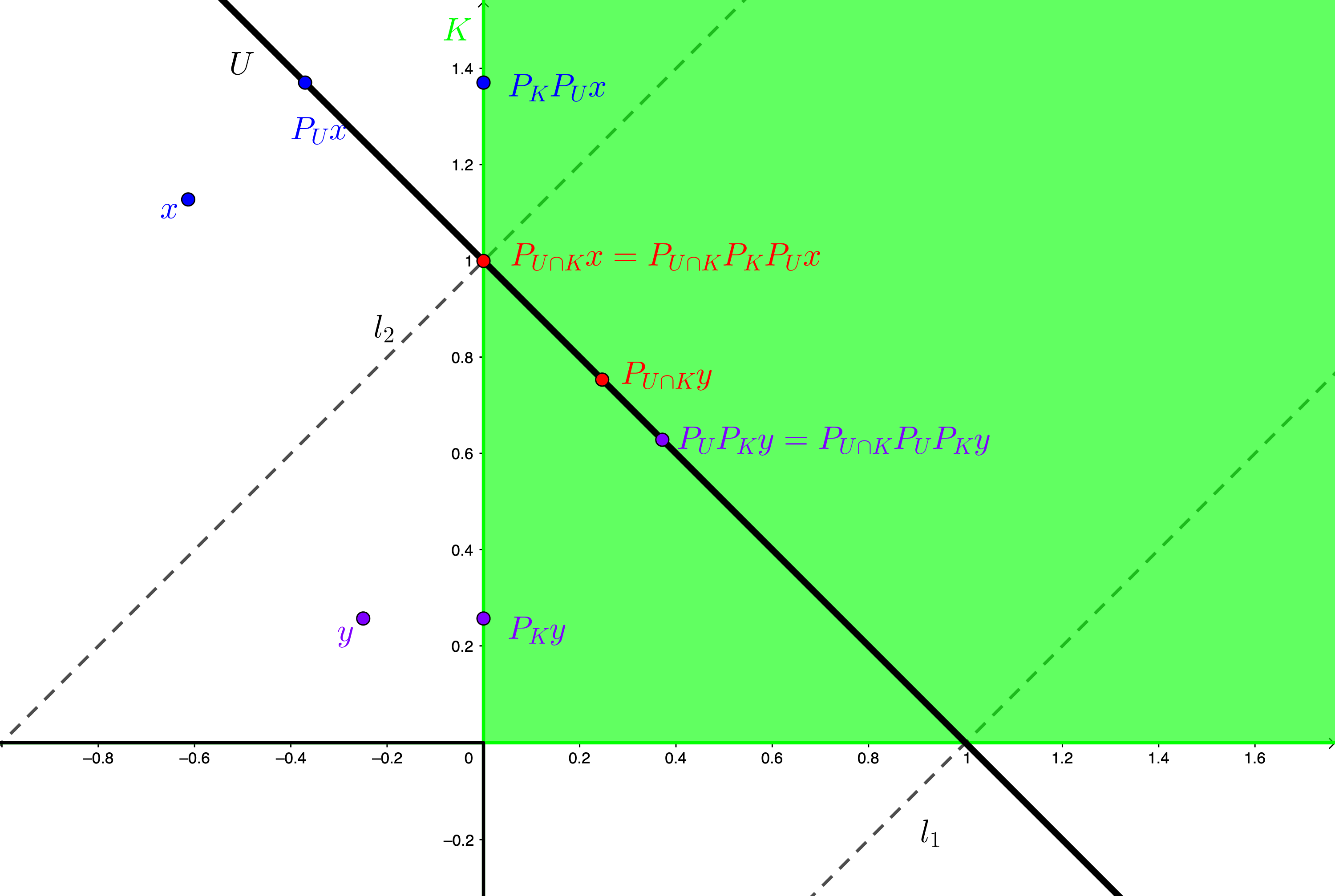}
	\end{center}
	\caption{Composition of projections onto line and cone} \label{fig:examp:BAM:LineCone}
\end{figure}

\begin{remark} \label{remark:HW}
By \cref{prop:coneball} and \cref{exam:LineCone}, we know that in \cref{thm:BAM:COMPO}, the assumption \enquote{$(\forall i \in \I)$ $\Fix G_{i}$ is closed affine subspaces} is not tight, and that the order of the operators matters.
\end{remark}

The following example examines the composition of projections onto balls and  states that generally the composition of BAMs is not a BAM again.
\begin{example} \label{examp:BAM:counterexamp:Balls}
	Suppose that $\mathcal{H} = \mathbb{R}^{2}$.  Consider the two closed balls $K_{1}= \{(x_{1},x_{2}) ~:~  (x_{1}+1)^{2} +x^{2}_{2} \leq 4 \}$ and let $K_{2} = \{(x_{1},x_{2}) ~:~ (x_{1}-1)^{2} +x^{2}_{2} \leq 4  \}$. Then the following statements hold (see also \cref{fig:examp:BAM:counterexamp:Balls}):
	\begin{enumerate}
		\item \label{examp:BAM:counterexamp:Balls:neq} For every $x \in \{ (x_{1},x_{2}) \in \mathbb{R}^{2}\smallsetminus (K_{1} \cup K_{2}) ~:~ x_{1} <0 ~\text{and}~ x_{2} \neq 0 \}$, $\Pro_{K_{1}\cap K_{2}}\Pro_{K_{2}}\Pro_{K_{1}}x = \Pro_{K_{2}}\Pro_{K_{1}}x \neq \Pro_{K_{1}\cap K_{2}}x$.
		\item \label{examp:BAM:counterexamp:Balls:notBAM} $\Pro_{K_{2}}\Pro_{K_{1}}$ is not a BAM.
	\end{enumerate}
	
\end{example}

\begin{proof}
	By \cref{examp:BAM:Pro} and  \cref{prop:CompositionBAM}\cref{prop:CompositionBAM:comp}, $\Fix \Pro_{K_{2}}\Pro_{K_{1}} =K_{1} \cap K_{2}$. The proof follows by \cref{def:BAM},  the formula shown in \cite[Example~3.18]{BC2017} and some elementary algebraic manipulations.
\end{proof}

\begin{figure}[H]
	\begin{center} \includegraphics[scale=1.0]{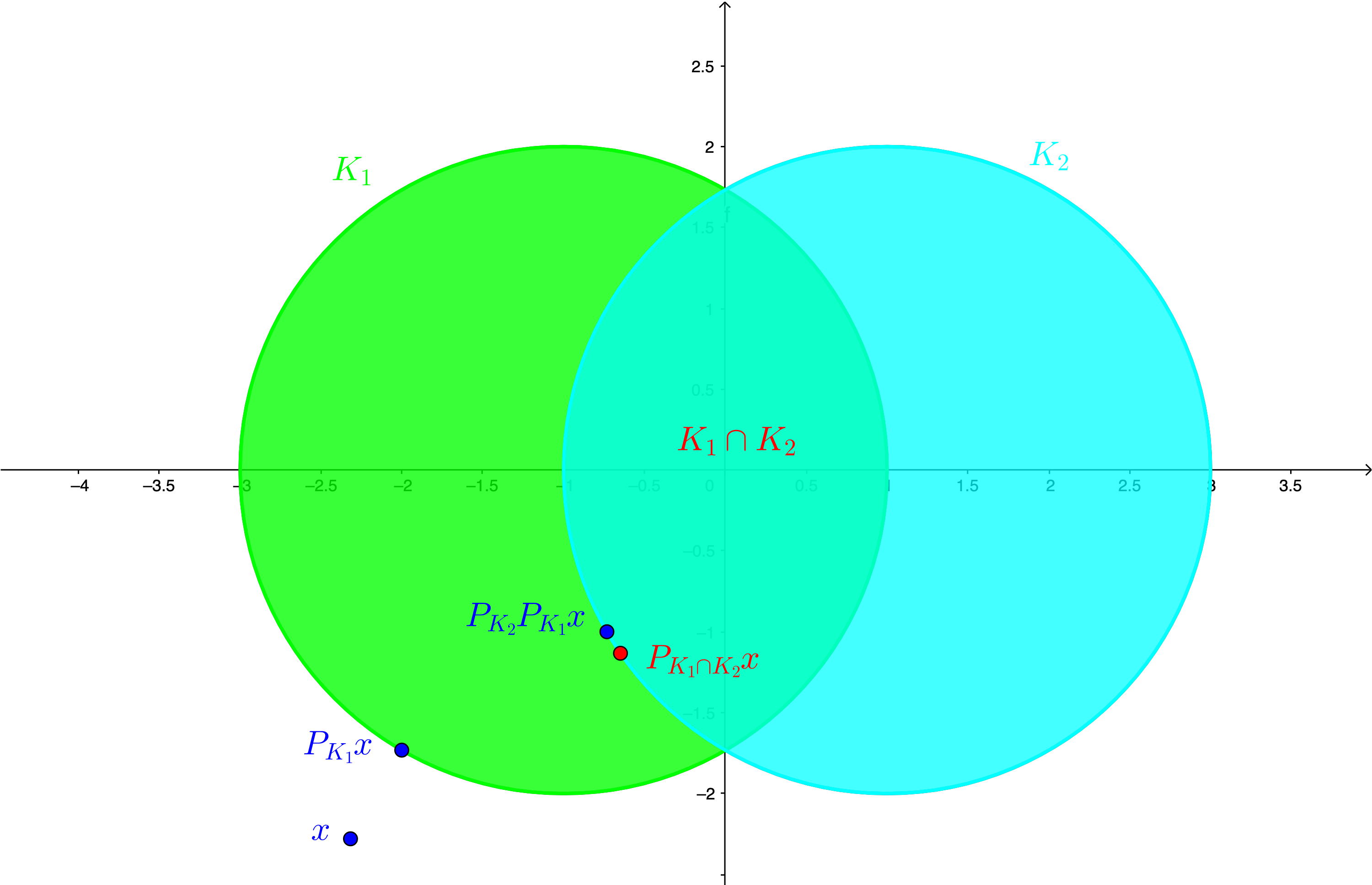}
	\end{center}
	\caption{Composition of BAMs may not be a BAM} \label{fig:examp:BAM:counterexamp:Balls}
\end{figure}

\section{Combinations of BAMs} \label{sec:combinationBAM}
In this section, we consider combinations of finitely many BAMs.
In the following results, by reviewing  \cref{remark:chara:Id:P}, we obtain constraints for the coefficients constructing the combinations.
\begin{remark} \label{exam:affineconv:notaffinesets}
	Let $C$ be a nonempty closed  convex subset  of $\mathcal{H}$ and let  $\gamma \in \mathbb{R}$. Note that by \cref{examp:BAM:Pro}, $\Pro_{C}$ and $\Id=\Pro_{\mathcal{H}} $ are BAMs.
	\begin{enumerate}
		\item   Let $\gamma \in \mathbb{R}$. By \cref{remark:chara:Id:P}\cref{remark:chara:Id:P:i} and \cref{def:BAM}, if  $\gamma \Id +  (1 -  \gamma) \Pro_{C}$ is a BAM, then $\gamma \in \left]-1, 1\right[\,$.
		\item In addition, suppose that $\mathcal{H}=\mathbb{R}^{2}$ and $C:=\mathbf{B}[0;1]$. Then   by \cref{remark:chara:Id:P}\cref{remark:chara:Id:P:ii},  $\gamma \Id +  (1 -  \gamma ) \Pro_{C}$ is a BAM implies that $\gamma \in \left[0,1\right[\,$.
	\end{enumerate}
\end{remark}

The following results are similar to \cref{examp:BAM:counterexamp:Balls}.
	\begin{example} \label{examp:BAM:Balls}
		Suppose that $\mathcal{H} = \mathbb{R}^{2}$.  Consider the two closed balls $K_{1}:= \{(x_{1},x_{2}) ~:~  (x_{1}+1)^{2} +x^{2}_{2} \leq 4 \}$ and let $K_{2} := \{(x_{1},x_{2}) ~:~ (x_{1}-1)^{2} +x^{2}_{2} \leq 4  \}$. Let $\alpha \in \left]0, 1\right[\,$. Then the following hold:
		\begin{enumerate}
			\item \label{examp:BAM:Balls:neq} For every $x \in \{ (x_{1},x_{2}) \in \mathbb{R}^{2}\smallsetminus (K_{1} \cup K_{2}) ~:~ x_{1} <0 ~\text{and}~ x_{2} \neq 0 \}$, $\Pro_{K_{1}\cap K_{2}} ( \alpha \Pro_{K_{1}} +(1-\alpha )\Pro_{K_{2}}  )x  \neq \Pro_{K_{1}\cap K_{2}}x$.
			\item \label{examp:BAM:Balls:notBAM} $\Pro_{K_{2}}\Pro_{K_{1}}$ is not a BAM.
		\end{enumerate}
	\end{example}

\begin{proof}
	Note that by \cref{examp:BAM:Pro} and  \cref{prop:CompositionBAM}\cref{prop:CompositionBAM:comb}, $\Fix (\alpha \Pro_{K_{2}} +(1-\alpha )\Pro_{K_{2}} )=K_{1} \cap K_{2}$. The remaining part of the proof is similar to the proof of \cref{examp:BAM:counterexamp:Balls}.
\end{proof}

In the remaining part of this section, we consider  convex combinations of BAMs with closed and affine fixed point sets.

\begin{lemma} \label{lem:convcomb:BAM:eq}
	Set $\I :=\{1, \ldots,m\}$. Let  $(\forall i \in \I)$  $G_{i}$ be a BAM such that  $\Fix G_{i}$ is a closed affine subspace of $\mathcal{H}$, and $\cap_{i \in \I} \Fix G_{i} \neq \varnothing$. Let $(\forall i \in \I)$ $\omega_{i} \in \left]0, 1\right[\,$ such that $\sum_{i \in \I} \omega_{i} =1$. Set $ G:= \sum_{i \in \I} \omega_{i} G_{i} $.
	Then
	\begin{enumerate}
		\item \label{lem:convcomb:BAM:eq:Fix} $\Fix G= \cap_{i \in \I} \Fix G_{i}$ is a closed affine subspace of $\mathcal{H}$.
		\item \label{lem:convcomb:BAM:eq:EQ} $\Pro_{ \Fix G} G=\Pro_{ \Fix G}$.
	\end{enumerate}
\end{lemma}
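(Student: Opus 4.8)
The plan is to reduce both parts to results already established in the excerpt together with the affine structure of the metric projection onto $\Fix G$. Throughout, write $U_{i}:=\Fix G_{i}$ and $U:=\cap_{i \in \I} U_{i}$.

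For \cref{lem:convcomb:BAM:eq:Fix}, I would invoke \cref{prop:CompositionBAM}\cref{prop:CompositionBAM:comb} directly: under the standing hypotheses (each $G_{i}$ a $\gamma_{i}$-BAM with $U_{i}$ a closed affine subspace, and $\cap_{i \in \I} U_{i} \neq \varnothing$), it yields $\Fix G = \Fix \sum_{i \in \I} \omega_{i} G_{i} = \cap_{i \in \I} U_{i} = U$. That $U$ is itself a closed affine subspace then follows because a nonempty intersection of closed affine subspaces is again a closed affine subspace.

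For \cref{lem:convcomb:BAM:eq:EQ}, the first step is to prove the pointwise identity $\Pro_{U} G_{i} = \Pro_{U}$ for each individual $i$. Since $U \subseteq U_{i}$ and $U = U \cap U_{i} \neq \varnothing$, \cref{lem:ExchangeProj} gives $\Pro_{U} \Pro_{U_{i}} = \Pro_{U}$. Combining this with the defining BAM identity $\Pro_{U_{i}} G_{i} = \Pro_{U_{i}}$ from \cref{def:BAM}\cref{def:BAM:eq} yields, as operators on $\mathcal{H}$,
\[
\Pro_{U} G_{i} = \Pro_{U} \Pro_{U_{i}} G_{i} = \Pro_{U} \Pro_{U_{i}} = \Pro_{U},
\]
where the first equality uses $\Pro_{U} = \Pro_{U} \Pro_{U_{i}}$ and the second uses $\Pro_{U_{i}} G_{i} = \Pro_{U_{i}}$.

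The second step passes from these individual identities to their convex combination, and this is the only place where the normalization $\sum_{i \in \I} \omega_{i} = 1$ is used. Fixing $z \in U$ and writing $U = z + L$ with $L := \pa U$ a closed linear subspace, \cref{fac:SetChangeProje} supplies the affine formula $\Pro_{U}(\cdot) = z + \Pro_{L}(\cdot - z)$. Because $\sum_{i \in \I} \omega_{i} = 1$ and $\Pro_{L}$ is linear, $\Pro_{U}$ preserves this affine combination, so for every $x \in \mathcal{H}$,
\[
\Pro_{U}(Gx) = \Pro_{U}\Big( \sum_{i \in \I} \omega_{i} G_{i} x \Big) = \sum_{i \in \I} \omega_{i} \Pro_{U}(G_{i} x) = \sum_{i \in \I} \omega_{i} \Pro_{U} x = \Pro_{U} x,
\]
which is exactly $\Pro_{\Fix G} G = \Pro_{\Fix G}$.

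The bulk of the argument is routine; the one point deserving care is the affine-combination interchange in the last display. It fails for projections onto general convex sets and relies in an essential way both on the affineness of $U$ (so that $\Pro_{U}$ is an affine map) and on $\sum_{i \in \I} \omega_{i} = 1$. Everything else is a direct appeal to \cref{prop:CompositionBAM}, \cref{lem:ExchangeProj}, \cref{def:BAM}, and \cref{fac:SetChangeProje}.
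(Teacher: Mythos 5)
Your proposal is correct and follows essentially the same route as the paper: part (i) via \cref{prop:CompositionBAM}\cref{prop:CompositionBAM:comb}, and part (ii) by first establishing $\Pro_{\cap_{i}\Fix G_{i}}G_{j}=\Pro_{\cap_{i}\Fix G_{i}}$ for each $j$ through \cref{lem:ExchangeProj} and \cref{def:BAM}\cref{def:BAM:eq}, then passing the affine projector through the convex combination. The only cosmetic difference is that you justify the affineness of $\Pro_{\Fix G}$ directly from \cref{fac:SetChangeProje} and the linearity of $\Pro_{\pa \Fix G}$, whereas the paper cites \cite[Proposition~29.14(i)]{BC2017} for the same fact.
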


\begin{proof}
 \cref{lem:convcomb:BAM:eq:Fix}: By \cref{prop:CompositionBAM}\cref{prop:CompositionBAM:comb} and the assumptions,   $\Fix G= \Fix (\sum_{i \in \I} \omega_{i} G_{i})= \cap_{i \in \I} \Fix G_{i}$ is a closed affine subspace of $\mathcal{H}$.

 \cref{lem:convcomb:BAM:eq:EQ}: By \cref{lem:convcomb:BAM:eq:Fix}  and  \cite[Proposition~29.14(i)]{BC2017}, we know that $\Pro_{ \Fix G} $ is affine. Note that $(\forall j \in \I)$ $\Fix G = \cap_{i \in \I} \Fix G_{i} \subseteq \Fix G_{j}$ and that  both $ \cap_{i \in \I} \Fix G_{i}$ and $\Fix G_{j}$ are closed affine subspaces. Moreover, by \cref{lem:ExchangeProj} and  \cref{def:BAM}\cref{def:BAM:Fix},  we have that
 \begin{align} \label{eq:lem:convcomb:BAM:eq:EQ}
 (\forall j \in \I) \quad \Pro_{ \Fix G}G_{j} =  \Pro_{ \cap_{i \in \I} \Fix G_{i} }G_{j} =\Pro_{ \cap_{i \in \I} \Fix G_{i} } \Pro_{  \Fix G_{j} } G_{j} = \Pro_{ \cap_{i \in \I} \Fix G_{i} } \Pro_{  \Fix G_{j} }= \Pro_{ \cap_{i \in \I} \Fix G_{i} } = \Pro_{ \Fix G}
 \end{align}
Therefore,
\begin{align*}
	\Pro_{ \Fix G} G =\Pro_{ \Fix G} \left(\sum_{i \in \I} \omega_{i} G_{i} \right) =\sum_{i \in \I} \omega_{i} \Pro_{ \Fix G}G_{i}  \stackrel{\cref{eq:lem:convcomb:BAM:eq:EQ}}{=} \sum_{i \in \I} \omega_{i} \Pro_{ \Fix G}= \Pro_{ \Fix G}.
\end{align*}
\end{proof}
\subsection*{Convex combination of BAMs with closed and affine fixed point sets}

\begin{theorem} \label{theorem:convex:comb:BAM:2}
	 Set $\I:=\{1,2\} $. Let $(\forall i \in \I)$ $\gamma_{i} \in  \left[ 0,1\right[\,$ and  let $G_{i}  : \mathcal{H} \to \mathcal{H}$ be a $\gamma_{i}$-BAM. Suppose that $(\forall i \in \I)$ $\Fix G_{i}$ is a closed linear subspace of $\mathcal{H}$. Suppose that $\Fix G_{1}+\Fix G_{2}$ is closed.  Let $\alpha \in \left] 0,1\right[\,$. Set  $c_{F}:=c( \Fix G_{1}, \Fix G_{2})$ and
	\begin{align} \label{eq:theorem:convex:comb:BAM:2:gamma}
	\gamma:=\max \left\{ \alpha \sqrt{\gamma^{2}_{1} + ( 1-\gamma^{2}_{1}) \frac{1+c_{F}}{2} } + (1-\alpha) , \alpha + (1-\alpha)\sqrt{\gamma^{2}_{2} + ( 1-\gamma^{2}_{2}) \frac{1+c_{F}}{2} } \right\}.
	\end{align}
	Then $\max \big\{  \alpha \sqrt{\frac{1}{2}( 1+\gamma^{2}_{1}) } +(1-\alpha),    \alpha+(1-\alpha)  \sqrt{\frac{1}{2}( 1+\gamma^{2}_{2}) } \big\} \leq \gamma  < 1$ and $\alpha G_{1} +(1- \alpha ) G_{2}  $ is a $\gamma$-BAM.
\end{theorem}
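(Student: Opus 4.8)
The plan is to verify the three defining conditions of a $\gamma$-BAM in \cref{def:BAM} for the operator $G := \alpha G_{1} + (1-\alpha) G_{2}$, writing $U_{i} := \Fix G_{i}$ and $P := \Pro_{U_{1} \cap U_{2}}$ throughout. Since $U_{1}, U_{2}$ are closed linear subspaces we have $0 \in U_{1} \cap U_{2} \neq \varnothing$, so \cref{lem:convcomb:BAM:eq} applies with $\omega_{1} = \alpha$, $\omega_{2} = 1 - \alpha$ and immediately delivers conditions \cref{def:BAM:Fix} and \cref{def:BAM:eq}: $\Fix G = U_{1} \cap U_{2}$ is a closed linear subspace and $\Pro_{\Fix G} G = \Pro_{\Fix G}$. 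Moreover, since $U_{1} + U_{2}$ is closed, \cref{fac:cFLess1} gives $c_{F} \in \left[0,1\right[\,$, which I will use at the end for the bounds on $\gamma$.

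The heart of the proof is the contraction inequality \cref{def:BAM:Ineq}. Fix $x \in \mathcal{H}$; I may assume $x - Px \neq 0$, since otherwise both sides vanish. Because $\Pro_{U_{i}} G_{i} = \Pro_{U_{i}}$ and $Px \in U_{1} \cap U_{2} \subseteq U_{i}$, the vectors $G_{i}x - \Pro_{U_{i}}x \in U_{i}^{\perp}$ and $\Pro_{U_{i}}x - Px \in U_{i}$ are orthogonal, so the Pythagorean theorem together with the defining inequality of the $\gamma_{i}$-BAM $G_{i}$ yields
\begin{align*}
\norm{G_{i}x - Px}^{2} = \norm{G_{i}x - \Pro_{U_{i}}x}^{2} + \norm{\Pro_{U_{i}}x - Px}^{2} \leq \gamma_{i}^{2}\norm{x - \Pro_{U_{i}}x}^{2} + \norm{\Pro_{U_{i}}x - Px}^{2}.
\end{align*}
Decomposing $\norm{x - Px}^{2} = \norm{x - \Pro_{U_{i}}x}^{2} + \norm{\Pro_{U_{i}}x - Px}^{2}$ the same way and setting $\beta_{i} := \norm{\Pro_{U_{i}}x - Px}/\norm{x - Px} \in [0,1]$, this rearranges to $\norm{G_{i}x - Px} \leq \sqrt{\gamma_{i}^{2} + (1 - \gamma_{i}^{2})\beta_{i}^{2}}\,\norm{x - Px}$.

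The key structural step is to bound $\min\{\beta_{1}, \beta_{2}\}$ by $\sqrt{(1 + c_{F})/2}$. Writing $u := (x - Px)/\norm{x - Px}$, I check using \cref{lem:ExchangeProj} and \cref{MetrProSubs8} that $\Pro_{U_{i}}x - Px = \Pro_{(U_{1} \cap U_{2})^{\perp}}\Pro_{U_{i}}x \in U_{i} \cap (U_{1} \cap U_{2})^{\perp}$, and that $\Pro_{U_{i}} u = \beta_{i} v_{i}$, where $v_{i}$ is the corresponding unit vector; self-adjointness and idempotence of $\Pro_{U_{i}}$ then give $\beta_{i} = \innp{u, v_{i}}$ (the degenerate case $\beta_{i}=0$, i.e.\ $\Pro_{U_i}x=Px$, being immediate). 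Since $v_{1} \in U_{1} \cap (U_{1} \cap U_{2})^{\perp}$ and $v_{2} \in U_{2} \cap (U_{1} \cap U_{2})^{\perp}$, \cref{defn:FredrichAngleClassical} gives $\innp{v_{1}, v_{2}} \leq c_{F}$; hence
\begin{align*}
\beta_{1} + \beta_{2} = \innp{u, v_{1} + v_{2}} \leq \norm{v_{1} + v_{2}} = \sqrt{2\left(1 + \innp{v_{1}, v_{2}}\right)} \leq \sqrt{2(1 + c_{F})},
\end{align*}
so $\min\{\beta_{1}, \beta_{2}\} \leq \sqrt{(1 + c_{F})/2}$. A short case distinction then finishes \cref{def:BAM:Ineq}: if $\beta_{1} \leq \sqrt{(1+c_{F})/2}$ I bound $\norm{G_{1}x - Px}$ by the first radical and $\norm{G_{2}x - Px}$ by $\norm{x-Px}$ (using $\beta_{2} \leq 1$), and the triangle inequality $\norm{Gx - Px} \leq \alpha\norm{G_{1}x - Px} + (1-\alpha)\norm{G_{2}x - Px}$ produces exactly the first entry of the maximum defining $\gamma$; the case $\beta_{2} \leq \sqrt{(1+c_{F})/2}$ gives the second entry symmetrically.

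Finally, the bounds on $\gamma$ are elementary: $c_{F} \geq 0$ gives $\tfrac{1+c_{F}}{2} \geq \tfrac12$, whence each radical dominates $\sqrt{\tfrac12(1+\gamma_{i}^{2})}$ and $\gamma$ meets the stated lower bound; while $c_{F} < 1$ and $\gamma_{i} < 1$ give $\gamma_{i}^{2} + (1-\gamma_{i}^{2})\tfrac{1+c_{F}}{2} < 1$, so each entry of the maximum is a combination of a number $<1$ with $1$ using positive weight $\alpha$ (resp.\ $1-\alpha$), forcing $\gamma < 1$. Collecting the three verified conditions and invoking \cref{def:BAM} shows $G$ is a $\gamma$-BAM. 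I expect the main obstacle to be the $\min\{\beta_{1},\beta_{2}\}$ estimate: correctly establishing that both $\Pro_{U_{i}}x - Px$ land in $U_{i} \cap (U_{1}\cap U_{2})^{\perp}$ (so that \cref{defn:FredrichAngleClassical} is applicable) and that $\beta_{i} = \innp{u,v_{i}}$, which is precisely what converts the Friedrichs-angle inequality into the bound on $\beta_{1} + \beta_{2}$.
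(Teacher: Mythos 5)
Your proposal is correct and follows essentially the same route as the paper's proof: the same Pythagorean decomposition yielding $\norm{G_{i}x-\Pro_{U_{1}\cap U_{2}}x}\leq\sqrt{\gamma_{i}^{2}+(1-\gamma_{i}^{2})\beta_{i}^{2}}\,\norm{x-\Pro_{U_{1}\cap U_{2}}x}$, the same identification $\beta_{i}=\innp{u,v_{i}}$ with $v_{i}\in U_{i}\cap(U_{1}\cap U_{2})^{\perp}$, and the same Friedrichs-angle estimate forcing $\min\{\beta_{1},\beta_{2}\}\leq\sqrt{(1+c_{F})/2}$. The only cosmetic difference is that you apply the triangle inequality directly to the norms where the paper squares and expands before recombining into $(\alpha\eta_{1}+(1-\alpha)\eta_{2})^{2}$; these are equivalent.
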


\begin{proof}
	Set $(\forall i \in \I)$ $U_{i}:=\Fix G_{i}$. Because $U_{1}+ U_{2}$ is closed,  by \cref{fac:cFLess1}, we know that $c_{F} :=c(U_{1},U_{2}) \in \left[0,1\right[\,$, which yields that  $\gamma <1$ and that $(\forall i \in \I)$ $\gamma^{2}_{i}  +( 1-\gamma^{2}_{i}) \frac{1+c_{F}}{2}  \geq  \frac{1}{2}( 1+\gamma^{2}_{i})  $. Hence, $\gamma \geq \big\{  \alpha \sqrt{\frac{1}{2}( 1+\gamma^{2}_{1}) } +(1-\alpha),    \alpha+(1-\alpha)  \sqrt{\frac{1}{2}( 1+\gamma^{2}_{2}) } \big\}$.

	Let $x \in \mathcal{H}$.
	By \cref{lem:convcomb:BAM:eq} and \cref{def:BAM}, it suffices to show that
	\begin{align} \label{eq:theorem:convex:comb:BAM:2}
	\norm{\alpha G_{1}x +(1- \alpha ) G_{2}x -\Pro_{U_{1}\cap U_{2}}x} \leq \gamma \norm{x - \Pro_{U_{1}\cap U_{2}}x}.
	\end{align}
	
	Because $(\forall i \in \I)$ $G_{i}$ is a $\gamma_{i}$-BAM,  by  \cref{def:BAM}  and  \cref{lemma:CFixG}\cref{lemma:CFixG:GC},  we have that
	\begin{align} \label{eq:theorem:convex:comb:BAM:2:ProG}
	(\forall i \in \I) \quad \Pro_{U_{i}} G_{i} = \Pro_{U_{i}} = G_{i}\Pro_{U_{i}},
	\end{align}
	and that
	\begin{align} \label{eq:theorem:convex:comb:BAM:2:Ineq}
	(\forall y \in \mathcal{H}) \quad \norm{G_{i}y -\Pro_{U_{i}} y} \leq \gamma_{i} \norm{y - \Pro_{U_{i}}  y}.
	\end{align}
	
	 If $x = \Pro_{U_{1}\cap U_{2}}x$, then $x \in U_{1} \cap U_{2}$ and $\alpha G_{1}x +(1- \alpha ) G_{2}x -\Pro_{U_{1}\cap U_{2}}x =\alpha G_{1} \Pro_{U_{1}} x +(1- \alpha ) G_{2}\Pro_{U_{2}} x -x \stackrel{ \cref{eq:theorem:convex:comb:BAM:2:ProG}}{=} \alpha \Pro_{U_{1}} x +(1- \alpha ) \Pro_{U_{2}} x -x =x-x=0$, from which we  deduce that \cref{eq:theorem:convex:comb:BAM:2} holds.
	 Therefore, in the rest of the proof, we assume that $x \neq \Pro_{U_{1}\cap U_{2}}x $. Set
	 \begin{align} \label{eq:theorem:convex:comb:BAM:2:beta}
	 \beta_{1}:= \frac{ \norm{\Pro_{U_{1}} x-  \Pro_{U_{1}\cap U_{2}}x } }{\norm{x -\Pro_{U_{1}\cap U_{2}}x}} \quad \text{and} \quad  \beta_{2}:= \frac{ \norm{\Pro_{U_{2}} x-  \Pro_{U_{1}\cap U_{2}}x } }{\norm{x -\Pro_{U_{1}\cap U_{2}}x}}.
	 \end{align}
	By the triangle inequality,
	\begin{subequations} \label{eq:theorem:convex:comb:BAM:2:p1}
			\begin{align}
		&\norm{\alpha G_{1}x +(1- \alpha ) G_{2}x -\Pro_{U_{1}\cap U_{2}}x}^{2} \\
		\leq & \alpha^{2} \norm{ G_{1}x- \Pro_{U_{1}\cap U_{2}}x}^{2} +(1-\alpha)^{2}  \norm{ G_{2}x- \Pro_{U_{1}\cap U_{2}}x}^{2} +2\alpha (1-\alpha) \norm{ G_{1}x- \Pro_{U_{1}\cap U_{2}}x}\norm{ G_{2}x- \Pro_{U_{1}\cap U_{2}}x}.
		\end{align}
	\end{subequations}
Note that $(\forall i \in \I)$,
\begin{subequations}
	\begin{align}
	&G_{i}x-\Pro_{U_{i}}x \stackrel{\cref{eq:theorem:convex:comb:BAM:2:ProG}}{=} G_{i}x-\Pro_{U_{i}}G_{i}x =\Pro_{U_{i}^{\perp}}G_{i}x \in U_{i}^{\perp}, \label{eq:theorem:convex:comb:BAM:2:GP}\\
	&\Pro_{U_{i}}x  -  \Pro_{U_{1}\cap U_{2}}x = \Pro_{U_{i}}x  -  \Pro_{U_{1}\cap U_{2}}\Pro_{U_{i}}x =\Pro_{(U_{1}\cap U_{2})^{\perp}}\Pro_{U_{i}}x \in U_{i} \cap (U_{1}\cap U_{2})^{\perp} \label{eq:theorem:convex:comb:BAM:2:PPu1u2}.
	\end{align}
\end{subequations}
Now, using \cref{eq:theorem:convex:comb:BAM:2:GP} and \cref{eq:theorem:convex:comb:BAM:2:PPu1u2} in the following \cref{eq:theorem:convex:comb:BAM:2GP} and \cref{eq:theorem:convex:comb:BAM:xP}, we know that $(\forall i \in \I)$,
\begin{subequations} \label{eq:theorem:convex:comb:BAM:2:p2}
	\begin{align}
	\norm{ G_{i}x- \Pro_{U_{1}\cap U_{2}}x}^{2} &~=\norm{G_{i}x-\Pro_{U_{i}}x }^{2} + \norm{\Pro_{U_{i}}x  -  \Pro_{U_{1}\cap U_{2}}x }^{2} \label{eq:theorem:convex:comb:BAM:2GP} \\
	& \stackrel{\cref{eq:theorem:convex:comb:BAM:2:Ineq}  }{\leq} \gamma^{2}_{i}  \norm{x-\Pro_{U_{i}}x }^{2} + \norm{\Pro_{U_{i}}x  -  \Pro_{U_{1}\cap U_{2}}x }^{2}\\
	&~= \gamma^{2}_{i}  \norm{x-\Pro_{U_{i}}x }^{2} +  \gamma^{2}_{i} \norm{\Pro_{U_{i}}x  -  \Pro_{U_{1}\cap U_{2}}x }^{2} +(1-\gamma^{2}_{i}) \norm{\Pro_{U_{i}}x  -  \Pro_{U_{1}\cap U_{2}}x }^{2}\\
	&~ = \gamma^{2}_{i}  \norm{x-\Pro_{U_{1}\cap U_{2}}x }^{2} +(1-\gamma^{2}_{i}) \norm{\Pro_{U_{i}}x  -  \Pro_{U_{1}\cap U_{2}}x }^{2} \label{eq:theorem:convex:comb:BAM:xP}\\
	&\stackrel{\cref{eq:theorem:convex:comb:BAM:2:beta}}{=} \gamma^{2}_{i}  \norm{x-\Pro_{U_{1}\cap U_{2}}x }^{2} +(1-\gamma^{2}_{i}) \beta^{2}_{i}\norm{x  -  \Pro_{U_{1}\cap U_{2}}x }^{2}\\
	&~= \big( \gamma^{2}_{i}  + (1-\gamma^{2}_{i}) \beta^{2}_{i} \big)\norm{x  -  \Pro_{U_{1}\cap U_{2}}x }^{2}.
	\end{align}
\end{subequations}
Set
\begin{align} \label{eq:theorem:convex:comb:BAM:2:eta}
(\forall i \in \I) \quad \eta_{i} :=\sqrt{ \gamma^{2}_{i}  + (1-\gamma^{2}_{i}) \beta^{2}_{i} }.
\end{align}
Combine \cref{eq:theorem:convex:comb:BAM:2:p1} with \cref{eq:theorem:convex:comb:BAM:2:p2} to obtain that
\begin{subequations} \label{eq:theorem:convex:comb:BAM:2:p3}
	\begin{align}
	\norm{\alpha G_{1}x +(1- \alpha ) G_{2}x -\Pro_{U_{1}\cap U_{2}}x}^{2}  &\leq \left(\alpha^{2}  \eta^{2}_{1} + (1-\alpha)^{2}\eta^{2}_{2}  +2\alpha (1-\alpha)  \eta_{1}\eta_{2}  \right) \norm{x  -  \Pro_{U_{1}\cap U_{2}}x }^{2}\\
	&=( \alpha \eta_{1} +(1-\alpha)  \eta_{2} )^{2}\norm{x  -  \Pro_{U_{1}\cap U_{2}}x }^{2}.
	\end{align}
\end{subequations}
Combining \cref{eq:theorem:convex:comb:BAM:2}, \cref{eq:theorem:convex:comb:BAM:2:gamma}, \cref{eq:theorem:convex:comb:BAM:2:eta} and \cref{eq:theorem:convex:comb:BAM:2:p3}, we know that it remains to show that
\begin{align} \label{eq:theorem:convex:comb:BAM:2:goal}
\min \{\beta_{1} ,  \beta_{2}  \} \leq \sqrt{\frac{1+c_{F}}{2} }.
\end{align}
Note that  by \cref{eq:theorem:convex:comb:BAM:2:beta}, if there exists $i \in \I$ such that $\Pro_{U_{i}}x-\Pro_{U_{1}\cap U_{2}}x =0$, then  $\beta_{i}=0$ and \cref{eq:theorem:convex:comb:BAM:2:goal} is true. Hence, we assume $(\forall i \in \I)$ $\Pro_{U_{i}}x-\Pro_{U_{1}\cap U_{2}}x \neq 0$ from now on.

Let $i \in \I$. Because $ \Pro_{U_{i}}x - \Pro_{U_{1}\cap U_{2}}x \in U_{i}$ and $x-\Pro_{U_{i}}x =\Pro_{U_{i}^{\perp}}x\in  U_{i}^{\perp}$, we have $\innp{ \Pro_{U_{i}}x - \Pro_{U_{1}\cap U_{2}}x, x-\Pro_{U_{i}}x }=0$. Hence
\begin{align*}
\innp{ \Pro_{U_{i}}x - \Pro_{U_{1}\cap U_{2}}x, x-\Pro_{U_{1}\cap U_{2}}x } &=  \innp{ \Pro_{U_{i}}x - \Pro_{U_{1}\cap U_{2}}x, x-\Pro_{U_{i}}x } +\innp{ \Pro_{U_{i}}x - \Pro_{U_{1}\cap U_{2}}x, \Pro_{U_{i}}x-\Pro_{U_{1}\cap U_{2}}x }\\
&= \norm{\Pro_{U_{i}}x-\Pro_{U_{1}\cap U_{2}}x }^{2}
\end{align*}
and thus
\begin{align} \label{eq:theorem:convex:comb:BAM:2:beta:again}
\beta_{i} =\frac{ \norm{\Pro_{U_{i}} x-  \Pro_{U_{1}\cap U_{2}}x } }{\norm{x -\Pro_{U_{1}\cap U_{2}}x}}
=\Innp{ \frac{\Pro_{U_{i}}x - \Pro_{U_{1}\cap U_{2}}x}{\norm{\Pro_{U_{i}}x - \Pro_{U_{1}\cap U_{2}}x}} ,  \frac{x-\Pro_{U_{1}\cap U_{2}}x   }{\norm{x-\Pro_{U_{1}\cap U_{2}}x}}}.
\end{align}
Set $u:=\frac{\Pro_{U_{1}}x - \Pro_{U_{1}\cap U_{2}}x}{\norm{\Pro_{U_{1}}x - \Pro_{U_{1}\cap U_{2}}x}} $, $v:=\frac{\Pro_{U_{2}}x - \Pro_{U_{1}\cap U_{2}}x}{\norm{\Pro_{U_{2}}x - \Pro_{U_{1}\cap U_{2}}x}} $ and $ w :=\frac{x-\Pro_{U_{1}\cap U_{2}}x   }{\norm{x-\Pro_{U_{1}\cap U_{2}}x} }$. By \cref{eq:theorem:convex:comb:BAM:2:PPu1u2},  $\Pro_{U_{1}} x-  \Pro_{U_{1}\cap U_{2}}x \in U_{1} \cap ( U_{1}\cap U_{2})^{\perp}$ and $\Pro_{U_{2}} x-  \Pro_{U_{1}\cap U_{2}}x \in U_{2} \cap ( U_{1}\cap U_{2})^{\perp}$. Hence,  by
	\cref{defn:FredrichAngleClassical},
	\begin{align}  \label{eq:theorem:convex:comb:BAM:2:uv}
	\innp{u,v} \leq c_{F}.
	\end{align}
Using \cref{eq:theorem:convex:comb:BAM:2:beta:again}, the Cauchy-Schwarz inequality,  and $\norm{u}=\norm{v}=\norm{w}=1$, we deduce that
	\begin{align} \label{eq:theorem:convex:comb:BAM:2:sum:beta}
\beta_{1} +\beta_{2} =\innp{u+v,w} \leq \norm{u+v}	=\sqrt{ \norm{u}^{2}+2\innp{u,v} +\norm{v}^{2}} =\sqrt{2 (1+\innp{u,v} )} \stackrel{\cref{eq:theorem:convex:comb:BAM:2:uv}}{\leq} \sqrt{2 (1+c_{F} )}.
	\end{align}
	Suppose to the contrary that \cref{eq:theorem:convex:comb:BAM:2:goal} is not true, that is, $ \beta_{1} > \sqrt{\frac{1+c_{F}}{2} } $ and $ \beta_{2} > \sqrt{\frac{1+c_{F}}{2} } $.  Then
	\begin{align*}
	\beta_{1} +\beta_{2} > 2\sqrt{\frac{1+c_{F}}{2} } = \sqrt{2 (1+c_{F} )},
	\end{align*}
	which contradicts with \cref{eq:theorem:convex:comb:BAM:2:sum:beta}. Altogether, the proof is complete.
\end{proof}

The following example illustrates that the constant associated with the convex combination of BAMs provided in \cref{theorem:convex:comb:BAM:2} is not sharp.
\begin{example} \label{exam:alpha:PU}
	Let $U$ be a closed linear subspace of $\mathcal{H}$. Let $\alpha \in \left]0, 1\right[\,$.
	Then the following hold:
	\begin{enumerate}
		\item \label{exam:alpha:PU:gama1} $\alpha \Pro_{U} + (1- \alpha) \Pro_{U^{\perp}}$ is a BAM with constant $\max \{ \alpha \frac{\sqrt{2}}{2 } +(1-\alpha), (1-\alpha)\frac{\sqrt{2}}{2 } +\alpha \}$, by \cref{theorem:convex:comb:BAM:2}.
		\item \label{exam:alpha:PU:gama2} $\alpha \Pro_{U} + (1- \alpha) \Pro_{U^{\perp}}$ is a BAM with sharp constant $\max\{ \alpha , 1-\alpha \}$.
		\item \label{exam:alpha:PU:gamm} $\max \{ \alpha \frac{\sqrt{2}}{2 } +(1-\alpha), (1-\alpha)\frac{\sqrt{2}}{2 } +\alpha \} > \max\{ \alpha , 1-\alpha \}$.
	\end{enumerate}
\end{example}

\begin{proof}	
	\cref{exam:alpha:PU:gama1}: By \cref{examp:BAM:Pro}, both $\Pro_{U}$ and $\Pro_{U^{\perp}}$ are $0$-BAMs. Moreover, by \cref{defn:FredrichAngleClassical}, $c_{F}=c( U, U^{\perp}) =0$. Hence, using \cref{theorem:convex:comb:BAM:2} directly, we obtain that $\alpha \Pro_{U} + (1- \alpha) \Pro_{U^{\perp}}$ is a BAM with constant $\max \{  \frac{\sqrt{2}}{2 } \alpha+(1-\alpha), \alpha+ \frac{\sqrt{2}}{2 }(1-\alpha)  \}$.
	
	\cref{exam:alpha:PU:gama2}:
	Denote by $G:= \alpha \Pro_{U} + (1- \alpha) \Pro_{U^{\perp}}$.
	Let $x \in \mathcal{H}$ and $\gamma \in \left[0,1\right[\,$.  Using  $\innp{\Pro_{U} x, \Pro_{U^{\perp}} x }=0$, \cref{MetrProSubs8}, and $ \Pro_{ \Fix G }x= \Pro_{ \{0\} }x=0$, we obtain that
	\begin{subequations}  \label{eq:exam:alpha:PU:neq}
\begin{align}
& \norm{ Gx - \Pro_{ \Fix G }x} \leq  \gamma \norm{x - \Pro_{ \Fix G }x}\\
\Leftrightarrow &~\norm{\alpha  \Pro_{U} x + (1-\alpha) \Pro_{U^{\perp}} x}^{2} \leq   \gamma^{2} \norm{x}^{2} \\
\Leftrightarrow &~ \alpha^{2}\norm{\Pro_{U} x}^{2} +(1-\alpha)^{2}\norm{\Pro_{U^{\perp}} x}^{2} \leq \gamma^{2} ( \norm{\Pro_{U} x}^{2} +\norm{\Pro_{U^{\perp}} x}^{2} ) \\
\Leftrightarrow &~ 0 \leq (\gamma^{2} -\alpha^{2}) \norm{\Pro_{U} x}^{2} +   (\gamma^{2} -(1-\alpha)^{2}) \norm{\Pro_{U^{\perp}} x}^{2},
\end{align}
	\end{subequations}
which implies that 	$\gamma \geq \max\{ \alpha , 1-\alpha \}$, since $x \in \mathcal{H}$ is arbitrary.
	Therefore, the required result follows from \cref{lem:convcomb:BAM:eq},  \cref{eq:exam:alpha:PU:neq} and \cref{def:BAM}.
	
		\cref{exam:alpha:PU:gamm}: This is trivial from $\alpha \in \left]0, 1\right[\,$ and $\frac{\sqrt{2}}{2 } \in \left]0, 1\right[\,$.
\end{proof}

\begin{theorem} \label{theor:averacomb:BAM:shift}
Set $\I:=\{1,\ldots, m \}$. Let $(\forall i \in \I)$ $\omega_{i}  \in  \left]0,1\right[\,$.	Suppose that $m\geq 2$ and that $(\forall i \in \I)$  $G_{i}$ is a BAM with  $U_{i}:=\Fix G_{i}$ being a closed  affine subspace of $\mathcal{H}$ such that $\cap_{i \in \I} \Fix G_{i} \neq \varnothing$. Suppose  that $(\forall i \in \I )$  $\sum^{i}_{j=1}   (\pa U_{j} )^{\perp}$ is closed.  Then $\sum_{i \in \I} \omega_{i} G_{i}$ is a BAM.
\end{theorem}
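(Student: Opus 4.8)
The plan is to reduce the statement to the case in which every $U_i$ is a closed \emph{linear} subspace, and then to obtain it from the two-operator result \cref{theorem:convex:comb:BAM:2} by an induction on the number of summands. Throughout I use that the weights are convex, i.e.\ $\sum_{i\in\I}\omega_i=1$, as is required for \cref{lem:convcomb:BAM:eq} to apply.

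First I would set up the reduction. Since $\cap_{i\in\I}U_i\neq\varnothing$, fix $z\in\cap_{i\in\I}U_i$ and define $F_i\colon\mathcal{H}\to\mathcal{H}$ by $F_i(x):=G_i(x+z)-z$, so that $G_ix=z+F_i(x-z)$. By \cref{prop:BAMAffinLinea}\cref{prop:BAMAffinLinea:Fix}$\&$\cref{prop:BAMAffinLinea:BAM}, each $F_i$ is a BAM with $\Fix F_i=U_i-z=\pa U_i$, a closed linear subspace, and $0\in\cap_{i\in\I}\pa U_i$. The standing closedness hypothesis concerns only the subspaces $\pa U_j$, hence is unaffected. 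Applying \cref{lem:convcomb:BAM:eq} to $\sum_{i\in\I}\omega_iG_i$ and to $\sum_{i\in\I}\omega_iF_i$ shows their fixed point sets $\cap_{i\in\I}U_i$ and $\cap_{i\in\I}\pa U_i$ are closed affine subspaces, in particular nonempty closed convex, so the hypothesis of \cref{lemma:shift}\cref{lemma:shift:combi} holds; that lemma then gives that $\sum_{i\in\I}\omega_iG_i$ is a BAM if and only if $\sum_{i\in\I}\omega_iF_i$ is. It therefore suffices to prove the theorem when each $U_i$ is a closed linear subspace, which I assume henceforth.

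Next I would run the induction. For $k\in\I$ put $\sigma_k:=\sum_{i=1}^{k}\omega_i$ and $H_k:=\sum_{i=1}^{k}\tfrac{\omega_i}{\sigma_k}G_i$, so $H_m=\sum_{i\in\I}\omega_iG_i$. By \cref{lem:convcomb:BAM:eq}\cref{lem:convcomb:BAM:eq:Fix}, applied with index set $\{1,\dots,k\}$, $\Fix H_k=\cap_{i=1}^{k}U_i$ is a closed linear subspace for every $k$. I claim each $H_k$ is a BAM, by induction on $k$: the base case $H_1=G_1$ holds by hypothesis. For the step, write $\alpha:=\sigma_{k-1}/\sigma_k\in\left]0,1\right[$, so that $H_k=\alpha H_{k-1}+(1-\alpha)G_k$ is a two-term convex combination of the BAMs $H_{k-1}$ and $G_k$, whose fixed point sets $\cap_{i=1}^{k-1}U_i$ and $U_k$ are closed linear subspaces. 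By \cref{lemma:perp:equiv}\cref{lemma:perp:equiv:EQ}, the assumption that $\sum_{j=1}^{i}U_j^\perp$ is closed for all $i\in\I$ is equivalent to $\bigl(\cap_{j=1}^{i}U_j\bigr)+U_{i+1}$ being closed for all $i\in\I\smallsetminus\{m\}$; with $i=k-1$ this gives that $\Fix H_{k-1}+\Fix G_k=\bigl(\cap_{j=1}^{k-1}U_j\bigr)+U_k$ is closed. Hence \cref{theorem:convex:comb:BAM:2} applies and yields that $H_k$ is a BAM, completing the induction. Thus $H_m=\sum_{i\in\I}\omega_iG_i$ is a BAM, and by the reduction the original affine statement follows.

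The main obstacle is the bookkeeping of the closedness hypotheses. \cref{theorem:convex:comb:BAM:2} can only be invoked for two subspaces whose \emph{sum} is closed, and it is precisely the equivalence in \cref{lemma:perp:equiv}\cref{lemma:perp:equiv:EQ} between the global condition ``$\sum_{j\le i}U_j^\perp$ closed'' and the incremental closedness of $\bigl(\cap_{j\le i}U_j\bigr)+U_{i+1}$ that makes the build-up ordering $H_1,\dots,H_m$ compatible with the available hypotheses; a different grouping would require closedness of sums that are not assumed. A secondary point, forcing the initial reduction, is that \cref{theorem:convex:comb:BAM:2} is stated only for linear subspaces, so the affine case genuinely needs the shift through \cref{lemma:shift}. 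A non-inductive alternative would bound $\norm{H_mx-\Pro_{\cap_{i\in\I}U_i}x}\le\sum_{i\in\I}\omega_i\norm{G_ix-\Pro_{\cap_{i\in\I}U_i}x}$ termwise and control the resulting coefficient via the bounded linear regularity of $\{U_i\}$ implied by closedness of $\sum_jU_j^\perp$; the inductive route above sidesteps that extra machinery.
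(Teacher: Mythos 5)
Your proposal is correct and follows essentially the same route as the paper's proof: reduce to closed linear subspaces via the shift $F_i(x)=G_i(x+z)-z$ together with \cref{prop:BAMAffinLinea} and \cref{lemma:shift}\cref{lemma:shift:combi}, then induct by peeling off the last operator, using \cref{lemma:perp:equiv}\cref{lemma:perp:equiv:EQ} to verify that $\bigl(\cap_{j=1}^{k-1}U_j\bigr)+U_k$ is closed so that \cref{theorem:convex:comb:BAM:2} applies at each step. Your normalized partial sums $H_k$ and the base case $k=1$ are only cosmetic variations on the paper's induction starting at $m=2$.
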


\begin{proof}
	Let $z \in \cap_{i \in \I} U_{i} $. Define $(\forall i \in \I)$ $F_{i} :\mathcal{H} \to \mathcal{H}$ by
	\begin{align} \label{eq:theor:averacomb:BAM:shift:F}
	(\forall x \in \mathcal{H}) \quad 	F_{i} (x) := G_{i}(x +z) -z
	\end{align}
	By  the assumptions, \cref{eq:theor:averacomb:BAM:shift:F} and \cref{prop:BAMAffinLinea}, $F_{i}$ is a BAM with $\Fix F_{i} =\pa U_{i}$ being a closed linear subspace of $\mathcal{H}$. By \cref{prop:CompositionBAM}\cref{prop:CompositionBAM:comb} and by assumptions,  $\Fix (\sum_{i \in \I} \omega_{i} G_{i})= \cap^{m}_{i=1} U_{i}$ is a  closed affine subspace. Hence,   by \cref{eq:theor:averacomb:BAM:shift:F} and \cref{lemma:shift}\cref{lemma:shift:combi},  to show $\sum_{i \in \I} \omega_{i} G_{i}$ is a BAM,  we are able to assume that $U_{1}, \ldots, U_{m}$ are closed linear subspaces of $\mathcal{H}$.
	
 We prove it by induction on $m$. By \cref{lemma:perp:equiv}\cref{lemma:perp:equiv:EQ} and \cref{theorem:convex:comb:BAM:2}, we know that the base case in which $m=2$ holds. Suppose  that $m \geq 3$ and that the required result holds for $m-1$, that is,
 for any $\{ \alpha_{1}, \ldots, \alpha_{m-1}\} \subseteq \left]0,1\right[\,$ we have that  if $(\forall i \in \{1, \ldots, m-1\})$  $\sum^{i}_{j=1}   U^{\perp}_{j}$ is closed, then $\sum^{m-1}_{i =1} \alpha_{i} G_{i}$ is a BAM.
Note that
 \begin{align*}
 \sum^{m}_{i =1} \omega_{i} G_{i} = \big(\sum^{m-1}_{j =1} \omega_{j} \big)\left( \sum^{m-1}_{i =1} \frac{\omega_{i} }{\sum^{m-1}_{t =1} \omega_{t}} G_{i} \right) + \omega_{m} G_{m+1}.
 \end{align*}
Because we have the assumption, $(\forall i \in \{1, \ldots,m\})$  $ \sum^{i}_{j=1}   U^{\perp} _{j}$ is closed, by the inductive hypothesis,  $\sum^{m-1}_{i =1} \frac{\omega_{i} }{\sum^{m-1}_{j =1} \omega_{j}} G_{i}$ is a BAM.
By the assumption, $(\forall i \in \I)$ $\sum^{i}_{j=1}   U^{\perp}_{j}$ is closed,
 by  \cref{prop:CompositionBAM}\cref{prop:CompositionBAM:comb} and \cref{lemma:perp:equiv}\cref{lemma:perp:equiv:EQ}, we know that
 $\Fix \left( \sum^{m-1}_{i =1} \frac{\omega_{i} }{\sum^{m-1}_{t =1} \omega_{t}} G_{i} \right) +\Fix G_{m} = (\cap^{m-1}_{i=1} U_{i}) +U_{m} $ is closed.
Hence, apply \cref{theorem:convex:comb:BAM:2} with $G_{1}= \sum^{m-1}_{i =1} \frac{\omega_{i} }{\sum^{m-1}_{j =1} \omega_{j}} G_{i}$, $G_{2} =G_{m}$, $\alpha = \sum^{m-1}_{j =1} \omega_{j} $ to obtain that $\sum^{m}_{i =1} \omega_{i} G_{i}$ is a BAM.
\end{proof}

\subsection*{New method  using the Cartesian product space reformulation}
The main result \cref{theor:averacomb:BAM} in this subsection is almost the same with the \cref{theor:averacomb:BAM:shift} proved in the previous subsection, however, in this subsection, we use a Cartesian product space reformulation.

In the whole subsection, set $\I :=\{1, \ldots, m\}$. Let $(\omega_{i})_{i \in \I}$ be real numbers in $\left]0,1\right] $ such that $\sum_{i \in \I} \omega_{i}=1$.
Let $\mathcal{H}^{m}$ be the real Hilbert space obtained by endowing the Cartesian product $\times_{i \in \I} \mathcal{H}$ with the usual vector space structure and with the weighted inner product
\begin{align*}
(\forall \mathbf{x} =(x_{i})_{i \in \I} \in \mathcal{H}^{m}) (\forall \mathbf{y} =(y_{i})_{i \in \I} \in \mathcal{H}^{m}) \quad \innp{\mathbf{x} , \mathbf{y} } =\sum_{i \in \I} \omega_{i} \innp{x_{i}, y_{i}}.
\end{align*}

Clearly,
\begin{align} \label{eq:norm}
(\forall \mathbf{x} =(x_{i})_{i \in \I} \in \mathcal{H}^{m}) \quad \norm{\mathbf{x}}^{2} =\innp{\mathbf{x} , \mathbf{x} } =\sum_{i \in \I} \omega_{i} \innp{x_{i},x_{i}} =\sum_{i \in \I} \omega_{i} \norm{x_{i}}^{2}.
\end{align}
Denote by
\begin{align*}
\mathbf{D} :=\{ (x)_{i \in \I} \in \mathcal{H}^{m} ~:~ x \in \mathcal{H} \}.
\end{align*}
The following well-known fact is critical in proofs in this subsection.
\begin{fact} \label{fact:projector}
	Let $\mathbf{x} = (x_{i})_{i \in \I} \in \mathcal{H}^{m}$. The following hold:
	\begin{enumerate}
		\item \label{fact:projector:D} $\Pro_{\mathbf{D}  } \mathbf{x} = ( \sum_{j \in \I}\omega_{j} x_{j})_{i\in \I}$.
		\item \label{fact:projector:timesC} Let $(\forall i \in \I)$ $C_{i}$ be nonempty closed and convex subset of $\mathcal{H}$. Then $ \Pro_{\times_{i \in \I}C_{i}}  \mathbf{x}  = ( \Pro_{C_{i}}x_{i})_{i \in \I}$.
	\end{enumerate}
\end{fact}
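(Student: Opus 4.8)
The plan is to verify both identities directly from the variational characterizations of the projector, keeping careful track of the weights $\omega_i$ appearing in the inner product \cref{eq:norm}. Both $\mathbf{D}$ and $\times_{i \in \I} C_i$ are nonempty closed convex subsets of $\mathcal{H}^m$ (the former is in fact a closed linear subspace), so the projectors are well defined, and each is characterized by the obtuse-angle condition (see, e.g., \cite{BC2017}): $\mathbf{p} = \Pro_S \mathbf{x}$ if and only if $\mathbf{p} \in S$ and $\innp{\mathbf{x} - \mathbf{p}, \mathbf{c} - \mathbf{p}} \le 0$ for all $\mathbf{c} \in S$; for a closed subspace the latter reduces to orthogonality $\mathbf{x} - \mathbf{p} \perp S$.

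For \cref{fact:projector:D} I would set $\bar{x} := \sum_{j \in \I} \omega_j x_j$ and take the candidate $\mathbf{p} := (\bar{x})_{i \in \I} \in \mathbf{D}$. Since $\mathbf{D}$ is a linear subspace, it suffices to check $\innp{\mathbf{x} - \mathbf{p}, \mathbf{d}} = 0$ for every $\mathbf{d} = (d)_{i \in \I} \in \mathbf{D}$. Expanding with the weighted inner product gives
\begin{align*}
\innp{\mathbf{x} - \mathbf{p}, \mathbf{d}} = \sum_{i \in \I} \omega_i \innp{x_i - \bar{x}, d} = \Innp{\sum_{i \in \I} \omega_i x_i - \Big(\sum_{i \in \I} \omega_i\Big) \bar{x}, d},
\end{align*}
which vanishes because $\sum_{i \in \I} \omega_i = 1$ forces $\sum_{i \in \I} \omega_i x_i - \bar{x} = 0$; hence $\mathbf{p} = \Pro_{\mathbf{D}} \mathbf{x}$.

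For \cref{fact:projector:timesC} I would exploit the fact that the squared distance decouples across coordinates. By \cref{eq:norm}, for any $\mathbf{c} = (c_i)_{i \in \I} \in \times_{i \in \I} C_i$ one has $\norm{\mathbf{x} - \mathbf{c}}^2 = \sum_{i \in \I} \omega_i \norm{x_i - c_i}^2$, a sum whose $i$-th summand depends only on $c_i$ and is constrained only by $c_i \in C_i$. Since each $\omega_i > 0$ and the constraints are independent across $i$, the sum is minimized exactly by minimizing each $\norm{x_i - c_i}^2$ separately, i.e.\ by $c_i = \Pro_{C_i} x_i$; thus $\Pro_{\times_{i \in \I} C_i} \mathbf{x} = (\Pro_{C_i} x_i)_{i \in \I}$. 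Equivalently, with $\mathbf{p} := (\Pro_{C_i} x_i)_{i \in \I} \in \times_{i \in \I} C_i$ one checks the obtuse-angle inequality coordinatewise, $\innp{\mathbf{x} - \mathbf{p}, \mathbf{c} - \mathbf{p}} = \sum_{i \in \I} \omega_i \innp{x_i - \Pro_{C_i} x_i, c_i - \Pro_{C_i} x_i} \le 0$, each summand being nonpositive by the characterization of $\Pro_{C_i}$ in $\mathcal{H}$ together with $\omega_i > 0$.

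I do not expect a genuine obstacle here, since the statement is a standard fact; the only points requiring care are the bookkeeping with the weights, namely using $\sum_{i \in \I} \omega_i = 1$ in \cref{fact:projector:D} and the strict positivity $\omega_i > 0$ in \cref{fact:projector:timesC}. The latter is precisely what legitimizes the term-by-term minimization (a vanishing weight would detach a coordinate from the objective), so it is worth flagging explicitly rather than suppressing as routine.
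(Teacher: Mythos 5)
Your proof is correct and, for part \cref{fact:projector:timesC}, is essentially identical to the paper's: the paper also verifies the obtuse-angle characterization of \cite[Theorem~3.16]{BC2017} coordinatewise in the weighted inner product. For part \cref{fact:projector:D} the paper simply cites \cite[Proposition~29.16]{BC2017}, whereas you verify the orthogonality condition directly using $\sum_{i \in \I}\omega_i = 1$; this is just the standard proof of the cited result, so there is no substantive difference.
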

\begin{proof}
	\cref{fact:projector:D}: This is from \cite[Poposition~29.16]{BC2017}.
	
	\cref{fact:projector:timesC}: This is similar to \cite[Proposition~29.3]{BC2017}. Because the  definition of inner product is different, we show the proof next.
	Clearly, $( \Pro_{C_{i}}x_{i})_{i \in \I} \in \times_{i \in \I}C_{i}$. Moreover, by \cite[Theorem~3.16]{BC2017},
	\begin{align*}
	(\forall (c_{i})_{i\in \I} \in  \times_{i \in \I}C_{i}) \quad  \Innp{(x_{i})_{i \in \I} - ( \Pro_{C_{i}}x_{i})_{i \in \I}, (c_{i})_{i\in \I}- ( \Pro_{C_{i}}x_{i})_{i \in \I}   } =\sum_{i \in \I} \omega_{i} \innp{x_{i} - \Pro_{C_{i}}x_{i}, c_{i} - \Pro_{C_{i}}x_{i}} \leq 0,
	\end{align*}
	which by \cite[Theorem~3.16]{BC2017} again, implies that $ \Pro_{\times_{i \in \I}C_{i}}  \mathbf{x}  = ( \Pro_{C_{i}}x_{i})_{i \in \I}$.
\end{proof}

In the remaining part of this subsection, let $(\forall i \in \I)$ $G_{i} : \mathcal{H} \to \mathcal{H}$. Define $\mathbf{F} : \mathcal{H}^{m} \to \mathcal{H}^{m}$, and $\mathbf{G} : \mathcal{H}^{m} \to \mathcal{H}^{m}$ respectively by
\begin{align}
(\forall \mathbf{x} =(x_{i})_{i \in \I} \in \mathcal{H}^{m}) \quad &\mathbf{F}(\mathbf{x}  ) = ( G_{i}x_{i})_{i \in \I},\label{defn:F}\\
(\forall \mathbf{x} =(x_{i})_{i \in \I} \in \mathcal{H}^{m}) \quad &\mathbf{G}(\mathbf{x}  ) =\big( \sum_{j \in \I} \omega_{j} G_{j}x_{j} \big)_{i \in \I}. \label{defn:G}
\end{align}

\begin{proposition} \label{prop:GiBAM:F}
	\begin{enumerate}
		\item \label{prop:GiBAM:F:FixF} $\Fix \mathbf{F} = \times_{i \in \I}\Fix G_{i} $.
		\item \label{prop:GiBAM:F:BAM} Let $(\forall i\in \I)$ $ \gamma_{i}  \in \left[0, 1\right[\,$.	Suppose that  $(\forall i \in \I)$  $G_{i}$ is a $\gamma_{i}$-BAM. Then $\mathbf{F}$ is a $(\max_{i \in \I} \{\gamma_{i}\})$-BAM.
	\end{enumerate}
\end{proposition}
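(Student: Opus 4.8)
The plan is to verify both assertions directly from the definitions, exploiting the fact that every relevant object in the product space $\mathcal{H}^{m}$ decouples across coordinates. For \cref{prop:GiBAM:F:FixF} I would simply unfold the definition of $\mathbf{F}$ in \cref{defn:F}: a point $\mathbf{x}=(x_{i})_{i\in\I}$ lies in $\Fix\mathbf{F}$ exactly when $(G_{i}x_{i})_{i\in\I}=(x_{i})_{i\in\I}$, i.e.\ when $G_{i}x_{i}=x_{i}$ for every $i\in\I$, which is precisely the condition $\mathbf{x}\in\times_{i\in\I}\Fix G_{i}$. This costs no computation.

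For \cref{prop:GiBAM:F:BAM}, I would set $\gamma:=\max_{i\in\I}\gamma_{i}$ and check the three conditions of \cref{def:BAM}. The workhorse throughout is \cref{fact:projector}\cref{fact:projector:timesC}, which gives the coordinatewise formula $\Pro_{\Fix\mathbf{F}}\mathbf{x}=\Pro_{\times_{i\in\I}\Fix G_{i}}\mathbf{x}=(\Pro_{\Fix G_{i}}x_{i})_{i\in\I}$. Condition \cref{def:BAM:Fix} follows from \cref{prop:GiBAM:F:FixF}, since each $\Fix G_{i}$ is nonempty, closed, and convex by \cref{def:BAM}\cref{def:BAM:Fix}, and a finite product of such sets is again nonempty, closed, and convex. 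For condition \cref{def:BAM:eq} I would combine the coordinatewise projection formula with condition \cref{def:BAM:eq} for each $G_{i}$ to obtain $\Pro_{\Fix\mathbf{F}}\mathbf{F}\mathbf{x}=(\Pro_{\Fix G_{i}}G_{i}x_{i})_{i\in\I}=(\Pro_{\Fix G_{i}}x_{i})_{i\in\I}=\Pro_{\Fix\mathbf{F}}\mathbf{x}$.

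The one genuine computation is the contraction estimate \cref{def:BAM:Ineq}, where I would use the weighted-norm identity \cref{eq:norm} to separate the bound across coordinates:
\begin{align*}
\norm{\mathbf{F}\mathbf{x}-\Pro_{\Fix\mathbf{F}}\mathbf{x}}^{2}=\sum_{i\in\I}\omega_{i}\norm{G_{i}x_{i}-\Pro_{\Fix G_{i}}x_{i}}^{2}\leq\sum_{i\in\I}\omega_{i}\gamma_{i}^{2}\norm{x_{i}-\Pro_{\Fix G_{i}}x_{i}}^{2}\leq\gamma^{2}\norm{\mathbf{x}-\Pro_{\Fix\mathbf{F}}\mathbf{x}}^{2},
\end{align*}
where the first inequality applies \cref{def:BAM}\cref{def:BAM:Ineq} to each $G_{i}$ and the second uses $\gamma_{i}\leq\gamma$ together with \cref{eq:norm}. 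Taking square roots then shows $\mathbf{F}$ is a $\gamma$-BAM.

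I do not anticipate any real obstacle: the whole argument is a coordinatewise unfolding, and the only point requiring mild care is tracking the weights $\omega_{i}$ in the inner product. These cancel cleanly because the same weights appear on both sides of the norm estimate, so the maximal constant $\max_{i\in\I}\gamma_{i}$ emerges without any cross-coordinate interaction.
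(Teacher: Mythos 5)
Your proposal is correct and follows essentially the same route as the paper: both arguments unfold everything coordinatewise, using \cref{fact:projector}\cref{fact:projector:timesC} for the product projection formula and the weighted norm identity \cref{eq:norm} for the contraction estimate, with the maximum $\max_{i\in\I}\gamma_{i}$ extracted exactly as you describe. No gaps.
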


\begin{proof}
	Let $\mathbf{x} =(x_{i})_{i \in \I} \in \mathcal{H}^{m}$.
	
	\cref{prop:GiBAM:F:FixF}: Now
	\begin{align*}
	\mathbf{x}  \in \Fix \mathbf{F} \stackrel{\cref{defn:F}}{\Leftrightarrow}  (x_{i})_{i \in \I}  = ( G_{i}x_{i})_{i \in \I}
	\Leftrightarrow (\forall i \in \I) x_{i}=G_{i}x_{i}
	\Leftrightarrow (\forall i \in \I) x_{i} \in \Fix G_{i}
	\Leftrightarrow \mathbf{x}  \in \times_{i \in \I}\Fix G_{i} .
	\end{align*}
	
	\cref{prop:GiBAM:F:BAM}: Because $(\forall i \in \I)$  $G_{i}$ is a BAM, we know that $(\forall i \in \I)$  $\Fix G_{i}$ is a	nonempty closed and convex subsets of $\mathcal{H}$ and, by  \cref{def:BAM},  that
	for every $i \in \I$,
	\begin{align}
	&\Pro_{\Fix G_{i}}G_{i}=\Pro_{\Fix G_{i}} \label{eq:prop:GiBAM:F:BAM:Gi:Fix}\\
	(\forall x \in \mathcal{H}) \quad &\norm{G_{i} x -\Pro_{\Fix G_{i}}x} \leq \gamma_{i} \norm{x - \Pro_{\Fix G_{i}}x}. \label{eq:prop:GiBAM:F:BAM:Gi:ineq}
	\end{align}
	By \cref{prop:GiBAM:F:FixF} and \cref{fact:projector}\cref{fact:projector:timesC}, $\Fix \mathbf{F} = \times_{i \in \I}\Fix G_{i} $ is a nonempty closed   convex subset of $\mathcal{H}^{m}$ and
	\begin{align}  \label{eq:prop:GiBAM:F:BAM:PFixG}
 \Pro_{\Fix \mathbf{F}} (\mathbf{x} ) =
	\Pro_{\times_{i \in \I}\Fix G_{i} } (\mathbf{x} ) = (\Pro_{\Fix G_{i}} x_{i} )_{i \in \I}.
	\end{align}
Now
	\begin{align} \label{eq:prop:GiBAM:F:BAM:EQ}
	\Pro_{\Fix \mathbf{F}} \mathbf{F}  (\mathbf{x} )\stackrel{\cref{defn:F}}{=}  \Pro_{\Fix \mathbf{F}} \left( ( G_{i}x_{i})_{i \in \I} \right) \stackrel{\cref{eq:prop:GiBAM:F:BAM:PFixG}}{=} (\Pro_{\Fix G_{i}} G_{i}x_{i} )_{i \in \I} \stackrel{\cref{eq:prop:GiBAM:F:BAM:Gi:Fix} }{=} (\Pro_{\Fix G_{i}} x_{i} )_{i \in \I} \stackrel{\cref{eq:prop:GiBAM:F:BAM:PFixG}}{=}  \Pro_{\Fix \mathbf{F}} (\mathbf{x}).
	\end{align}
	Note that
	by \cref{defn:F} and \cref{eq:prop:GiBAM:F:BAM:PFixG},
	\begin{subequations} \label{eq:prop:GiBAM:F:BAM:Norm}
		\begin{align}
		\norm{\mathbf{F} (\mathbf{x})  - \Pro_{\Fix \mathbf{F}}\mathbf{x}}^{2} &~=~ \norm{ ( G_{i}x_{i})_{i \in \I} - (\Pro_{\Fix G_{i}} x_{i} )_{i \in \I} }^{2}\\
		&\stackrel{\cref{eq:norm}}{=} \sum_{i\in \I} \omega_{i} \norm{G_{i}x_{i} -\Pro_{\Fix G_{i}} x_{i}}^{2}\\
		&\stackrel{\cref{eq:prop:GiBAM:F:BAM:Gi:ineq}}{\leq} \sum_{i\in \I} \omega_{i}  \gamma^{2}_{i} \norm{x_{i} -\Pro_{\Fix G_{i}} x_{i}}^{2}\\
		&~\leq~  \max_{j\in \I} \{ \gamma^{2}_{j}\}  \sum_{i\in \I} \omega_{i}  \norm{x_{i} -\Pro_{\Fix G_{i}} x_{i}}^{2}\\
		&\stackrel{\cref{eq:norm}}{=}   \max_{j\in \I} \{ \gamma^{2}_{j}\}  \Norm{(x_{i})_{i \in \I} -  (\Pro_{\Fix G_{i}} x_{i} )_{i \in \I}}^{2}\\
		&\stackrel{\cref{eq:prop:GiBAM:F:BAM:PFixG}}{=} \max_{j\in \I} \{ \gamma^{2}_{j}\}  \norm{\mathbf{x}  -  \Pro_{\Fix \mathbf{F}}\mathbf{x} }^{2}.
		\end{align}
	\end{subequations}
	Therefore, combine \cref{eq:prop:GiBAM:F:BAM:EQ} and \cref{eq:prop:GiBAM:F:BAM:Norm} with  \cref{def:BAM} to obtain the asserted result.
	
\end{proof}

\begin{proposition} \label{proposition:producF:G}
	Let $\gamma \in \left[0,1\right[\,$ . Then the following hold:
\begin{enumerate}
	\item  \label{proposition:producF:G:F:Gi} If $\mathbf{F}$ is a $\gamma$-BAM, then $(\forall i \in \I)$ $G_{i}$ is a $\gamma$-BAM.
	\item \label{proposition:producF:G:equiv} $\mathbf{F}$ is a  BAM if and only if $(\forall i \in \I)$ $G_{i}$ is a  BAM.
\end{enumerate}
\end{proposition}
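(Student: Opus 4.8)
The plan is to reduce everything to \cref{prop:GiBAM:F}, which already records $\Fix \mathbf{F} = \times_{i \in \I}\Fix G_{i}$ and the fact that a product of BAMs is a BAM. The only genuinely new content is \cref{proposition:producF:G:F:Gi}, and \cref{proposition:producF:G:equiv} will then follow immediately: the implication ``$(\forall i \in \I)$ $G_{i}$ is a BAM $\Rightarrow$ $\mathbf{F}$ is a BAM'' is exactly \cref{prop:GiBAM:F}\cref{prop:GiBAM:F:BAM} (with $\gamma = \max_{i} \gamma_{i}$), while the converse is obtained by specializing \cref{proposition:producF:G:F:Gi} to the unknown constant for which $\mathbf{F}$ is a BAM.

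So I would assume $\mathbf{F}$ is a $\gamma$-BAM, fix $i \in \I$, and verify the three defining conditions of \cref{def:BAM} for $G_{i}$. For \cref{def:BAM:Fix} I would start from $\Fix \mathbf{F} = \times_{j \in \I}\Fix G_{j}$ (\cref{prop:GiBAM:F}\cref{prop:GiBAM:F:FixF}), which is nonempty, closed, and convex because $\mathbf{F}$ is a BAM. A nonempty product is nonempty in each factor, so $\Fix G_{j} \neq \varnothing$ for every $j$; fixing points $p_{j} \in \Fix G_{j}$ for $j \neq i$ and considering the continuous affine embedding $x \mapsto \mathbf{x}$ with $x_{i} = x$ and $x_{j} = p_{j}$ $(j \neq i)$, one checks that $\Fix G_{i}$ equals the preimage of $\Fix \mathbf{F}$ under this map, hence is closed and convex.

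For \cref{def:BAM:eq} and \cref{def:BAM:Ineq} the key tool is the coordinatewise projection formula $\Pro_{\Fix \mathbf{F}}\mathbf{x} = (\Pro_{\Fix G_{j}} x_{j})_{j \in \I}$ from \cref{fact:projector}\cref{fact:projector:timesC}. Applying the identity $\Pro_{\Fix \mathbf{F}}\mathbf{F} = \Pro_{\Fix \mathbf{F}}$ to an $\mathbf{x}$ whose $i$-th entry is an arbitrary $x \in \mathcal{H}$ and reading off the $i$-th coordinate yields $\Pro_{\Fix G_{i}} G_{i} x = \Pro_{\Fix G_{i}} x$, which is \cref{def:BAM:eq}. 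For \cref{def:BAM:Ineq} I would exploit the weighted-norm identity \cref{eq:norm}: take $\mathbf{x}$ with $x_{i} = x$ and $x_{j} = p_{j} \in \Fix G_{j}$ for $j \neq i$. Then for every $j \neq i$ one has $G_{j} x_{j} = x_{j}$ and $\Pro_{\Fix G_{j}} x_{j} = x_{j}$, so each $j \neq i$ summand in both $\norm{\mathbf{F}\mathbf{x} - \Pro_{\Fix \mathbf{F}}\mathbf{x}}^{2}$ and $\norm{\mathbf{x} - \Pro_{\Fix \mathbf{F}}\mathbf{x}}^{2}$ vanishes; the $\gamma$-BAM inequality for $\mathbf{F}$ therefore collapses to $\omega_{i}\norm{G_{i} x - \Pro_{\Fix G_{i}} x}^{2} \le \gamma^{2}\omega_{i}\norm{x - \Pro_{\Fix G_{i}} x}^{2}$, and cancelling $\omega_{i} > 0$ delivers exactly \cref{def:BAM:Ineq} for $G_{i}$.

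The step I expect to be most delicate is this decoupling in \cref{def:BAM:Ineq}: it is essential that the coordinates $j \neq i$ can be loaded with genuine fixed points (so that their contribution to both sides is zero) and that $\omega_{i}$ is strictly positive (so that it can be cancelled); without the fixed-point choice one could only control a weighted sum rather than the single summand of index $i$. Everything else is routine bookkeeping with the product structure.
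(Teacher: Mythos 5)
Your proposal is correct and follows essentially the same route as the paper: deduce the properties of each $\Fix G_{i}$ from $\Fix \mathbf{F}=\times_{j\in \I}\Fix G_{j}$, read off \cref{def:BAM}\cref{def:BAM:eq} coordinatewise from the product projection formula, and obtain \cref{def:BAM}\cref{def:BAM:Ineq} by loading the coordinates $j\neq i$ with fixed points so the weighted-norm inequality collapses to the single index $i$ and $\omega_{i}>0$ can be cancelled. Item \cref{proposition:producF:G:equiv} is then, as in the paper, immediate from \cref{proposition:producF:G:F:Gi} and \cref{prop:GiBAM:F}\cref{prop:GiBAM:F:BAM}.
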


\begin{proof}
	\cref{proposition:producF:G:F:Gi}:
	Because $\mathbf{F}$ is a $\gamma$-BAM, using \cref{def:BAM} and \cref{prop:GiBAM:F}\cref{prop:GiBAM:F:FixF}, we know that $\Fix \mathbf{F} = \times_{i \in \I}\Fix G_{i} $ is a nonempty closed and convex subset of $\mathcal{H}^{m}$,  and
	there exists $\gamma \in \left[0, 1\right[\,$ such that
	\begin{align}
	&\Pro_{\times_{i \in \I}  \Fix G_{i} }\mathbf{F}=\Pro_{\times_{i \in \I}  \Fix G_{i}} \label{eq:proposition:producF:G:Fix}\\
	(\forall \mathbf{x}  \in \mathcal{H}^{m}) \quad &\norm{\mathbf{F}\mathbf{x} -\Pro_{\times_{i \in \I}  \Fix G_{i}}\mathbf{x}} \leq \gamma \norm{\mathbf{x}- \Pro_{\times_{i \in \I}  \Fix G_{i}}\mathbf{x}}. \label{eq:proposition:producF:G:ineq}
	\end{align}
Hence, $(\forall i \in \I)$ $ \Fix G_{i}$ is a nonempty closed and convex subset of $\mathcal{H}$.	 Let $x \in \mathcal{H}$.

Set $\mathbf{y} :=(x)_{i \in \I} \in \mathcal{H}^{m}$.
By \cref{fact:projector}\cref{fact:projector:timesC}, \cref{defn:F}, and \cref{eq:proposition:producF:G:Fix},
	\begin{align*}
\big( \Pro_{\Fix G_{i} } ( G_{i}x)\big)_{i \in \I}=	\Pro_{\times_{i \in \I}  \Fix G_{i} } \big( ( G_{i}x)_{i \in \I}\big)=\Pro_{\times_{i \in \I}  \Fix G_{i} }\mathbf{F}\mathbf{y}=\Pro_{\times_{i \in \I}  \Fix G_{i}}\mathbf{y} =\big( \Pro_{\Fix G_{i} } ( x)\big)_{i \in \I},
	\end{align*}
	which yields $(\forall i \in \I)$ $\Pro_{\Fix G_{i} } G_{i}=\Pro_{\Fix G_{i} }  $.
	
	Let $j \in \I$. Set $\mathbf{x} :=(x_{i})_{i \in \I} \in \mathcal{H}^{m}$ such that $x_{j}=x$ and $(\forall i \in \I \smallsetminus \{j\})$ $x_{i} \in \Fix G_{i}$.   Then $(\forall i \in \I \smallsetminus \{j\})$ $x_{i} =G_{i}x_{i} =\Pro_{  \Fix G_{i} }x_{i}$.
	Hence, by \cref{eq:proposition:producF:G:ineq}, we have that
		\begin{align*}
		&~\norm{\mathbf{F}\mathbf{x} -\Pro_{\times_{i \in \I}  \Fix G_{i}}\mathbf{x}}^{2} \leq \gamma^{2}
		\norm{\mathbf{x}- \Pro_{\times_{i \in \I}  \Fix G_{i}}\mathbf{x}}^{2}\\
		\Leftrightarrow~ &~\norm{ ( G_{i} x_{i} )_{i \in \I}-(\Pro_{\Fix G_{i}}  x_{i})_{i\in \I} }^{2} \leq \gamma^{2}
		\norm{ (x_{i})_{i \in \I}  - (\Pro_{\Fix G_{i}}  x_{i})_{i\in \I}  }^{2}\\
		\stackrel{\cref{eq:norm}}{\Leftrightarrow} &~ \sum_{i \in \I} \omega_{i} \norm{G_{i}x_{i} -\Pro_{\Fix G_{i}}  x_{i} }^{2}  \leq \gamma^{2}  \sum_{i \in \I} \omega_{i} \norm{x_{i} -\Pro_{\Fix G_{i}}  x_{i} }^{2}\\
		\Leftrightarrow~ &~ \omega_{j} \norm{G_{j}x -\Pro_{\Fix G_{j}}  x}^{2}  \leq \gamma^{2} \omega_{j} \norm{x-\Pro_{\Fix G_{j}}  x }^{2}\\
		\Leftrightarrow~ &~ \norm{G_{j}x -\Pro_{\Fix G_{j}}  x }^{2}  \leq \gamma^{2}  \norm{x -\Pro_{\Fix G_{j}}  x }^{2}.
		\end{align*}
Hence, by  \cref{def:BAM}, we know that $(\forall i \in \I)$ $G_{i}$ is a $\gamma$-BAM.

	\cref{proposition:producF:G:equiv}:  The equivalence comes from \cref{proposition:producF:G:F:Gi} above  and \cref{prop:GiBAM:F}\cref{prop:GiBAM:F:BAM}.
\end{proof}

The following result is inspired by \cite[Proposition~5.25]{BOyW2019Isometry}. With consideration of \cref{prop:BAM:Properties} and \cref{examp:BAM:Pro}, we note that \cref{theor:averacomb:BAM} is a refinement of \cite[Proposition~5.25]{BOyW2019Isometry}.
\begin{theorem} \label{theor:averacomb:BAM}
Let $(\forall i\in \I)$ $ \gamma_{i}  \in \left[0, 1\right[\,$.	Suppose that  $(\forall i \in \I)$  $G_{i}$ is a $\gamma_{i}$-BAM and that $\Fix G_{i}$ is a closed affine subspace of $\mathcal{H}$ with $\cap_{j \in \I} \Fix G_{j} \neq \varnothing$. Set $c_{F}:=c\left( \mathbf{D}, \times_{j \in \I} (\pa\Fix G_{j} ) \right)$. Suppose that $ \sum_{j \in \I} (\pa \Fix G_{j} )^{\perp}$ is closed.
Denote by $\mu:=\max_{j\in \I} \{\gamma_{j} \}$ and $\gamma:=\sqrt{\mu^{2}+(1-\mu^{2}) \frac{(1+c_{F})^{2} }{4}}$. Then $\Fix \sum_{j \in \I} \omega_{j} G_{j} =\cap_{j \in \I} \Fix G_{j} $ is a closed affine subspace of $\mathcal{H}$ and $\sum_{j \in \I} \omega_{j} G_{j}$ is a $\gamma$-BAM.  Moreover,
\begin{align*}
(\forall x \in \mathcal{H}) \quad \norm{ (\sum_{i \in \I} \omega_{i} G_{i})^{k}x - \Pro_{ \cap_{i \in \I} \Fix G_{i} } x} \leq \gamma^{k} \norm{x - \Pro_{ \cap_{i \in \I} \Fix G_{i} } x}.
\end{align*}
\end{theorem}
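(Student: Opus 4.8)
The plan is to exploit the identity $\mathbf{G}=\Pro_{\mathbf{D}}\circ\mathbf{F}$ and reduce the statement to the composition result \cref{thm:ComposiBAM:Linear}. First I would dispatch the fixed point set and the projection identity: writing $G:=\sum_{j\in\I}\omega_j G_j$, \cref{lem:convcomb:BAM:eq}\cref{lem:convcomb:BAM:eq:Fix} gives that $\Fix G=\cap_{j\in\I}\Fix G_j$ is a nonempty closed affine subspace, and \cref{lem:convcomb:BAM:eq}\cref{lem:convcomb:BAM:eq:EQ} gives $\Pro_{\Fix G}G=\Pro_{\Fix G}$ (we may assume $m\ge 2$ and each $\omega_j\in\left]0,1\right[$, the case $m=1$ being trivial). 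Next, choosing $z\in\cap_{j\in\I}\Fix G_j$ and passing to $F_j:=G_j(\cdot+z)-z$ as in \cref{prop:BAMAffinLinea}, each $F_j$ is a $\gamma_j$-BAM whose fixed point set is the closed \emph{linear} subspace $\pa\Fix G_j$; since $\pa\Fix F_j=\pa\Fix G_j$ leaves $c_F$ unchanged, \cref{lemma:shift}\cref{lemma:shift:combi} lets me assume henceforth that every $U_j:=\Fix G_j$ is a closed linear subspace of $\mathcal{H}$.

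Working in $\mathcal{H}^m$, \cref{prop:GiBAM:F}\cref{prop:GiBAM:F:BAM} shows $\mathbf{F}$ is a $\mu$-BAM with $\mu=\max_{j\in\I}\gamma_j$ and $\Fix\mathbf{F}=\times_{j\in\I}U_j$, while $\Pro_{\mathbf{D}}$ is a $0$-BAM with $\Fix\Pro_{\mathbf{D}}=\mathbf{D}$ by \cref{examp:BAM:Pro}; moreover \cref{fact:projector}\cref{fact:projector:D} together with \cref{defn:F} and \cref{defn:G} give $\mathbf{G}=\Pro_{\mathbf{D}}\mathbf{F}$. I would then apply \cref{thm:ComposiBAM:Linear} with $G_1=\mathbf{F}$, $G_2=\Pro_{\mathbf{D}}$, $\gamma_1=\mu$, $\gamma_2=0$, and Friedrichs cosine $c_F=c(\mathbf{D},\times_{j\in\I}U_j)$. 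Substituting $\gamma_2=0$ into \cref{eq:lVcapW} yields exactly $s=\sqrt{\mu^2+(1-\mu^2)(1+c_F)^2/4}=\gamma$, and since $(1+c_F)/2\le 1$ one checks $s\le r$ (equivalently \cref{remark:ComposiBAM:Linear}\cref{remark:ComposiBAM:Linear:constant}), so $\min\{r,s\}=\gamma$. Hence $\mathbf{G}$ is a $\gamma$-BAM and, by \cref{thm:ComposiBAM:Linear}\cref{thm:ComposiBAM:PVcapW}, $\Fix\mathbf{G}=(\times_{j\in\I}U_j)\cap\mathbf{D}=\{(c)_{i\in\I}:c\in\cap_{j\in\I}U_j\}$.

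The step where \cref{thm:ComposiBAM:Linear} becomes applicable requires $\mathbf{D}+\times_{j\in\I}U_j$ to be closed, i.e. $c_F<1$, and establishing this from the hypothesis that $\sum_{j\in\I}U_j^{\perp}$ is closed is the main obstacle. I would introduce the bounded surjection $T\colon\mathcal{H}^m\to\mathcal{H}$, $T((y_i)_{i\in\I})=\sum_{i\in\I}\omega_i y_i$, for which $\ker T=\mathbf{D}^{\perp}$ and, since each $U_j^{\perp}$ is a subspace and $\omega_j\neq 0$, $T(\times_{j\in\I}U_j^{\perp})=\sum_{j\in\I}U_j^{\perp}$. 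By the open mapping theorem applied to the surjection $T$, the set $\sum_{j\in\I}U_j^{\perp}=T(\times_{j\in\I}U_j^{\perp})$ is closed if and only if $\times_{j\in\I}U_j^{\perp}+\ker T=(\times_{j\in\I}U_j)^{\perp}+\mathbf{D}^{\perp}$ is closed; combining this with \cref{fac:cFLess1} applied to the pair $\mathbf{D},\times_{j\in\I}U_j$ gives $c_F<1$, which also guarantees $\gamma<1$.

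Finally I would transfer the conclusion to the diagonal. For $\mathbf{x}=(x)_{i\in\I}\in\mathbf{D}$ one has $\mathbf{G}\mathbf{x}=(Gx)_{i\in\I}$, and because $\sum_{i\in\I}\omega_i=1$ the projection onto $\Fix\mathbf{G}$ satisfies $\Pro_{\Fix\mathbf{G}}\mathbf{x}=(\Pro_{\cap_{j\in\I}U_j}x)_{i\in\I}$; the $\gamma$-BAM inequality for $\mathbf{G}$ then reads, via $\norm{(y)_{i\in\I}}^{2}=\sum_{i\in\I}\omega_i\norm{y}^{2}=\norm{y}^{2}$, as $\norm{Gx-\Pro_{\cap_{j\in\I}U_j}x}\le\gamma\norm{x-\Pro_{\cap_{j\in\I}U_j}x}$ for every $x\in\mathcal{H}$. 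Together with $\Fix G=\cap_{j\in\I}U_j$ and $\Pro_{\Fix G}G=\Pro_{\Fix G}$ from the first paragraph, \cref{def:BAM} yields that $G$ is a $\gamma$-BAM; undoing the shift through \cref{lemma:shift}\cref{lemma:shift:combi} gives the statement for the original operators $G_j$. The displayed linear-convergence estimate for the iterates is then immediate from \cref{prop:BAM:Properties}.
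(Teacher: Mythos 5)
Your proposal is correct and follows essentially the same route as the paper: reduce to linear subspaces by a shift, pass to the product space $\mathcal{H}^m$, write $\mathbf{G}=\Pro_{\mathbf{D}}\mathbf{F}$, and apply the two-operator composition theorem with $G_{1}=\mathbf{F}$ (a $\mu$-BAM) and $G_{2}=\Pro_{\mathbf{D}}$ (a $0$-BAM), then transfer the inequality back through the diagonal. The only substantive difference is that where the paper derives the closedness of $\mathbf{D}+\times_{j\in\I}U_{j}$ from the hypothesis by citing \cite[Lemma~5.18]{BB1996}, you give a self-contained open-mapping-theorem argument via the surjection $(y_{i})_{i\in\I}\mapsto\sum_{i\in\I}\omega_{i}y_{i}$, which is a valid (and arguably more transparent) substitute.
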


\begin{proof}
	By the assumptions and \cref{lem:convcomb:BAM:eq},  $\Fix \sum_{i \in \I} \omega_{i} G_{i}= \cap_{i \in \I} \Fix G_{i}$ is a closed affine subspace of $\mathcal{H}$ and $\Pro_{\cap_{i \in \I} \Fix G_{i}} ( \sum_{i \in \I} \omega_{i} G_{i})= \Pro_{\cap_{i \in \I} \Fix G_{i}} $.
	To show $\sum_{j \in \I} \omega_{j} G_{j}$ is a $\gamma$-BAM, by  \cref{def:BAM}, it suffices to show that
	\begin{align} \label{eq:prop:producF:G:G}
	(\forall x \in \mathcal{H}) \quad 	\Norm{\sum_{i \in \I} \omega_{i} G_{i} x -\Pro_{\cap_{i \in \I} \Fix G_{i}} x} \leq \gamma \norm{x - \Pro_{\cap_{i \in \I} \Fix G_{i}} x}.
	\end{align}
	By \cref{prop:GiBAM:F}\cref{prop:GiBAM:F:FixF}$\&$\cref{prop:GiBAM:F:BAM}, $\Fix \mathbf{F} = \times_{j \in \I}\Fix G_{j} $ is a closed affine subspace of $\mathcal{H}^{m}$ and $\mathbf{F}$ is a $\mu$-BAM.
	By \cref{examp:BAM:Pro},  $\Pro_{\mathbf{D}}$ is a $0$-BAM.
	By \cref{fac:cFLess1}, $\mathbf{D}^{\perp} +\left( \pa \big(\times_{j \in \I}\Fix G_{j} \big) \right)^{\perp}$ is closed if and only if $\mathbf{D}+\left( \pa \big(\times_{j \in \I}\Fix G_{j} \big) \right) $ is closed. Moreover,
by  \cite[Lemma~5.18]{BB1996}, $\mathbf{D}+\left( \pa \big(\times_{j \in \I}\Fix G_{j} \big) \right) $ is closed if and only if $ \sum_{j \in \I} (\pa \Fix G_{j} )^{\perp}$ is closed, which is our assumption. Hence, we obtain that $\mathbf{D}^{\perp} +\left( \pa \big(\times_{j \in \I}\Fix G_{j} \big) \right)^{\perp}$ is closed.
	Then apply  \cref{thm:BAM:COMPO}\cref{thm:BAM:COMPO:BAM:2} with $\mathcal{H} =\mathcal{H}^{m}$, $G_{1} =\mathbf{F} $ and $G_{2}= \Pro_{\mathbf{D}} $ to obtain that $\Pro_{\mathbf{D}}\mathbf{F} $ is a $\gamma$-BAM.
Note that, by \cref{defn:F} and \cref{fact:projector}\cref{fact:projector:D},
	\begin{align*}
(\forall \mathbf{y} =(y_{i})_{i \in \I} \in \mathcal{H}^{m}) \quad &\Pro_{\mathbf{D}} \mathbf{F}(\mathbf{y}  ) =\big( \sum_{j \in \I} \omega_{j} G_{j}y_{j} \big)_{i \in \I} =\mathbf{G}(\mathbf{y}  ),	
	\end{align*}
that is,  $\Pro_{\mathbf{D}} \mathbf{F}=\mathbf{G}$. By  \cite[Corollary~4.5.2]{Cegielski},
\begin{align*}
\Fix \mathbf{G}=\Fix (\Pro_{\mathbf{D}} \mathbf{F})=\mathbf{D} \cap \left( \times_{i \in \I} (\cap_{j \in \I} \Fix G_{j})\right).
\end{align*}
Let $x \in \mathcal{H}$ and set $\mathbf{x} =(x)_{i \in \I} \in \mathcal{H}^{m}$. Similarly with the proof of \cref{fact:projector}\cref{fact:projector:timesC}, by \cite[Theorem~3.16]{BC2017}, we have that
\begin{align} \label{eq:prop:producF:G:G:fixG}
\Pro_{\Fix \mathbf{G} }\mathbf{x}=(\Pro_{\cap_{j \in \I} \Fix G_{j} }x )_{i \in \I}.
\end{align}
Because	 $\mathbf{G}=\Pro_{\mathbf{D}}\mathbf{F} $  is a $\gamma$-BAM, by \cref{def:BAM}\cref{def:BAM:Ineq},
\begin{align*}
& \norm{\mathbf{G} \mathbf{x} -\Pro_{\Fix \mathbf{G}} \mathbf{x} } \leq \gamma \norm{\mathbf{x} - \Pro_{\Fix \mathbf{G}}\mathbf{x}}\\ \stackrel{\cref{eq:prop:producF:G:G:fixG}}{\Leftrightarrow}&~
\Norm{ (\sum_{j \in \I} \omega_{j} G_{j} x)_{i \in \I} -(\Pro_{\cap_{j \in \I} \Fix G_{j} }x )_{i \in \I} }^{2} \leq \gamma^{2} \Norm{(x)_{i\in \I} - (\Pro_{\cap_{j \in \I} \Fix G_{j} }x )_{i \in \I}}^{2}\\
\stackrel{\cref{eq:norm}}{\Leftrightarrow}&~\sum_{i \in \I} \omega_{i} \Norm{ \sum_{j \in \I} \omega_{j} G_{j} x-  \Pro_{\cap_{j \in \I} \Fix G_{j} }x }^{2} \leq \gamma^{2}  \sum_{i \in \I} \omega_{i} \norm{x -  \Pro_{\cap_{j \in \I} \Fix G_{j} }x }^{2}\\
~\Leftrightarrow~&~ \Norm{ \sum_{j \in \I} \omega_{j} G_{j} x-  \Pro_{\cap_{j \in \I} \Fix G_{j} }x } \leq \gamma \norm{x -  \Pro_{\cap_{j \in \I} \Fix G_{j} }x },
\end{align*}
which yields  \cref{eq:prop:producF:G:G}. Hence, the proof is complete.
\end{proof}

\begin{remark}\label{remark:assumpt:constants:different}
	Consider \Cref{theor:averacomb:BAM:shift,theor:averacomb:BAM}. Although
	 the results from these two theorems are the same, but there are different assumptions:   \enquote{$(\forall i \in \I )$  $\sum^{i}_{j=1}   (\pa U_{j} )^{\perp}$ is closed} and \enquote{$ \sum_{i \in \I} (\pa U_{i} )^{\perp}$ is closed} respectively.

	Suppose that $m =3$, that $(\pa U_{2})^{\perp} + (\pa U_{1} )^{\perp}$ is not closed,  and that $(\pa U_{3})^{\perp} =\mathcal{H}$, say, $G_{3}=\Pro_{\{0\}}$. Then clearly,  $ \sum^{3}_{i =1} (\pa U_{i} )^{\perp} =\mathcal{H}$ is closed. Hence, \enquote{$ \sum_{i \in \I} (\pa U_{i} )^{\perp}$ is closed}  $\not \Rightarrow$ \enquote{ $(\forall i \in \I \smallsetminus\{m\})$  $(\pa U_{i+1})^{\perp} +\sum^{i}_{j=1}   (\pa U_{j} )^{\perp}$ is closed}.
	
	Therefore, we know that the assumptions in \cref{theor:averacomb:BAM:shift} are more restrictive than the assumptions in \cref{theor:averacomb:BAM}. However, comparing the constant $\gamma$ in \cref{theorem:convex:comb:BAM:2} and in \cref{theor:averacomb:BAM} for $m=2$, we know that the constants associated with the convex combination of two BAMs are independent in these two theorems. Hence, we keep  \cref{theor:averacomb:BAM:shift,theor:averacomb:BAM} together.
\end{remark}

The following \cref{cor:BAM:COMPO:Pro}\cref{cor:BAM:COMPO:Pro:BAM} is a weak version of \cite[Theorem~9.33]{D2012} which shows clearly the convergence rate of the method of alternating projections.
\begin{corollary} \label{cor:BAM:COMPO:Pro}
	Let $U_{1}, \ldots, U_{m}$  be closed affine subspaces of $\mathcal{H}$ with $\cap^{m}_{i=1} U_{i} \neq \varnothing$.  Then the following statements hold:
	\begin{enumerate}
		\item \label{cor:BAM:COMPO:Pro:BAM} Assume that $(\forall i \in \I)$ $\sum^{i}_{j=1} (\pa U_{j})^{\perp}$ is closed. Then $\Pro_{U_{m}}\cdots \Pro_{U_{2}}\Pro_{U_{1}}$ is a  BAM; moreover, there exists $\gamma \in \left[0,1\right[$ such that
		\begin{align*}
		(\forall x \in \mathcal{H}) \quad \norm{ (\Pro_{U_{m}}\cdots \Pro_{U_{2}}\Pro_{U_{1}})^{k}x - \Pro_{ \cap^{m}_{i=1} U_{i}} x} \leq \gamma^{k} \norm{x - \Pro_{ \cap^{m}_{i=1} U_{i}} x}.
		\end{align*}
		\item \label{cor:BAM:COMPO:Pro:comb} Suppose that $\sum_{i \in \I} (\pa U_{i} )^{\perp} $  is closed.  Let $(\omega_{i})_{1\leq i \leq m}$ be real numbers in $\left]0,1\right] $ such that $\sum^{m}_{i =1} \omega_{i}=1$. Then $\sum^{m}_{i =1}  \omega_{i} \Pro_{U_{i}}$ is a  BAM.  Moreover, there exists $\gamma \in \left[0,1\right[$ such that
		\begin{align*}
		(\forall x \in \mathcal{H}) \quad \Norm{ (\sum^{m}_{i =1}  \omega_{i} \Pro_{U_{i}})^{k}x - \Pro_{ \cap^{m}_{i=1} U_{i}} x} \leq \gamma^{k} \norm{x - \Pro_{ \cap^{m}_{i=1} U_{i}} x}.
		\end{align*}
	\end{enumerate}
\end{corollary}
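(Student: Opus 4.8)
The plan is to recognize this corollary as a direct specialization of the two main composition/combination theorems to the case where every building block is a projector. The starting observation is that, by \cref{examp:BAM:Pro}, for each $i \in \I$ the projector $\Pro_{U_{i}}$ is a $0$-BAM whose fixed point set is precisely the closed affine subspace $U_{i}$. Hence the family $(\Pro_{U_{i}})_{i \in \I}$ meets the standing hypotheses of both \cref{thm:BAM:COMPO} and \cref{theor:averacomb:BAM} with $\gamma_{i}=0$, provided the respective closedness conditions are in force.

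For \cref{cor:BAM:COMPO:Pro:BAM}, I would apply \cref{thm:BAM:COMPO} with $G_{i}:=\Pro_{U_{i}}$. The assumption that $(\forall i \in \I)$ $\sum^{i}_{j=1}(\pa U_{j})^{\perp}$ is closed is exactly the closedness hypothesis of that theorem, and $\cap_{i \in \I} U_{i} \neq \varnothing$ is given. Then \cref{thm:BAM:COMPO}\cref{thm:BAM:COMPO:BAM} yields that $\Pro_{U_{m}}\cdots \Pro_{U_{1}}$ is a BAM, while \cref{thm:BAM:COMPO}\cref{thm:BAM:COMPO:LineaConve} supplies a constant $\gamma \in \left[0,1\right[$ together with the claimed linear rate of convergence of the iterates to $\Pro_{\cap^{m}_{i=1} U_{i}}x$.

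For \cref{cor:BAM:COMPO:Pro:comb}, I would instead invoke \cref{theor:averacomb:BAM} with the same choice $G_{i}:=\Pro_{U_{i}}$ and the given weights $(\omega_{i})_{i \in \I}$. Here the matching hypothesis is that $\sum_{i \in \I}(\pa U_{i})^{\perp}$ is closed, which is precisely what is assumed. That theorem then delivers both that $\sum^{m}_{i=1}\omega_{i}\Pro_{U_{i}}$ is a $\gamma$-BAM (with $\gamma$ formed from $c_{F}$ and $\mu=\max_{i}\gamma_{i}=0$) and the stated linear convergence estimate.

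The argument is therefore essentially bookkeeping, and I expect no substantial obstacle. The only points requiring care are: first, confirming via \cref{examp:BAM:Pro} that each projector is genuinely a $0$-BAM with fixed point set $U_{i}$; and second, verifying that the two \emph{distinct} closedness assumptions stated in the corollary align correctly with the respectively distinct closedness hypotheses of \cref{thm:BAM:COMPO} and \cref{theor:averacomb:BAM}—a distinction already highlighted in \cref{remark:assumpt:constants:different}. Since no new estimate is introduced, the whole proof reduces to this alignment of hypotheses and a citation of the relevant conclusions.
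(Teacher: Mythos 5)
Your proposal is correct and matches the paper's own proof essentially verbatim: both note via \cref{examp:BAM:Pro} that each $\Pro_{U_{i}}$ is a $0$-BAM with $\Fix \Pro_{U_{i}}=U_{i}$, then cite \cref{thm:BAM:COMPO}\cref{thm:BAM:COMPO:BAM}$\&$\cref{thm:BAM:COMPO:LineaConve} for part (i) and \cref{theor:averacomb:BAM} for part (ii). No differences worth noting.
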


\begin{proof}
	By \cref{examp:BAM:Pro}, we know that $(\forall i \in \{1, \ldots, m\})$ $\Pro_{U_{i}}$ is a $0$-BAM and $\Fix \Pro_{U_{i}}=U_{i}$ is a closed affine subspace.
	
\cref{cor:BAM:COMPO:Pro:BAM}: This comes from  \cref{thm:BAM:COMPO}\cref{thm:BAM:COMPO:BAM}$\&$\cref{thm:BAM:COMPO:LineaConve}   with $G_{1}=\Pro_{U_{1}}, \ldots,  G_{m}=\Pro_{U_{m}}$.

\cref{cor:BAM:COMPO:Pro:comb}: This follows by  \cref{theor:averacomb:BAM}.
\end{proof}

\section{Connections between BAMs and circumcenter mappings} \label{sec:BAMandCIM}
In this section, we present BAMs which are not projections in Hilbert spaces. In particular, we connect the circumcenter mapping with BAM.

\subsection*{Definitions and facts on circumcentered isometry methods}
Before we turn to  the relationship between best approximation mapping and circumcenter mapping, we need the background and facts on the circumcenter mapping and the circumcentered method in this section.

By  \cite[Proposition~3.3]{BOyW2018}, we know that the following definition is well defined.
\begin{definition}[circumcenter operator]  {\rm \cite[Definition~3.4]{BOyW2018}} \label{defn:Circumcenter}
	Let $\mathcal{P}(\mathcal{H})$ be the set of all nonempty subsets of $\mathcal{H}$ containing \emph{finitely many} elements. The \emph{circumcenter operator} is
	\begin{align*}
	\CCO{} \colon \mathcal{P}(\mathcal{H}) \to \mathcal{H} \cup \{ \varnothing \} \colon K \mapsto \begin{cases} p, \quad ~\text{if}~p \in \aff (K)~\text{and}~\{\norm{p-y} ~:~y \in K \}~\text{is a singleton};\\
	\varnothing, \quad~ \text{otherwise}.
	\end{cases}
	\end{align*}
	In particular, when $\CCO(K) \in \mathcal{H}$, that is, $\CCO(K) \neq \varnothing$, we say that the circumcenter of $K$ exists and we call $\CCO(K)$ the \emph{circumcenter} of $K$.
\end{definition}

\begin{definition}[circumcenter mapping] {\rm \cite[Definition~3.1]{BOyW2018Proper}} \label{def:cir:map}
Let 	$F_{1}, \ldots, F_{m}$ be operators from $\mathcal{H}$ to $\mathcal{H}$ such that $\cap^{m}_{j=1} \Fix F_{j} \neq \varnothing$. Set $\mathcal{S}:=\{ F_{1}, \ldots,  F_{m} \}$ and  $(\forall x \in \mathcal{H}) $ $\mathcal{S}(x):=\{ F_{1}x, \ldots,  F_{m}x\} $.
	The \emph{circumcenter mapping} induced by $\mathcal{S}$ is
	\begin{align*}
	\CC{\mathcal{S}} \colon \mathcal{H} \to \mathcal{H} \cup \{ \varnothing \} \colon x \mapsto \CCO(\mathcal{S}(x)),
	\end{align*}
	that is, for every $x \in \mathcal{H}$, if the circumcenter of the set $\mathcal{S}(x)$ defined in \cref{defn:Circumcenter} does not exist, then  $\CC{\mathcal{S}}x= \varnothing $. Otherwise, $\CC{\mathcal{S}}x$ is the unique point satisfying the two conditions below:
	\begin{enumerate}
		\item $\CC{\mathcal{S}}x \in \aff(\mathcal{S}(x))=\aff\{F_{1}(x), \ldots,    F_{m}(x)\}$, and
		\item $\norm{\CC{\mathcal{S}}x -F_{1}(x)}=\cdots =\norm{\CC{\mathcal{S}}x -F_{m}(x)}$.
	\end{enumerate}
	In particular, if for every $x \in \mathcal{H}$, $\CC{\mathcal{S}}x \in \mathcal{H}$, then we say the circumcenter mapping $\CC{\mathcal{S}}$ induced by $\mathcal{S}$ is \emph{proper}. Otherwise, we call  $\CC{\mathcal{S}}$ \emph{improper}.
\end{definition}

\begin{fact} {\rm \cite[Proposition~3.7(ii)]{BOyW2018Proper} } \label{fact:FixCCS:G}
Let 	$F_{1}, \ldots, F_{m}$ be operators from $\mathcal{H}$ to $\mathcal{H}$ with $\cap^{m}_{j=1} \Fix F_{j} \neq \varnothing$. Set $\mathcal{S}:=\{ F_{1}, \ldots,  F_{m} \}$.	Assume that $\CC{\mathcal{S}}$ is proper and that $\Id \in \mathcal{S}$. Then $\Fix \CC{\mathcal{S}} = \cap^{m}_{i=1} \Fix F_{i}$.
\end{fact}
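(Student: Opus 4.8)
The plan is to prove the set equality by establishing the two inclusions separately, exploiting the two defining conditions of $\CC{\mathcal{S}}$ from \cref{def:cir:map} together with the properness assumption (which guarantees $\CC{\mathcal{S}}x \in \mathcal{H}$ for every $x$, so that $\Fix \CC{\mathcal{S}}$ is meaningful).

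First I would show the easy inclusion $\cap^{m}_{i=1} \Fix F_{i} \subseteq \Fix \CC{\mathcal{S}}$. Let $x \in \cap^{m}_{i=1} \Fix F_{i}$, so that $F_{i}x = x$ for every $i \in \{1,\ldots,m\}$. Then the set $\mathcal{S}(x) = \{F_{1}x, \ldots, F_{m}x\}$ collapses to the singleton $\{x\}$, whence $\aff(\mathcal{S}(x)) = \{x\}$. The point $x$ itself then satisfies both conditions in \cref{def:cir:map}: it lies in $\aff(\mathcal{S}(x))$, and $\norm{x - F_{i}x} = 0$ for all $i$, so the equidistance condition holds trivially. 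By uniqueness of the circumcenter, $\CC{\mathcal{S}}x = x$, i.e. $x \in \Fix \CC{\mathcal{S}}$. Note this direction does not even require $\Id \in \mathcal{S}$.

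For the reverse inclusion $\Fix \CC{\mathcal{S}} \subseteq \cap^{m}_{i=1} \Fix F_{i}$, the assumption $\Id \in \mathcal{S}$ is exactly what does the work. Let $x \in \Fix \CC{\mathcal{S}}$, so $\CC{\mathcal{S}}x = x$. Since $\Id \in \mathcal{S}$, there is an index $k$ with $F_{k} = \Id$, hence $F_{k}x = x = \CC{\mathcal{S}}x$, giving $\norm{\CC{\mathcal{S}}x - F_{k}x} = 0$. The second condition of \cref{def:cir:map} forces all the distances $\norm{\CC{\mathcal{S}}x - F_{i}x}$ to be equal, hence all equal to $0$; therefore $F_{i}x = \CC{\mathcal{S}}x = x$ for every $i$, i.e. $x \in \cap^{m}_{i=1} \Fix F_{i}$. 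Combining the two inclusions yields the claim.

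I do not expect a genuine obstacle here: the argument is a direct unwinding of the definition, and the only point requiring care is to invoke properness so that $\CC{\mathcal{S}}x$ is an honest element of $\mathcal{H}$ (rather than $\varnothing$) throughout, and to use the equidistance condition as the mechanism that propagates a single vanishing distance to all of them.
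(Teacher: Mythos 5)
Your argument is correct and complete: the forward inclusion follows because $\mathcal{S}(x)$ collapses to the singleton $\{x\}$ when $x$ is a common fixed point, and the reverse inclusion follows because $\Id\in\mathcal{S}$ forces one of the equal distances to vanish, hence all of them. The paper itself offers no proof of this statement --- it is imported as a Fact from \cite[Proposition~3.7(ii)]{BOyW2018Proper} --- and your direct unwinding of \cref{def:cir:map} is precisely the standard argument one would find there, with the role of properness and of the hypothesis $\Id\in\mathcal{S}$ correctly isolated.
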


\begin{fact} \label{fact:form:m2:Oper} {\rm  \cite[Proposition~3.3]{BOyW2018Proper}}
	Let 	$F_{1},   F_{2}$ be operators from $\mathcal{H}$ to $\mathcal{H}$ and set $\mathcal{S}:=\{F_{1}, F_{2}\}$. Then
	\begin{align*}
	(\forall x \in \mathcal{H}) \quad  \CC{\mathcal{S}}x = \frac{F_{1}x +F_{2}x}{2}.
	\end{align*}
\end{fact}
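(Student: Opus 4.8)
The plan is to verify directly that the midpoint $p := \tfrac12(F_{1}x + F_{2}x)$ satisfies the two defining conditions of the circumcenter in \cref{defn:Circumcenter}, and then invoke the uniqueness built into that definition to conclude $\CC{\mathcal{S}}x = p$. The whole statement is essentially the observation that, for a two-point set, the affine hull is the line through the two points and the equidistance locus is their perpendicular bisector, so the circumcenter is forced to be the midpoint.

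First I would fix $x \in \mathcal{H}$ and set $p := \tfrac12(F_{1}x + F_{2}x)$. Since $p = \tfrac12 F_{1}x + \tfrac12 F_{2}x$ is an affine combination of $F_{1}x$ and $F_{2}x$ (the two coefficients sum to $1$), we have $p \in \aff\{F_{1}x, F_{2}x\} = \aff(\mathcal{S}(x))$, which establishes condition (i) in \cref{def:cir:map} (equivalently, that $p \in \aff(\mathcal{S}(x))$ in \cref{defn:Circumcenter}).

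Next I would check equidistance. Writing $d := F_{2}x - F_{1}x$, we get $p - F_{1}x = \tfrac12 d$ and $p - F_{2}x = -\tfrac12 d$, so $\norm{p - F_{1}x} = \tfrac12 \norm{d} = \norm{p - F_{2}x}$; hence $\{\norm{p - y} ~:~ y \in \mathcal{S}(x)\}$ is a singleton, which is condition (ii). Note this argument also covers the degenerate case $F_{1}x = F_{2}x$, in which $\aff(\mathcal{S}(x))$ is a single point and $p = F_{1}x$ trivially meets both requirements.

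Finally, since \cite[Proposition~3.3]{BOyW2018} guarantees that the circumcenter operator is well defined --- that is, there is at most one point satisfying (i) and (ii) --- the point $p$ exhibited above must equal $\CCO(\mathcal{S}(x))$, so $\CC{\mathcal{S}}x = p = \tfrac12(F_{1}x + F_{2}x)$ for every $x \in \mathcal{H}$. I do not expect any genuine obstacle: the only point worth flagging is that the explicit midpoint also supplies \emph{existence} of the circumcenter (not merely uniqueness), which is what makes $\CC{\mathcal{S}}$ proper in the two-operator case, so the formula holds for all $x$ without exception.
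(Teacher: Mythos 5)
Your verification is correct and complete: the midpoint lies in the affine hull as an affine combination, is equidistant from $F_{1}x$ and $F_{2}x$, and the uniqueness guaranteed by \cite[Proposition~3.3]{BOyW2018} forces $\CC{\mathcal{S}}x$ to equal it, with the degenerate case $F_{1}x=F_{2}x$ properly covered. The paper states this as a Fact imported from \cite[Proposition~3.3]{BOyW2018Proper} without proof, and your argument is exactly the standard one-line verification that result rests on, so there is nothing to compare or correct.
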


Let $x \in \mathcal{H}$ and assume that $\CC{\mathcal{S}}$ is proper. The \emph{circumcenter method} induced by $\mathcal{S}$ is
\begin{align} \label{eq:CIMSequence}
x_{0}:=x, ~\mbox{and}~x_{k}:=\CC{\mathcal{S}}(x_{k-1})=\CC{\mathcal{S}}^{k}x, ~\mbox{where}~k=1,2,\ldots.
\end{align}

\begin{definition} \label{defn:isometry} {\rm \cite[Definition~1.6-1]{Kreyszig1989}}
	A mapping $T: \mathcal{H} \rightarrow \mathcal{H}$ is said to be \emph{isometric} or an \emph{isometry} if
	\begin{align} \label{eq:T:normpreserving}
	(\forall x \in \mathcal{H}) (\forall y \in \mathcal{H}) \quad \norm{Tx -Ty} =\norm{x-y}.
	\end{align}
\end{definition}

\begin{fact} {\rm \cite[Proposition~3.3 and Corollary~3.4]{BOyW2019LinearConvergence} } \label{fact:IsometryAffine}
	Let $T: \mathcal{H} \rightarrow \mathcal{H}$ be isometric. Then $T$ is affine.  Moreover, if $\Fix T$ is nonempty, then $\Fix T$ is a  closed affine  subspace.
\end{fact}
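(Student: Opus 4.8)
The plan is to first prove that $T$ is affine, and then read off the structure of $\Fix T$ from affineness together with continuity.

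First I would reduce to the origin-fixing case. Define $S \colon \mathcal{H} \to \mathcal{H}$ by $Sx := Tx - T(0)$, so that $S(0) = 0$ and, since $\norm{Sx - Sy} = \norm{Tx - Ty} = \norm{x - y}$, the map $S$ is again an isometry. Setting $y = 0$ gives $\norm{Sx} = \norm{x}$, so $S$ preserves norms. Expanding $\norm{Sx - Sy}^2 = \norm{Sx}^2 - 2\innp{Sx, Sy} + \norm{Sy}^2$ and comparing with $\norm{x - y}^2 = \norm{x}^2 - 2\innp{x, y} + \norm{y}^2$, the squared-norm terms cancel by norm preservation and one is left with $\innp{Sx, Sy} = \innp{x, y}$ for all $x, y \in \mathcal{H}$. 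Thus $S$ preserves inner products, and this polarization step is the crux of the argument.

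With inner product preservation in hand, linearity of $S$ follows by computing the norm of the appropriate defect and showing it vanishes. For additivity I would expand $\norm{S(x+y) - Sx - Sy}^2$: every term of the form $\innp{Su, Sv}$ that appears equals $\innp{u, v}$, and the resulting expression in $\innp{x,x}$, $\innp{x,y}$, $\innp{y,y}$ collapses identically to $0$, whence $S(x+y) = Sx + Sy$. Applying the same device to $\norm{S(\lambda x) - \lambda Sx}^2$ yields $0$, giving homogeneity. Therefore $S$ is linear, and since $T = S + T(0)$ with $S$ linear, $T$ is affine.

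For the second assertion, continuity of $T$ (an isometry is $1$-Lipschitz, hence nonexpansive) makes $x \mapsto \norm{Tx - x}$ continuous, so $\Fix T = \{x \in \mathcal{H} : \norm{Tx - x} = 0\}$ is closed. Assuming $\Fix T \neq \varnothing$, affineness of $T$ gives, for any $x, y \in \Fix T$ and $\rho \in \mathbb{R}$, that $T(\rho x + (1-\rho) y) = \rho Tx + (1-\rho) Ty = \rho x + (1-\rho) y$, so $\rho x + (1-\rho) y \in \Fix T$; this yields $\rho\, \Fix T + (1-\rho)\, \Fix T \subseteq \Fix T$, while the reverse inclusion is immediate from $c = \rho c + (1-\rho) c$. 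By the definition of affine subspace used in this paper, $\Fix T$ is therefore a closed affine subspace.

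The main obstacle is establishing affineness of $T$ without any surjectivity hypothesis: in a general normed space this is precisely the Mazur--Ulam phenomenon and is delicate, but in a Hilbert space the inner product supplies the elementary polarization route sketched above, where essentially all the work is concentrated. Once affineness is secured, the fixed point set statement is routine.
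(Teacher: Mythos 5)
Your proof is correct and complete. Note, however, that the paper does not prove this statement at all: it is recorded as a \emph{Fact} and imported verbatim from \cite[Proposition~3.3 and Corollary~3.4]{BOyW2019LinearConvergence}, so there is no in-paper argument to compare against. Your route --- translate so that the isometry fixes the origin, deduce norm preservation, polarize to get preservation of inner products, and then show the additivity and homogeneity defects have zero norm --- is the standard Hilbert-space argument and is airtight; it exploits the inner product directly rather than the metric midpoint characterization (the Mazur--Ulam-style argument valid in strictly convex spaces) that one might alternatively use. Your treatment of the fixed point set also matches the paper's conventions exactly: closedness from continuity, and the identity $\rho \Fix T + (1-\rho)\Fix T = \Fix T$ for all $\rho \in \mathbb{R}$, which is precisely the definition of affine subspace adopted in \cref{sec:Introduction}.
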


Note that by \cref{fact:IsometryAffine}, every isometry must be affine. In the rest of this section, without otherwise statement,
\begin{empheq}[box = \mybluebox]{equation*}
\big(\forall i \in \{1, \ldots, m\} \big) \quad T_{i} : \mathcal{H} \rightarrow \mathcal{H} ~\text{is affine isometry} \quad \text{with} \quad \bigcap^{m}_{j=1} \Fix T_{j} \neq \varnothing.
\end{empheq}
Denote by
\begin{empheq}[box = \mybluebox]{equation*}
\mathcal{S}:=\{ T_{1}, \ldots, T_{m-1}, T_{m} \}.
\end{empheq}
The associated set-valued operator $\mathcal{S}: \mathcal{H} \rightarrow \mathcal{P}(\mathcal{H})$ is defined by
\begin{align*}
(\forall x \in \mathcal{H}) \quad \mathcal{S}(x):=\{ T_{1}x, \ldots, T_{m-1}x, T_{m}x\}.
\end{align*}

The following \cref{fact:CCS:proper:NormPres:T}\cref{fact:CCS:proper:NormPres:T:prop} makes the circumcentered method induced by $\mathcal{S}$ defined in \cref{eq:CIMSequence} well-defined.  Since every element of  $\mathcal{S}$ is isometry, we call the circumcentered method induced by the $\mathcal{S}$ \emph{circumcentered isometry method} (CIM).

\begin{fact}  \label{fact:CCS:proper:NormPres:T} {\rm \cite[Theorem~3.3  and Proposition~4.2]{BOyW2019Isometry}}
	Let $x \in \mathcal{H}$. Then the following statements hold:
	\begin{enumerate}
		\item \label{fact:CCS:proper:NormPres:T:prop} The circumcenter mapping
		$\CC{\mathcal{S}} : \mathcal{H} \rightarrow \mathcal{H}$ induced by
		$\mathcal{S}$ is proper; moreover,
		$\CC{\mathcal{S}}x$ is the unique point satisfying the two conditions
		below:
		\begin{enumerate}
			\item  \label{thm:CCS:proper:NormPres:T:i} $\CC{\mathcal{S}}x\in  \aff (\mathcal{S}(x))$, and
			\item  \label{thm:CCS:proper:NormPres:T:ii} $
			\left\{  \norm{\CC{\mathcal{S}}x-Tx } ~:~ T \in \mathcal{S} \right\} $ is a singleton.
		\end{enumerate}

		\item \label{fact:CCS:proper:NormPres:T:PaffU}   Let $W$ be nonempty closed  affine subspace of $\cap^{m}_{i=1} \Fix T_{i}$.   Then $(\forall k \in \mathbb{N})$ $\Pro_{W} \CC{\mathcal{S}}^{k}=\Pro_{W}= \CC{\mathcal{S}}^{k}\Pro_{W}$.
	\end{enumerate}
\end{fact}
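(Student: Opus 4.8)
The plan is to exploit the standing assumption $\bigcap^{m}_{j=1}\Fix T_{j}\neq\varnothing$: any common fixed point is automatically equidistant from all the images $T_{i}x$ (each $T_{i}$ is an isometry), and its projection onto the affine hull $\aff(\mathcal{S}(x))$ will turn out to be the circumcenter. Concretely, to prove \cref{fact:CCS:proper:NormPres:T:prop} I would fix $z\in\bigcap^{m}_{j=1}\Fix T_{j}$ and set $p:=\Pro_{\aff(\mathcal{S}(x))}z$, which is well defined since $\aff(\mathcal{S}(x))$, being the affine hull of finitely many points, is finite-dimensional and hence closed. Because $T_{i}z=z$ and $T_{i}$ is an isometry, $\norm{T_{i}x-z}=\norm{x-z}$ for every $i$; the Pythagorean identity for the projection onto an affine subspace then gives $\norm{z-T_{i}x}^{2}=\norm{z-p}^{2}+\norm{p-T_{i}x}^{2}$ for each $i$, so $\norm{p-T_{i}x}^{2}=\norm{x-z}^{2}-\norm{z-p}^{2}$ is independent of $i$. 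Thus $p\in\aff(\mathcal{S}(x))$ is equidistant from the points of $\mathcal{S}(x)$, so by \cref{defn:Circumcenter} the circumcenter exists, equals $p$, and satisfies conditions \cref{thm:CCS:proper:NormPres:T:i} and \cref{thm:CCS:proper:NormPres:T:ii}; uniqueness is \cite[Proposition~3.3]{BOyW2018}. Since $x$ was arbitrary, $\CC{\mathcal{S}}$ is proper.

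For \cref{fact:CCS:proper:NormPres:T:PaffU} I would treat the two identities separately. The identity $\CC{\mathcal{S}}^{k}\Pro_{W}=\Pro_{W}$ is immediate: for any $z\in\bigcap^{m}_{j=1}\Fix T_{j}$ we have $\mathcal{S}(z)=\{z\}$, whence $\CC{\mathcal{S}}z=z$, so $\bigcap^{m}_{j=1}\Fix T_{j}\subseteq\Fix\CC{\mathcal{S}}$; since $\Pro_{W}x\in W\subseteq\bigcap^{m}_{j=1}\Fix T_{j}$, repeated application of $\CC{\mathcal{S}}$ fixes it, giving $\CC{\mathcal{S}}^{k}\Pro_{W}x=\Pro_{W}x$. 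The substantive identity $\Pro_{W}\CC{\mathcal{S}}^{k}=\Pro_{W}$ I would prove first for $k=1$ and then iterate. Fix $z\in W$ and put $V:=\pa W$, so that $W=z+V$ and $\Pro_{W}x=z+\Pro_{V}(x-z)$. By \cref{fact:IsometryAffine} each $T_{i}$ is affine, so $T_{i}x=z+L_{i}(x-z)$ for a linear isometry $L_{i}$, and because $T_{i}$ fixes $W$ pointwise, $L_{i}$ fixes $V$ pointwise. A short computation shows $L_{i}(V^{\perp})\subseteq V^{\perp}$: for $u\in V$ and $w\in V^{\perp}$, expanding $\norm{u+L_{i}w}^{2}=\norm{L_{i}(u+w)}^{2}=\norm{u+w}^{2}$ and using $\norm{L_{i}w}=\norm{w}$ yields $\innp{u,L_{i}w}=0$. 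Writing $x-z=a+b$ with $a\in V$ and $b\in V^{\perp}$, we obtain $T_{i}x=z+a+L_{i}b$ with $L_{i}b\in V^{\perp}$, so every point of $\mathcal{S}(x)$ shares the $V$-component $z+a$; hence $\aff(\mathcal{S}(x))\subseteq(z+a)+V^{\perp}$ and therefore $\CC{\mathcal{S}}x\in(z+a)+V^{\perp}$, which forces $\Pro_{W}\CC{\mathcal{S}}x=z+a=\Pro_{W}x$. Applying this to the point $\CC{\mathcal{S}}^{k-1}x$ and inducting on $k$ finishes the argument.

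The main obstacle is \cref{fact:CCS:proper:NormPres:T:prop}: one must \emph{guarantee} that a point of $\aff(\mathcal{S}(x))$ equidistant from all $T_{i}x$ exists, and the device of projecting a common fixed point onto the affine hull is precisely what supplies it. In \cref{fact:CCS:proper:NormPres:T:PaffU} the only delicate step is the invariance $L_{i}(V^{\perp})\subseteq V^{\perp}$, which cannot be taken for granted, since an isometry of a Hilbert space need not be surjective; the inner-product computation above is what replaces the orthogonality argument that would be automatic for orthogonal operators in finite dimensions.
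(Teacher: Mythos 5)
Your argument is correct. Note that the paper itself does not prove this statement --- it is imported as a Fact with a citation to \cite[Theorem~3.3 and Proposition~4.2]{BOyW2019Isometry} --- and your proof is essentially the one given there: properness is obtained by projecting a common fixed point $z$ onto the closed affine hull $\aff(\mathcal{S}(x))$ and using the isometry identity $\norm{T_{i}x-z}=\norm{x-z}$ together with the Pythagorean relation, while the identity $\Pro_{W}\CC{\mathcal{S}}=\Pro_{W}$ follows from the observation that all points of $\mathcal{S}(x)$, and hence $\aff(\mathcal{S}(x))$, lie in the coset $(z+a)+(\pa W)^{\perp}$ determined by the $\pa W$-component of $x$. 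Your explicit verification that each linearization $L_{i}$ maps $(\pa W)^{\perp}$ into itself (which is where the isometry hypothesis, rather than mere nonexpansiveness, is genuinely used) is exactly the delicate point, and you handle it correctly.
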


\begin{fact}  \label{fact:CWP:line:conv} {\rm  \cite[Theorem~4.15(i)]{BOyW2019Isometry}}
	Let $W$ be a nonempty closed  affine subspace of $\cap^{m}_{i=1} \Fix T_{i} $. 	
	Assume that there exist $F: \mathcal{H} \to \mathcal{H}$ and $\gamma \in \left[0,1\right[$ such that $(\forall x \in \mathcal{H})$ $F(x) \in \aff(\mathcal{S}(x) )$ and
	$(\forall x \in \mathcal{H})$ $ \norm{Fx-\Pro_{W} x} \leq \gamma \norm{x -\Pro_{W} x}.
	$
	Then
	\begin{align*}
	(\forall x \in \mathcal{H}) (\forall k \in \mathbb{N}) \quad \norm{\CC{\mathcal{S}}^{k}x-\Pro_{W} x} \leq \gamma^{k} \norm{x -\Pro_{W} x}.
	\end{align*}
\end{fact}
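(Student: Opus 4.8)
The plan is to reduce the statement to a single contraction inequality for the circumcenter mapping and then iterate. Throughout, abbreviate $w := \Pro_{W} x$, $K := \mathcal{S}(x) = \{T_{1}x, \ldots, T_{m}x\}$, and $c := \CC{\mathcal{S}}x$; by \cref{fact:CCS:proper:NormPres:T}\cref{fact:CCS:proper:NormPres:T:prop} the mapping $\CC{\mathcal{S}}$ is proper and $c$ is the unique point of $\aff K$ that is equidistant from all of the $T_{i}x$. The first observation I would make is that the points $T_{i}x$ all lie on a common sphere centered at $w$: since $w \in W \subseteq \cap_{j}\Fix T_{j}$, each affine isometry $T_{i}$ fixes $w$, so $\norm{T_{i}x - w} = \norm{T_{i}x - T_{i}w} = \norm{x - w}$ for every $i$. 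Thus $w$, just like $c$, is equidistant from the points of $K$, at the common distance $\norm{x-w}$.

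The key geometric step is to deduce from this that $c$ is the point of $\aff K$ nearest to $w$, i.e.\ $c = \Pro_{\aff K} w$. Since both $c$ and $w$ are equidistant from all points of $K$, expanding $\norm{c - T_{i}x}^{2} = \norm{c - T_{j}x}^{2}$ and $\norm{w - T_{i}x}^{2} = \norm{w - T_{j}x}^{2}$ and subtracting yields $\innp{c - w, T_{i}x - T_{j}x} = 0$ for all $i,j$; hence $c - w$ is orthogonal to the direction space $\pa(\aff K) = \spn\{T_{i}x - T_{j}x\}$. For any $p \in \aff K$ one has $p - c \in \pa(\aff K)$, so the Pythagorean identity gives $\norm{p - w}^{2} = \norm{p - c}^{2} + \norm{c - w}^{2} \geq \norm{c - w}^{2}$, proving the minimality $\norm{c - w} \leq \norm{p - w}$.

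Applying this with $p = F(x) \in \aff(\mathcal{S}(x))$ and invoking the hypothesis on $F$ gives the one-step estimate
\[
\norm{\CC{\mathcal{S}}x - \Pro_{W}x} = \norm{c - w} \leq \norm{F(x) - w} \leq \gamma \norm{x - \Pro_{W}x}.
\]
To finish, I would iterate this inequality: replacing $x$ by $\CC{\mathcal{S}}^{k-1}x$ and using \cref{fact:CCS:proper:NormPres:T}\cref{fact:CCS:proper:NormPres:T:PaffU} to identify $\Pro_{W}\CC{\mathcal{S}}^{k-1}x = \Pro_{W}x$, a straightforward induction on $k$ produces $\norm{\CC{\mathcal{S}}^{k}x - \Pro_{W}x} \leq \gamma^{k}\norm{x - \Pro_{W}x}$.

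The hard part will be the middle step — recognizing that $\CC{\mathcal{S}}x$ coincides with the metric projection of $\Pro_{W}x$ onto $\aff(\mathcal{S}(x))$. This is precisely what converts the abstract existence of a contracting selector $F$ with values in $\aff(\mathcal{S}(x))$ into a genuine bound on the circumcenter itself, via the minimality of $c$ among all points of the affine hull. Once this variational characterization of the circumcenter as the nearest point to any equidistant point is in hand, both the sphere observation and the induction are routine.
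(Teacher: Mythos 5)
Your argument is correct: the observation that $\Pro_{W}x$ is equidistant from the points of $\mathcal{S}(x)$ (because each isometry $T_{i}$ fixes it), the resulting identification of $\CC{\mathcal{S}}x$ as the nearest point of $\aff(\mathcal{S}(x))$ to $\Pro_{W}x$, the one-step bound via $F(x)\in\aff(\mathcal{S}(x))$, and the induction using $\Pro_{W}\CC{\mathcal{S}}^{k}=\Pro_{W}$ from \cref{fact:CCS:proper:NormPres:T} all hold up. Note that the paper itself gives no proof of \cref{fact:CWP:line:conv} --- it is imported verbatim from the cited reference --- but the variational characterization of the circumcenter as the projection of any common-fixed point onto $\aff(\mathcal{S}(x))$ is precisely the mechanism used there, so your proof is essentially the standard one.
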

In fact, it is easy to show that $W=\Fix \CC{\mathcal{S}}$ from the last inequality with $k=1$ in \cref{fact:CWP:line:conv}.
\begin{fact}  {\rm  \cite[Theorem~4.16(ii)]{BOyW2019Isometry}}
	\label{fact:CCSLinearConvTSFirmNone}
	Suppose that $\mathcal{H}= \mathbb{R}^{n}$.
	Let $T_{\mathcal{S}} \in \aff \mathcal{S}$  satisfy  $ \Fix T_{\mathcal{S}} \subseteq \cap_{T \in \mathcal{S}} \Fix T$.  Assume that $T_{\mathcal{S}} $ is linear and $\alpha$-averaged with $\alpha \in \, \left]0,1\right[\,$. Then $\norm{T_{\mathcal{S}} \Pro_{(\cap_{T \in \mathcal{S}} \Fix T)^{\perp} }} \in \left[0,1\right[\,$. Moreover,
	\begin{align*}
	(\forall x \in \mathcal{H})  (\forall k \in \mathbb{N}) \quad \norm{\CC{\mathcal{S}}^{k}x-\Pro_{\cap_{T \in \mathcal{S}} \Fix T} x} \leq \norm{T_{\mathcal{S}} \Pro_{(\cap_{T \in \mathcal{S}} \Fix T)^{\perp} }}^{k} \norm{x -\Pro_{\cap_{T \in \mathcal{S}} \Fix T } x}.
	\end{align*}
\end{fact}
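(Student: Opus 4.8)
The plan is to deduce the statement directly from \cref{exam:LinearConver:BAM} together with \cref{fact:CWP:line:conv}, so that no new estimate on the circumcenter mapping itself is needed; the circumcenter $\CC{\mathcal{S}}$ enters only through the abstract linear-rate criterion of \cref{fact:CWP:line:conv}, while all the geometry lives in the single averaged linear operator $T_{\mathcal{S}}$.

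First I would pin down the relevant subspace. Write $W := \bigcap_{T \in \mathcal{S}} \Fix T = \bigcap_{i=1}^m \Fix T_i$, which is a nonempty closed affine subspace by the standing assumptions and \cref{fact:IsometryAffine}. I claim $\Fix T_{\mathcal{S}} = W$. Since $T_{\mathcal{S}} \in \aff \mathcal{S}$, there are scalars $(\lambda_i)_{i=1}^m$ with $\sum_{i=1}^m \lambda_i = 1$ and $T_{\mathcal{S}} = \sum_{i=1}^m \lambda_i T_i$. Hence every $x \in W$ satisfies $T_{\mathcal{S}} x = \sum_i \lambda_i T_i x = \sum_i \lambda_i x = x$, giving $W \subseteq \Fix T_{\mathcal{S}}$; combined with the hypothesis $\Fix T_{\mathcal{S}} \subseteq W$ this yields $\Fix T_{\mathcal{S}} = W$. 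This identification is the small but essential point, because it lets me replace the constant $\norm{T_{\mathcal{S}} \Pro_{(\Fix T_{\mathcal{S}})^\perp}}$ produced by \cref{exam:LinearConver:BAM} with the advertised constant $\norm{T_{\mathcal{S}} \Pro_{W^\perp}}$.

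Next, because $\mathcal{H} = \mathbb{R}^n$ and $T_{\mathcal{S}}$ is linear and $\alpha$-averaged with $\alpha \in \left]0,1\right[$, \cref{exam:LinearConver:BAM} applies verbatim to $T = T_{\mathcal{S}}$: it gives $\gamma := \norm{T_{\mathcal{S}} \Pro_{(\Fix T_{\mathcal{S}})^\perp}} \in \left[0,1\right[$ and that $T_{\mathcal{S}}$ is a $\gamma$-BAM. Using $\Fix T_{\mathcal{S}} = W$ from the previous step, this reads $\gamma = \norm{T_{\mathcal{S}} \Pro_{W^\perp}} \in \left[0,1\right[$, which is the first assertion of the statement, and by \cref{def:BAM}\cref{def:BAM:Ineq} it also yields the one-step contraction $\norm{T_{\mathcal{S}} x - \Pro_W x} \le \gamma \norm{x - \Pro_W x}$ for every $x \in \mathcal{H}$.

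Finally I would invoke \cref{fact:CWP:line:conv} with $F := T_{\mathcal{S}}$ and this $W$ and $\gamma$. Its hypotheses all hold: $W$ is a nonempty closed affine subspace of $\bigcap_{i=1}^m \Fix T_i$ (in fact equal to it); for every $x$ we have $F(x) = T_{\mathcal{S}} x = \sum_i \lambda_i T_i x \in \aff(\mathcal{S}(x))$ since $\sum_i \lambda_i = 1$; and the contraction inequality is the one just established. Therefore \cref{fact:CWP:line:conv} delivers $\norm{\CC{\mathcal{S}}^k x - \Pro_W x} \le \gamma^k \norm{x - \Pro_W x}$ for all $x$ and all $k \in \mathbb{N}$, which is precisely the displayed inequality. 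The only genuine obstacle is the bookkeeping in the first paragraph, namely verifying $\Fix T_{\mathcal{S}} = W$ so that the two a priori different constants coincide and so that $F = T_{\mathcal{S}}$ is an admissible selection in \cref{fact:CWP:line:conv}; everything else is a direct citation.
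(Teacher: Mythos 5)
Your proposal is correct, and it is worth noting that the paper itself offers no proof of this statement: it is imported verbatim as a Fact from \cite[Theorem~4.16(ii)]{BOyW2019Isometry}, so there is nothing internal to compare against. Your derivation is sound: writing $T_{\mathcal{S}}=\sum_i\lambda_i T_i$ with $\sum_i\lambda_i=1$ gives $W:=\cap_{T\in\mathcal{S}}\Fix T\subseteq\Fix T_{\mathcal{S}}$, hence equality with the hypothesized reverse inclusion, after which \cref{exam:LinearConver:BAM} supplies the one-step contraction with constant $\norm{T_{\mathcal{S}}\Pro_{W^{\perp}}}\in\left[0,1\right[$ and \cref{fact:CWP:line:conv} (whose selection hypothesis $T_{\mathcal{S}}x\in\aff(\mathcal{S}(x))$ you correctly verify) upgrades it to the iterated bound; this is in essence how the cited source obtains its Theorem~4.16 from its Theorem~4.15, so the argument is a faithful reconstruction rather than a new route.
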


\subsection*{Circumcenter mappings that are BAMs }

\begin{theorem} \label{thm:CCSBAM:F}
	Let $W$ be a nonempty closed affine subspace of $\cap^{m}_{i=1} \Fix T_{i}$.  Assume that  $\Id \in \mathcal{S}$ and that there exists $F : \mathcal{H} \to \mathcal{H}$ and $\gamma \in \left[0,1\right[$ such that $(\forall x \in \mathcal{H})$ $Fx \in \aff (\mathcal{S}(x))$ and $(\forall x \in \mathcal{H})$ $\norm{Fx - \Pro_{W}x} \leq \gamma \norm{x - \Pro_{W}x}$. Then $\CC{\mathcal{S}}$ is a  $\gamma$-BAM and $\Fix \CC{\mathcal{S}} =\cap^{m}_{i=1} \Fix T_{i}$.
\end{theorem}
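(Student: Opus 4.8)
The plan is to verify the three defining conditions of a $\gamma$-BAM in \cref{def:BAM} for the operator $G=\CC{\mathcal{S}}$, assembling the facts already recorded for circumcenter mappings of affine isometries. First I would observe that, since every member of $\mathcal{S}$ is an affine isometry, \cref{fact:CCS:proper:NormPres:T}\cref{fact:CCS:proper:NormPres:T:prop} guarantees that $\CC{\mathcal{S}}\colon\mathcal{H}\to\mathcal{H}$ is proper, so $\CC{\mathcal{S}}$ is a genuine single-valued operator on $\mathcal{H}$. Because $\Id\in\mathcal{S}$ and $\CC{\mathcal{S}}$ is proper, \cref{fact:FixCCS:G} yields $\Fix\CC{\mathcal{S}}=\cap^{m}_{i=1}\Fix T_{i}$, which already settles the second assertion of the theorem. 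Moreover, by \cref{fact:IsometryAffine} each $\Fix T_{i}$ is a closed affine subspace, so the nonempty intersection $\cap^{m}_{i=1}\Fix T_{i}=\Fix\CC{\mathcal{S}}$ is a nonempty closed affine, hence closed convex, subset of $\mathcal{H}$; this is condition \cref{def:BAM:Fix} of \cref{def:BAM}.

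For the quantitative estimate I would feed the hypotheses on $F$ and $\gamma$ into \cref{fact:CWP:line:conv}, whose assumptions are exactly $F(x)\in\aff(\mathcal{S}(x))$ together with the $\gamma$-contraction of $F$ towards $W$, obtaining
\begin{align*}
(\forall x\in\mathcal{H})(\forall k\in\mathbb{N})\quad \norm{\CC{\mathcal{S}}^{k}x-\Pro_{W}x}\leq\gamma^{k}\norm{x-\Pro_{W}x}.
\end{align*}
The crux is then to upgrade the reference set from $W$ to $\Fix\CC{\mathcal{S}}$. By hypothesis $W\subseteq\cap^{m}_{i=1}\Fix T_{i}=\Fix\CC{\mathcal{S}}$, so it remains to prove the reverse inclusion. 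Taking $k=1$ and any $x\in\Fix\CC{\mathcal{S}}$, the displayed inequality reads $\norm{x-\Pro_{W}x}\leq\gamma\norm{x-\Pro_{W}x}$; since $\gamma<1$, this forces $x=\Pro_{W}x$, i.e.\ $x\in W$. Hence $W=\Fix\CC{\mathcal{S}}$ and therefore $\Pro_{W}=\Pro_{\Fix\CC{\mathcal{S}}}$, which converts the $k=1$ instance of the inequality into condition \cref{def:BAM:Ineq} of \cref{def:BAM}.

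Finally, condition \cref{def:BAM:eq}, namely $\Pro_{\Fix\CC{\mathcal{S}}}\CC{\mathcal{S}}=\Pro_{\Fix\CC{\mathcal{S}}}$, follows from \cref{fact:CCS:proper:NormPres:T}\cref{fact:CCS:proper:NormPres:T:PaffU} applied with the now-admissible choice $W=\Fix\CC{\mathcal{S}}$ and $k=1$. Combining conditions \cref{def:BAM:Fix}, \cref{def:BAM:eq}, and \cref{def:BAM:Ineq} via \cref{def:BAM} then gives that $\CC{\mathcal{S}}$ is a $\gamma$-BAM. I expect the only non-mechanical step to be the identification $W=\Fix\CC{\mathcal{S}}$: everything else is a direct invocation of the cited facts, whereas this step is precisely the observation flagged in the remark following \cref{fact:CWP:line:conv}, exploiting $\gamma<1$ to collapse the distance to $W$ of any fixed point of $\CC{\mathcal{S}}$.
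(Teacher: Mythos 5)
Your proof is correct and follows essentially the same route as the paper's: properness via \cref{fact:CCS:proper:NormPres:T}, the identification $\Fix \CC{\mathcal{S}}=\cap^{m}_{i=1}\Fix T_{i}$ via \cref{fact:FixCCS:G} and \cref{fact:IsometryAffine}, the projection identity via \cref{fact:CCS:proper:NormPres:T}\cref{fact:CCS:proper:NormPres:T:PaffU}, and the contraction estimate via \cref{fact:CWP:line:conv}. The only difference is that you spell out the identification $W=\Fix\CC{\mathcal{S}}$ (from the $k=1$ inequality and $\gamma<1$), a step the paper leaves to the remark immediately following \cref{fact:CWP:line:conv}.
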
	
	
	\begin{proof}
By  \cref{fact:CCS:proper:NormPres:T}\cref{fact:CCS:proper:NormPres:T:prop}, $\CC{\mathcal{S}}$ is proper. Then
	by \Cref{fact:IsometryAffine,fact:FixCCS:G}, $\Fix \CC{\mathcal{S}} =\cap^{m}_{i=1} \Fix T_{i}$ is a nonempty closed affine subspace of $\mathcal{H}$. Apply \cref{fact:CCS:proper:NormPres:T}\cref{fact:CCS:proper:NormPres:T:PaffU} with $W=\cap^{m}_{i=1} \Fix T_{i}$ to obtain that $\Pro_{\Fix \CC{\mathcal{S}}  }\CC{\mathcal{S}} = \Pro_{\Fix \CC{\mathcal{S}}  }$.
	 Moreover, by the assumptions,  \cref{fact:CWP:line:conv} and \cref{def:BAM}, we know   that $\CC{\mathcal{S}}$ is a $\gamma$-BAM.
	\end{proof}

 The following result states that in order to study whether the circumcenter mapping $ \CC{\mathcal{S}} $   is a BAM or not, we are free to assume the related isometries are linear.
 \begin{proposition} \label{prop:S:SF:equivalence}
 	  Let $z \in \cap^{m}_{i=1} \Fix T_{i}$. Define $\left( \forall i \in \{1,\ldots, m\}  \right) $ $(\forall x \in \mathcal{H})$ $F_{i}x:= T_{i} (x+z) -z$.
 	Set   $\mathcal{S}_{F} := \{F_{1}, \ldots, F_{m}\}$.
 	Then the following statements hold:
 	\begin{enumerate}
 		\item \label{prop:S:SF:equivalence:SF} $\mathcal{S}_{F}$ is a set of linear isometries.
 		\item \label{prop:S:SF:equivalence:S:SF} Let $\gamma \in \left[0,1\right[\,$. Assume that $\Id \in \mathcal{S}$. Then $ \CC{\mathcal{S}} $ is a $\gamma$-BAM  if and only if $\CC{\mathcal{S}_{F}}$ is a $\gamma$-BAM.
 	\end{enumerate}
 \end{proposition}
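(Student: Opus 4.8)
The plan is to establish \cref{prop:S:SF:equivalence:SF} by a direct computation and then reduce \cref{prop:S:SF:equivalence:S:SF} to the already-proven shift lemma \cref{prop:BAMAffinLinea} by showing that $\CC{\mathcal{S}}$ and $\CC{\mathcal{S}_{F}}$ form exactly the kind of affine/linear shifted pair treated there. The single new ingredient is the translation equivariance of the circumcenter operator.

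For \cref{prop:S:SF:equivalence:SF}, fix $i$. By \cref{fact:IsometryAffine} each $T_{i}$ is affine, so $F_{i}\colon x \mapsto T_{i}(x+z)-z$ is affine as well; since $z \in \Fix T_{i}$ we have $F_{i}(0)=T_{i}z-z=0$, and invoking \cite[Lemma~3.8]{BOyW2019LinearConvergence} exactly as in the proof of \cref{lemma:quasi:to:nonexpansive} gives that $F_{i}$ is linear. The isometry property is immediate, since for all $x,y \in \mathcal{H}$,
\begin{align*}
\norm{F_{i}x-F_{i}y}=\norm{T_{i}(x+z)-T_{i}(y+z)}=\norm{(x+z)-(y+z)}=\norm{x-y}.
\end{align*}
Hence $\mathcal{S}_{F}$ is a set of linear isometries.

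For \cref{prop:S:SF:equivalence:S:SF}, the heart of the argument is that the circumcenter operator of \cref{defn:Circumcenter} is translation equivariant: for any finite $K$ and any $z$, one has $\aff(K-z)=(\aff K)-z$ and the equidistance condition is preserved by translation, so $\CCO(K-z)=\CCO(K)-z$ (each side existing exactly when the other does). Since $\mathcal{S}_{F}(x)=\{T_{i}(x+z)-z\}=\mathcal{S}(x+z)-z$, applying this with $K=\mathcal{S}(x+z)$ yields
\begin{align*}
\CC{\mathcal{S}_{F}}(x)=\CCO(\mathcal{S}(x+z)-z)=\CCO(\mathcal{S}(x+z))-z=\CC{\mathcal{S}}(x+z)-z.
\end{align*}
Setting $G_{1}:=\CC{\mathcal{S}}$ and $G_{2}:=\CC{\mathcal{S}_{F}}$, this reads $G_{1}x=z+G_{2}(x-z)$ for all $x$, which is precisely \cref{eq:prop:BAMAffinLinea}. (Both mappings are genuine operators $\mathcal{H}\to\mathcal{H}$ by \cref{fact:CCS:proper:NormPres:T}\cref{fact:CCS:proper:NormPres:T:prop}, applied to $\mathcal{S}$ and to $\mathcal{S}_{F}$: every element of each family is an affine isometry, and $0 \in \cap_{i}\Fix F_{i}$, so $\mathcal{S}_{F}$ satisfies the standing hypotheses.)

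It then remains to check the single standing hypothesis of \cref{prop:BAMAffinLinea}\cref{prop:BAMAffinLinea:BAM}, that $\Fix G_{1}$ or $\Fix G_{2}$ be a nonempty closed convex set. Because $\Id \in \mathcal{S}$ and $\CC{\mathcal{S}}$ is proper, \cref{fact:FixCCS:G} gives $\Fix \CC{\mathcal{S}}=\cap_{i=1}^{m}\Fix T_{i}$, which is nonempty by assumption and a closed affine subspace by \cref{fact:IsometryAffine}, hence nonempty closed convex. Thus \cref{prop:BAMAffinLinea}\cref{prop:BAMAffinLinea:BAM} applies and delivers that $G_{1}=\CC{\mathcal{S}}$ is a $\gamma$-BAM if and only if $G_{2}=\CC{\mathcal{S}_{F}}$ is a $\gamma$-BAM, as claimed. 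The only genuinely new step is the translation-equivariance identity $\CCO(K-z)=\CCO(K)-z$; once it is in place, the statement collapses onto the shift lemma and no estimate is required.
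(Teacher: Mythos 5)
Your proof is correct and follows essentially the same route as the paper: establish that the $F_{i}$ are linear isometries, derive the shift identity $\CC{\mathcal{S}}x = z + \CC{\mathcal{S}_{F}}(x-z)$, and conclude via \cref{prop:BAMAffinLinea} after checking that $\Fix\CC{\mathcal{S}} = \cap_{i=1}^{m}\Fix T_{i}$ is nonempty, closed, and convex. The only difference is that you prove the shift identity from scratch via translation equivariance of the circumcenter operator, whereas the paper simply cites an external lemma (\cite[Lemma~4.8]{BOyW2019LinearConvergence}) for the same identity; your self-contained derivation is valid.
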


 \begin{proof}
 	\cref{prop:S:SF:equivalence:SF}:  	Because $z \in \cap^{m}_{i=1} \Fix T_{i}$, by \cite[Lemma~3.8]{BOyW2019LinearConvergence}, $F_{1}, \ldots, F_{m}$ are linear isometries.

 	\cref{prop:S:SF:equivalence:S:SF}: 		Because both $\mathcal{S} $ and $\mathcal{S}_{F}$ are sets of isometries, by \cref{fact:CCS:proper:NormPres:T}\cref{fact:CCS:proper:NormPres:T:prop}, both $ \CC{\mathcal{S}} $ and $\CC{\mathcal{S}_{F}}$ are proper.
 	Clearly, $\Id \in \mathcal{S}$ implies that $\Id \in \mathcal{S}_{F}$ as well. So, by \cref{fact:FixCCS:G} ,   $\Fix \CC{\mathcal{S}} = \cap^{m}_{i=1}\Fix T_{i}$ and  $\Fix \CC{\mathcal{S}_{F}} = \cap^{m}_{i=1}\Fix F_{i}$.  In addition, by \cite[Lemma~4.8]{BOyW2019LinearConvergence}, $(\forall x \in \mathcal{H})$ $ \CC{\mathcal{S}} x =  z+ \CC{\mathcal{S}_{F}}(x-z)$. Hence, the desired result comes from \cref{prop:BAMAffinLinea} and \cref{def:BAM}.
 \end{proof}	

  \begin{theorem} \label{thm:CCS:BAM:averaged}
 	Suppose that $\mathcal{H}= \mathbb{R}^{n}$.
 	Let $T_{\mathcal{S}} \in \aff \mathcal{S}$  satisfy that $ \Fix T_{\mathcal{S}} \subseteq \cap_{T \in \mathcal{S}} \Fix T$.  Assume that $T_{\mathcal{S}} $ is linear and $\alpha$-averaged with $\alpha \in \, \left]0,1\right[\,$. Then $\gamma:=\norm{T_{\mathcal{S}} \Pro_{(\cap^{m}_{i=1} \Fix T_{i})^{\perp} }} \in \left[0,1\right[\,$ and $\CC{\mathcal{S}}$ is a $\gamma$-BAM.
 \end{theorem}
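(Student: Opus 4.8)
The plan is to lean on \cref{fact:CCSLinearConvTSFirmNone}, which under exactly the present hypotheses already yields both $\gamma = \norm{T_{\mathcal{S}} \Pro_{(\cap^{m}_{i=1}\Fix T_{i})^{\perp}}} \in \left[0,1\right[\,$ and, writing $W := \cap^{m}_{i=1}\Fix T_{i}$, the estimate
\begin{align*}
(\forall x \in \mathcal{H})(\forall k \in \mathbb{N}) \quad \norm{\CC{\mathcal{S}}^{k} x - \Pro_{W} x} \le \gamma^{k} \norm{x - \Pro_{W} x}.
\end{align*}
Thus the claim $\gamma \in \left[0,1\right[$ and the defining inequality \cref{def:BAM}\cref{def:BAM:Ineq} (its $k=1$ instance) come essentially for free; the remaining work is to identify $\Fix \CC{\mathcal{S}}$ with $W$ and to verify the projection identity \cref{def:BAM}\cref{def:BAM:eq}.

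First I would record that $\CC{\mathcal{S}}$ is proper by \cref{fact:CCS:proper:NormPres:T}\cref{fact:CCS:proper:NormPres:T:prop}, and that, since each $T_{i}$ is an affine isometry, \cref{fact:IsometryAffine} makes every $\Fix T_{i}$ a closed affine subspace; because $W \neq \varnothing$ by hypothesis, $W$ is a nonempty closed affine subspace, hence nonempty closed and convex, so $\Pro_{W}$ is well defined. This will settle \cref{def:BAM}\cref{def:BAM:Fix} once the fixed-point set has been identified.

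Next I would prove $\Fix \CC{\mathcal{S}} = W$. The inclusion $W \subseteq \Fix \CC{\mathcal{S}}$ follows from \cref{fact:CCS:proper:NormPres:T}\cref{fact:CCS:proper:NormPres:T:PaffU} applied with this $W$ and $k=1$: since $\CC{\mathcal{S}} \Pro_{W} = \Pro_{W}$, any $x \in W$ satisfies $\CC{\mathcal{S}} x = \CC{\mathcal{S}} \Pro_{W} x = \Pro_{W} x = x$. For the reverse inclusion---the single place where the absence of the hypothesis $\Id \in \mathcal{S}$ (needed in \cref{fact:FixCCS:G}) must be circumvented---I would use the estimate: if $x \in \Fix \CC{\mathcal{S}}$ then $\CC{\mathcal{S}}^{k} x = x$ for all $k$, so $\norm{x - \Pro_{W} x} \le \gamma^{k} \norm{x - \Pro_{W} x}$ for every $k \in \mathbb{N}$; letting $k \to \infty$ and using $\gamma \in \left[0,1\right[$ forces $x = \Pro_{W} x$, that is, $x \in W$. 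This limiting argument over the iterates is the crux of the proof.

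Finally I would assemble \cref{def:BAM}. Condition \cref{def:BAM:Fix} holds since $\Fix \CC{\mathcal{S}} = W$ is nonempty closed convex; condition \cref{def:BAM:eq}, namely $\Pro_{\Fix \CC{\mathcal{S}}} \CC{\mathcal{S}} = \Pro_{\Fix \CC{\mathcal{S}}}$, is exactly the $k=1$ identity $\Pro_{W} \CC{\mathcal{S}} = \Pro_{W}$ from \cref{fact:CCS:proper:NormPres:T}\cref{fact:CCS:proper:NormPres:T:PaffU}; and condition \cref{def:BAM:Ineq} is the $k=1$ case of the estimate, read with $\Pro_{\Fix \CC{\mathcal{S}}} = \Pro_{W}$. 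Hence $\CC{\mathcal{S}}$ is a $\gamma$-BAM, which completes the argument.
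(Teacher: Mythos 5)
Your proposal is correct, and its skeleton coincides with the paper's proof: both obtain the constant $\gamma\in\left[0,1\right[$ and the inequality from \cref{fact:CCSLinearConvTSFirmNone}, and both obtain the projection identity $\Pro_{\Fix \CC{\mathcal{S}}}\CC{\mathcal{S}}=\Pro_{\Fix \CC{\mathcal{S}}}$ from \cref{fact:CCS:proper:NormPres:T}\cref{fact:CCS:proper:NormPres:T:PaffU}. The one place where you genuinely diverge is the identification $\Fix \CC{\mathcal{S}}=\cap^{m}_{i=1}\Fix T_{i}$: the paper gets it by citing \cref{fact:FixCCS:G}, which formally requires $\Id\in\mathcal{S}$ --- a hypothesis that does not appear in the theorem statement --- whereas you prove the inclusion $\Fix \CC{\mathcal{S}}\subseteq \cap^{m}_{i=1}\Fix T_{i}$ directly from the contraction estimate, and the reverse inclusion from $\CC{\mathcal{S}}\Pro_{W}=\Pro_{W}$. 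Your route is therefore self-contained on exactly the point where the paper's citation is loosest, and it buys you the conclusion without any assumption that $\Id$ belongs to $\mathcal{S}$. Two small remarks: the limit $k\to\infty$ is not needed, since already the $k=1$ inequality $\norm{x-\Pro_{W}x}\leq\gamma\norm{x-\Pro_{W}x}$ with $\gamma<1$ forces $x=\Pro_{W}x$; and when you invoke \cref{fact:CCS:proper:NormPres:T}\cref{fact:CCS:proper:NormPres:T:PaffU} you should note, as you implicitly do, that $W=\cap^{m}_{i=1}\Fix T_{i}$ is itself a nonempty closed affine subspace (via \cref{fact:IsometryAffine} and the standing assumption $\cap^{m}_{i=1}\Fix T_{i}\neq\varnothing$), so the fact applies with this choice of $W$.
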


 \begin{proof}
 	By \Cref{fact:IsometryAffine,fact:FixCCS:G},  and assumptions, $\Fix \CC{\mathcal{S}}= \cap^{m}_{i=1} \Fix T_{i}  $  is a closed  affine subspace. Apply \cref{fact:CCS:proper:NormPres:T}\cref{fact:CCS:proper:NormPres:T:PaffU} with $W= \cap^{m}_{i=1} \Fix T_{i} $ to obtain that $\Pro_{ \Fix \CC{\mathcal{S}} } \CC{\mathcal{S}} =\Pro_{ \Fix \CC{\mathcal{S}} }$. Moreover, by
 	\cref{fact:CCSLinearConvTSFirmNone}, $\gamma \in \left[0,1\right[$ and $(\forall x \in \mathcal{H})  $ $\norm{\CC{\mathcal{S}}x-\Pro_{\cap^{m}_{i=1} \Fix T_{i}} x} \leq \gamma\norm{x -\Pro_{\cap^{m}_{i=1} \Fix T_{i}} x}$.
 \end{proof}

Let $t \in \mathbb{N} \smallsetminus \{0\}$ and let $(\forall i \in \{1,\ldots, t\})$ $F_{i}: \mathcal{H} \to \mathcal{H}$. From now on, to facilitate the statements later, we denote
\begin{align}  \label{eq:defn:Omega}
\Omega ( F_{1}, \ldots, F_{t}) :=  \Big\{ F_{i_{r}}\cdots F_{i_{2}}F_{i_{1}}  ~\Big|~ r \in \mathbb{N}, ~\mbox{and}~ i_{1}, \ldots,  i_{r} \in \{1, \ldots,t\}    \Big\}
\end{align}
which is the set consisting of all finite composition of operators from $\{F_{1}, \ldots, F_{t}\}$. We use the empty product convention: $F_{i_{0}}\cdots F_{i_{1}} = \Id$.

  \begin{fact} \label{fact:CCS:LineaConve:F1Fn} {\rm \cite[Theorem~5.4]{BOyW2019LinearConvergence}}
  	Suppose that $\mathcal{H} = \mathbb{R}^{n}$. Let $F_{1}, F_{2},  \ldots, F_{t}$ be linear isometries on $\mathcal{H}$. Assume  that $\widetilde{\mathcal{S}}$ is a finite subset of $\Omega(F_{1}, \ldots, F_{t})$, where $\Omega(F_{1}, \ldots, F_{t})$ is defined in \cref{eq:defn:Omega}. Assume that   $\{ \Id,  F_{1}, F_{2},  \ldots, F_{t} \}  \subseteq \widetilde{\mathcal{S}}$.  Let $(\omega_{i})_{i \in \I}$ be real numbers in $]0,1]$ such that $\sum_{i \in \I} \omega_{i} =1$ and let $(\alpha_{i})_{i \in \I}$ be real numbers in $\left]0,1\right[\,$.  Denote $A:= \sum^{t}_{i =1} \omega_{i} A_{i}$ where $(\forall i \in \{1,\ldots,t\})$ $A_{i}:=
  	(1-\alpha_{i}) \Id  +  \alpha_{i}  F_{i}$.  Then the following statements hold:
  	\begin{enumerate}
  		\item \label{fact:CCS:LineaConve:F1Fn:Fix} $\Fix \CC{\widetilde{\mathcal{S}}}= \cap_{T \in \widetilde{\mathcal{S}}} \Fix T=  \cap^{t}_{i=1} \Fix F_{i}  = \Fix A $.
  		\item \label{fact:CCS:LineaConve:F1Fn:LineaConv} $\norm{A \Pro_{(\cap^{t}_{i=1} \Fix F_{i})^{\perp}} } <1$. Moreover,
  		\begin{align*}
  		(\forall x \in \mathcal{H})  (\forall k \in \mathbb{N}) \quad \norm{\CC{\widetilde{\mathcal{S}}}^{k}x - \Pro_{\cap^{t}_{i=1} \Fix F_{i}}x} \leq \norm{A \Pro_{(\cap^{t}_{i=1} \Fix F_{i})^{\perp}} }^{k} \norm{x-\Pro_{\cap^{t}_{i=1} \Fix F_{i}} x }.
  		\end{align*}
  	\end{enumerate}
  \end{fact}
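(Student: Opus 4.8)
The plan is to reduce the whole statement to a single application of \cref{fact:CCSLinearConvTSFirmNone}, using the operator $A$ as the distinguished element $T_{\mathcal{S}}\in\aff\widetilde{\mathcal{S}}$. First I would record the structural facts that make the circumcenter mapping well behaved. Every element of $\Omega(F_{1},\dots,F_{t})$ is a finite composition of linear isometries and hence itself a linear isometry, so each $T\in\widetilde{\mathcal{S}}$ is an affine isometry (by \cref{fact:IsometryAffine}) with $0\in\Fix T$ by linearity. In particular $\cap_{T\in\widetilde{\mathcal{S}}}\Fix T\ni 0$ is nonempty, and since $\widetilde{\mathcal{S}}$ is a finite set of affine isometries with nonempty common fixed-point set, \cref{fact:CCS:proper:NormPres:T}\cref{fact:CCS:proper:NormPres:T:prop} (applied with $\widetilde{\mathcal{S}}$ in place of $\mathcal{S}$) shows $\CC{\widetilde{\mathcal{S}}}$ is proper.

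For \cref{fact:CCS:LineaConve:F1Fn:Fix} I would establish the three fixed-point identities in turn. Since $\{F_{1},\dots,F_{t}\}\subseteq\widetilde{\mathcal{S}}\subseteq\Omega(F_{1},\dots,F_{t})$, every $F_{i}$ lies in $\widetilde{\mathcal{S}}$, giving $\cap_{T\in\widetilde{\mathcal{S}}}\Fix T\subseteq\cap_{i=1}^{t}\Fix F_{i}$; conversely, any $x$ fixed by all $F_{i}$ is fixed by every composition of the $F_{i}$, hence by every $T\in\widetilde{\mathcal{S}}$, which yields the reverse inclusion and so $\cap_{T\in\widetilde{\mathcal{S}}}\Fix T=\cap_{i=1}^{t}\Fix F_{i}$. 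Because $\Id\in\widetilde{\mathcal{S}}$ and $\CC{\widetilde{\mathcal{S}}}$ is proper, \cref{fact:FixCCS:G} gives $\Fix\CC{\widetilde{\mathcal{S}}}=\cap_{T\in\widetilde{\mathcal{S}}}\Fix T$. Finally, each $A_{i}=(1-\alpha_{i})\Id+\alpha_{i}F_{i}$ is linear and $\alpha_{i}$-averaged (the relaxation of the nonexpansive isometry $F_{i}$, cf.\ \cref{defn:AlphaAverage}), so by \cref{exam:LinearConver:BAM} it is a BAM with $\Fix A_{i}=\Fix F_{i}$ a closed linear subspace; since these subspaces share the common point $0$, \cref{prop:CompositionBAM}\cref{prop:CompositionBAM:comb} gives $\Fix A=\Fix\sum_{i}\omega_{i}A_{i}=\cap_{i=1}^{t}\Fix F_{i}$.

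For \cref{fact:CCS:LineaConve:F1Fn:LineaConv} I would verify the hypotheses of \cref{fact:CCSLinearConvTSFirmNone} for $T_{\widetilde{\mathcal{S}}}:=A$. Expanding $A=\sum_{i}\omega_{i}\bigl[(1-\alpha_{i})\Id+\alpha_{i}F_{i}\bigr]=\bigl(\sum_{i}\omega_{i}(1-\alpha_{i})\bigr)\Id+\sum_{i}\omega_{i}\alpha_{i}F_{i}$ exhibits $A$ as an affine combination of $\Id,F_{1},\dots,F_{t}\in\widetilde{\mathcal{S}}$ (the coefficients sum to $\sum_{i}\omega_{i}=1$), so $A\in\aff\widetilde{\mathcal{S}}$; $A$ is linear as a convex combination of linear operators; and $A$ is $\alpha$-averaged for some $\alpha\in\,\left]0,1\right[\,$, since a convex combination of averaged operators is averaged (\cite{BC2017}). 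Moreover $\Fix A=\cap_{i=1}^{t}\Fix F_{i}=\cap_{T\in\widetilde{\mathcal{S}}}\Fix T$ from \cref{fact:CCS:LineaConve:F1Fn:Fix}, so the inclusion $\Fix A\subseteq\cap_{T\in\widetilde{\mathcal{S}}}\Fix T$ required by \cref{fact:CCSLinearConvTSFirmNone} holds. Applying that fact with $\mathcal{S}$ replaced by $\widetilde{\mathcal{S}}$ then delivers both $\norm{A\Pro_{(\cap_{i=1}^{t}\Fix F_{i})^{\perp}}}<1$ and the claimed geometric decay of $(\CC{\widetilde{\mathcal{S}}}^{k}x)_{k\in\mathbb{N}}$.

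The main obstacle is bookkeeping rather than depth: one must confirm that $A$ genuinely satisfies \emph{every} hypothesis of \cref{fact:CCSLinearConvTSFirmNone}, the two most delicate points being that $A$ is averaged (so that the spectral estimate $\norm{A\Pro_{(\Fix A)^{\perp}}}<1$ is available) and that the fixed-point set of the convex combination $A$ does not collapse to something strictly larger than $\cap_{i}\Fix F_{i}$. This is precisely where the averagedness of the summands $A_{i}$ is essential---equivalently, that each isometry $F_{i}$ is relaxed by $\alpha_{i}\in\,\left]0,1\right[\,$ rather than used raw---since raw linear isometries such as $-\Id$ are neither averaged nor BAMs and need not enjoy the intersection property.
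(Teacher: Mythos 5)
This statement is quoted in the paper as a \cref{fact:CCS:LineaConve:F1Fn} imported from \cite[Theorem~5.4]{BOyW2019LinearConvergence}, so the paper itself supplies no proof to compare against. Your reconstruction is correct: writing $A=\bigl(\sum_{i}\omega_{i}(1-\alpha_{i})\bigr)\Id+\sum_{i}\omega_{i}\alpha_{i}F_{i}$ exhibits $A$ as an affine combination of elements of $\widetilde{\mathcal{S}}$ with coefficients summing to $1$, $A$ is linear and averaged as a convex combination of the averaged relaxations $A_{i}$, and $\Fix A=\cap_{i=1}^{t}\Fix F_{i}=\cap_{T\in\widetilde{\mathcal{S}}}\Fix T$, so all hypotheses of \cref{fact:CCSLinearConvTSFirmNone} hold and that fact delivers exactly the claimed constant and geometric decay; this is almost certainly the derivation in the cited source. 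The only stylistic remark is that $\Fix A=\cap_{i}\Fix F_{i}$ can be obtained directly from \cite[Proposition~4.47]{BC2017} for quasinonexpansive operators without first certifying each $A_{i}$ as a BAM, but your route through \cref{exam:LinearConver:BAM} and \cref{prop:CompositionBAM} is equally valid.
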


  \begin{fact} \label{fact:CCS:LineaConve:F1t}  {\rm \cite[Theorem~5.6]{BOyW2019LinearConvergence}}
  	Suppose that  $\mathcal{H} = \mathbb{R}^{n}$. Let $F_{1}, F_{2},  \ldots, F_{t}$ be linear isometries. Assume  that $\widetilde{\mathcal{S}}$ is a finite subset of $\Omega(F_{1}, \ldots, F_{t})$, where $\Omega(F_{1}, \ldots, F_{t})$ is defined in \cref{eq:defn:Omega}.
  	Assume that  $ \{ \Id,  F_{1}, F_{2}F_{1},  \ldots, F_{t}\cdots F_{2}F_{1} \} \subseteq \widetilde{\mathcal{S}} $. Let $(\omega_{i})_{i \in \I}$ be real numbers in $]0,1]$ such that $\sum_{i \in \I} \omega_{i} =1$ and let $(\alpha_{i})_{i \in \I}$ and $(\lambda_{i})_{i \in \I}$ be real numbers in $\left]0,1\right[\,$.  Set
  	$ A:= \sum_{i \in \I} \omega_{i} A_{i}$
  	where $A_{1} :=  (1-\alpha_{1}) \Id  + \alpha_{1} F_{1} $ and
  	\begin{align*}
  	(\forall i \in \I \smallsetminus \{1\}) \quad A_{i} := (1-\alpha_{i}) \Id  + \alpha_{i}  \left ( (1-\lambda_{i}) \Id + \lambda_{i}F_{i} \right )F_{i-1}\cdots F_{1}.
  	\end{align*}
  	Then the following assertions hold:
  	\begin{enumerate}
  		\item \label{fact:CCS:LineaConve:F1t:Fix} $\Fix \CC{\widetilde{\mathcal{S}}}= \cap_{T \in \widetilde{\mathcal{S}}} \Fix T=  \cap^{t}_{i=1} \Fix F_{i}  = \Fix A $.
  		\item \label{fact:CCS:LineaConve:F1t:LineaConv} $\norm{A \Pro_{(\cap^{t}_{i=1} \Fix F_{i})^{\perp}} } \in \left[0,1\right[\,$. Moreover,
  		\begin{align*}
  		(\forall x \in \mathcal{H}) (\forall k \in \mathbb{N}) \quad \norm{\CC{\widetilde{\mathcal{S}}}^{k}x - \Pro_{\cap^{t}_{i=1} \Fix F_{i}}x} \leq \norm{A \Pro_{(\cap^{t}_{i=1} \Fix F_{i})^{\perp}} }^{k} \norm{x-\Pro_{\cap^{t}_{i=1} \Fix F_{i}} x }.
  		\end{align*}
  	\end{enumerate}
  \end{fact}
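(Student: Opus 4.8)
The plan is to realize the operator $A$ as a linear, averaged element of $\aff\widetilde{\mathcal{S}}$ whose fixed point set is exactly $\cap^{t}_{i=1}\Fix F_{i}$, and then to read off both assertions from the circumcenter machinery, in particular from \cref{fact:CCSLinearConvTSFirmNone}. First I would record the structural facts about the building blocks: since each $F_{i}$ is a linear isometry, every composition $F_{i}\cdots F_{1}$ is again a linear isometry, so all members of $\widetilde{\mathcal{S}}\subseteq\Omega(F_{1},\ldots,F_{t})$ are isometries and $0\in\cap^{t}_{i=1}\Fix F_{i}$, whence this intersection is nonempty. Because $\Id\in\widetilde{\mathcal{S}}$ and $\widetilde{\mathcal{S}}$ consists of isometries, \cref{fact:CCS:proper:NormPres:T}\cref{fact:CCS:proper:NormPres:T:prop} makes $\CC{\widetilde{\mathcal{S}}}$ proper, and \cref{fact:FixCCS:G} gives $\Fix\CC{\widetilde{\mathcal{S}}}=\cap_{T\in\widetilde{\mathcal{S}}}\Fix T$.

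Next I would prove the fixed-point identities of \cref{fact:CCS:LineaConve:F1t:Fix}. Expanding the definition yields $A_{i}=(1-\alpha_{i})\Id+\alpha_{i}(1-\lambda_{i})\,F_{i-1}\cdots F_{1}+\alpha_{i}\lambda_{i}\,F_{i}\cdots F_{1}$ for $i\neq 1$, and $A_{1}=(1-\alpha_{1})\Id+\alpha_{1}F_{1}$; since the coefficients sum to $1$ and $\Id,\,F_{i-1}\cdots F_{1},\,F_{i}\cdots F_{1}\in\widetilde{\mathcal{S}}$ by hypothesis, each $A_{i}$ — and hence the convex combination $A$ — lies in $\aff\widetilde{\mathcal{S}}$, so that $Ax\in\aff(\widetilde{\mathcal{S}}(x))$ for every $x$. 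Writing $B_{i}:=\big((1-\lambda_{i})\Id+\lambda_{i}F_{i}\big)F_{i-1}\cdots F_{1}$, the factor $(1-\lambda_{i})\Id+\lambda_{i}F_{i}$ is averaged and $F_{i-1}\cdots F_{1}$ is nonexpansive, so $B_{i}$ is nonexpansive and $A_{i}=(1-\alpha_{i})\Id+\alpha_{i}B_{i}$ is $\alpha_{i}$-averaged with $\Fix A_{i}=\Fix B_{i}$; being linear and averaged, each $A_{i}$ is a BAM by \cref{exam:LinearConver:BAM}, with $\Fix A_{i}$ a closed linear subspace. Since a convex combination of averaged operators is averaged, $A$ is linear and averaged, and by \cref{prop:CompositionBAM}\cref{prop:CompositionBAM:comb} applied to the BAMs $A_{i}$ (whose fixed point sets share $0$) I get $\Fix A=\cap^{t}_{i=1}\Fix A_{i}$. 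A telescoping argument using the order $F_{1},F_{2}F_{1},\ldots$ then upgrades this to $\Fix A=\cap^{t}_{i=1}\Fix F_{i}$: if $x$ is fixed by all $A_{i}$, then $F_{1}x=x$ from $\Fix A_{1}=\Fix F_{1}$, and inductively $B_{i}x=x$ together with $F_{i-1}\cdots F_{1}x=x$ forces $F_{i}x=x$. The same chain shows $\cap_{T\in\widetilde{\mathcal{S}}}\Fix T=\cap^{t}_{i=1}\Fix F_{i}$, completing \cref{fact:CCS:LineaConve:F1t:Fix}.

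Finally I would invoke \cref{fact:CCSLinearConvTSFirmNone} with $\mathcal{S}=\widetilde{\mathcal{S}}$ and $T_{\mathcal{S}}=A$. Its hypotheses are now met: $A\in\aff\widetilde{\mathcal{S}}$, $A$ is linear and $\alpha$-averaged with $\alpha\in\left]0,1\right[$, and $\Fix A=\cap^{t}_{i=1}\Fix F_{i}=\cap_{T\in\widetilde{\mathcal{S}}}\Fix T$, so in particular $\Fix A\subseteq\cap_{T\in\widetilde{\mathcal{S}}}\Fix T$. That fact then yields $\norm{A\Pro_{(\cap^{t}_{i=1}\Fix F_{i})^{\perp}}}\in\left[0,1\right[$ together with the claimed geometric rate for $\CC{\widetilde{\mathcal{S}}}^{k}$, which is exactly \cref{fact:CCS:LineaConve:F1t:LineaConv}.

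I anticipate the main obstacle to be the fixed-point bookkeeping of the second paragraph: because each $A_{i}$ mixes an averaged factor with the pure isometry $F_{i-1}\cdots F_{1}$ (whose own fixed set need not equal $\cap_{j<i}\Fix F_{j}$), one cannot simply apply a composition-of-averaged result to $B_{i}$, and must instead exploit simultaneous fixedness across all $A_{i}$ and the specific nesting $F_{1},F_{2}F_{1},\ldots,F_{t}\cdots F_{1}$ to telescope $\Fix A$ down to $\cap^{t}_{i=1}\Fix F_{i}$. A secondary technical point is verifying that the convex combinations and compositions involved preserve averagedness and affine-hull membership, so that the hypotheses of \cref{fact:CCSLinearConvTSFirmNone} are genuinely satisfied.
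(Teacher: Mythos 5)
This statement is imported as a \cref{fact:CCS:LineaConve:F1t} from \cite[Theorem~5.6]{BOyW2019LinearConvergence}, so the paper itself contains no proof to compare against; judged on its own, your argument is correct and reconstructs exactly the strategy the cited source uses, namely exhibiting $A$ as a linear, averaged element of $\aff\widetilde{\mathcal{S}}$ with $\Fix A=\cap^{t}_{i=1}\Fix F_{i}=\cap_{T\in\widetilde{\mathcal{S}}}\Fix T$ and then invoking the generic rate result (\cref{fact:CCSLinearConvTSFirmNone}, which is Theorem~4.16(ii) of the same reference). Your telescoping argument for the fixed-point identities and the verification that each $A_{i}=(1-\alpha_{i})\Id+\alpha_{i}B_{i}$ is $\alpha_{i}$-averaged are both sound, so no gaps remain.
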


		\begin{proposition} \label{prop:CCSBAM}
		Suppose that $\mathcal{H} =\mathbb{R}^{n}$. Let $F_{1}, \ldots, F_{t}$ be linear isometries from $\mathcal{H} $ to $\mathcal{H}$.  Let $\tilde{ \mathcal{S}}$ be a finite subset of $\Omega ( F_{1}, \ldots, F_{t})$.
		\begin{enumerate}
		\item \label{prop:CCSBAM:T1Tm}  If  $\{ \Id, F_{1},F_{2},  \ldots, F_{t} \}  \subseteq \tilde{ \mathcal{S}}$,
		then $\Fix \CC{\tilde{ \mathcal{S}}} = \cap^{t}_{i=1} \Fix F_{i}$ and $\CC{\tilde{ \mathcal{S}}}$ is a  BAM

		\item \label{prop:CCSBAM:T12mTm} If $ \{ \Id, F_{1}, F_{2}F_{1},  \ldots, F_{t}\cdots F_{2}F_{1} \} \subseteq \tilde{ \mathcal{S}}$, then $\Fix \CC{\tilde{ \mathcal{S}}} = \cap^{t}_{i=1} \Fix F_{i}$ and  $\CC{\tilde{ \mathcal{S}}}$ is a  BAM.
	\end{enumerate}
\end{proposition}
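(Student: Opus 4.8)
The plan is to verify the three defining conditions of \cref{def:BAM} for $\CC{\tilde{\mathcal{S}}}$, reading each one off a circumcenter fact already recorded in this section; parts \cref{prop:CCSBAM:T1Tm} and \cref{prop:CCSBAM:T12mTm} will then have the same proof, differing only in whether \cref{fact:CCS:LineaConve:F1Fn} or \cref{fact:CCS:LineaConve:F1t} is invoked.

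First I would set up a common framework. Every member of $\tilde{\mathcal{S}}$ is a finite composition of the linear isometries $F_{1}, \ldots, F_{t}$, hence is itself a linear isometry, and each such composition fixes $0$; thus $\tilde{\mathcal{S}}$ is a set of affine isometries with $\bigcap_{T \in \tilde{\mathcal{S}}} \Fix T \ni 0$, so the blanket hypotheses under which \cref{fact:CCS:proper:NormPres:T} is stated transfer verbatim to the collection $\tilde{\mathcal{S}}$. Moreover, since $\{F_{1}, \ldots, F_{t}\} \subseteq \tilde{\mathcal{S}}$ (respectively the telescoping family in \cref{prop:CCSBAM:T12mTm}), while conversely each element of $\tilde{\mathcal{S}}$ is a composition of the $F_{i}$ and hence fixes $\bigcap^{t}_{i=1} \Fix F_{i}$ pointwise, I obtain the bookkeeping identity $\bigcap_{T \in \tilde{\mathcal{S}}} \Fix T = \bigcap^{t}_{i=1} \Fix F_{i}$, a closed linear (so nonempty closed convex) subspace of $\mathcal{H}$.

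Next I would apply \cref{fact:CCS:LineaConve:F1Fn} (for \cref{prop:CCSBAM:T1Tm}) or the corresponding parts of \cref{fact:CCS:LineaConve:F1t} (for \cref{prop:CCSBAM:T12mTm}). Item \cref{fact:CCS:LineaConve:F1Fn:Fix} gives $\Fix \CC{\tilde{\mathcal{S}}} = \bigcap^{t}_{i=1} \Fix F_{i}$, which is the asserted fixed-point identity and simultaneously supplies \cref{def:BAM}\cref{def:BAM:Fix}. Item \cref{fact:CCS:LineaConve:F1Fn:LineaConv} furnishes a constant $\gamma := \norm{A \Pro_{(\cap^{t}_{i=1} \Fix F_{i})^{\perp}}} \in \left[0,1\right[$ together with the estimate $\norm{\CC{\tilde{\mathcal{S}}}^{k} x - \Pro_{\cap^{t}_{i=1} \Fix F_{i}} x} \le \gamma^{k} \norm{x - \Pro_{\cap^{t}_{i=1} \Fix F_{i}} x}$; specializing to $k=1$ and using the fixed-point identity yields \cref{def:BAM}\cref{def:BAM:Ineq}. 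For the remaining condition \cref{def:BAM}\cref{def:BAM:eq}, I would invoke \cref{fact:CCS:proper:NormPres:T}\cref{fact:CCS:proper:NormPres:T:PaffU} with $W = \bigcap^{t}_{i=1} \Fix F_{i} = \Fix \CC{\tilde{\mathcal{S}}}$ and $k=1$, giving $\Pro_{\Fix \CC{\tilde{\mathcal{S}}}} \CC{\tilde{\mathcal{S}}} = \Pro_{\Fix \CC{\tilde{\mathcal{S}}}}$. Assembling these three items through \cref{def:BAM} then shows $\CC{\tilde{\mathcal{S}}}$ is a $\gamma$-BAM. Alternatively, one may feed the operator $A$ produced by the convergence fact into \cref{thm:CCSBAM:F}: at each $x$ it lies in $\aff(\tilde{\mathcal{S}}(x))$, being an affine combination of iterates with coefficients summing to $1$, and $\norm{Ax - \Pro_{W} x} = \norm{A \Pro_{W^{\perp}} x} \le \gamma \norm{x - \Pro_{W} x}$ by \cref{MetrProSubs8} and $\Fix A = W$, so \cref{thm:CCSBAM:F} returns the same conclusion in a single step.

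The argument is essentially an assembly of cited results, so there is no substantial analytic obstacle. The only point requiring care is the one flagged in the first two paragraphs: confirming that the blanket isometry hypotheses legitimately transfer from the section's ambient operators $T_{1}, \ldots, T_{m}$ to the new collection $\tilde{\mathcal{S}}$ of compositions, and tracking the set identity $\bigcap_{T \in \tilde{\mathcal{S}}} \Fix T = \bigcap^{t}_{i=1} \Fix F_{i} = \Fix \CC{\tilde{\mathcal{S}}}$, so that all three BAM conditions are expressed relative to the single projector $\Pro_{\cap^{t}_{i=1} \Fix F_{i}}$.
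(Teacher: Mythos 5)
Your proposal is correct and follows essentially the same route as the paper: the paper likewise reads $\Fix \CC{\tilde{\mathcal{S}}} = \cap^{t}_{i=1}\Fix F_{i}$ and the contraction estimate off \cref{fact:CCS:LineaConve:F1Fn} (respectively \cref{fact:CCS:LineaConve:F1t}), obtains $\Pro_{\Fix \CC{\tilde{\mathcal{S}}}}\CC{\tilde{\mathcal{S}}} = \Pro_{\Fix \CC{\tilde{\mathcal{S}}}}$ from \cref{fact:CCS:proper:NormPres:T}\cref{fact:CCS:proper:NormPres:T:PaffU}, and assembles the three conditions of \cref{def:BAM}. Your extra care in verifying that the blanket isometry hypotheses transfer to $\tilde{\mathcal{S}}$, and the alternative assembly via \cref{thm:CCSBAM:F}, are both sound but do not change the argument.
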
	

\begin{proof}
	\cref{prop:CCSBAM:T1Tm}:  By \cref{fact:CCS:LineaConve:F1Fn}\cref{fact:CCS:LineaConve:F1Fn:Fix}, $\Fix \CC{ \tilde{ \mathcal{S}} } = \cap_{T \in \widetilde{\mathcal{S}}} \Fix T= \cap^{t}_{i=1} \Fix F_{i}$ is a nonempty  closed linear subspace of $\mathcal{H}$.
	Apply \cref{fact:CCS:proper:NormPres:T}\cref{fact:CCS:proper:NormPres:T:PaffU} with $W= \Fix \CC{ \tilde{ \mathcal{S}} } = \cap_{T \in \widetilde{\mathcal{S}}} \Fix T$ yields $\Pro_{\Fix \CC{ \tilde{ \mathcal{S}} } } \CC{\tilde{ \mathcal{S}}} = \Pro_{\Fix \CC{ \tilde{ \mathcal{S}} }  }$.
	 Apply \cref{fact:CCS:LineaConve:F1Fn}\cref{fact:CCS:LineaConve:F1Fn:LineaConv}   to obtain that
	there exists $\gamma \in \left[0,1\right[\,$ such that
	\begin{align*}
	(\forall x \in \mathcal{H}) \quad \norm{\CC{\tilde{ \mathcal{S}}}x - \Pro_{\Fix \CC{ \tilde{ \mathcal{S}} } } x  } \leq \gamma \norm{x - \Pro_{\Fix \CC{ \tilde{ \mathcal{S}} } } \Fix Tx}.
	\end{align*}
	Hence, by \cref{def:BAM}, $\CC{\tilde{ \mathcal{S}}}$ is a  BAM.
	
 \cref{prop:CCSBAM:T12mTm}:  The proof is similar to the proof of \cref{prop:CCSBAM:T1Tm}, however, this time we use \cref{fact:CCS:LineaConve:F1t}  instead of \cref{fact:CCS:LineaConve:F1Fn}.
\end{proof}

\begin{theorem} \label{theom:CCS:affine}
	Assume that $\mathcal{H} =\mathbb{R}^{n}$ and that $F_{1}, \ldots, F_{t}$ are affine isometries from $\mathcal{H} $ to $\mathcal{H}$ with $\cap^{t}_{i=1} \Fix F_{i} \neq \varnothing$.  Assume that $\tilde{ \mathcal{S}}$ is a finite subset of $\Omega ( F_{1}, \ldots, F_{t}) $ defined in \cref{eq:defn:Omega} such that
	$\{ \Id,  F_{1},F_{2},  \ldots, F_{t} \}  \subseteq \tilde{ \mathcal{S}}$ or $ \{ \Id,  F_{1}, F_{2}F_{1},  \ldots, F_{t}\cdots F_{2}F_{1} \} \subseteq \tilde{ \mathcal{S}} $.   Then $\Fix \CC{\tilde{ \mathcal{S}}} = \cap^{t}_{i=1} \Fix F_{i}$ and  $\CC{\tilde{ \mathcal{S}}}$ is a  BAM.
\end{theorem}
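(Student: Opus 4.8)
The plan is to reduce the affine case to the linear case already settled in \cref{prop:CCSBAM} by translating everything by a common fixed point, precisely the device isolated in \cref{prop:S:SF:equivalence}. First I would fix a point $z \in \cap^{t}_{i=1} \Fix F_{i}$, which exists by hypothesis, and define the shifted generators $(\forall i \in \{1,\ldots,t\})$ $(\forall x \in \mathcal{H})$ $L_{i}x := F_{i}(x+z) - z$. Since $z$ is a fixed point of each $F_{i}$, \cite[Lemma~3.8]{BOyW2019LinearConvergence} guarantees that each $L_{i}$ is a \emph{linear} isometry.

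The key combinatorial observation, which I would record as a one-line sublemma, is that conjugation by the translation $x \mapsto x+z$ commutes with composition whenever $z$ is a common fixed point: for any word $T = F_{i_{r}}\cdots F_{i_{1}} \in \Omega(F_{1},\ldots,F_{t})$ one has $T(x+z) - z = L_{i_{r}}\cdots L_{i_{1}}x$, because every such composition fixes $z$ and the inner translations telescope. Writing $\widehat{T}x := T(x+z)-z$, it follows that the shifted family $\widehat{\tilde{\mathcal{S}}} := \{\widehat{T} : T \in \tilde{\mathcal{S}}\}$ is a finite subset of $\Omega(L_{1},\ldots,L_{t})$. Moreover $\widehat{\Id} = \Id$, $\widehat{F_{j}} = L_{j}$, and $\widehat{F_{j}\cdots F_{1}} = L_{j}\cdots L_{1}$, so whichever of the two inclusion hypotheses holds for $\tilde{\mathcal{S}}$ transfers verbatim to $\widehat{\tilde{\mathcal{S}}}$; that is, $\{\Id, L_{1}, \ldots, L_{t}\} \subseteq \widehat{\tilde{\mathcal{S}}}$ or $\{\Id, L_{1}, L_{2}L_{1}, \ldots, L_{t}\cdots L_{1}\} \subseteq \widehat{\tilde{\mathcal{S}}}$.

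With this in place I would invoke \cref{prop:CCSBAM} (part \cref{prop:CCSBAM:T1Tm} or \cref{prop:CCSBAM:T12mTm} according to the case) applied to the linear isometries $L_{1},\ldots,L_{t}$ and the set $\widehat{\tilde{\mathcal{S}}}$, obtaining that $\CC{\widehat{\tilde{\mathcal{S}}}}$ is a BAM. To pull this back, note that $\Id \in \tilde{\mathcal{S}}$ and that every element of $\tilde{\mathcal{S}}$ is an affine isometry fixing $z$; hence \cref{prop:S:SF:equivalence} (with the roles of its $T_{i}$ played by the members of $\tilde{\mathcal{S}}$ and its $F_{i}$ by the members of $\widehat{\tilde{\mathcal{S}}}$) gives that $\CC{\tilde{\mathcal{S}}}$ is a BAM if and only if $\CC{\widehat{\tilde{\mathcal{S}}}}$ is, settling the BAM claim. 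For the fixed point set, properness of $\CC{\tilde{\mathcal{S}}}$ follows from the circumcentered isometry framework (\cref{fact:CCS:proper:NormPres:T}\cref{fact:CCS:proper:NormPres:T:prop}), and since $\Id \in \tilde{\mathcal{S}}$, \cref{fact:FixCCS:G} yields $\Fix \CC{\tilde{\mathcal{S}}} = \cap^{t}_{i=1} \Fix F_{i}$ at once; alternatively the shift delivers the same equality via $\Fix \CC{\tilde{\mathcal{S}}} = z + \Fix \CC{\widehat{\tilde{\mathcal{S}}}}$ together with \cref{prop:BAMAffinLinea}\cref{prop:BAMAffinLinea:Fix}.

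I expect the only genuine obstacle to be the bookkeeping in the second paragraph: verifying that the translate of a composition is the composition of the translates, and hence that $\widehat{\tilde{\mathcal{S}}}$ really lands inside $\Omega(L_{1},\ldots,L_{t})$ with the prescribed generating elements preserved. Everything else is a direct application of \cref{prop:CCSBAM}, \cref{prop:S:SF:equivalence}, and \cref{fact:FixCCS:G}, none of which requires the isometries to be linear beyond what the shift already supplies.
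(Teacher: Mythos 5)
Your proposal is correct and follows exactly the paper's route: the paper's proof is the one-line citation of \cref{prop:CCSBAM} together with the shift equivalence \cref{prop:S:SF:equivalence}\cref{prop:S:SF:equivalence:S:SF}, and your argument simply makes explicit the bookkeeping (conjugating each word by the translation $x\mapsto x+z$ and checking that the generating inclusions are preserved) that the paper leaves implicit.
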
	

\begin{proof}
	This is from  \cref{prop:CCSBAM}\cref{prop:CCSBAM:T1Tm}$\&$\cref{prop:CCSBAM:T12mTm} and \cref{prop:S:SF:equivalence}\cref{prop:S:SF:equivalence:S:SF}.
\end{proof}

The following example shows that BAM is generally neither continuous nor linear.
\begin{example} \label{exam:counterexam}
	Suppose that $\mathcal{H}=\mathbb{R}^2$, set $U_{1}:=\mathbb{R}\cdot (1,0)$,
	and  $U_{2}:=\mathbb{R}\cdot(1,1)$. Suppose that
	$\mathcal{S}=\{\Id, \R_{U_{1}}, \R_{U_{2}}\}$ or that $ \mathcal{S}=\{\Id, \R_{U_{1}}, \R_{U_{2}}\R_{U_{1}}\}$. Then the following statements hold.
	\begin{enumerate}
		\item \label{exam:counterexam:BAM} $\CC{\mathcal{S}}$ is a BAM and $\Fix \CC{\mathcal{S}}=\{(0,0)\}$.
		\item \label{exam:counterexam:discontinuity:nonlinear} $\CC{\mathcal{S}}$ is   neither continuous nor linear.
	\end{enumerate}
\end{example}

\begin{proof}
	\cref{exam:counterexam:BAM}: Because $\R_{U_{1}}$ and $\R_{U_{2}}$ are linear isometries and $\Fix \R_{U_{1}} \cap \Fix \R_{U_{2}} =U_{1} \cap U_{2}=\{(0,0)\}$, by \cref{theom:CCS:affine},  $\CC{\mathcal{S}}$ is a BAM.
	
	\cref{exam:counterexam:discontinuity:nonlinear}: This is from \cite[Examples~4.19 and 4.20]{BOyW2018Proper}.
\end{proof}

The following example illustrates that the composition of three BAMs is a projector does not imply that the individual BAMs are  projectors.
\begin{example} \label{examp:CCS1CCS2}
	Suppose that $\mathcal{H}=\mathbb{R}^2$, set $U_{1}:=\mathbb{R}\cdot (1,0)$, $U_{2}:=\mathbb{R}\cdot(1,1)$ and $U_{3}:=\mathbb{R}\cdot(0, 1)$. Denote by
	$\mathcal{S}_{1} :=\{\Id, \R_{U_{1}}, \R_{U_{2}}\}$ and $\mathcal{S}_{2}:=\{\Id, \R_{U_{2}}, \R_{U_{3}}\}$. Then the following statements hold:
	\begin{enumerate}
		\item \label{examp:CCS1CCS2:BAM} All of  $\CC{\mathcal{S}_{1}}$, $\CC{\mathcal{S}_{2}}$ and $\CC{\mathcal{S}_{2}} \CC{\mathcal{S}_{1}}$ are  BAMs. Moreover, $\Fix \CC{\mathcal{S}_{1}}=\{(0,0)\}$, $\Fix \CC{\mathcal{S}_{2}} =\{(0,0)\}$, and $ \Fix (\CC{\mathcal{S}_{2}}  \CC{\mathcal{S}_{1}})=\{(0,0)\}$.
		\item \label{examp:CCS1CCS2:Projector}
		None of the $\CC{\mathcal{S}_{1}}$, $\CC{\mathcal{S}_{2}}$  or $\CC{\mathcal{S}_{2}}  \CC{\mathcal{S}_{1}}$  is a projector.
		\item \label{examp:CCS1CCS2:0} $\CC{\mathcal{S}_{1}}\CC{\mathcal{S}_{2}}  \CC{\mathcal{S}_{1}} =\Pro_{\{(0,0)\}}$.
	\end{enumerate}
\end{example}

\begin{proof}
	By \cref{fact:form:m2:Oper}, it is easy to see that
	\begin{align} \label{EQ:examp:CCS1CCS2}
	(\forall x \in \mathcal{H}) \quad \CC{\mathcal{S}_{1}}x =\begin{cases}
	\Pro_{ U_{2}}x, \quad &\text{if } x \in U_{1};\\
	\Pro_{ U_{1}}x, \quad &\text{if } x \in U_{2};\\
	0, \quad &\text{otherwise.}
	\end{cases}
	\quad \text{and} \quad
	\CC{\mathcal{S}_{2}}x =\begin{cases}
	\Pro_{ U_{3}}x, \quad &\text{if } x \in U_{2};\\
	\Pro_{ U_{2}}x, \quad &\text{if } x \in U_{3};\\
	0, \quad &\text{otherwise. }
	\end{cases}
	\end{align}
	Hence,
	\begin{align}\label{EQ:examp:CCS1compCCS2}
	\CC{\mathcal{S}_{2}}\CC{\mathcal{S}_{1}}x =\begin{cases}
	\Pro_{ U_{3}}\Pro_{ U_{2}}x, \quad &\text{if } x \in U_{1};\\
	0, \quad &\text{if } x \in U_{2};\\
	0, \quad &\text{otherwise. }
	\end{cases}
	\end{align}
	
	\cref{examp:CCS1CCS2:BAM}: Because $\R_{U_{1}}$, $\R_{U_{2}}$ and $\R_{U_{3}}$ are linear isometries, $\Fix \R_{U_{1}} \cap \Fix \R_{U_{2}} =\{(0,0)\}$, and $\Fix \R_{U_{2}} \cap \Fix \R_{U_{3}} =U_{2} \cap U_{3}=\{(0,0)\}$, by \cref{theom:CCS:affine},
	$\CC{\mathcal{S}_{1}}$ and $\CC{\mathcal{S}_{2}}$ are BAMs and $\Fix \CC{\mathcal{S}_{1}}= \Fix \CC{\mathcal{S}_{2}} =\{(0,0)\}$.
Hence, by \cref{thm:BAM:COMPO}\cref{thm:BAM:COMPO:BAM}, $\CC{\mathcal{S}_{2}}  \CC{\mathcal{S}_{1}}$ is  BAM and $ \Fix (\CC{\mathcal{S}_{2}}  \CC{\mathcal{S}_{1}})=\{(0,0)\}$.

	\cref{examp:CCS1CCS2:Projector}:  Because $U_{1}$ is not  orthogonal with $U_{2}$, and by \cref{EQ:examp:CCS1CCS2},  the range of $\CC{\mathcal{S}_{1}} $ equals $U_{1} \cup U_{2}$, but   $\CC{\mathcal{S}_{1}} \neq \Pro_{ U_{1} \cup U_{2}}$, we know that $\CC{\mathcal{S}_{1}}$ is not a projector. Similarly, neither $\CC{\mathcal{S}_{2}} $ nor $\CC{\mathcal{S}_{2}}  \CC{\mathcal{S}_{1}}$ is   a projector.

	\cref{examp:CCS1CCS2:0}: This is clear  from the definitions of $\CC{\mathcal{S}_{1}}$ and $\CC{\mathcal{S}_{2}}\CC{\mathcal{S}_{1}}$ presented in \cref{EQ:examp:CCS1CCS2} and \cref{EQ:examp:CCS1compCCS2} respectively.

\end{proof}
\subsection*{Circumcenter and best approximation mappings in Hilbert space}
Because reflectors associated with closed affine subspaces are isometries,   we call  the circumcenter method induced by a set of reflectors the \emph{circumcentered reflection method} (CRM).  Clearly, all facts on CIM are applicable to CRM.

In this subsection, we assume that
\begin{subequations}\label{eq:U1Um}
	\begin{align}
	U_{1}, \ldots, U_{m}~\text{are closed linear subspaces in the real Hilbert space}~\mathcal{H},
	\end{align}
	\begin{align}
	\Omega := \Omega(\R_{U_{1}}, \ldots, \R_{U_{m}} ) := \Big\{ \R_{U_{i_{r}}}\cdots \R_{U_{i_{2}}}\R_{U_{i_{1}}}  ~\Big|~ r \in \mathbb{N}, ~\mbox{and}~ i_{1}, \ldots,  i_{r} \in \{1, \ldots,m\}    \Big\},
	\end{align}
	\begin{align}
	\Psi :=\Big\{ \R_{U_{i_{r}}}\cdots \R_{U_{i_{2}}}\R_{U_{i_{1}}}  ~\Big|~ r, i_{1}, i_{2}, \ldots, i_{r} \in \{0,1, \ldots,m\} ~\mbox{and}~0<i_{1}<\cdots<i_{r} \Big\}.
	\end{align}
\end{subequations}

We also assume that
	\begin{align}\label{eq:Defi:S}
\Psi \subseteq \mathcal{S} \subseteq \Omega \quad \text{and} \quad \mathcal{S} ~\text{consists of finitely many elements.}
	\end{align}

For every nonempty closed affine subset $C$ of $\mathcal{H}$, $\R_{C}\R_{C} =(2\Pro_{C} -\Id)(2\Pro_{C} -\Id)=4 \Pro_{C} - 2 \Pro_{C}  -2\Pro_{C} +\Id =\Id$. So, if $m=1$, then $\Omega =\Psi=\{\Id, \R_{U_{1}} \}$. Hence, by the assumption, $\mathcal{S} = \{\Id, \R_{U_{1}} \}$, and, by \cref{fact:form:m2:Oper}, $\CC{\mathcal{S}} =\frac{1}{2} (\Id + \R_{U_{1}}) =\Pro_{U_{1}}$. By \cref{examp:BAM:Pro}, $\CC{\mathcal{S}} = \Pro_{U_{1}}$ is a $0$-BAM. Therefore, $m=1$ is a trivial case and we consider only $m \geq 2$ below.

\begin{lemma} \label{lemma:Fix:CCS}
	$\Fix \CC{\mathcal{S}} =\cap_{T \in \mathcal{S}} \Fix T =  \cap^{m}_{i=1} U_{i}$.
\end{lemma}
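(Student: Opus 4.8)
The plan is to prove the two equalities separately: the first, $\Fix \CC{\mathcal{S}} = \cap_{T \in \mathcal{S}}\Fix T$, by invoking the circumcentered-isometry machinery already developed, and the second, $\cap_{T \in \mathcal{S}}\Fix T = \cap_{i=1}^{m} U_{i}$, by a direct double-inclusion argument on fixed-point sets.

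First I would check the hypotheses needed to apply \cref{fact:FixCCS:G}. Since every $U_{i}$ is a closed linear subspace, $0 \in U_{i}$ for each $i$, so $0$ is fixed by every reflector $\R_{U_{i}}$ and hence by every finite composition of such reflectors; in particular $0 \in \cap_{T \in \mathcal{S}}\Fix T$, so this common fixed-point set is nonempty. Moreover, each $T \in \mathcal{S} \subseteq \Omega$ is a composition of the reflectors $\R_{U_{1}}, \ldots, \R_{U_{m}}$, and each $\R_{U_{i}} = 2\Pro_{U_{i}} - \Id$ is a linear isometry because $U_{i}$ is a closed linear subspace; hence every element of $\mathcal{S}$ is an isometry with $0$ as a common fixed point, and \cref{fact:CCS:proper:NormPres:T}\cref{fact:CCS:proper:NormPres:T:prop} guarantees that $\CC{\mathcal{S}}$ is proper. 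Finally, the empty-product convention ($r=0$) places $\Id$ in $\Psi$, and since $\Psi \subseteq \mathcal{S}$ we obtain $\Id \in \mathcal{S}$. With these three facts in hand, \cref{fact:FixCCS:G} yields $\Fix \CC{\mathcal{S}} = \cap_{T \in \mathcal{S}}\Fix T$ at once.

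For the second equality I would argue by double inclusion. For the inclusion $\cap_{i=1}^{m} U_{i} \subseteq \cap_{T \in \mathcal{S}}\Fix T$, note that any $x \in \cap_{i=1}^{m} U_{i}$ satisfies $\R_{U_{i}}x = x$ for every $i$ (as $\Fix \R_{U_{i}} = U_{i}$ when $U_{i}$ is a closed linear subspace), hence is fixed by every finite composition of these reflectors, and so by every $T \in \mathcal{S} \subseteq \Omega$. For the reverse inclusion I would use $\Psi \subseteq \mathcal{S}$, which gives $\cap_{T \in \mathcal{S}}\Fix T \subseteq \cap_{T \in \Psi}\Fix T$; taking $r=1$ and $i_{1}=j$ in the definition of $\Psi$ shows $\R_{U_{j}} \in \Psi$ for each $j \in \{1,\ldots,m\}$, whence $\cap_{T \in \Psi}\Fix T \subseteq \cap_{j=1}^{m}\Fix \R_{U_{j}} = \cap_{j=1}^{m} U_{j}$. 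Combining the two inclusions closes the argument and completes the chain of equalities.

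The main obstacle is purely bookkeeping rather than analytic: one must confirm that $\CC{\mathcal{S}}$ is proper so that \cref{fact:FixCCS:G} is applicable, and that $\Id, \R_{U_{1}}, \ldots, \R_{U_{m}}$ all genuinely belong to $\mathcal{S}$ via the inclusion $\Psi \subseteq \mathcal{S}$ together with the index conventions defining $\Psi$. No quantitative estimate is required; the entire content lies in matching the hypotheses of the cited facts to the present reflector setting.
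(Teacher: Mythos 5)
Your proposal is correct and follows essentially the same route as the paper: both obtain $\Fix \CC{\mathcal{S}} = \cap_{T \in \mathcal{S}} \Fix T$ from \cref{fact:FixCCS:G} (using $\Id \in \Psi \subseteq \mathcal{S}$), and both establish $\cap_{T \in \mathcal{S}} \Fix T = \cap^{m}_{i=1} U_{i}$ by the same double inclusion via $\{\R_{U_{1}}, \ldots, \R_{U_{m}}\} \subseteq \Psi \subseteq \mathcal{S} \subseteq \Omega$. Your explicit verification of the hypotheses of \cref{fact:FixCCS:G} (nonemptiness of the common fixed-point set and properness of $\CC{\mathcal{S}}$) is a welcome extra step that the paper leaves implicit.
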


\begin{proof}
	By construction of $\Psi$ with $r=0$ and $r=1$, we know that $\{ \Id, \R_{U_{1}}, \ldots, \R_{U_{m}} \} \subseteq \Psi \subseteq \mathcal{S}$, so  by \cref{fact:FixCCS:G},  $\Fix \CC{\mathcal{S}} =\cap_{T \in \mathcal{S}} \Fix T \subseteq \cap^{m}_{i=1} \Fix \R_{U_{i}} =\cap^{m}_{i=1} U_{i} $. On the other hand, because $\mathcal{S} \subseteq \Omega $ and $(\forall T \in \Omega)$ $ \cap^{m}_{i=1} U_{i} \subseteq \Fix T$,  we know that $\cap^{m}_{i=1} U_{i}  \subseteq \cap_{T \in \mathcal{S}} \Fix T = \Fix \CC{\mathcal{S}} $. Altogether,  $\Fix \CC{\mathcal{S}} =\cap_{T \in \mathcal{S}} \Fix T= \cap^{m}_{i=1} U_{i}$.
\end{proof}

\begin{fact} {\rm \cite[Theorem~6.6]{BOyW2019LinearConvergence}}
	\label{fact:MAP:LC}
	Set   $\gamma := \norm{ \Pro_{U_{m}} \Pro_{U_{m-1}} \cdots \Pro_{U_{1}} \Pro_{(\cap^{m}_{i=1} U_{i} )^{\perp}} } $.  Assume that $m \geq 2$ and that $U^{\perp}_{1} + \cdots+ U^{\perp}_{m}$ is closed. Then $\gamma \in \left[0,1\right[$ and
	\begin{align*}
	(\forall x \in \mathcal{H} ) (\forall k \in \mathbb{N}) \quad \norm{\CC{\mathcal{S}}^{k}x- \Pro_{\cap^{m}_{i=1} U_{i}} x} \leq \gamma^{k} \norm{x - \Pro_{\cap^{m}_{i=1} U_{i}} x}.
	\end{align*}
\end{fact}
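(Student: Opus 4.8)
The plan is to realize the method-of-alternating-projections operator as a selection from the affine hulls $\aff(\mathcal{S}(x))$ and then invoke \cref{fact:CWP:line:conv}. Throughout write $W := \cap_{i=1}^{m} U_{i}$, which is a nonempty (it contains $0$) closed linear subspace with $W \subseteq U_{i}$ for each $i$; by \cref{lemma:Fix:CCS} we have $\Fix \CC{\mathcal{S}} = \cap_{T \in \mathcal{S}} \Fix T = W$, and $\CC{\mathcal{S}}$ is a genuine self-map of $\mathcal{H}$ by \cref{fact:CCS:proper:NormPres:T}\cref{fact:CCS:proper:NormPres:T:prop}, since every reflector is an affine isometry and hence so is every element of $\mathcal{S} \subseteq \Omega$. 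Set $F := \Pro_{U_{m}}\Pro_{U_{m-1}}\cdots\Pro_{U_{1}}$, which is linear.

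First I would check the membership $Fx \in \aff(\mathcal{S}(x))$. Using $\Pro_{U_{i}} = \frac{1}{2}(\Id + \R_{U_{i}})$ one has
$$F = \Pro_{U_{m}}\cdots\Pro_{U_{1}} = \frac{1}{2^{m}}\prod_{i=m}^{1}(\Id + \R_{U_{i}}).$$
Expanding the product amounts to choosing, from each factor, either $\Id$ or $\R_{U_{i}}$; the choice indexed by a subset $\{i_{1} < \cdots < i_{r}\}$ of $\{1,\ldots,m\}$ produces exactly the increasing-index composition $\R_{U_{i_{r}}}\cdots\R_{U_{i_{1}}} \in \Psi$ (the empty subset giving $\Id$). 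Thus $Fx$ is the average of the $2^{m}$ vectors $\R_{U_{i_{r}}}\cdots\R_{U_{i_{1}}}x$, each of which lies in $\Psi(x) \subseteq \mathcal{S}(x)$ because $\Psi \subseteq \mathcal{S}$. An average is an affine combination, so $Fx \in \aff(\mathcal{S}(x))$.

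Next I would verify the contraction estimate $\norm{Fx - \Pro_{W}x} \le \gamma\norm{x - \Pro_{W}x}$. Since $W \subseteq U_{i}$, \cref{lem:ExchangeProj} gives $\Pro_{U_{i}}\Pro_{W} = \Pro_{W}$, and iterating yields $F\Pro_{W} = \Pro_{W}$. Linearity of $F$ together with $\Id = \Pro_{W} + \Pro_{W^{\perp}}$ (\cref{MetrProSubs8}) then gives $Fx - \Pro_{W}x = F(x - \Pro_{W}x) = F\Pro_{W^{\perp}}x$. Because $\Pro_{W^{\perp}}x \in W^{\perp}$ is fixed by $\Pro_{W^{\perp}}$, and $W^{\perp} = (\cap_{i}U_{i})^{\perp}$,
$$\norm{Fx - \Pro_{W}x} = \norm{F\Pro_{W^{\perp}}(\Pro_{W^{\perp}}x)} \le \norm{F\Pro_{W^{\perp}}}\,\norm{\Pro_{W^{\perp}}x} = \gamma\,\norm{x - \Pro_{W}x}.$$
Granting $\gamma \in \left[0,1\right[$, I would then apply \cref{fact:CWP:line:conv} with this $F$, this $\gamma$, and $W \subseteq \cap_{T \in \mathcal{S}}\Fix T$, which immediately produces the claimed inequality for all $x \in \mathcal{H}$ and all $k \in \mathbb{N}$.

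The hard part will be showing $\gamma = \norm{\Pro_{U_{m}}\cdots\Pro_{U_{1}}\Pro_{(\cap_{i}U_{i})^{\perp}}} < 1$; this is exactly the classical linear-convergence criterion for alternating projections, and it is precisely here that closedness of $U_{1}^{\perp} + \cdots + U_{m}^{\perp}$ is consumed. For $m = 2$ it follows from the Friedrichs-angle characterization (\cref{fac:cFLess1} gives $c(U_{1},U_{2}) < 1$, combined with \cref{rema:comp:BAM}\cref{rema:comp:BAM:ineq}), whereas the general case requires the angle theory for finitely many subspaces from the approximation-theory literature. Note that one cannot shortcut this via \cref{cor:BAM:COMPO:Pro}, since that statement needs every partial sum $\sum_{j=1}^{i} U_{j}^{\perp}$ to be closed, which is strictly stronger than closedness of the full sum assumed here.
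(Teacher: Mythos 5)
This statement appears in the paper only as an imported \cref{fact:MAP:LC}, quoted from \cite[Theorem~6.6]{BOyW2019LinearConvergence} without proof, so there is no internal argument to compare against; what can be said is that your reconstruction is correct and uses exactly the machinery the paper assembles around it. The two key steps are sound: expanding $\Pro_{U_{m}}\cdots\Pro_{U_{1}}=\tfrac{1}{2^{m}}(\Id+\R_{U_{m}})\cdots(\Id+\R_{U_{1}})$ yields precisely the increasing-index compositions that make up $\Psi$, so $\Psi\subseteq\mathcal{S}$ gives $Fx\in\conv(\mathcal{S}(x))\subseteq\aff(\mathcal{S}(x))$; and the identity $Fx-\Pro_{W}x=F\Pro_{W^{\perp}}(\Pro_{W^{\perp}}x)$ (via \cref{lem:ExchangeProj} and idempotence of $\Pro_{W^{\perp}}$) delivers the one-step estimate with constant $\gamma$, after which \cref{fact:CWP:line:conv} produces the claimed geometric decay for $\CC{\mathcal{S}}^{k}$. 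The one ingredient you do not establish is $\gamma<1$ for $m\geq 3$; this is genuinely external (it is the characterization in \cite[Theorem~9.33]{D2012} of when the MAP operator norm on $(\cap_{i}U_{i})^{\perp}$ is strictly below $1$, which is exactly where closedness of $U_{1}^{\perp}+\cdots+U_{m}^{\perp}$ is consumed), and you flag it honestly rather than hiding it. Your side remark is also accurate: \cref{cor:BAM:COMPO:Pro} cannot serve as a shortcut here, since it requires every partial sum $\sum_{j=1}^{i}U_{j}^{\perp}$ to be closed, which \cref{remark:assumpt:constants:different} shows is strictly stronger than closedness of the full sum assumed in the statement.
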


\begin{theorem}	\label{theorem:MAP:LC}
	Set   $\gamma := \norm{ \Pro_{U_{m}} \Pro_{U_{m-1}} \cdots \Pro_{U_{1}} \Pro_{(\cap^{m}_{i=1} U_{i} )^{\perp}} } $.  Assume that $m \geq 2$ and that $U^{\perp}_{1} + \cdots+ U^{\perp}_{m}$ is closed. Then $\gamma \in \left[0,1\right[\,$,  $ \Fix \CC{\mathcal{S}}  =\cap^{m}_{i=1} U_{i}$, and $\CC{\mathcal{S}}$ is a $\gamma$-BAM.
\end{theorem}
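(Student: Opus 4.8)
The plan is to assemble the statement from the three results stated immediately above it—\cref{lemma:Fix:CCS}, \cref{fact:MAP:LC}, and the projection identity in \cref{fact:CCS:proper:NormPres:T}\cref{fact:CCS:proper:NormPres:T:PaffU}—by checking that $\CC{\mathcal{S}}$ satisfies the three defining conditions of a $\gamma$-BAM in \cref{def:BAM}, reading off the constant $\gamma$ and the basic estimate directly from \cref{fact:MAP:LC}. Before invoking the circumcentered-isometry facts I would first observe that their standing hypotheses apply to $\mathcal{S}$: every $\R_{U_{i}}=2\Pro_{U_{i}}-\Id$ is a linear isometry and a composition of isometries is again an isometry, so every element of $\mathcal{S}\subseteq\Omega$ is an affine isometry; moreover $0\in\cap^{m}_{i=1}U_{i}\subseteq\Fix T$ for each $T\in\mathcal{S}$, so the common fixed point set is nonempty and $\CC{\mathcal{S}}$ is proper by \cref{fact:CCS:proper:NormPres:T}\cref{fact:CCS:proper:NormPres:T:prop}. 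With this in place, \cref{fact:MAP:LC} immediately yields $\gamma\in\left[0,1\right[\,$, which is the first assertion.

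Next I would establish the fixed point set and condition \cref{def:BAM}\cref{def:BAM:Fix} simultaneously. By \cref{lemma:Fix:CCS}, $\Fix\CC{\mathcal{S}}=\cap_{T\in\mathcal{S}}\Fix T=\cap^{m}_{i=1}U_{i}$, which proves the second assertion; and as an intersection of closed linear subspaces, $\cap^{m}_{i=1}U_{i}$ is a nonempty (it contains $0$) closed convex—indeed linear—subspace of $\mathcal{H}$. For condition \cref{def:BAM}\cref{def:BAM:eq} I would set $W:=\cap^{m}_{i=1}U_{i}=\Fix\CC{\mathcal{S}}$; this $W$ is a nonempty closed affine subspace of $\cap_{T\in\mathcal{S}}\Fix T$ (the two sets are in fact equal), so \cref{fact:CCS:proper:NormPres:T}\cref{fact:CCS:proper:NormPres:T:PaffU} applied with this $W$ and $k=1$ gives $\Pro_{\Fix\CC{\mathcal{S}}}\CC{\mathcal{S}}=\Pro_{\Fix\CC{\mathcal{S}}}$. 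Finally, for condition \cref{def:BAM}\cref{def:BAM:Ineq} I would specialize \cref{fact:MAP:LC} to $k=1$: for every $x\in\mathcal{H}$, $\norm{\CC{\mathcal{S}}x-\Pro_{\cap^{m}_{i=1}U_{i}}x}\leq\gamma\norm{x-\Pro_{\cap^{m}_{i=1}U_{i}}x}$, which, since $\Fix\CC{\mathcal{S}}=\cap^{m}_{i=1}U_{i}$, is exactly the required contraction toward the fixed point set. Collecting these three items and appealing to \cref{def:BAM} then gives that $\CC{\mathcal{S}}$ is a $\gamma$-BAM.

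There is no genuine analytic obstacle here: the theorem is essentially a repackaging, and all the heavy lifting—properness of $\CC{\mathcal{S}}$, the membership $\gamma\in\left[0,1\right[\,$, and the linear-rate estimate—is already contained in \cref{fact:MAP:LC}. The only points that require a little care are bookkeeping ones, namely confirming that the elements of $\mathcal{S}$ meet the standing isometry and common-fixed-point hypotheses so that the cited circumcentered-isometry facts are applicable, and matching the set $W$ in \cref{fact:CCS:proper:NormPres:T}\cref{fact:CCS:proper:NormPres:T:PaffU} with $\Fix\CC{\mathcal{S}}$ so that the projection identity delivers precisely condition \cref{def:BAM}\cref{def:BAM:eq}.
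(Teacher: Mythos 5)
Your proposal is correct and follows exactly the paper's own argument: it assembles the three BAM conditions from \cref{lemma:Fix:CCS}, \cref{fact:CCS:proper:NormPres:T}\cref{fact:CCS:proper:NormPres:T:PaffU} applied with $W=\cap^{m}_{i=1}U_{i}$, and \cref{fact:MAP:LC} for $\gamma\in\left[0,1\right[$ and the $k=1$ inequality. The only difference is that you explicitly verify the standing isometry and nonempty-intersection hypotheses, which the paper leaves implicit.
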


\begin{proof}
	Because $\Psi \subseteq \mathcal{S} \subseteq \Omega$, by \cref{lemma:Fix:CCS}, $ \Fix \CC{\mathcal{S}} =\cap_{T \in \mathcal{S}} \Fix T =\cap^{m}_{i=1} U_{i}$ is a closed linear subspace. Apply \cref{fact:CCS:proper:NormPres:T}\cref{fact:CCS:proper:NormPres:T:PaffU} with $W= \cap_{T \in \mathcal{S}} \Fix T $ to obtain that $ \Pro_{ \Fix \CC{\mathcal{S}} } \CC{\mathcal{S}} =\Pro_{ \Fix \CC{\mathcal{S}} }  $. In addition, by
	\cref{fact:MAP:LC}, $\gamma \in \left[0,1\right[$ and $(\forall x \in \mathcal{H})  $ $\norm{\CC{\mathcal{S}}x-\Pro_{\Fix \CC{\mathcal{S}}} x} \leq \gamma  \norm{x -\Pro_{\Fix \CC{\mathcal{S}} } x}$. Hence, by \cref{def:BAM},   $\CC{\mathcal{S}}$ is a $\gamma$-BAM.
\end{proof}

\begin{corollary}\label{cor:Hilbert:BAM}
	Assume that $m=2$ in \cref{eq:U1Um},  that $\mathcal{S} = \{\Id, \R_{U_{1}}, \R_{U_{2}}, \R_{U_{2}}\R_{U_{1}} \}$, and that $U_{1}+ U_{2}$ is closed. Set $\gamma := \norm{ \Pro_{U_{2}} \Pro_{U_{1}} \Pro_{(U_{1} \cap U_{2})^{\perp}} } $. Then $\gamma \in \left[0,1\right[\,$, $ \Fix \CC{\mathcal{S}}  =U_{1} \cap U_{2}$, and $\CC{\mathcal{S}}$ is a $\gamma$-BAM.
\end{corollary}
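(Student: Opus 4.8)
The plan is to recognize this corollary as the special case $m=2$ of \cref{theorem:MAP:LC} and simply to verify that the hypotheses of that theorem are met. First I would check that the set $\mathcal{S}$ prescribed here coincides with the minimal admissible set $\Psi$ from \cref{eq:U1Um}. Specializing the definition of $\Psi$ to $m=2$, the admissible strictly increasing index tuples $0<i_{1}<\cdots<i_{r}$ with entries in $\{1,2\}$ are: the empty tuple ($r=0$), giving $\Id$; the singletons $(1)$ and $(2)$, giving $\R_{U_{1}}$ and $\R_{U_{2}}$; and the pair $(1,2)$, giving $\R_{U_{2}}\R_{U_{1}}$. Hence $\Psi=\{\Id,\R_{U_{1}},\R_{U_{2}},\R_{U_{2}}\R_{U_{1}}\}=\mathcal{S}$, so the standing requirement $\Psi\subseteq\mathcal{S}\subseteq\Omega$ of \cref{eq:Defi:S} holds (indeed with equality $\mathcal{S}=\Psi$), and $\mathcal{S}$ consists of finitely many elements.

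Next I would dispose of the closedness hypothesis. \Cref{theorem:MAP:LC} requires $U^{\perp}_{1}+U^{\perp}_{2}$ to be closed, whereas here we only assume that $U_{1}+U_{2}$ is closed. By \cref{fac:cFLess1}, applied to the closed linear subspaces $U_{1}$ and $U_{2}$, the statements ``$U_{1}+U_{2}$ is closed'' and ``$U^{\perp}_{1}+U^{\perp}_{2}$ is closed'' are equivalent, so the closedness hypothesis of \cref{theorem:MAP:LC} is automatically fulfilled under the assumption of the corollary.

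Finally, with $m=2\geq 2$ and both hypotheses verified, I would invoke \cref{theorem:MAP:LC} directly. Its conclusion, specialized to $m=2$, reads: $\gamma=\norm{\Pro_{U_{2}}\Pro_{U_{1}}\Pro_{(U_{1}\cap U_{2})^{\perp}}}\in\left[0,1\right[\,$, $\Fix\CC{\mathcal{S}}=U_{1}\cap U_{2}$, and $\CC{\mathcal{S}}$ is a $\gamma$-BAM, which is precisely the assertion to be proved. There is essentially no substantive obstacle here: the only points requiring care are the purely combinatorial identification $\mathcal{S}=\Psi$ when $m=2$, and the translation between closedness of $U_{1}+U_{2}$ and of $U^{\perp}_{1}+U^{\perp}_{2}$ via \cref{fac:cFLess1}; everything else is inherited verbatim from \cref{theorem:MAP:LC}.
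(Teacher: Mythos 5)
Your proposal is correct and follows exactly the paper's own argument: identify $\mathcal{S}=\Psi$ when $m=2$, convert the closedness of $U_{1}+U_{2}$ into that of $U_{1}^{\perp}+U_{2}^{\perp}$ via \cref{fac:cFLess1}, and then apply \cref{theorem:MAP:LC}. The only difference is that you spell out the enumeration of the index tuples defining $\Psi$, which the paper leaves implicit.
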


\begin{proof}
	By \cref{fac:cFLess1},  $U_{1}+ U_{2}$ is closed if and only if $U^{\perp}_{1}+ U^{\perp}_{2}$ is closed.
	Note that  $m=2$ in \cref{eq:U1Um} implies  $\Psi= \{\Id, \R_{U_{1}}, \R_{U_{2}}, \R_{U_{2}}\R_{U_{1}} \} =\mathcal{S} $. Hence,  the desired result is from \cref{theorem:MAP:LC} with $m=2$.
\end{proof}

\begin{theorem}
	\label{theorem:SymMAP:LC}
Let $n \in \mathbb{N} \smallsetminus \{0\}$.	Assume that $m=2n-1$ and that $U_{1}, \ldots ,U_{n}$ are closed linear subspaces of $\mathcal{H}$ with $U^{\perp}_{1} + \cdots + U^{\perp}_{n}$ being closed. Set $(\forall i \in \{1, \ldots, n-1\})$ $U_{n+i} :=U_{n-i}$.
	Denote $\gamma := \norm{ \Pro_{U_{n}} \Pro_{U_{n-1}} \cdots \Pro_{U_{1}} \Pro_{(\cap^{n}_{i=1} U_{i})^{\perp}} } $. Then $\gamma \in \left[0,1\right[\,$, $\Fix \CC{\mathcal{S}} =\cap^{m}_{i=1}U_{i}$,  and $\CC{\mathcal{S}}$ is a $\gamma^{2}$-BAM.
\end{theorem}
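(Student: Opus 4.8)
The plan is to reduce the statement to \cref{theorem:MAP:LC} and then to \emph{upgrade} the constant: \cref{theorem:MAP:LC} already yields that $\CC{\mathcal{S}}$ is a BAM with the full symmetric alternating-projection constant, and the real content is to show that this constant equals $\gamma^{2}$. First I would dispose of the degenerate case $n=1$, where $m=1$, there are no repeated factors, and $\CC{\mathcal{S}}=\Pro_{U_{1}}$ is a $0$-BAM by \cref{examp:BAM:Pro} while $\gamma=\norm{\Pro_{U_{1}}\Pro_{U_{1}^{\perp}}}=0$, so the claim is trivial. For $n\geq 2$ we have $m=2n-1\geq 3\geq 2$. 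I would record the structural facts forced by the palindromic definition $U_{n+i}:=U_{n-i}$: as a family $\{U_{1},\dots,U_{m}\}=\{U_{1},\dots,U_{n}\}$, hence $\cap_{i=1}^{m}U_{i}=\cap_{i=1}^{n}U_{i}=:V$ and $\sum_{i=1}^{m}U_{i}^{\perp}=\sum_{i=1}^{n}U_{i}^{\perp}$, so the closedness hypothesis of \cref{theorem:MAP:LC} is met. Applying \cref{theorem:MAP:LC} then gives at once that $\Fix\CC{\mathcal{S}}=V$ and that $\CC{\mathcal{S}}$ is a $\gamma_{\mathrm{MAP}}$-BAM, where $\gamma_{\mathrm{MAP}}:=\norm{\Pro_{U_{m}}\cdots\Pro_{U_{1}}\Pro_{V^{\perp}}}\in[0,1[$. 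It remains only to prove $\gamma_{\mathrm{MAP}}=\gamma^{2}$; then $\gamma=\sqrt{\gamma_{\mathrm{MAP}}}\in[0,1[$ and the $\gamma^{2}$-BAM conclusion is immediate from \cref{def:BAM}.

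The heart of the argument is a factorization of the symmetric product. Writing $Q:=\Pro_{U_{n}}\Pro_{U_{n-1}}\cdots\Pro_{U_{1}}$, so that $\gamma=\norm{Q\Pro_{V^{\perp}}}$, and using that each $\Pro_{U_{i}}$ is self-adjoint together with the palindromic indexing, I would check that
\begin{align*}
\Pro_{U_{m}}\cdots\Pro_{U_{1}}
=\Pro_{U_{1}}\cdots\Pro_{U_{n-1}}\Pro_{U_{n}}\Pro_{U_{n-1}}\cdots\Pro_{U_{1}}
= Q^{*}Q,
\end{align*}
where the last equality uses $\Pro_{U_{n}}^{2}=\Pro_{U_{n}}$ and $Q^{*}=\Pro_{U_{1}}\cdots\Pro_{U_{n-1}}\Pro_{U_{n}}$. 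Since $V\subseteq U_{i}$ for every $i$, \cref{lem:ExchangeProj} gives $\Pro_{V}\Pro_{U_{i}}=\Pro_{U_{i}}\Pro_{V}=\Pro_{V}$, whence by telescoping $\Pro_{V}Q=\Pro_{V}Q^{*}=\Pro_{V}$ and $Q\Pro_{V}=Q^{*}\Pro_{V}=\Pro_{V}$. Using $\Id=\Pro_{V}+\Pro_{V^{\perp}}$ from \cref{MetrProSubs8} and the computation $\Pro_{V}Q^{*}Q\Pro_{V^{\perp}}=\Pro_{V}Q\Pro_{V^{\perp}}=\Pro_{V}\Pro_{V^{\perp}}=0$, the $\Pro_{V}$-component vanishes, so
\begin{align*}
Q^{*}Q\Pro_{V^{\perp}}=\Pro_{V^{\perp}}Q^{*}Q\Pro_{V^{\perp}}=(Q\Pro_{V^{\perp}})^{*}(Q\Pro_{V^{\perp}}).
\end{align*}
Applying the Hilbert-space identity $\norm{A^{*}A}=\norm{A}^{2}$ with $A:=Q\Pro_{V^{\perp}}$ then yields $\gamma_{\mathrm{MAP}}=\norm{Q^{*}Q\Pro_{V^{\perp}}}=\norm{Q\Pro_{V^{\perp}}}^{2}=\gamma^{2}$, completing the proof.

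The step I expect to be the main obstacle — more a point requiring care than genuine difficulty — is the vanishing of the cross term $\Pro_{V}Q^{*}Q\Pro_{V^{\perp}}$, which is exactly what permits replacing the operator norm of the long symmetric product by the square of the norm of the half-product $Q$. It rests on the absorption identities $\Pro_{V}Q=\Pro_{V}Q^{*}=\Pro_{V}$, which use only that $V$ lies in each $U_{i}$ (via \cref{lem:ExchangeProj}) and require no averaging or Friedrichs-angle estimate; once this is in place the $C^{*}$-identity does the rest. Everything else is bookkeeping: matching the index reversal of the palindrome to the adjoint $Q^{*}$, and invoking \cref{theorem:MAP:LC} for the fixed-point set and the base BAM property.
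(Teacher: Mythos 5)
Your proposal is correct, but it takes a genuinely different route from the paper. The paper's own proof is very short: it obtains $\Fix \CC{\mathcal{S}}=\cap_{i=1}^{m}U_{i}$ from \cref{lemma:Fix:CCS}, the identity $\Pro_{\Fix \CC{\mathcal{S}}}\CC{\mathcal{S}}=\Pro_{\Fix \CC{\mathcal{S}}}$ from \cref{fact:CCS:proper:NormPres:T}\cref{fact:CCS:proper:NormPres:T:PaffU}, and then outsources the entire quantitative content — both $\gamma\in\left[0,1\right[$ and the inequality $\norm{\CC{\mathcal{S}}x-\Pro_{\Fix\CC{\mathcal{S}}}x}\leq\gamma^{2}\norm{x-\Pro_{\Fix\CC{\mathcal{S}}}x}$ — to the external result \cite[Theorem~6.7]{BOyW2019LinearConvergence}. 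You instead invoke \cref{theorem:MAP:LC} (whose hypothesis $U_{1}^{\perp}+\cdots+U_{m}^{\perp}=U_{1}^{\perp}+\cdots+U_{n}^{\perp}$ closed you correctly verify) to get the BAM property with the $m$-fold constant $\gamma_{\mathrm{MAP}}=\norm{\Pro_{U_{m}}\cdots\Pro_{U_{1}}\Pro_{V^{\perp}}}$, and then prove internally that $\gamma_{\mathrm{MAP}}=\gamma^{2}$ via the factorization $\Pro_{U_{m}}\cdots\Pro_{U_{1}}=Q^{*}Q$ with $Q=\Pro_{U_{n}}\cdots\Pro_{U_{1}}$, the absorption identities $\Pro_{V}Q=Q\Pro_{V}=\Pro_{V}$ from \cref{lem:ExchangeProj}, and the identity $\norm{A^{*}A}=\norm{A}^{2}$ applied to $A=Q\Pro_{V^{\perp}}$. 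I checked the details: the palindromic reindexing does give $Q^{*}Q$ (using idempotence of $\Pro_{U_{n}}$ at the centre), the cross term $\Pro_{V}Q^{*}Q\Pro_{V^{\perp}}$ does vanish, and the $C^{*}$-identity holds in real Hilbert spaces, so the argument is sound; your separate treatment of $n=1$ is also consistent with the subsection's convention that $m\geq 2$ elsewhere. What each approach buys: the paper's citation of \cite[Theorem~6.7]{BOyW2019LinearConvergence} is shorter and, per the remark following the theorem, that reference actually yields a possibly sharper rate (the accelerated MAP rate, which is at most $\gamma^{2}$); your derivation is self-contained modulo \cref{theorem:MAP:LC} and makes transparent \emph{why} the symmetric (palindromic) arrangement squares the constant — it is nothing more than the operator-norm identity for $Q^{*}Q$.
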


\begin{proof}
	Because $\Psi \subseteq \mathcal{S} \subseteq \Omega$, by \cref{lemma:Fix:CCS}, $ \Fix \CC{\mathcal{S}} =\cap_{T \in \mathcal{S}} \Fix T =\cap^{n}_{i=1} U_{i}$ is a closed linear subspace. Apply \cref{fact:CCS:proper:NormPres:T}\cref{fact:CCS:proper:NormPres:T:PaffU} with $W= \cap_{T \in \mathcal{S}} \Fix T $ to obtain that $  \Pro_{ \Fix \CC{\mathcal{S}} } \CC{\mathcal{S}} =\Pro_{ \Fix \CC{\mathcal{S}} }  $. In addition, by
	\cite[Theorem~6.7]{BOyW2019LinearConvergence}, $\gamma \in \left[0,1\right[$ and $(\forall x \in \mathcal{H})  $ $\norm{\CC{\mathcal{S}}x-\Pro_{\Fix \CC{\mathcal{S}} } x} \leq \gamma^{2}  \norm{x -\Pro_{\Fix \CC{\mathcal{S}} } x}$. Hence, by \cref{def:BAM}, $\CC{\mathcal{S}}$ is a $\gamma^{2}$-BAM.
\end{proof}

\begin{corollary}
	Assume that $m=3$ in \cref{eq:U1Um},  that  $\mathcal{S} :=\{\Id, \R_{U_{1}}, \R_{U_{2}},\R_{U_{1}}\R_{U_{2}}, \R_{U_{2}}\R_{U_{1}}, \R_{U_{1}}\R_{U_{2}}\R_{U_{1}} \}$, and that $U_{1}+ U_{2}$ is closed. Set
	$
	\gamma := \norm{ \Pro_{U_{2}} \Pro_{U_{1}} \Pro_{(U_{1} \cap U_{2})^{\perp}} }.
	$
 Then $\gamma \in \left[0,1\right[\,$, $\Fix \CC{\mathcal{S}} =U_{1} \cap U_{2}$, and $\CC{\mathcal{S}}$ is a $\gamma^{2}$-BAM.
\end{corollary}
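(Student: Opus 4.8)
The plan is to recognize this corollary as precisely the case $n=2$ of \cref{theorem:SymMAP:LC}, so that the entire argument reduces to checking that the hypotheses specialize correctly. First I would set $n=2$ and note that then $m=2n-1=3$, matching the assumption $m=3$. The symmetrizing prescription $U_{n+i}:=U_{n-i}$ for $i \in \{1,\ldots,n-1\}$ collapses to the single assignment $U_{3}:=U_{1}$, so the triple of subspaces is $U_{1},U_{2},U_{1}$, which is exactly the palindromic pattern that \cref{theorem:SymMAP:LC} is built around.

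Next I would reconcile the closedness hypotheses. \cref{theorem:SymMAP:LC} requires $U^{\perp}_{1}+U^{\perp}_{2}$ to be closed, whereas the corollary assumes $U_{1}+U_{2}$ is closed; by \cref{fac:cFLess1} these two conditions are equivalent (each is equivalent to $c(U_{1},U_{2})<1$), so the corollary's hypothesis supplies exactly what the theorem needs. The one point that genuinely demands bookkeeping is verifying that the set $\mathcal{S}$ named in the corollary coincides with the set $\Psi$ attached to $U_{1},U_{2},U_{3}=U_{1}$ in \cref{eq:U1Um}, so that the standing requirement $\Psi \subseteq \mathcal{S} \subseteq \Omega$ of \cref{theorem:SymMAP:LC} holds. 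I would expand $\Psi$ over the strictly increasing index tuples $0<i_{1}<\cdots<i_{r}$ drawn from $\{1,2,3\}$, then substitute $\R_{U_{3}}=\R_{U_{1}}$ and use $\R_{U_{1}}\R_{U_{1}}=\Id$; the tuples $(1),(2),(3)$ give $\R_{U_{1}},\R_{U_{2}}$, the tuples $(1,2),(1,3),(2,3)$ give $\R_{U_{2}}\R_{U_{1}},\Id,\R_{U_{1}}\R_{U_{2}}$, and the tuple $(1,2,3)$ gives $\R_{U_{1}}\R_{U_{2}}\R_{U_{1}}$. Collecting these yields $\{\Id,\R_{U_{1}},\R_{U_{2}},\R_{U_{2}}\R_{U_{1}},\R_{U_{1}}\R_{U_{2}},\R_{U_{1}}\R_{U_{2}}\R_{U_{1}}\}$, which is precisely the stated $\mathcal{S}$.

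Finally I would invoke \cref{theorem:SymMAP:LC} outright. With $n=2$ its constant is $\gamma=\norm{\Pro_{U_{2}}\Pro_{U_{1}}\Pro_{(U_{1}\cap U_{2})^{\perp}}}\in\left[0,1\right[$, its fixed-point identity reads $\Fix\CC{\mathcal{S}}=\cap^{m}_{i=1}U_{i}=U_{1}\cap U_{2}\cap U_{1}=U_{1}\cap U_{2}$, and it certifies $\CC{\mathcal{S}}$ as a $\gamma^{2}$-BAM; these are exactly the three assertions of the corollary. I do not expect any real obstacle: the argument is a substitution plus a hypothesis match, and the only step that requires care is the enumeration in the previous paragraph confirming $\mathcal{S}=\Psi$.
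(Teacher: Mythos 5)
Your proposal is correct and follows essentially the same route as the paper: set $U_{3}=U_{1}$ with $n=2$, check that $\Psi$ collapses to the stated $\mathcal{S}$, and invoke \cref{theorem:SymMAP:LC}. You are in fact slightly more careful than the paper's own proof, which silently relies on the equivalence (via \cref{fac:cFLess1}) between closedness of $U_{1}+U_{2}$ and of $U_{1}^{\perp}+U_{2}^{\perp}$ that you make explicit.
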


\begin{proof}
	Let $U_{3} =U_{1}$ in \cref{eq:U1Um} with $m=3$
to obtain that $\Psi= \{\Id, \R_{U_{1}}, \R_{U_{2}},\R_{U_{1}}\R_{U_{2}}, \R_{U_{2}}\R_{U_{1}}, \R_{U_{1}}\R_{U_{2}}\R_{U_{1}} \} =\mathcal{S} $.  Hence, the required result comes from \cref{theorem:SymMAP:LC} with $n=2$  and $U_{3} =U_{1}$.
\end{proof}

\begin{remark}
	\cite[Theorem~6.8]{BOyW2019LinearConvergence} shows that the sequence of iterations of the $\CC{\mathcal{S}}$ in \cref{theorem:SymMAP:LC}  attains the convergence rate of the accelerated method of alternative projections which is no larger than the $\gamma^{2}$ presented in \cref{theorem:SymMAP:LC}.
	Hence, by \cite[Theorem~6.8]{BOyW2019LinearConvergence}, using the similar proof of \cref{theorem:SymMAP:LC}, one can  show that the constant associated with the BAM, the $\CC{\mathcal{S}}$ in \cref{theorem:SymMAP:LC}, is no larger than the convergence rate of the accelerated method of alternative projections.
\end{remark}

\subsection*{Compositions and convex combinations of circumcenter mapping}
The following \Cref{theom:CCS1:compose:comb,theom:CCS2:compose:comb} with condition \cref{theom:CCS1:compose:BAM} are  generalizations of \cite[Theorem~2]{BCS2019} from one class of circumcenter mapping induced by finite set of reflections to   two classes of more general circumcenter mappings induced by finite set of isometries.  Recall that
\begin{align*}
T_{1}, \ldots, T_{m} \text{ are affine isometries from }  \mathcal{H} \text{ to }  \mathcal{H}  \text{ with }  \cap^{m}_{i=1} \Fix T_{i} \neq \varnothing.
\end{align*}
\begin{theorem} \label{theom:CCS1:compose:comb}
	Suppose that $\mathcal{H} =\mathbb{R}^{n}$. Set $\mathcal{S}_{1} := \{\Id, T_{q_{0}+1}, T_{q_{0}+2},  \ldots, T_{q_{1}}  \}$, $\mathcal{S}_{2} := \{\Id, T_{q_{1}+1}, T_{q_{1}+2},  \ldots, T_{q_{2}}  \}$, $\ldots$, $\mathcal{S}_{t} := \{\Id, T_{q_{t-1}+1}, T_{q_{t-1}+2},  \ldots, T_{q_{t}}  \}$, with $q_{0}=0, q_{t}=m$ and $(\forall i \in \{1, \ldots, t\})$ $q_{i} -q_{i-1} \geq 1$. Suppose that one of the following holds:
	\begin{enumerate}
		\item \label{theom:CCS1:compose:BAM} $\CC{\mathcal{S}}=\CC{\mathcal{S}_{t}}\circ \CC{\mathcal{S}_{t-1}} \circ \cdots \circ \CC{\mathcal{S}_{1}}$.
		\item \label{theom:CCS1:compose:Fix}  $\CC{\mathcal{S}}=\sum^{t}_{i =1}  \omega_{i} \CC{\mathcal{S}_{i}}$, where $\{ \omega_{i} \}_{1\leq i \leq t} \subseteq \left]0,1\right] $ such that $\sum^{t}_{i =1} \omega_{i}=1$.
	\end{enumerate}
Then $\Fix \CC{\mathcal{S}} =\cap^{m}_{i=1} \Fix T_{i}$ and  $\CC{\mathcal{S}}$ is a BAM. Moreover, there exists $\gamma \in \left[0,1\right[$ such that
\begin{align*}
(\forall x \in \mathcal{H})  (\forall k \in \mathbb{N})\quad \norm{\CC{\mathcal{S}}^{k}x - \Pro_{\cap^{m}_{i=1} \Fix T_{I}}x } \leq \gamma^{k} \norm{x - \Pro_{\cap^{m}_{i=1} \Fix T_{I}}x}.
\end{align*}
\end{theorem}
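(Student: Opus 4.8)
The plan is to reduce the statement to the composition theorem \cref{thm:BAM:COMPO} and the convex-combination theorem \cref{theor:averacomb:BAM}, after first recognizing each block map $\CC{\mathcal{S}_{i}}$ as a BAM via \cref{theom:CCS:affine}. First I would analyze a single block $\mathcal{S}_{i} = \{\Id, T_{q_{i-1}+1}, \ldots, T_{q_{i}}\}$. Since $\Id$ together with the affine isometries $T_{q_{i-1}+1}, \ldots, T_{q_{i}}$ sit inside $\Omega(T_{q_{i-1}+1}, \ldots, T_{q_{i}})$, and since $\cap^{q_{i}}_{j=q_{i-1}+1} \Fix T_{j} \supseteq \cap^{m}_{j=1} \Fix T_{j} \neq \varnothing$, the set $\mathcal{S}_{i}$ meets the inclusion hypothesis of \cref{theom:CCS:affine}. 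Applying it yields that each $\CC{\mathcal{S}_{i}}$ is a BAM with $\Fix \CC{\mathcal{S}_{i}} = \cap^{q_{i}}_{j=q_{i-1}+1} \Fix T_{j}$, which by \cref{fact:IsometryAffine} is a closed affine subspace of $\mathcal{H}$. Intersecting over $i \in \{1, \ldots, t\}$ and using $q_{0}=0$, $q_{t}=m$ gives $\cap^{t}_{i=1} \Fix \CC{\mathcal{S}_{i}} = \cap^{m}_{j=1} \Fix T_{j} \neq \varnothing$, which will be the common fixed point set in both cases.

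Next I would observe that, because $\mathcal{H} = \mathbb{R}^{n}$ is finite-dimensional, every linear subspace is closed and every finite sum of subspaces is closed; in particular the closedness hypotheses \enquote{$\sum^{i}_{j=1} (\pa U_{j})^{\perp}$ is closed} in \cref{thm:BAM:COMPO} and \enquote{$\sum_{j} (\pa \Fix G_{j})^{\perp}$ is closed} in \cref{theor:averacomb:BAM} hold automatically, where $U_{i} = \Fix \CC{\mathcal{S}_{i}}$. This removes the only nontrivial structural assumption needed to invoke those theorems.

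For case \cref{theom:CCS1:compose:BAM} I would apply \cref{thm:BAM:COMPO} with $G_{i} := \CC{\mathcal{S}_{i}}$, so that $t$ plays the role of the number of composed operators: its items \cref{thm:BAM:COMPO:Fix} and \cref{thm:BAM:COMPO:BAM} give that $\CC{\mathcal{S}} = \CC{\mathcal{S}_{t}} \circ \cdots \circ \CC{\mathcal{S}_{1}}$ is a BAM with $\Fix \CC{\mathcal{S}} = \cap^{t}_{i=1} \Fix \CC{\mathcal{S}_{i}} = \cap^{m}_{j=1} \Fix T_{j}$, while item \cref{thm:BAM:COMPO:LineaConve} supplies the constant $\gamma \in \left[0,1\right[$ and the asserted linear estimate. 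For case \cref{theom:CCS1:compose:Fix} I would instead invoke \cref{theor:averacomb:BAM} with the same $G_{i}$ and weights $\omega_{i} \in \left]0,1\right]$ summing to $1$; it delivers simultaneously that $\Fix \sum^{t}_{i=1} \omega_{i} \CC{\mathcal{S}_{i}} = \cap^{m}_{j=1} \Fix T_{j}$, that the combination is a $\gamma$-BAM, and the iterated linear estimate. The degenerate case $t=1$ is immediate, since then $\CC{\mathcal{S}} = \CC{\mathcal{S}_{1}}$ is already a BAM by \cref{theom:CCS:affine}, with linear convergence from \cref{prop:BAM:Properties}.

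The main obstacle here is not conceptual but one of careful bookkeeping: verifying that each block $\mathcal{S}_{i}$ exactly satisfies the inclusion hypothesis of \cref{theom:CCS:affine}, and checking that the per-block fixed point sets $\cap^{q_{i}}_{j=q_{i-1}+1} \Fix T_{j}$ reassemble to $\cap^{m}_{j=1} \Fix T_{j}$. Once each $\CC{\mathcal{S}_{i}}$ is identified as a BAM with closed affine fixed point set and finite-dimensionality discharges the closedness conditions, both conclusions follow directly from the earlier structural theorems, and the stated linear-convergence rate is inherited through them.
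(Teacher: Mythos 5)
Your proposal is correct and follows essentially the same route as the paper: identify each block $\CC{\mathcal{S}_{i}}$ as a BAM with closed affine fixed point set via \cref{theom:CCS:affine}, discharge the closedness hypotheses using finite-dimensionality, and then invoke \cref{thm:BAM:COMPO} for the composition case and \cref{theor:averacomb:BAM} for the convex-combination case, with the iterated estimate coming from \cref{prop:BAM:Properties}. The only cosmetic difference is that you treat $t=1$ separately and route the fixed-point identity through the items of \cref{thm:BAM:COMPO}, whereas the paper cites \cref{fact:FixCCS:G}, \cref{fact:CCS:proper:NormPres:T} and \cref{prop:CompositionBAM} directly; both are sound.
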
	

\begin{proof}
	By \cref{theom:CCS:affine}, $(\forall i \in \{1,\ldots, t\})$ $\CC{\mathcal{S}_{i}}$ is a BAM with $\Fix \CC{\mathcal{S}_{i}}=\cap^{q_{i} -1}_{j=q_{i-1}} \Fix T_{j+1}$.  Using \cref{fact:FixCCS:G,fact:CCS:proper:NormPres:T}  and  \cref{prop:CompositionBAM}\cref{prop:CompositionBAM:comp}$\&$\cref{prop:CompositionBAM:comb}, we know that  $\Fix \CC{\mathcal{S}} =\cap^{m}_{i=1} \Fix T_{i}$.
	Note that every  finite-dimensional linear subspace must be closed.  Hence, by \cref{thm:BAM:COMPO}\cref{thm:BAM:COMPO:BAM} and \cref{theor:averacomb:BAM}, we obtain that $\CC{\mathcal{S}}$ is a BAM.
The last inequality comes from \cref{prop:BAM:Properties}.
\end{proof}

\begin{theorem} \label{theom:CCS2:compose:comb}
	Suppose that $\mathcal{H} =\mathbb{R}^{n}$. 	 Set $\I:=\{1, \ldots, t\}$ and
	\begin{align*}
	(\forall i \in \I) \quad \mathcal{S}_{i} := \{\Id, T_{q_{i-1}+1}, T_{q_{i-1}+2}T_{q_{i-1}+1},  \ldots, T_{q_{i}}\cdots T_{q_{i-1}+2}T_{q_{i-1}+1}  \},
	\end{align*}
	with $q_{0}=0, q_{t}=m$ and $(\forall i \in \I)$ $q_{i} -q_{i-1} \geq 1$.  Suppose that  one of the following holds:
	\begin{enumerate}
		\item  \label{theom:CCS2:compose:comb:comp}  $\CC{\mathcal{S}}=\CC{\mathcal{S}_{t}}\circ \CC{\mathcal{S}_{t-1}} \circ \cdots \circ \CC{\mathcal{S}_{1}}$.
		\item  \label{theom:CCS2:compose:comb:comb}  $\CC{\mathcal{S}}=\sum^{t}_{i =1}  \omega_{i} \CC{\mathcal{S}_{i}}$, where $\{ \omega_{i} \}_{1\leq i \leq t} \subseteq \left]0,1\right] $ such that $\sum^{t}_{i =1} \omega_{i}=1$.
	\end{enumerate}
Then $\Fix \CC{\mathcal{S}} =\cap^{m}_{i=1} \Fix T_{i}$ and $\CC{\mathcal{S}}$ is a  BAM. Moreover, there exists $\gamma \in \left[0,1\right[$ such that
\begin{align*}
(\forall x \in \mathcal{H})  (\forall k \in \mathbb{N})\quad \norm{\CC{\mathcal{S}}^{k}x - \Pro_{\cap^{m}_{i=1} \Fix T_{i}}x } \leq \gamma^{k} \norm{x - \Pro_{\cap^{m}_{i=1} \Fix T_{i}}x}.
\end{align*}
\end{theorem}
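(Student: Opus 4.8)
The plan is to reduce the assertion to the abstract composition and convex-combination theorems for best approximation mappings, exactly as in the proof of \cref{theom:CCS1:compose:comb}; the only structural change is that each block $\mathcal{S}_i$ now carries the \emph{cascading} shape $\{\Id, T_{q_{i-1}+1}, T_{q_{i-1}+2}T_{q_{i-1}+1}, \ldots\}$ rather than the flat shape, so I would invoke the second admissible pattern of \cref{theom:CCS:affine} instead of the first. First I would fix $i \in \{1,\ldots,t\}$ and relabel the relevant isometries by $F_j := T_{q_{i-1}+j}$ for $j \in \{1,\ldots, q_i - q_{i-1}\}$; then $\mathcal{S}_i$ is precisely the chain $\{\Id, F_1, F_2 F_1, \ldots, F_{q_i-q_{i-1}} \cdots F_1\}$, which is a finite subset of $\Omega(F_1,\ldots,F_{q_i-q_{i-1}})$ matching the hypothesis ``$\{\Id, F_1, F_2F_1, \ldots\} \subseteq \tilde{\mathcal{S}}$'' of \cref{theom:CCS:affine}. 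Since $\cap_{j=1}^m \Fix T_j \neq \varnothing$ forces $\cap_{j=q_{i-1}+1}^{q_i} \Fix T_j \neq \varnothing$, \cref{theom:CCS:affine} applies and gives that $\CC{\mathcal{S}_i}$ is a BAM with $\Fix \CC{\mathcal{S}_i} = \cap_{j=q_{i-1}+1}^{q_i} \Fix T_j$; by \cref{fact:IsometryAffine} each $\Fix T_j$, hence this intersection, is a closed affine subspace.

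Next I would assemble the global fixed-point set. Because $\cap_{i=1}^t \Fix \CC{\mathcal{S}_i} = \cap_{i=1}^t \cap_{j=q_{i-1}+1}^{q_i} \Fix T_j = \cap_{j=1}^m \Fix T_j \neq \varnothing$, and each $\Fix \CC{\mathcal{S}_i}$ is a closed affine subspace, I would apply \cref{prop:CompositionBAM}\cref{prop:CompositionBAM:comp} in case \cref{theom:CCS2:compose:comb:comp} and \cref{prop:CompositionBAM}\cref{prop:CompositionBAM:comb} in case \cref{theom:CCS2:compose:comb:comb}; both yield $\Fix \CC{\mathcal{S}} = \cap_{j=1}^m \Fix T_j$, which is the first claim.

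It then remains to promote ``$\CC{\mathcal{S}}$ is a composition (resp.\ convex combination) of BAMs'' to ``$\CC{\mathcal{S}}$ is a BAM''. Here I would exploit the standing hypothesis $\mathcal{H} = \mathbb{R}^n$: every linear subspace of $\mathbb{R}^n$ is closed, so the closedness requirements of \cref{thm:BAM:COMPO} (that $\sum_{j=1}^i (\pa \Fix \CC{\mathcal{S}_j})^\perp$ be closed) and of \cref{theor:averacomb:BAM} (that $\sum_{j} (\pa \Fix \CC{\mathcal{S}_j})^\perp$ be closed) hold automatically. Applying \cref{thm:BAM:COMPO}\cref{thm:BAM:COMPO:BAM} in case \cref{theom:CCS2:compose:comb:comp} and \cref{theor:averacomb:BAM} in case \cref{theom:CCS2:compose:comb:comb} then shows $\CC{\mathcal{S}}$ is a $\gamma$-BAM for some $\gamma \in \left[0,1\right[$, and the displayed linear-convergence estimate follows at once from \cref{prop:BAM:Properties}\cref{prop:BAM:Properties:compo:Ineq}.

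The only point requiring genuine care, and where I expect the (modest) effort to concentrate, is the bookkeeping of the first paragraph: confirming that the cascading list $\mathcal{S}_i$ matches the second hypothesis pattern of \cref{theom:CCS:affine} under the index shift $F_j = T_{q_{i-1}+j}$, and that the per-block intersections $\cap_{j=q_{i-1}+1}^{q_i} \Fix T_j$ reassemble to $\cap_{j=1}^m \Fix T_j$. Once this identification is secured there is no analytic obstacle, since every remaining step is a direct citation; the argument is structurally identical to that of \cref{theom:CCS1:compose:comb}, with the flat blocks replaced throughout by cascading blocks.
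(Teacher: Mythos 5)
Your proposal is correct and follows essentially the same route as the paper: the paper's proof of this theorem simply defers to the proof of \cref{theom:CCS1:compose:comb}, which likewise applies \cref{theom:CCS:affine} blockwise (here via its second, cascading pattern), assembles the fixed-point set through \cref{prop:CompositionBAM}, uses finite-dimensionality to discharge the closedness hypotheses of \cref{thm:BAM:COMPO} and \cref{theor:averacomb:BAM}, and finishes with \cref{prop:BAM:Properties}. Your write-up merely makes the index relabelling explicit, which the paper leaves implicit.
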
	

\begin{proof}
	The proof is similar to that of \cref{theom:CCS1:compose:comb}.
\end{proof}

We conclude this section by presenting  BAMs from finite composition or convex combination of circumcenter mappings, which is not projections, in Hilbert spaces.
In fact, using \Cref{theorem:MAP:LC,theorem:SymMAP:LC}, one may construct more similar BAMs in  Hilbert space.
\begin{theorem} \label{theorem:Hilbert:BAM}
	Let $U_{1}, \ldots, U_{2m}$ be closed affine subspaces of  $\mathcal{H} $ with $\cap^{2m}_{i=1} U_{i} \neq \varnothing$. Set $\I:=\{1, \ldots, m\}$. Assume that $(\forall i \in \I)$ $\pa U_{2i-1} + \pa U_{2i}$ is closed.  Set
	\begin{subequations}
		\begin{align*}
		 \mathcal{S}_{1} := \{\Id, \R_{U_{q_{0}+1}}, \R_{U_{q_{0}+2}},  \R_{U_{q_{0}+2}}\R_{U_{q_{0}+1}} \}, \ldots,
		\mathcal{S}_{m}  := \{\Id, \R_{U_{q_{m}-1}}, \R_{U_{q_{m}}},  \R_{U_{q_{m}}}\R_{U_{q_{m}-1}}  \},
		\end{align*}
	\end{subequations}
	with $(\forall i \in \{0\} \cup \I)$ $q_{i} =2i$.
	Suppose that  one of the following holds:
	\begin{enumerate}
		\item  \label{theorem:Hilbert:BAM:comp}  $(\forall i \in \I )$ $ \sum^{2i}_{j=1}(\pa U_{j})^{\perp}$ is closed, and  $\CC{\mathcal{S}}=\CC{\mathcal{S}_{t}}\circ \CC{\mathcal{S}_{t-1}} \circ \cdots \circ \CC{\mathcal{S}_{1}}$.
		\item  \label{theorem:Hilbert:BAM:comb:comb}  $ \sum^{2m}_{j=1} (\pa U_{j})^{\perp}$ is closed, and  $\CC{\mathcal{S}}=\sum^{m}_{i =1}  \omega_{i} \CC{\mathcal{S}_{i}}$, where $\{ \omega_{i} \}_{1\leq i \leq m} \subseteq \left]0,1\right] $ such that $\sum^{m}_{i =1} \omega_{i}=1$.
	\end{enumerate}
	Then $\Fix \CC{\mathcal{S}} =\cap^{2m}_{i=1} \Fix T_{i}$ and $\CC{\mathcal{S}}$ is a  BAM.
	Moreover,  there exists $\gamma \in \left[0,1\right[$ such that
		\begin{align*}
		(\forall x \in \mathcal{H})  (\forall k \in \mathbb{N})\quad \norm{\CC{\mathcal{S}}^{k}x - \Pro_{\cap^{2m}_{i=1} U_{i}}x } \leq \gamma^{k} \norm{x - \Pro_{\cap^{2m}_{i=1} U_{i}}x}.
		\end{align*}
\end{theorem}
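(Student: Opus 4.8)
The plan is to reduce everything to the two-subspace building block \cref{cor:Hilbert:BAM}, fed into the composition theorem \cref{thm:BAM:COMPO} and the convex-combination theorem \cref{theor:averacomb:BAM}. The whole argument rests on first analysing each factor $\CC{\mathcal{S}_i}$ separately and then invoking the already-established BAM calculus; here $\mathcal{S}_i = \{\Id, \R_{U_{2i-1}}, \R_{U_{2i}}, \R_{U_{2i}}\R_{U_{2i-1}}\}$ for $i \in \I$.

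First I would fix $z \in \cap^{2m}_{j=1} U_j$, which exists by hypothesis, and treat a single $\CC{\mathcal{S}_i}$. Since reflectors onto closed affine subspaces are affine isometries, \cref{fact:CCS:proper:NormPres:T}\cref{fact:CCS:proper:NormPres:T:prop} makes $\CC{\mathcal{S}_i}$ proper and \cref{fact:FixCCS:G} gives $\Fix \CC{\mathcal{S}_i} = U_{2i-1}\cap U_{2i}$, a closed affine subspace. To identify $\CC{\mathcal{S}_i}$ as a BAM I would pass to the shifted operators $F(x) := T(x+z) - z$ of \cref{prop:S:SF:equivalence}; using \cref{fac:SetChangeProje} one checks that the shift of $\R_{U_j}$ (with $z \in U_j$) is exactly $\R_{\pa U_j}$, so the shifted family is $\{\Id, \R_{\pa U_{2i-1}}, \R_{\pa U_{2i}}, \R_{\pa U_{2i}}\R_{\pa U_{2i-1}}\}$. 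This is precisely the configuration of \cref{cor:Hilbert:BAM} for the two closed linear subspaces $\pa U_{2i-1}, \pa U_{2i}$, whose sum is closed by the standing hypothesis; hence the shifted circumcenter mapping is a BAM, and \cref{prop:S:SF:equivalence}\cref{prop:S:SF:equivalence:S:SF} transfers this back to show $\CC{\mathcal{S}_i}$ is a BAM.

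Next I would assemble the factors. Writing $V_i := \Fix \CC{\mathcal{S}_i} = U_{2i-1}\cap U_{2i}$, note that $\cap^m_{i=1} V_i = \cap^{2m}_{j=1} U_j \neq \varnothing$ and $\pa V_i = \pa U_{2i-1}\cap \pa U_{2i}$. The key bookkeeping step is to re-express the closedness hypotheses in terms of the $U_j$: since $\pa U_{2i-1}+\pa U_{2i}$ is closed, \cref{fac:cFLess1} together with the identity $(\pa U_{2i-1}\cap\pa U_{2i})^{\perp} = \overline{(\pa U_{2i-1})^{\perp}+(\pa U_{2i})^{\perp}}$ from \cite[Theorem~4.6(5)]{D2012} yields $(\pa V_i)^{\perp} = (\pa U_{2i-1})^{\perp}+(\pa U_{2i})^{\perp}$, whence $\sum^{i}_{j=1}(\pa V_j)^{\perp} = \sum^{2i}_{j=1}(\pa U_j)^{\perp}$ as sets. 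In case \cref{theorem:Hilbert:BAM:comp} this makes the required closedness of $\sum^{i}_{j=1}(\pa V_j)^{\perp}$ coincide with the assumed closedness of $\sum^{2i}_{j=1}(\pa U_j)^{\perp}$, so \cref{thm:BAM:COMPO}\cref{thm:BAM:COMPO:Fix} and \cref{thm:BAM:COMPO}\cref{thm:BAM:COMPO:BAM} apply to $G_j := \CC{\mathcal{S}_j}$ and give that the composition $\CC{\mathcal{S}} = \CC{\mathcal{S}_m}\circ\cdots\circ\CC{\mathcal{S}_1}$ of the $m$ circumcenter mappings is a BAM with fixed point set $\cap^m_{i=1}V_i = \cap^{2m}_{j=1}U_j$. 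In case \cref{theorem:Hilbert:BAM:comb:comb} the same identity with $i=m$ turns the assumed closedness of $\sum^{2m}_{j=1}(\pa U_j)^{\perp}$ into the hypothesis of \cref{theor:averacomb:BAM}, so the convex combination $\CC{\mathcal{S}} = \sum^m_{i=1}\omega_i\CC{\mathcal{S}_i}$ is a BAM with the same fixed point set. In both cases the stated linear rate follows immediately, either from \cref{thm:BAM:COMPO}\cref{thm:BAM:COMPO:LineaConve} and \cref{theor:averacomb:BAM} directly, or by applying \cref{prop:BAM:Properties} to the resulting BAM.

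I expect the main obstacle to be this closedness bookkeeping: one must verify carefully that $\pa$ commutes with intersection for the affine subspaces $U_{2i-1}, U_{2i}$ sharing the point $z$, and that the perpendicular of the intersection collapses from a closure to a genuine algebraic sum precisely because of the standing assumption that $\pa U_{2i-1}+\pa U_{2i}$ is closed; without this reduction the hypotheses of \cref{thm:BAM:COMPO} and \cref{theor:averacomb:BAM} could not be checked. By contrast, the circumcenter-theoretic input (properness, the identification of fixed point sets, and the affine-to-linear shift) is routine given the cited facts, and the final BAM conclusions are then direct applications of the composition and combination theorems.
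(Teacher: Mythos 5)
Your proposal is correct and follows essentially the same route as the paper's proof: reduce to linear subspaces via \cref{prop:S:SF:equivalence}, identify each $\CC{\mathcal{S}_i}$ as a BAM through \cref{cor:Hilbert:BAM}, convert the closedness hypotheses using \cref{fac:cFLess1} and the identity $\sum^{i}_{j=1}(\pa\Fix\CC{\mathcal{S}_j})^{\perp}=\sum^{2i}_{j=1}(\pa U_j)^{\perp}$, and conclude with \cref{thm:BAM:COMPO} and \cref{theor:averacomb:BAM}. The only difference is that you spell out the shift and transfer-back bookkeeping more explicitly than the paper does, which is harmless.
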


\begin{proof}
	By \cref{prop:S:SF:equivalence}, we are able to assume that $(\forall i \in \{1,\ldots,2m\})$ $U_{i}$ is closed linear subspace of $\mathcal{H}$.
	For every $( i \in \I)$, because $  U_{2i-1} +  U_{2i}$ is closed,  by \cref{cor:Hilbert:BAM}, $\CC{\mathcal{S}_{i}}$ is a BAM with $\Fix \CC{\mathcal{S}_{i}}= U_{2i-1} \cap U_{2i}$ and by \cref{fac:cFLess1}, $ U^{\perp}_{2i-1} +  U^{\perp}_{2i} =\overline{U^{\perp}_{2i-1} +  U^{\perp}_{2i}}$.
	Hence, for every $ i \in \I $,
	\begin{align*}
	\sum^{i}_{j=1}   (\pa \Fix \CC{\mathcal{S}_{j}})^{\perp} =\sum^{i}_{j=1}   (U_{2j-1} \cap U_{2j})^{\perp} = \sum^{i}_{j=1} \overline{U^{\perp}_{2i-1} +  U^{\perp}_{2i}} =\sum^{2i}_{j=1}(\pa U_{j})^{\perp}.
	\end{align*}
	 Therefore, the asserted results follow by
	 \cref{thm:BAM:COMPO}\cref{thm:BAM:COMPO:BAM} and \cref{theor:averacomb:BAM}.
\end{proof}

\section{Conclusion and future work}
We discovered that the iteration sequence of BAM linearly converges to the best approximation onto the fixed point set of the BAM. We compared BAMs with linear convergent mappings, Banach contractions, and linear regular operators.
We also generalized the result proved by Behling, Bello-Cruz and Santos that the finite composition of BAMs with closed affine fixed point sets in $\mathbb{R}^{n}$ is still a BAM from $\mathbb{R}^{n}$ to the general Hilbert space. We  constructed new constant associated with the composition of BAMs.  Moreover, we proved that convex combinations of BAMs with closed affine fixed point sets is still a BAM. In addition, we connected BAMs with circumcenter mappings.

Although \cref{thm:BAM:COMPO} states that the  finite composition of BAMs  with closed affine fixed point sets   is still a BAM,
\cref{examp:BAM:counterexamp:Balls} shows that the composition of BAMs associated with closed Euclidean balls  is generally not a BAM.
Moreover,  \cref{prop:coneball} and  \Cref{exam:LineCone,examp:BAM:counterexamp:Balls}   illustrate that to determine whether the composition of BAMs is a BAM or not, the order of the BAMs does matter.
In addition, although  \Cref{theor:averacomb:BAM:shift,theor:averacomb:BAM} state that the convex combination of BAMs  with closed affine fixed point sets is a BAM, we have a little knowledge for affine  combinations of BAMs with general convex fixed point sets.
It would be interesting to characterize the sufficient conditions for the finite composition of  or affine combination of BAMs  with  general convex fixed point sets.
By \cref{rema:comp:BAM},  the constant  associated with the composition of BAMs in \cref{thm:ComposiBAM:Linear}\cref{thm:ComposiBAM:BAM} is not sharp. Using \cref{exam:alpha:PU}, we know that the constant associated with  the convex combination of BAMs presented \cref{theorem:convex:comb:BAM:2}
is not sharp as well. Hence, we will also try to find  better upper bound for the constant associated with the composition of  or the convex combination of BAMs.  As we mentioned in \cref{remark:assumpt:constants:different}, although the assumption of \cref{theor:averacomb:BAM:shift} is more restrictive than that of \cref{theor:averacomb:BAM}, the constants in these results are independent. We will investigate the relation between the constants associated with the convex combination of BAMs  in \cref{theor:averacomb:BAM:shift,theor:averacomb:BAM}.
Last but not least, we will try to find more BAMs  with general convex fixed point sets and more applications of those BAMs.
\section*{Acknowledgements}
HHB and XW were partially supported by NSERC Discovery Grants.

\addcontentsline{toc}{section}{References}

\bibliographystyle{abbrv}

\end{document}